\newcommand{\dy}{\text{d}\ney}
\newcommand{\dsx}{\text{d}s_{x}}
\newcommand{\dsy}{\text{d}s_{y}}
\newcommand{\reals}{\mathbb{R}}
\newcommand{\norm}[1]{\left\Vert #1\right\Vert}
\newcommand{\grad}{\operatorname{\mathbf{grad}}}
\newcommand{\dv}{\operatorname{div}}
\newcommand{\Curl}{\operatorname{\mathbf{curl}}}
\newcommand{\curl}{\operatorname{curl}}
\newcommand{\jump}[1]{\llbracket {#1}  \rrbracket}
\newcommand{\av}[1]{\left\{\!\!\left\{   #1 \right\}\!\!\right\} }
\newcommand{\neu}{\bm{u}}
\newcommand{\nev}{\bm{v}}
\newcommand{\nex}{\bm{x}}
\newcommand{\ney}{\bm{y}}
\newcommand{\llangle}{\langle\!\langle }
\newcommand{\rrangle}{\rangle\!\rangle}
\newcommand{\LLangle}{\left\langle\!\!\left\langle }
\newcommand{\RRangle}{\right\rangle\!\!\right\rangle}
\newcommand{\gammat}{\gamma_{\mathbf{t}}}
\newcommand{\gammatau}{\gamma_{\bm{\tau}}}
\newcommand{\gamman}{\gamma_{\mathbf{n}}}
\newcommand{\overbar}[1]{\mkern 1.5mu\overline{\mkern-1.5mu#1\mkern-1.5mu}\mkern 1.5mu}
\newcommand{\il}[1]{#1}
\newcommand{\blue}[1]{#1}
\newcommand{\highlightr}[2]{%
	\colorbox{#1!25!white}{\strut $\displaystyle#2$}}
\newcommand{\thighlightr}[2]{%
	\colorbox{#1!25!white}{\strut #2}}
\newcommand{\cdef}[1]{{\color{red}{#1}}}
\newtheorem{theorem}{Theorem}[section]
\newtheorem{lemma}[theorem]{Lemma}
\newtheorem{proposition}[theorem]{Proposition}
\newtheorem{corollary}[theorem]{Corollary}
\newtheorem{remark}[theorem]{Remark}
\newtheorem{problem}[theorem]{Problem}
\newtheorem{assumption}[theorem]{Assumption}
\theoremstyle{definition}
\newtheorem{proofpart}{Part}
\begin{document}

\begin{frontmatter}



\title{Coupled Boundary and Volume Integral Equations for Electromagnetic Scattering}


\author[1]{Ignacio Labarca-Figueroa\corref{cor1}}
\ead{ignacio.labarca-figueroa@uibk.ac.at}
\cortext[cor1]{Corresponding author}
\affiliation[1]{organization={Institute for Theoretical Physics, University of Innsbruck},
            city={Innsbruck},
            country={Austria}}

\author[2]{Ralf Hiptmair}\ead{ralf.hiptmair@sam.math.ethz.ch}
\affiliation[2]{organization={Seminar for Applied Mathematics, ETH Zurich},
         	city={Zurich},
         	country={Switzerland}}

\begin{abstract}
We study frequency domain electromagnetic scattering at a bounded, penetrable, and inhomogeneous obstacle $ \Omega  \subset \reals^3 $.
From the Stratton-Chu integral representation, we derive a new representation formula when constant reference coefficients are given for the interior domain. The resulting integral representation contains the usual layer potentials, but also volume potentials on $\Omega$. Then it is possible to follow a single-trace approach to obtain boundary integral equations perturbed by traces of compact volume integral operators with weakly singular kernels. The coupled boundary and volume integral equations are discretized with a Galerkin approach with usual Curl-conforming and Div-conforming finite elements on the boundary and in the volume. Compression techniques and special quadrature rules for singular integrands are required for an efficient
and accurate method. Numerical experiments provide evidence
that our new formulation enjoys promising properties. 
\end{abstract}



\begin{keyword}
volume integral equations \sep boundary integral equations \sep electromagnetic scattering


\end{keyword}

\end{frontmatter}


	
\section{Introduction}\label{sec:Introduction}
\subsection{Maxwell Transmission Problem}\label{sec:Maxwell-transmission-problem}
We are interested in solving the frequency domain electromagnetic wave scattering problem in a medium that is homogeneous outside a bounded region $ \Omega_i  \subset \reals^3$ (see Figure \ref{fig:geom}). We denote the exterior domain $ \Omega_o \coloneqq \reals^3 \setminus \overbar{\Omega}_i. $ Material properties are given by functions $ \varepsilon\in L^{\infty}(\reals^3)$ and $ \mu \in L^{\infty}(\reals^3)$ where
\begin{equation}\label{eq:material_params}
	\varepsilon(\nex) \equiv \varepsilon_0, \quad \mu(\nex) \equiv \mu_0\quad \text{ for } \nex \in \Omega_o,
\end{equation}
and $ \varepsilon_{\text{max}} > \varepsilon(\nex) > \varepsilon_{\text{min}} > 0, \ \mu_{\text{max}} > \mu(\nex) > \mu_{\text{min}} > 0 $ almost everywhere in $ \reals^3. $ \\

\begin{figure}[t]
	\centering
	\includegraphics[width=0.5\linewidth]{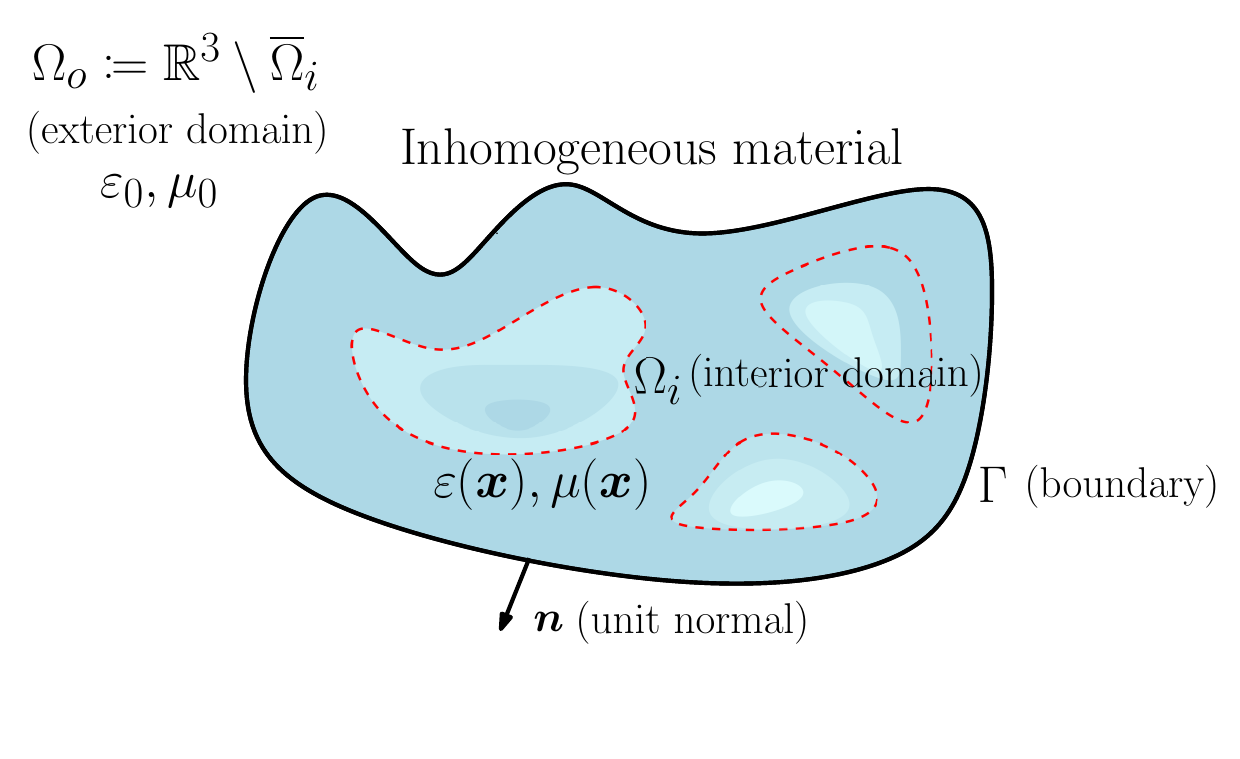}
	\caption{Geometric setting. Inhomogeneous material.}
	\label{fig:geom}
\end{figure}
The equations governing the problem of finding the total electric field $ \neu \coloneqq \neu^s + \neu^{\text{inc}} $ and total magnetic field $ \nev \coloneqq \nev^s + \nev^{\text{inc}}  $  in this inhomogeneous medium are
\begin{equation}\label{eq:inh_helmholtz}
	\Curl \neu - i\omega \mu(\nex)\nev = 0, \quad \Curl \nev + i\omega \varepsilon(\nex)\neu = 0, \quad \text{ for }\nex \in \reals^3,
\end{equation}
where $ \neu^{\text{inc}}, \nev^{\text{inc}}  $ are the incident fields satisfying the vacuum Maxwell's equations in the whole space, 
\begin{equation}\label{eq:uinc_pde}
	\Curl \neu^{\text{inc}} - i\omega \mu_0\nev^{\text{inc}} = 0, \quad \Curl \nev^{\text{inc}} + i\omega \varepsilon_0\neu^{\text{inc}} = 0, \quad \text{ for }\nex \in \reals^3,
\end{equation}
and $ \neu^s, \nev^s  $ satisfy Silver-M\"uller radiation conditions \cite[Chapter~6]{colton2012inverse} 
\begin{equation}\label{eq:radiation}
	\lim\limits_{r\rightarrow\infty}	\nev^s\times \dfrac{\nex}{r} - \neu^s = 0, \text{ uniformly on }r=|\nex|.
\end{equation}
The problem can be formulated as the following transmission problem: 
\begin{tcolorbox}[colback=lightgray!15!white, colframe=black, sharp corners, colframe=lightgray!15!white]
	Find $ \neu, \nev \in \mathbf{H}_{\text{loc}}(\Curl, \reals^3\setminus\Gamma) $ such that
	\begin{align}\label{eq:transmission-problem-maxwell}\begin{aligned}
			&\begin{aligned}
				&\Curl \neu - i\omega \mu_0 \nev &&= 0, &&\Curl \nev + i\omega \varepsilon_0 \neu &&= 0 &&\text{ in }\Omega_o,\\
				&\Curl \neu - i\omega \mu(\nex) \nev &&= 0, && \Curl \nev + i\omega \varepsilon(\nex) \neu &&= 0 &&\text{ in }\Omega_i ,\\
				&\gammat^+ \neu - \gammat^- \neu&&= -\gammat \neu^{\text{inc}}, && \gammat^+ \nev - \gammat^- \nev&&= -\gammat \nev^{\text{inc}}&&\text{ on }\Gamma, \\
			\end{aligned}\\ 
			&\qquad\qquad\lim\limits_{r\rightarrow\infty}	\nev\times \dfrac{\nex}{r} - \neu = 0, \text{ uniformly on }r=|\nex|,
		\end{aligned}
	\end{align}
	where $ \gammat^{\pm}$ denotes the exterior/interior tangential trace operators (see Section \ref{sec:Preliminaries} for details).\\
\end{tcolorbox}

\subsection{VIEs for electromagnetic scattering}\label{sec:VIE}
\blue{It is possible to reformulate the transmission problem \eqref{eq:transmission-problem-maxwell} as a volume integral equation (VIE).} Depending on the material properties, \blue{this can be done in different ways, for instance as the VIE} \cite{botha2006solving, markkanen2020new}:
\begin{tcolorbox}[colback=lightgray!15!white, colframe=black, sharp corners, colframe=lightgray!15!white]
	Find $\neu\in \mathbf{H}(\dv, \Omega_i)\cap \mathbf{H}(\Curl, \Omega_i)$ such that
	\begin{equation}\label{eq:EVIE}
		\neu - \grad\dv\mathbf{N}_{\Omega_i,\kappa_0}(p_e \neu) - \kappa_0^2\mathbf{N}_{\Omega_i,\kappa_0}(p_e\neu) - \Curl\mathbf{N}_{\Omega_i,\kappa_0}(q_m\Curl\neu) = \neu^{\text{inc}},
	\end{equation}
	where $p_e(\nex) \coloneqq 1 - \tfrac{\varepsilon(\nex)}{\varepsilon_0}, \ q_m(\nex) \coloneqq 1 - \tfrac{\mu_0}{\mu(\nex)},$ and $\mathbf{N}_{\Omega_i,\kappa_0}$ is the Newton potential over $\Omega_i$ with wavenumber $\kappa_0$ (introduced in Section \ref{sec:Newton}).
\end{tcolorbox}
Variants of \eqref{eq:EVIE} can be found in \cite{costabel2010volume,costabel2012essential, markkanen2020new,botha2006solving}.
The operators involved in these formulations are not compact in $\mathbf{H}(\Curl, \Omega_i)$ or $\mathbf{H}(\dv, \Omega_i)$. Most of the equations \blue{involve} integral operators with strongly singular kernels. Therefore, Fredholm theory can not be used directly, as the operators underlying the VIEs fail to be compact perturbations of the identity. Spectral properties of the volume integral operators (VIOs) have been studied, with results in the continuous setting \cite{costabel2010volume, costabel2012essential} and numerical experiments for the discrete setting \cite{markkanen2016numerical}. Well-posedness of their discretizations is not available for the existing formulations. Galerkin discretizations, although widely used in literature, are not guaranteed to be stable or converge in appropriate normed spaces. \blue{In \cite{chandler2022coercivity} the authors demonstrated that Galerkin methods for second-kind BIEs in $L^2(\Gamma)$ fail even though they rely on asymptotically dense sequence of subspaces of $L^2(\Gamma)$. This could also affect second-kind VIEs though a rigorous analysis is still open.
}
\subsection{BIEs for piecewise-constant coefficients}
For the particular case of \emph{piecewise-constant material properties}, BIEs can be used to obtain stable formulations for the transmission problem. First and second-kind BIEs can be employed \cite{buffa2003galerkin,claeys2012electromagnetic,claeys2017second, spindler2016second}. In this article we focus on the first-kind single-trace formulation (STF) from \cite[Section~7.1]{buffa2003galerkin}, in the engineering community also known as the Poggio-Miller-Chang-Harrington-Wu-Tsai (PMCHWT) formulation \cite{chang1977surface,poggio1970integral,wu1977scattering}. This formulation can be extended to the setting of composite scatterers with piecewise-constant material properties. The STF BIEs for \eqref{eq:transmission-problem-maxwell} with \emph{piecewise-constant coefficients} have the following structure:
\begin{tcolorbox}[colback=white!15!white, colframe=white]
	Find $\bm{\alpha}\in \mathbf{H}^{-1/2}(\curl_{\Gamma}, \Gamma)$ and $\bm{\beta}\in \mathbf{H}^{-1/2}(\dv_{\Gamma}, \Gamma)$ such that
	\begin{equation}\label{eq:STF-BIE}
		\left(\mathbf{M}^{-1}\mathbf{A}_{\kappa_0}\mathbf{M} + \mathbf{A}_{\kappa_1}\right)\begin{pmatrix}
			\bm{\alpha} \\ \bm{\beta}
		\end{pmatrix} = - \mathbf{M}^{-1}\begin{pmatrix}
			\gammat \neu^{\text{inc}} \\ \gammatau\tilde{\nev}^{\text{inc}}
		\end{pmatrix}
	\end{equation}
	in $\mathbf{H}^{-1/2}(\curl_{\Gamma}, \Gamma) \times \mathbf{H}^{-1/2}(\dv_{\Gamma}, \Gamma),$ where $\mathbf{A}_{\kappa_{\star}}$ is the Maxwell's Calder\'on operator with wavenumber $\kappa_{\star}$ and $\mathbf{M}$ component scaling operator (see Section \ref{sec:BIOs}).
\end{tcolorbox}
For piecewise-constant coefficients, BIEs are arguably the best option as a formulation for the transmission problem. Solving BIE formulations with the boundary-element method (BEM) offers an accurate and efficient approach. Matrix compression techniques such as $\mathcal{H}$ and $\mathcal{H}^2$-matrices \cite{bebendorf2008hierarchical,borm2010efficient} significantly reduce the cost of storing and solving the dense linear systems arising from a BEM discretization.\\ 

\subsection{FEM-BEM coupling}\label{sec:FEM-BEM}
A widely used approach to the discretization of the transmission problem \eqref{eq:transmission-problem-maxwell} relies on the coupling of a volume variational formulation in $\Omega_i$ with boundary integral equations realizing the Dirichlet-to-Neumann map for $\Omega_o$. Subsequent Galerkin finite-element discretization leads to schemes known as FEM-BEM coupling. Different couplings can be obtained depending on the choice of boundary integral equations (BIEs) for the coupling, such as Johnson-N\'ed\'elec \cite{johnson1980coupling}, Bielak-MacCamy \cite{bielak1983exterior} or Costabel-Han approaches \cite{costabel1987symmetric,hou1990new,hiptmair2003coupling}. Formulations robust with respect to the wavenumber have also been studied in \cite{hiptmair2008stabilized}. We use solutions produced by FEM-BEM coupling as reference in Section \ref{sec:numerics}.\\

\subsection{STF-VIEs}
One drawback of the approaches mentioned in Sections \ref{sec:VIE} and \ref{sec:FEM-BEM} is that these methods do not benefit from a piecewise-constant material. Neither classical VIEs nor FEM-BEM coupled formulations reduce to pure BIEs when applied in the special case of piecewise-constant coefficients.
Our interest is to study an extended formulation based on boundary and volume integral operators. The approach is similar to \cite{usner2006generalized}, and the analysis follows closely that in the case of acoustic scattering \cite{labarca2023, labarca2024}, with a few differences that are particular to Maxwell equations. Similar ideas combining BIEs and VIEs can also be found in \cite{munger2023multi,munger2023single,olyslager2023volume}. Starting from the Stratton-Chu integral representation, we derive a new combined integral representation for the electric and magnetic fields. For the case of piecewise-constant coefficients, the formulation reduces to the simple case of first-kind BIEs \eqref{eq:STF-BIE}. \il{As in \cite{labarca2023} we do not pursue second-kind formulations due to the impossibility to remove leading singularities of integral kernels in modified Calder\'on operators by subtraction (see Remark \ref{second-kind}).} The volume integral operators can be shown to be compact, and only supported in the domain of inhomogeneity (i.e. not necessarily the whole domain $\Omega$, see Figure \ref{fig:localized}). \il{This last point is related to a potential computational advantage in our setting: the computational domain in the volume may be reduced to a smaller domain containing the support of the inhomogeneities. This leads to a significantly reduced number of degrees of freedom in the volume, compared to a finite-element setting in which the finite-element spaces always have to cover the whole domain $\Omega_i$. In particular, if $N_{\text{ih}}>0$ is the number of degrees of freedom related to a region where the material properties are inhomogeneous, and $N$ is the total number of degrees of freedom in the volume, then the computational complexity of solving the problem with iterative methods can be reduced to $\mathcal{O}(N_{\text{ih}}^2)$ instead of $\mathcal{O}(N^2)$.\\
	
	 Moreover, it is well-known that $h$--FEM formulations for high-frequency problems suffer from the pollution effect \cite{babuska1997pollution,galkowski2023does, melenk2011wavenumber}. We present some numerical evidence that indicates a better frequency behavior of coupled boundary-volume integral equations in this matter, compared to a FEM-BEM coupling. } \\

\begin{figure}[h!]
	\begin{subfigure}[b]{0.45\textwidth}
		\includegraphics[width=1.0\linewidth]{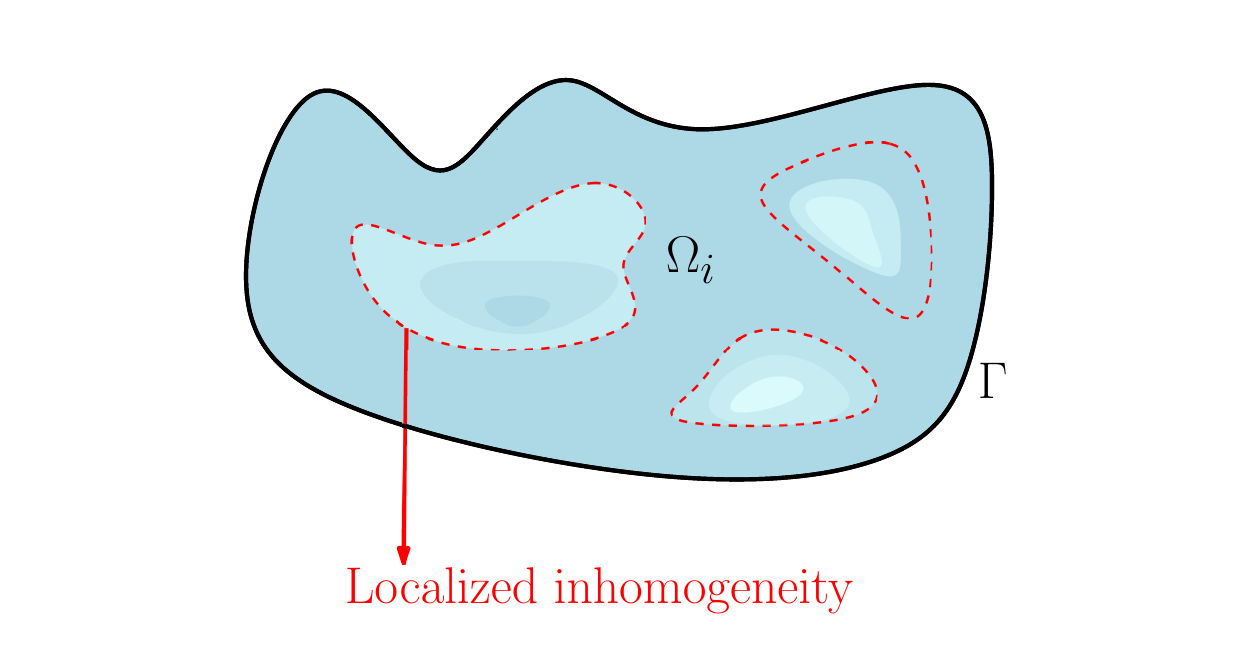}
		\caption{Region to be meshed for VIEs and FEM.}
		\label{fig:DomainVIEs}	
	\end{subfigure}\hfill
	\begin{subfigure}[b]{0.45\textwidth}
		\includegraphics[width=1.0\linewidth]{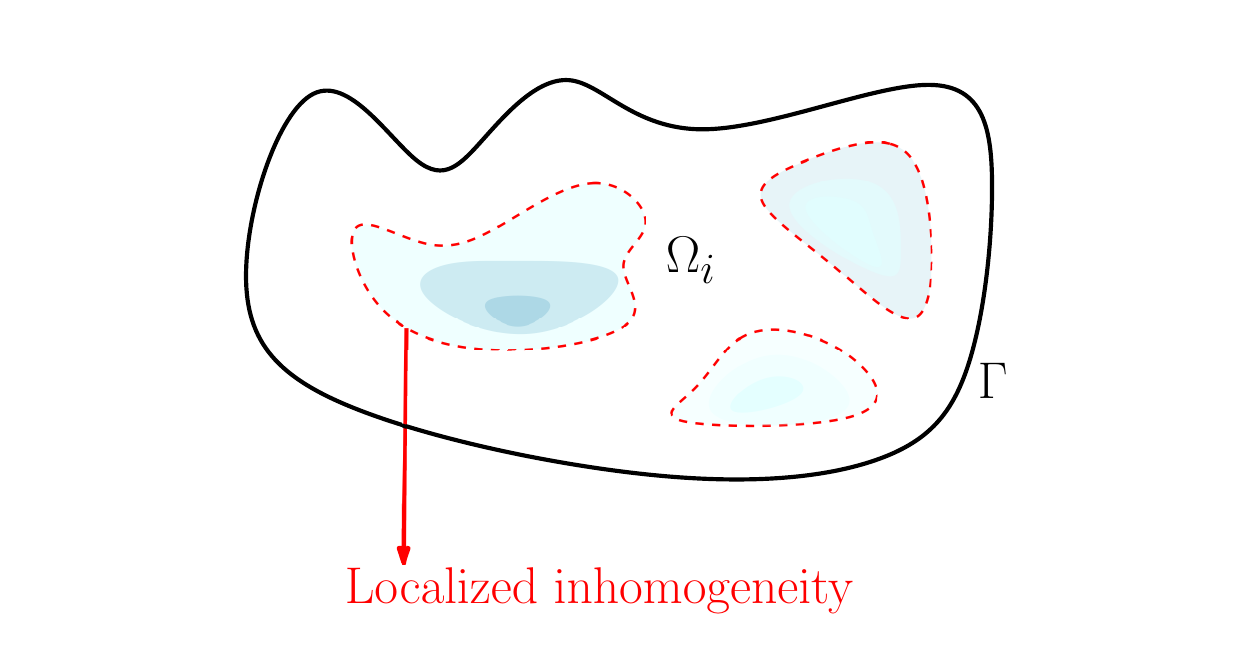}	
		\caption{Region to be meshed in our formulation.}
		\label{fig:DomainBVIEs}
	\end{subfigure}
	\caption{Domains for transmission problems with spatially varying coefficients.}
	\label{fig:localized}
\end{figure}

The setting of our work is as follows:
\begin{tcolorbox}[colback=lightgray!15!white, colframe=black, sharp corners, colframe=lightgray!15!white]
	\begin{assumption}\label{assumption}
		Throughout we make the following assumptions 
		\begin{enumerate}
			\item $ \Omega_i$ is a bounded Lipschitz domain with boundary $ \Gamma. $
			\item The parameters are smooth inside $\Omega_i$: $ \varepsilon, \mu \in C^1(\overbar{\Omega}_i)\cap C^2(\Omega_i). $
			\item There are positive constants $\varepsilon_{\min},\mu_{\min}, \varepsilon_{\max}, \mu_{\max} $ such that
			\begin{align*}
				&\varepsilon_{\min} \leq \varepsilon(\nex) \leq \varepsilon_{\max}, 
				&\mu_{\min} \leq \mu(\nex) \leq \mu_{\max},
			\end{align*}
			for all $ \nex \in \Omega_i. $
			\item Reference coefficients $ \varepsilon_1,\mu_1\in\reals $ are chosen such that $\mu_1 > 0, \varepsilon_1>0. $
		\end{enumerate}
	\end{assumption}
\end{tcolorbox}

In constrast with the acoustic scattering approach, for Maxwell problems we need different techniques. Problems are no longer coercive, but $T$-coercive \cite{ciarlet2012}. Discrete stability now depends on $h$-uniform inf-sup conditions, equivalent to $T_h$-coercivity. First order formulations play a central role, due to the symmetry between electric and magnetic fields. Finally, we observed an interesting problem when discretizing volume integral equations: discrete stability of duality pairings can not be taken for granted as in the scalar case.\\
The required stability estimates are not readily available as in the case of $H^1(\Omega)$ and its dual space $\widetilde{H}^{-1}(\Omega)$.\\

\subsection{Outline and main results}

In Section \ref{sec:Preliminaries} we introduce the preliminaries for the functional setting in which we study our equations. We present the derivation of the representation formula in Section \ref{sec:integralrep}. Our new representation formula is written in Section \ref{sec:STF}, \eqref{eq:new-rep}, and we state the variational formulation in Problem \ref{prob:varSTF}, Section \ref{sec:analysis}.\\
In Section \ref{sec:analysis} we study the continuous problem using standard techniques: Fredholm theory and T-Coercivity. In Theorem \ref{th:well-posed} we establish the well-posedness of Problem \ref{prob:varSTF}.\\ 
Results about the Galerkin discretization are presented in Section \ref{sec:Galerkin}. Numerical experiments that validate our formulation are shown in Section \ref{sec:numerics}.

\subsection*{List of symbols}

	\begin{align*}
		&\textbf{Symbol}   &&	\textbf{Description}										&&		\textbf{Section}				\\
		&\varepsilon, \mu 		&& \text{Material coefficients varying in space} && \text{Section \ref{sec:Introduction}}, \eqref{eq:material_params}\\
		&C^{\infty}     && \text{Spaces of smooth functions} && \text{Section \ref{sec:Preliminaries}}\\
		&C^{\infty}_0   && \text{Smooth functions vanishing on the boundary} && \text{Section \ref{sec:Preliminaries}}\\
		&C^{\infty}_{\text{comp}}    && \text{Compactly supported smooth functions} && \text{Section \ref{sec:Preliminaries}}\\
		&H^s,\ H^s_{\text{loc}},\ H^s_{\text{comp}}   && \text{Scalar Sobolev spaces of order }s && \text{Section \ref{sec:Preliminaries}}\\
		&\mathbf{H}^s,\ \mathbf{H}^s_{\text{loc}}, \ \mathbf{H}^s_{\text{comp}}&& \text{Vector Sobolev spaces of order }s && \text{Section \ref{sec:Preliminaries}}\\
		&H^s(\Gamma) , \mathbf{H}^{s}(\Gamma)  && \text{Scalar/vector Sobolev space of order }s \text{ on } \Gamma  && \text{Section \ref{sec:Preliminaries}}\\
		&H^{-1/2}(\Gamma),\ \mathbf{H}^{-1/2}(\Gamma)   && \text{Dual spaces of }H^{1/2}(\Gamma)\text{ and }\mathbf{H}^{1/2}(\Gamma) && \text{Section \ref{sec:Preliminaries}}\\
		&\gamma, \partial_n   && \text{Dirichlet/Neumann/Normal trace operators} && \text{Section \ref{sec:Preliminaries}}\\
		&\gammat, \gammatau, \gamman   && \text{Tangential and normal trace operators} && \text{Section \ref{sec:Preliminaries}}\\
		&\mathbf{H}^{-1/2}(\dv_{\Gamma}, \Gamma)  && \text{Maxwell Trace space }\gammatau(\mathbf{H}(\Curl, \Omega)) && \text{Section \ref{sec:Preliminaries}}\\
		&\mathbf{H}^{-1/2}(\curl_{\Gamma}, \Gamma)  && \text{Maxwell Trace space }\gammat(\mathbf{H}(\Curl, \Omega))  && \text{Section \ref{sec:Preliminaries}}\\
		&G_{j}   && \text{Fundamental solution with wavenumber } \kappa_j && \text{Section \ref{sec:Newton}}, \eqref{eq:fundamental_solution}\\
		&\mathsf{N}_{j},\  \mathbf{N}_j   && \text{Scalar and vector Newton potential} && \text{Section \ref{sec:Newton}}, \eqref{eq:NewtonR3}\\
		&\mathsf{N}_{\Omega, j}, \ \mathbf{N}_{\Omega, j}   && \text{Scalar and vector Newton potential (local)} && \text{Section \ref{sec:Newton}}, \eqref{eq:mapping_newton_omega},\eqref{eq:mappingVecNewton}\\
		&\bm{\mathcal{T}}_j, \ \bm{\mathcal{D}}_j   && \text{Maxwell layer potentials with wavenumber }\kappa_j && \text{Section \ref{sec:Newton}},\eqref{eq:MaxwellSL},\eqref{eq:MaxwellDL} \\
		&\bm{\mathcal{T}}_j^{\tilde{\varepsilon}, \tilde{\mu}}  && \text{Weighted Maxwell single layer potential} && \text{Section \ref{sec:Newton}},\eqref{eq:weightedT} \\
		&\mathbf{V}_j, \ \mathbf{K}_j, \ \mathbf{K}'_j, \ \mathbf{W}_j   && \text{BIOs with wavenumber }\kappa_j && \text{Section \ref{sec:BIOs}}\\
		&\mathbf{A}_j   && \text{Calder\'on operator} && \text{Section \ref{sec:BIOs}}, \eqref{eq:Calderon}\\
		&\varepsilon_1, \mu_1   && \text{Constant reference coefficients} && \text{Section \ref{sec:integralrep}}, \eqref{eq:transmission_problem_pwsmooth}\\
		&p_e, p_m  && \text{Contrast functions with reference coefficients} && \text{Section \ref{sec:integralrep}}, \eqref{eq:transmission_problem_pwsmooth}\\
		&\tilde{\varepsilon}, \tilde{\mu}  && \text{Scaled material coefficients} && \text{Section \ref{sec:STF}}, \eqref{eq:scaled-coeffs}\\
		&\mathbf{V}_j^{\tilde{\varepsilon}, \tilde{\mu}} , \ \mathbf{W}_j^{\tilde{\mu}, \tilde{\varepsilon}}    && \text{Weighted BIOs with wavenumber }\kappa_j && \text{Section \ref{sec:STF}},\eqref{eq:weightedBIOs}\\
		&\mathbf{A}_j^{\tilde{\varepsilon}, \tilde{\mu}}   && \text{Weighted Calder\'on operator} && \text{Section \ref{sec:STF}}, \eqref{eq:weightedCalderon}\\
		&\mathbf{\Lambda}^e,\mathbf{\Lambda}^m,\mathbf{\Xi}^e, \mathbf{\Xi}^m   && \text{Volume integral operators (VIOs)} && \text{Section \ref{sec:STF}}, \eqref{eq:VolAK}\\
		&\mathbf{J}^e, \mathbf{J}^m  && \text{Operators related to traces of VIOs} && \text{Section \ref{sec:STF}}, \eqref{eq:tracesVolE},\eqref{eq:tracesVolM}\\
		&\mathbf{M}  && \text{Diagonal multiplier} && \text{Section \ref{sec:STF}}, \eqref{eq:STF-first}
	\end{align*}
	\newpage
	\section{Derivation of VIEs}
	\subsection{Preliminaries: Function spaces and trace operators}\label{sec:Preliminaries}
	
	Let $ \Omega_i  \subset \reals^3 $ be a Lipschitz domain, $ \Gamma \coloneqq \partial \Omega_i  $ its Lipschitz boundary with outward unit normal $ \bm{n} $. We rely on standard Sobolev spaces $ H^s(\Omega_i) $ of order $ s > 0 $. We also denote as $ \widetilde{H}^{-s}(\Omega_i ) $ the dual space of $ H^s(\Omega_i ) $ \cite[Section~3]{mclean2000strongly}. Spaces of compactly supported (resp. locally integrable) functions will be denoted with a sub-index {\itshape{comp}} (resp. {\itshape{loc}}), as in $H^s_{\text{comp}}(\Omega_i)$. Sobolev spaces on the boundary $ \Gamma $ are denoted as $ H^{s+1/2}(\Gamma). $ They arise naturally as boundary restrictions of elements of $ H^{s+1}(\Omega_i ) $ by the interior Dirichlet trace operator
	\begin{align*}
		&\gamma^- : H^{s+1}(\Omega_i)\rightarrow H^{s+1/2}(\Gamma), \quad 0\leq s \leq \tfrac{1}{2},\\
		&\cdef{\gamma^- u} \coloneqq u|_{\Gamma }, \quad \text{ for }u\in C^{\infty}(\overbar{\Omega}_i), 
	\end{align*}
	which is a bounded operator \cite[Theorem~3.37]{mclean2000strongly}. Note that we use boldface symbols to indicate vector-valued functions and function spaces of vector fields. We define the interior normal (component) trace operator $ \gamman $ \cite[Theorem~3.24]{monk2003finite}
	\begin{align*}
		& \gamman^-  : \mathbf{H}(\dv, \Omega_i) \rightarrow H^{-1/2}(\Gamma),\\ 
		&\cdef{\gamman^- 	\neu} \coloneqq \neu|_{\Gamma} \cdot \bm{n}, \quad \text{ for }\neu\in [C^{\infty}(\overbar{\Omega}_i )]^3,  
	\end{align*}
	where the space $ \mathbf{H}(\dv, \Omega_i ) $ is defined as
	\begin{equation*}
		\cdef{\mathbf{H}(\dv, \Omega_i ) }\coloneqq \left\{\bm{u}\in [L^2(\Omega_i )]^3 \ : \ \dv\bm{u}\in L^2(\Omega_i )  \right\},
	\end{equation*}
	the interior tangential (component) trace operator $ \gammat $ \cite[Theorem~4.1]{buffa2002traces}
	\begin{align*}
		&\gammat^- : \mathbf{H}(\Curl, \Omega_i ) \rightarrow \mathbf{H}^{-1/2}(\text{curl}_{\Gamma}, \Gamma),\\ 
		&\cdef{\gammat^-\bm{u}} \coloneqq  \bm{n} \times (\bm{u}|_{\Gamma} \times \bm{n}) , \quad \text{ for }\bm{u}\in [C^{\infty}(\overbar{\Omega}_i )]^3,  
	\end{align*}
	and the rotated tangential (component) trace operator $ \gammatau $ \cite[Theorem~4.1]{buffa2002traces}
	\begin{align*}
		&\gammatau^- :  \mathbf{H}(\Curl, \Omega_i) \rightarrow \mathbf{H}^{-1/2}(\dv_{\Gamma}, \Gamma),\\ 
		&\cdef{\gammatau^-\bm{u}} \coloneqq  \bm{u}|_{\Gamma} \times \bm{n} , \quad \text{ for }\bm{u}\in [C^{\infty}(\overbar{\Omega}_i)]^3, 
	\end{align*}
	where the occurring spaces are defined as
	\begin{align*}
		\cdef{\mathbf{H}(\Curl, \Omega_i)} &\coloneqq \left\{\bm{u}\in \mathbf{L}^2(\Omega_i)\ : \ \Curl \neu\in \mathbf{L}^2(\Omega_i)  \right\}, \\
		\cdef{\mathbf{H}^1(\Curl, \Omega_i)} &\coloneqq \left\{\bm{u}\in \mathbf{H}^1(\Omega_i)\ : \ \Curl \neu\in \mathbf{H}^1(\Omega_i)  \right\}, \\
		\cdef{\mathbf{H}^{-1/2}(\text{curl}_{\Gamma}, \Gamma)} &\coloneqq \left\{ \bm{\mu} \in \mathbf{H}_{\parallel}^{-1/2}(\Gamma) \ : \ \curl_{\Gamma} \bm{\mu} \in H^{-1/2}(\Gamma)\right\}, \text{\cite[Theorem~4.1]{buffa2002traces}}\\
		\cdef{	\mathbf{H}^{-1/2}(\dv_{\Gamma}, \Gamma)} &\coloneqq \left\{ \bm{\mu} \in \mathbf{H}_{\times}^{-1/2}(\Gamma) \ : \ \dv_{\Gamma} \bm{\mu} \in H^{-1/2}(\Gamma)\right\}, \text{\cite[Theorem~4.1]{buffa2002traces}}\\
		\cdef{\mathbf{H}_{\times}^{1/2}(\Gamma)} &\coloneqq \gammatau(\mathbf{H}^1(\Omega_i)), \text{\cite[Section~2]{buffa2002traces}},\\
		\cdef{\mathbf{H}_{\parallel}^{1/2}(\Gamma)} &\coloneqq \gammat(\mathbf{H}^1(\Omega_i)), \text{\cite[Section~2]{buffa2002traces}},
	\end{align*}
	and $\cdef{\mathbf{H}^{-1/2}_{\times}(\Gamma) }\coloneqq \left[\mathbf{H}^{1/2}_{\times}(\Gamma)\right]', \ \cdef{\mathbf{H}^{-1/2}_{\parallel}(\Gamma) }\coloneqq \left[\mathbf{H}^{1/2}_{\parallel}(\Gamma)\right]'$.\\
	Differential operators on surfaces of Lipschitz domains are defined according to \cite[Section~4]{buffa2002traces}.
	We also need the isomorphism $ \mathsf{R} : \mathbf{H}^{-1/2}(\curl_{\Gamma}, \Gamma) \rightarrow  \mathbf{H}^{-1/2}(\dv_{\Gamma}, \Gamma)$ given by $ \cdef{\mathsf{R}\bm{\mu} }\coloneqq \bm{n}\times \bm{\mu}. $ In particular \cite[Section~2]{buffa2002traces}, for $ \bm{u}\in \mathbf{H}(\Curl, \Omega_i)$ we have 
	\begin{equation}
		\mathsf{R}(\gammat \bm{u}) = - \gammatau \bm{u}, \quad \mathsf{R}(\gammatau \bm{u}) = \gammat \bm{u}.
	\end{equation}For $ u\in H(\Delta, \Omega_i), $ where
	\begin{equation*}
		\cdef{H(\Delta, \Omega_i)} \coloneqq \left\{ u\in H^1(\Omega_i)  \ : \ \Delta u\in L^2(\Omega_i)  \right\},
	\end{equation*}
	we define the  Neumann trace operator $ \partial_n $ as \cite[Theorem~2.8.3]{sauter2010boundary}
	\begin{align*}
		&\partial_n^- : H(\Delta, \Omega_i) \rightarrow H^{-1/2}(\Gamma),\\ 
		&\cdef{\partial_n^-u} = \grad u|_{\Gamma} \cdot \bm{n}, \quad \text{ for }u\in C^{\infty}(\overbar{\Omega}_i).
	\end{align*}
	Replacing $ \Omega_i  $ by $ \Omega_o \coloneqq \reals^d \setminus \overbar{\Omega}_i $ in the previous definitions, we obtain exterior trace operators: $ \gamma^+, \gamman^+, \gammat^+, \gammatau^+ $ and $ \partial_n^+, $ keeping the normal vector $\bm{n}$.\\
	We define jump and average trace operators for elements of $ H^1(\reals^d\setminus\Gamma),\ \mathbf{H}(\dv, \reals^d\setminus\Gamma) $ and $ H(\Delta, \reals^d\setminus\Gamma) $:
	\begin{align*}
		\cdef{	\jump{\gamma} }\coloneqq \{\gamma^+ - \gamma^-\}, \qquad\cdef{ \av{\gamma}} \coloneqq \dfrac{1}{2}\{\gamma^+ + \gamma^-\},
	\end{align*}
	and similarly for other trace operators.
	We denote the bilinear inner product in $ L^2(\Omega_i) $ as $ \cdef{\langle u, v\rangle_{\Omega_i}}. $
	It can be extended to a duality pairing between $ \widetilde{H}^{-1}(\Omega_i) $ and $ H^1(\Omega_i). $
	Similarly, we define the bilinear dual product for $ H^{1/2}(\Gamma) $ and its dual $ H^{-1/2}(\Gamma) $, and denote it as $ \langle \cdot, \cdot \rangle_{\Gamma}. $ We denote $ \cdef{\langle \bm{\mu}, \bm{\zeta}\rangle_{\tau, \Gamma} }$ the bilinear duality pairing between $ \mathbf{H}^{-1/2}(\curl_{\Gamma}, \Gamma) $ and $ \mathbf{H}^{-1/2}(\dv_{\Gamma}, \Gamma). $

	\subsection{Fundamental Solutions and Newton Potential}\label{sec:Newton}
	The fundamental solution for the Helmholtz operator with wavenumber $ \kappa\in \mathbb{R} $ is given by  $ G_{\kappa}\in L_{\text{loc}}^1(\reals^3)$ \cite[Section~5.4]{steinbach2007numerical}:
	\begin{equation}\label{eq:fundamental_solution}
		\cdef{	G_{\kappa}(\nex, \ney)} \coloneqq 
		\dfrac{\exp(i\kappa|\nex - \ney| )}{4\pi |\nex - \ney|}, \quad \nex,\ney\in \reals^3, \nex\neq\ney.
	\end{equation}
	The Newton potential $ \mathsf{N}_{\kappa} : C^{\infty}_c(\reals^3) \rightarrow C^{\infty}(\reals^3) $ is the mapping defined by \cite[Section~3.1.1]{sauter2010boundary}
	\begin{equation}\label{eq:NewtonR3}
		\cdef{\mathsf{N}_{\kappa} f(\nex)} \coloneqq \displaystyle \int\limits_{\reals^3}G_{\kappa}(\nex, \ney)f(\ney)\text{d}\ney.
	\end{equation}
	The Newton potential can be extended to the following two continuous operators
	\begin{equation}\label{eq:mapping_newton}
		\begin{array}{lll}
			\mathsf{N}_{\kappa} &:& H^{-1}_{\text{comp}}(\reals^3)\rightarrow H^1_{\text{loc}}(\reals^3),\\
			\mathsf{N}_{\kappa} &:& L^2_{\text{comp}}(\reals^3)\rightarrow H^2_{\text{loc}}(\reals^3)
		\end{array}
	\end{equation}
	and more generally, 
	$\mathsf{N}_{\kappa}: H^{s}_{\text{comp}}(\reals^3)\rightarrow H^{s+2}_{\text{loc}}(\reals^3)$
	is continuous for $ s \in \reals $ \cite[Theorem.~3.12]{sauter2010boundary}.\\
	Similarly, by extension by zero followed by restriction to $ \Omega_i , $ it is possible to consider the Newton potential in a bounded domain $ \Omega_i: $
	\begin{equation}\label{eq:mapping_newton_omega}
		\begin{array}{lll}
			\cdef{\mathsf{N}_{\Omega_i,\kappa} }&:& L^2(\Omega_i)\rightarrow H^2(\Omega_i),\\
			\mathsf{N}_{\Omega_i,\kappa} &:& \widetilde{H}^{-1}(\Omega_i)\rightarrow H^1(\Omega_i).
		\end{array}
	\end{equation}
	We define the scalar single layer potential as \cite[Theorem~1]{costabel1988boundary}
	\begin{align}
		\cdef{\mathsf{S}_{\kappa} }\coloneqq \mathsf{N}_{\kappa} \circ \gamma ' &\ : \ H^{-1/2}(\Gamma) \rightarrow H^1_{\text{loc}}(\reals^3 \setminus \Gamma),
	\end{align}
	which for smooth enough densities $\psi \in L^{\infty}(\Gamma)$ has the following integral representations for $\nex\notin\Gamma $
	\begin{align}
		(\mathsf{S}_{\kappa}\psi)(\nex) &= \int\limits_{\Gamma} G_{\kappa}(\nex,\ney) \psi(\ney)\dsy.
	\end{align}

	The following theorem \cite[Theorem~8.1]{colton2012inverse} is essential for the derivation of volume integral equations for scattering problems.
	
	\begin{theorem}\label{eq:newton}
		The Newton potential defines a solution operator for the Helmholtz equation on $ \reals^3 $, i.e. for $ f \in L^2_{\mathrm{comp}}(\reals^3)$ compactly supported in $ \Omega_i$, $ u \coloneqq \mathsf{N}_{\kappa}f $ satisfies
		\begin{equation}
			-\Delta u - \kappa^2 u = f \quad \text{ in }\reals^3
		\end{equation}
		and the Sommerfeld radiation conditions.
	\end{theorem}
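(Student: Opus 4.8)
The plan is to reduce everything to the classical theory of the Newton potential as a Helmholtz parametrix and to the mapping properties already recorded in \eqref{eq:mapping_newton}. First I would verify the PDE identity $-\Delta u - \kappa^2 u = f$. Since $f \in L^2_{\mathrm{comp}}(\reals^3)$, the stated continuity $\mathsf{N}_{\kappa}: L^2_{\mathrm{comp}}(\reals^3)\to H^2_{\mathrm{loc}}(\reals^3)$ guarantees $u=\mathsf{N}_{\kappa}f \in H^2_{\mathrm{loc}}(\reals^3)$, so $\Delta u \in L^2_{\mathrm{loc}}(\reals^3)$ and the equation makes sense pointwise almost everywhere. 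To establish it, I would argue by density: pick $f_n \in C^{\infty}_c(\reals^3)$ with $f_n \to f$ in $L^2$ and $\operatorname{supp} f_n$ in a fixed compact set, note that for smooth compactly supported data the identity $-\Delta \mathsf{N}_{\kappa}f_n - \kappa^2 \mathsf{N}_{\kappa}f_n = f_n$ is the textbook statement that $G_{\kappa}$ is a fundamental solution of $-\Delta - \kappa^2$ (this can be checked by the standard excision argument around the singularity of $G_{\kappa}$, using $-\Delta G_{\kappa} - \kappa^2 G_{\kappa} = \delta_0$), and then pass to the limit using continuity of $\mathsf{N}_{\kappa}$ from $L^2_{\mathrm{comp}}$ into $H^2_{\mathrm{loc}}$, which makes $\Delta \mathsf{N}_{\kappa}f_n \to \Delta \mathsf{N}_{\kappa}f$ in $L^2_{\mathrm{loc}}$.

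Next I would address the Sommerfeld radiation condition. Here I would use the explicit integral representation \eqref{eq:NewtonR3}: for $\nex$ outside the support of $f$ the integrand is smooth in $\nex$, so $u(\nex) = \int_{\Omega_i} G_{\kappa}(\nex,\ney) f(\ney)\,\dy$ and one may differentiate under the integral sign. The pointwise asymptotics $G_{\kappa}(\nex,\ney) = \frac{e^{i\kappa|\nex|}}{4\pi|\nex|}\bigl(e^{-i\kappa \hat{\nex}\cdot\ney} + O(|\nex|^{-1})\bigr)$ as $|\nex|\to\infty$, uniform for $\ney$ in the fixed compact support, together with the corresponding expansion of $\grad_{\nex} G_{\kappa}$, give $\partial_r u - i\kappa u = O(|\nex|^{-2})$ uniformly in direction; integrating against $f \in L^2$ (finite measure on a bounded set, Cauchy-Schwarz) preserves the uniform decay. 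This is precisely the content invoked from \cite[Theorem~8.1]{colton2012inverse}, so I would either cite it directly or reproduce this short computation.

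The main obstacle, if any, is purely bookkeeping: making sure the density argument is carried out in the locally-convex topologies of $L^2_{\mathrm{comp}}$ and $H^2_{\mathrm{loc}}$ with a \emph{uniformly} bounded family of supports (so that "$\operatorname{supp} f \subset \Omega_i$" is respected and the continuity estimate \eqref{eq:mapping_newton} applies with a single constant), and being careful that the equation $-\Delta u - \kappa^2 u = f$ is asserted in $\reals^3$ in the distributional sense, which for $u \in H^2_{\mathrm{loc}}$ coincides with the a.e.\ pointwise statement. Once these technical points are in place, the proof is immediate from Theorem statements and mapping properties already available in the excerpt; no genuinely new estimate is needed. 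I would therefore keep the write-up short, essentially: (i) regularity of $u$ from \eqref{eq:mapping_newton}; (ii) the fundamental-solution identity for smooth data plus a density/limit argument; (iii) the radiation condition from the far-field expansion of $G_{\kappa}$ under the integral sign.
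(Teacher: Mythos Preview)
Your proposal is correct and outlines precisely the standard argument; however, the paper does not give its own proof of this theorem at all --- it merely cites \cite[Theorem~8.1]{colton2012inverse} and states the result. Your sketch (regularity from \eqref{eq:mapping_newton}, the fundamental-solution identity by density, and the radiation condition from the far-field expansion of $G_\kappa$) is essentially the proof one finds in that reference, so there is nothing to compare beyond noting that you have reproduced what the paper chose to omit.
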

	Both the Newton potential and the single layer potential will also be used with vectorial arguments, for which the following mapping properties hold.
	\begin{proposition}\label{eq:vectorNewton-SL}The Newton potential can be extended to vectorial arguments component-wise. We denote it as $\mathbf{N}_{\Omega_i,\kappa}$, and it defines a continuous linear operator
		\begin{equation}\label{eq:mappingVecNewton}
			\mathbf{N}_{\Omega_i,\kappa} : \mathbf{L}^2(\Omega_i)\rightarrow \mathbf{H}^2(\Omega_i),
		\end{equation}
		and it has the integral representation 
		\begin{equation*}
			\cdef{\mathbf{N}_{\Omega_i,\kappa}(\bm{f})}\coloneqq \displaystyle\int\limits_{\Omega_i} G_{\kappa}(\nex-\ney)\bm{f}(\ney)\dy, 
		\end{equation*}	
		for all $\bm{f}\in \mathbf{L}^2(\Omega_i).$\\
		The single layer potential can also be extended to vectorial arguments component-wise. We denote it $\cdef{\mathbf{S}_{\kappa}}$, and it defines a continuous linear operator
		\begin{equation*}
			\mathbf{S}_{\kappa} :\mathbf{H}^{-1/2+s}(\Gamma) \rightarrow \mathbf{H}^{1+s}_{\text{loc}}(\reals^3), \quad -\tfrac{1}{2} < s < \tfrac{1}{2}.
		\end{equation*}
	\end{proposition}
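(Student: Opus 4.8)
The plan is to reduce both assertions to the scalar mapping properties already recorded, exploiting the fact that each potential acts on a vector field by applying the corresponding scalar operator to every Cartesian component.

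\emph{The vector Newton potential.} Let $\bm f = (f_1,f_2,f_3) \in \mathbf L^2(\Omega_i)$, so that by definition $\mathbf N_{\Omega_i,\kappa}\bm f = (\mathsf N_{\Omega_i,\kappa}f_1, \mathsf N_{\Omega_i,\kappa}f_2, \mathsf N_{\Omega_i,\kappa}f_3)$. The continuity $\mathsf N_{\Omega_i,\kappa}\colon L^2(\Omega_i)\to H^2(\Omega_i)$ from \eqref{eq:mapping_newton_omega}, with constant $C$, gives $\norm{\mathbf N_{\Omega_i,\kappa}\bm f}_{\mathbf H^2(\Omega_i)}^2 = \sum_{k=1}^3 \norm{\mathsf N_{\Omega_i,\kappa}f_k}_{H^2(\Omega_i)}^2 \le C^2 \sum_{k=1}^3 \norm{f_k}_{L^2(\Omega_i)}^2 = C^2\norm{\bm f}_{\mathbf L^2(\Omega_i)}^2$, which is the asserted boundedness. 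For the integral formula, observe that $\ney \mapsto G_\kappa(\nex,\ney)\bm f(\ney)$ is integrable over $\Omega_i$ for a.e.\ $\nex$ since $G_\kappa(\nex,\cdot)\in L^1_{\mathrm{loc}}(\reals^3)$ and $\Omega_i$ is bounded, and $\bm f\mapsto\int_{\Omega_i}G_\kappa(\cdot,\ney)\bm f(\ney)\dy$ is continuous from $\mathbf L^2(\Omega_i)$ into $\mathbf L^2(\Omega_i)$ by an elementary convolution (Cauchy--Schwarz/Fubini) estimate. On $\bm f\in[C^\infty_c(\Omega_i)]^3$ this map coincides componentwise with $\mathbf N_{\Omega_i,\kappa}\bm f$ by \eqref{eq:NewtonR3} and the extension-by-zero/restriction convention of \eqref{eq:mapping_newton_omega}; since such $\bm f$ are dense in $\mathbf L^2(\Omega_i)$ and both sides depend continuously on $\bm f$, the representation extends to all of $\mathbf L^2(\Omega_i)$.

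\emph{The vector single layer potential.} It suffices to prove the scalar statement $\mathsf S_\kappa\colon H^{-1/2+s}(\Gamma)\to H^{1+s}_{\mathrm{loc}}(\reals^3)$ for $-\tfrac12<s<\tfrac12$ and then pass to components exactly as above. Factor $\mathsf S_\kappa = \mathsf N_\kappa\circ\gamma'$. For $-\tfrac12<s<\tfrac12$ the Dirichlet trace $\gamma\colon H^{1-s}(\reals^3)\to H^{1/2-s}(\Gamma)$ is bounded by the trace theorem (valid since $\tfrac12<1-s<\tfrac32$, which also keeps $\tfrac12-s\in(0,1)$ so that $H^{1/2-s}(\Gamma)$ is a well-defined Sobolev space on the Lipschitz surface $\Gamma$); hence its adjoint $\gamma'\colon H^{-1/2+s}(\Gamma)\to H^{s-1}(\reals^3)$ is bounded, with range consisting of distributions supported in $\overbar\Omega_i$, i.e.\ lying in $H^{s-1}_{\mathrm{comp}}(\reals^3)$. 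Composing with the general mapping property $\mathsf N_\kappa\colon H^{s-1}_{\mathrm{comp}}(\reals^3)\to H^{s+1}_{\mathrm{loc}}(\reals^3)$ stated after \eqref{eq:mapping_newton} gives the claim, and for smooth densities the surface-integral representation of $\mathsf S_\kappa\psi$ already recorded carries over componentwise to $\mathbf S_\kappa$.

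\emph{Main obstacle.} There is no genuine difficulty here; the componentwise arguments are pure bookkeeping. The only point deserving care is the boundedness of $\gamma'$ on the fractional scale $|s|<\tfrac12$, in particular the strict bound $s<\tfrac12$: this is exactly what guarantees that a distribution carried by the Lebesgue-null set $\Gamma$ still defines an element of $H^{s-1}(\reals^3)$, a statement that fails for $s\ge\tfrac12$. One may invoke the standard references \cite{costabel1988boundary,mclean2000strongly} for this trace/duality fact or unwind it via the trace theorem and duality as sketched above.
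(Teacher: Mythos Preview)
Your proof is correct and follows essentially the same route as the paper: both assertions are reduced to the known scalar mapping properties and then applied componentwise. The only difference is that for $\mathsf S_\kappa$ you derive the scalar estimate via the factorization $\mathsf S_\kappa=\mathsf N_\kappa\circ\gamma'$ and duality of the trace map, whereas the paper simply cites it from \cite[Theorem~3.1.16]{sauter2010boundary}; this is a harmless expansion of the same argument rather than a different approach.
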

	\begin{proof}
		We know that the scalar Newton potential satisfies
		\begin{equation}\label{eq:scalarNewton}
			\mathsf{N}_{\Omega_i,\kappa} : L^2(\Omega_i)\rightarrow H^2(\Omega_i).
		\end{equation}
		In particular, for $\neu\in \mathbf{L}^2(\Omega_i),$ using \eqref{eq:scalarNewton} component-wise leads to
		\begin{equation}
			\mathbf{N}_{\Omega_i,\kappa} : \mathbf{L}^2(\Omega_i)\rightarrow \mathbf{H}^2(\Omega_i).
		\end{equation}
		In a similar way, we know (see \cite[Theorem~3.1.16]{sauter2010boundary}) that the scalar single-layer potential satisfies
		\begin{equation}\label{eq:scalarSL}
			\mathsf{S}_{\kappa} : H^{-1/2+s}(\Gamma)\rightarrow H^{1+s}_{\text{loc}}(\reals^3), \quad -\tfrac{1}{2}<s<\tfrac{1}{2}.
		\end{equation}
		For any $\bm{\psi}\in \mathbf{H}^{-1/2+s}, s\in (-\tfrac{1}{2}, \tfrac{1}{2}]$, using \eqref{eq:scalarSL} component-wise, we obtain
		\begin{equation}
			\mathbf{S}_{\kappa} : \mathbf{H}^{-1/2+s}\rightarrow \mathbf{H}^{1+s}_{\text{loc}}(\reals^3), \quad -\tfrac{1}{2}<s< \tfrac{1}{2}.
		\end{equation}
	\end{proof}
	\begin{corollary}
		The Newton potential defines a continuous linear operator
		\begin{equation*}
			\mathbf{N}_{\Omega_i,\kappa} \ : \ \mathbf{H}(\Curl, \Omega_i)\rightarrow \mathbf{H}^2(\Omega_i).
		\end{equation*}
	\end{corollary}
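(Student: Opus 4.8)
The plan is to observe that this corollary is an immediate consequence of the $\mathbf{L}^2$-based mapping property already established in Proposition \ref{eq:vectorNewton-SL}, combined with the elementary fact that $\mathbf{H}(\Curl, \Omega_i)$ embeds continuously into $\mathbf{L}^2(\Omega_i)$. Concretely, for $\bm{u}\in \mathbf{H}(\Curl, \Omega_i)$ the definition of the graph norm gives $\norm{\bm{u}}_{\mathbf{L}^2(\Omega_i)} \le \norm{\bm{u}}_{\mathbf{H}(\Curl, \Omega_i)}$, so $\bm{u}$ is in particular an admissible argument for $\mathbf{N}_{\Omega_i,\kappa}$ with controlled $\mathbf{L}^2$-norm.

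Then I would invoke \eqref{eq:mappingVecNewton}: there is a constant $C>0$, independent of $\bm{u}$, such that
\begin{equation*}
	\norm{\mathbf{N}_{\Omega_i,\kappa}\bm{u}}_{\mathbf{H}^2(\Omega_i)} \le C\,\norm{\bm{u}}_{\mathbf{L}^2(\Omega_i)} \le C\,\norm{\bm{u}}_{\mathbf{H}(\Curl, \Omega_i)},
\end{equation*}
and linearity is inherited from the linearity of $\mathbf{N}_{\Omega_i,\kappa}$ on $\mathbf{L}^2(\Omega_i)$. That is the whole argument. There is no genuine obstacle: the $\Curl$-part of the norm is never used, the claimed $\mathbf{H}^2$-regularity coming entirely from the smoothing property of the Newton potential on $\mathbf{L}^2(\Omega_i)$. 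The statement is recorded separately only because, in the derivation of the representation formula in Section~\ref{sec:integralrep}, the Newton potential is naturally applied to fields known a priori to lie in $\mathbf{H}(\Curl, \Omega_i)$ rather than in a space with explicit $\mathbf{L}^2$ control, so it is convenient to have the mapping property phrased directly on that space.
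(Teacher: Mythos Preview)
Your argument is correct and is exactly the intended one: the paper states this corollary without proof precisely because it follows immediately from the continuous embedding $\mathbf{H}(\Curl,\Omega_i)\hookrightarrow\mathbf{L}^2(\Omega_i)$ together with \eqref{eq:mappingVecNewton}. The same reasoning is spelled out verbatim later in the paper when proving \eqref{eq:mappingA}.
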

	\begin{corollary}
		The vector-valued single-layer potential $\mathbf{S}_{\kappa}$ defines a continuous linear operator
		\begin{equation*}
			\mathbf{S}_{\kappa} \ : \ \mathbf{H}^{-1/2}(\dv_{\Gamma}, \Gamma) \rightarrow \mathbf{H}(\Curl, \Omega_i).
		\end{equation*}
	\end{corollary}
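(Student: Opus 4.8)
The plan is to deduce the statement from the classical $\mathbf{H}(\Curl)$-mapping property of the electric (Maxwell) single-layer potential, combined with the mapping properties of the scalar potentials already at our disposal; I would proceed in three steps.

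First, I would record the surface integration-by-parts identity
\[
\dv\bigl(\mathbf{S}_{\kappa}\bm j\bigr) \;=\; \mathsf{S}_{\kappa}\bigl(\dv_{\Gamma}\bm j\bigr) \qquad\text{in }\reals^3\setminus\Gamma ,
\]
valid first for smooth tangential densities $\bm j$ — it follows from $\grad_{\nex}G_{\kappa}(\nex,\ney)=-\grad_{\ney}G_{\kappa}(\nex,\ney)$, the tangentiality of $\bm j$, and the fact that $\Gamma$ is closed — and then for all $\bm j\in\mathbf{H}^{-1/2}(\dv_{\Gamma},\Gamma)$ by density of smooth tangential fields. Since $\dv_{\Gamma}\bm j\in H^{-1/2}(\Gamma)$ with $\|\dv_{\Gamma}\bm j\|_{H^{-1/2}(\Gamma)}\le\|\bm j\|_{\mathbf{H}^{-1/2}(\dv_{\Gamma},\Gamma)}$, the scalar single-layer mapping \eqref{eq:scalarSL} (with $s=0$) gives $\mathsf{S}_{\kappa}(\dv_{\Gamma}\bm j)\in H^1_{\text{loc}}(\reals^3)$, hence $\grad\mathsf{S}_{\kappa}(\dv_{\Gamma}\bm j)\in\mathbf{L}^2(\Omega_i)$ with norm bounded by $\|\bm j\|_{\mathbf{H}^{-1/2}(\dv_{\Gamma},\Gamma)}$; being a gradient, it has vanishing curl, so it already lies in $\mathbf{H}(\Curl, \Omega_i)$.

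Second, I would invoke the fact — standard in the Maxwell BEM literature, see e.g. \cite{buffa2003galerkin,monk2003finite} and \cite[Section~5.6]{sauter2010boundary} — that the electric single-layer potential
\[
\bm j \;\longmapsto\; \mathbf{S}_{\kappa}\bm j + \frac{1}{\kappa^{2}}\grad\mathsf{S}_{\kappa}\bigl(\dv_{\Gamma}\bm j\bigr)
\]
maps $\mathbf{H}^{-1/2}(\dv_{\Gamma},\Gamma)$ continuously into $\mathbf{H}_{\text{loc}}(\Curl,\reals^3\setminus\Gamma)$, hence into $\mathbf{H}(\Curl, \Omega_i)$ after restriction. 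Writing
\[
\mathbf{S}_{\kappa}\bm j \;=\; \Bigl(\mathbf{S}_{\kappa}\bm j + \tfrac{1}{\kappa^{2}}\grad\mathsf{S}_{\kappa}(\dv_{\Gamma}\bm j)\Bigr) \;-\; \tfrac{1}{\kappa^{2}}\grad\mathsf{S}_{\kappa}(\dv_{\Gamma}\bm j),
\]
the first summand belongs to $\mathbf{H}(\Curl, \Omega_i)$ by this mapping property and the second by the first step, each with norm controlled by $\|\bm j\|_{\mathbf{H}^{-1/2}(\dv_{\Gamma},\Gamma)}$. Adding the two estimates yields $\mathbf{S}_{\kappa}\bm j\in\mathbf{H}(\Curl, \Omega_i)$ and $\|\mathbf{S}_{\kappa}\bm j\|_{\mathbf{H}(\Curl, \Omega_i)}\lesssim\|\bm j\|_{\mathbf{H}^{-1/2}(\dv_{\Gamma},\Gamma)}$, which is the claim.

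The main obstacle — and the reason this does not follow in one line from Proposition~\ref{eq:vectorNewton-SL} — is precisely that $\mathbf{H}^{-1/2}(\dv_{\Gamma},\Gamma)$ is not continuously embedded in $[H^{-1/2}(\Gamma)]^3$ on a general Lipschitz boundary: the tangential trace spaces for $\mathbf{H}(\Curl)$ are genuinely finer than component-wise Sobolev spaces when the normal field is only $L^{\infty}$, so the component-wise bound $\mathbf{S}_{\kappa}:\mathbf{H}^{-1/2}(\Gamma)\to\mathbf{H}^1_{\text{loc}}(\reals^3)$ cannot be applied to such densities. The argument therefore has to route through the curl-conforming electric single-layer potential, whose $\mathbf{H}(\Curl)$-mapping property is the nontrivial ingredient; a self-contained alternative would instead estimate $\|\Curl\mathbf{S}_{\kappa}\bm j\|_{\mathbf{L}^2(\Omega_i)}$ directly on the dense subspace of smooth tangential densities using a Green-type identity for $\Curl\Curl$ together with the trace estimates of Section~\ref{sec:Preliminaries}, but that route is considerably more technical.
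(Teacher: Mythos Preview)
Your argument is correct and takes a genuinely different route from the paper's. The paper's proof is one line: it asserts the chain $\mathbf{H}^{-1/2}(\dv_{\Gamma},\Gamma)\subset\mathbf{H}^{-1/2}_{\times}(\Gamma)\subset[H^{-1/2}(\Gamma)]^3$ and then applies the component-wise mapping $\mathbf{S}_{\kappa}:[H^{-1/2}(\Gamma)]^3\to\mathbf{H}^1_{\text{loc}}(\reals^3)\subset\mathbf{H}_{\text{loc}}(\Curl,\reals^3)$ from Proposition~\ref{eq:vectorNewton-SL}. You instead write $\mathbf{S}_{\kappa}\bm j=\bm{\mathcal{T}}_{\kappa}\bm j-\kappa^{-2}\grad\mathsf{S}_{\kappa}(\dv_{\Gamma}\bm j)$ and handle the two pieces via the $\mathbf{H}(\Curl)$-mapping of the Maxwell single layer (recorded later in the paper as Proposition~\ref{SLandDL}, from \cite{buffa2003galerkin}) and the scalar single-layer mapping. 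The concern you flag about the second inclusion in the paper's chain is well taken: on a merely Lipschitz boundary the normal is only $L^\infty$, the tangential trace spaces $\mathbf{H}^{\pm 1/2}_{\times}(\Gamma)$ are not simply subspaces of $[H^{\pm 1/2}(\Gamma)]^3$, and the inclusion $\mathbf{H}^{-1/2}_{\times}(\Gamma)\subset[H^{-1/2}(\Gamma)]^3$ is at best delicate. Your decomposition sidesteps this entirely by working in the curl-conforming framework where the relevant mapping properties are established directly on Lipschitz domains. The trade-off: the paper's proof is shorter but leans on an embedding it does not justify in the Lipschitz setting, while yours is longer and forward-references Proposition~\ref{SLandDL} but is rigorous there. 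One small remark: your first step (the identity $\dv\mathbf{S}_{\kappa}\bm j=\mathsf{S}_{\kappa}\dv_{\Gamma}\bm j$) is correct but not actually used in your argument; all you need is that $\grad\mathsf{S}_{\kappa}(\dv_{\Gamma}\bm j)$ is curl-free and lies in $\mathbf{L}^2(\Omega_i)$, which follows directly from the scalar mapping \eqref{eq:scalarSL}.
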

	\begin{proof}
		From Proposition \ref{eq:vectorNewton-SL}, since $\mathbf{H}^1_{\text{loc}}(\reals^3)\subset \mathbf{H}_{\text{loc}}(\Curl, \reals^3)$, and $\mathbf{H}^{-1/2}(\dv_{\Gamma},\Gamma)\subset\mathbf{H}^{-1/2}_{\times}(\Gamma)\subset\mathbf{H}^{-1/2}(\Gamma)=[H^{-1/2}(\Gamma)]^3$, we obtain
		\begin{equation}
			\mathbf{S}_{\kappa} : \mathbf{H}^{-1/2}(\dv_{\Gamma},\Gamma)\rightarrow\mathbf{H}_{\text{loc}}(\Curl, \reals^3).
		\end{equation}
	\end{proof}
	\subsection{Stratton-Chu Representation Formula}\label{sec:Representation}
	
	We show an integral representation for arbitrary vector fields, which will be useful for the study of Maxwell solutions \cite[Theorem~9.1]{monk2003finite}
	\begin{tcolorbox}[colback=lightgray!15!white, colframe=lightgray!15!white, sharp corners]
	\begin{theorem}[Stratton-Chu Integral Representation]\label{stratton-chu-general}
		Let $ \neu, \nev \in C^2(\Omega_i),\  \varepsilon_{\star}, \mu_{\star} > 0,$ and $ \kappa_{\star} = \omega\sqrt{\mu_{\star}\varepsilon_{\star}}. $ Then the following integral representations hold
		\begin{align}\nonumber
			\neu =&\ \Curl(\mathbf{N}_{\Omega_i,\kappa_{\star}}(\Curl \neu - i\omega \mu_{\star} \nev)) + i\omega\mu_{\star} \mathbf{N}_{\Omega_i,\kappa_{\star}}(\Curl \nev + i\omega \varepsilon_{\star} \neu) & \\
			&- \grad (\mathsf{N}_{\Omega_i, \kappa_\star} (\dv\neu))+ \Curl (\mathbf{S}_{\kappa_\star}(\gammatau \neu)) + \grad( \mathsf{S}_{\kappa_\star}(\gamman \neu)) + i\omega \mu_{\star} \mathbf{S}_{\kappa_\star}(\gammatau \nev),&\nonumber \\
			&&\\ \nonumber
			\nev =&\ \Curl (\mathbf{N}_{\Omega_i, \kappa_{\star}}(\Curl \nev + i\omega \varepsilon_{\star} \neu) )- i\omega\varepsilon_{\star} \mathbf{N}_{\Omega_i,\kappa_{\star}}(\Curl \neu - i\omega \mu_{\star} \nev) &\\
			&- \grad (\mathsf{N}_{\Omega_i, \kappa_{\star}} (\dv\nev))+ \Curl (\mathbf{S}_{\kappa_{\star}}(\gammatau \nev) )+ \grad (\mathsf{S}_{\kappa_{\star}}(\gamman \nev)) - i\omega \varepsilon_{\star} \mathbf{S}_{\kappa_{\star}}(\gammatau \neu).&\nonumber
		\end{align} 
	\end{theorem}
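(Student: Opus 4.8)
The result is the general vectorial Green/Stratton-Chu representation of \cite[Theorem~9.1]{monk2003finite} written in Maxwell-adapted variables, so the plan is to reduce to the scalar Helmholtz representation formula applied componentwise and then rewrite the volume source through the combinations appearing in the statement. Since the claimed identity and all the manipulations below are invariant under $(\neu,\mu_{\star})\leftrightarrow(\nev,-\varepsilon_{\star})$, it suffices to treat $\neu$. Put $\bm{j}_e\coloneqq\Curl\neu-i\omega\mu_{\star}\nev$ and $\bm{j}_m\coloneqq\Curl\nev+i\omega\varepsilon_{\star}\neu$; inserting $\Curl\neu=i\omega\mu_{\star}\nev+\bm{j}_e$ and $\Curl\nev=-i\omega\varepsilon_{\star}\neu+\bm{j}_m$ into $\Curl\Curl\neu$, and using $\kappa_{\star}^2=\omega^2\varepsilon_{\star}\mu_{\star}$ together with $\Delta=\grad\dv-\Curl\Curl$, gives pointwise in $\Omega_i$
\begin{equation*}
	\Curl\Curl\neu-\kappa_{\star}^2\neu=\Curl\bm{j}_e+i\omega\mu_{\star}\bm{j}_m,\qquad\text{equivalently}\qquad -\Delta\neu-\kappa_{\star}^2\neu=\Curl\bm{j}_e+i\omega\mu_{\star}\bm{j}_m-\grad\dv\neu.
\end{equation*}

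Applying the classical Green representation formula for $-\Delta-\kappa_{\star}^2$ on $\Omega_i$ to each Cartesian component of $\neu$ — which relies on Theorem \ref{eq:newton} and the mapping properties of $\mathsf{N}_{\Omega_i,\kappa_{\star}}$, $\mathbf{S}_{\kappa_{\star}}$ recalled in Section \ref{sec:Newton} — and reassembling componentwise yields, for $\nex\in\Omega_i$, $\neu=\mathbf{N}_{\Omega_i,\kappa_{\star}}(-\Delta\neu-\kappa_{\star}^2\neu)+\mathbf{S}_{\kappa_{\star}}(\partial_n^-\neu)-\mathbf{D}_{\kappa_{\star}}(\gamma^-\neu)$, with $\mathbf{D}_{\kappa_{\star}}$ the componentwise scalar double-layer potential. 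Now substitute the equation above into the volume term and move the differential operators out of the Newton potentials by integration by parts; the only identities needed are $\grad_{\ney}G_{\kappa_{\star}}(\nex-\ney)=-\grad_{\nex}G_{\kappa_{\star}}(\nex-\ney)$, $\Curl_{\nex}(G_{\kappa_{\star}}(\nex-\ney)\bm{a}(\ney))=\grad_{\nex}G_{\kappa_{\star}}(\nex-\ney)\times\bm{a}(\ney)$, and the divergence-theorem identities $\int_{\Omega_i}\Curl\bm{F}=\int_{\Gamma}\ny\times\bm{F}$, $\int_{\Omega_i}\grad\phi=\int_{\Gamma}\phi\,\ny$. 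This gives
\begin{equation*}
	\mathbf{N}_{\Omega_i,\kappa_{\star}}(\Curl\bm{j}_e)=\Curl\,\mathbf{N}_{\Omega_i,\kappa_{\star}}(\bm{j}_e)-\mathbf{S}_{\kappa_{\star}}(\gammatau\bm{j}_e),\qquad \mathbf{N}_{\Omega_i,\kappa_{\star}}(\grad\dv\neu)=\grad\,\mathsf{N}_{\Omega_i,\kappa_{\star}}(\dv\neu)+\mathbf{S}_{\kappa_{\star}}\!\big((\dv\neu)\,\ny\big),
\end{equation*}
which already reproduces the three volume potentials of the statement, while $-\mathbf{S}_{\kappa_{\star}}(\gammatau\bm{j}_e)=-\mathbf{S}_{\kappa_{\star}}(\gammatau\Curl\neu)+i\omega\mu_{\star}\mathbf{S}_{\kappa_{\star}}(\gammatau\nev)$ supplies the magnetic surface term $i\omega\mu_{\star}\mathbf{S}_{\kappa_{\star}}(\gammatau\nev)$. (Equivalently one may invoke \cite[Theorem~9.1]{monk2003finite} for $\neu$ directly and substitute $\Curl\Curl\neu-\kappa_{\star}^2\neu=\Curl\bm{j}_e+i\omega\mu_{\star}\bm{j}_m$ with the first identity above, whereupon the $\mathbf{S}_{\kappa_{\star}}(\gammatau\Curl\neu)$ contributions cancel.)

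It remains to establish the purely boundary identity
\begin{equation*}
	\mathbf{S}_{\kappa_{\star}}(\partial_n^-\neu)-\mathbf{S}_{\kappa_{\star}}(\gammatau\Curl\neu)-\mathbf{S}_{\kappa_{\star}}\!\big((\dv\neu)\,\ny\big)-\mathbf{D}_{\kappa_{\star}}(\gamma^-\neu)=\Curl\,\mathbf{S}_{\kappa_{\star}}(\gammatau\neu)+\grad\,\mathsf{S}_{\kappa_{\star}}(\gamman\neu).
\end{equation*}
The route I would take: split $\neu|_{\Gamma}=\gammat\neu+(\gamman\neu)\,\ny$; rewrite the componentwise double-layer kernel with the double-cross-product identity $\neu\,(\ny\cdot\grad_{\nex}G_{\kappa_{\star}})=(\gamman\neu)\,\grad_{\nex}G_{\kappa_{\star}}-\ny\times(\grad_{\nex}G_{\kappa_{\star}}\times\neu)$ and decompose $\grad_{\nex}G_{\kappa_{\star}}$ into its normal and tangential parts on $\Gamma$; pull $\grad_{\nex}$ and $\Curl_{\nex}$ out of the surface integrals; integrate by parts on the closed surface $\Gamma$, turning surface gradients/curls of the densities into $\grad_{\Gamma}(\gamman\neu)$, $\curl_{\Gamma}\gammat\neu$ and $\dv_{\Gamma}\gammatau\neu$ plus mean-curvature terms; and finally invoke the classical pointwise relation on $\Gamma$ expressing $(\ny\cdot\grad)\neu$ through $\grad_{\Gamma}(\gamman\neu)$, $\gammatau\Curl\neu$, $\dv\neu$ and the second fundamental form, so that the contributions carrying $\partial_n^-\neu$ and $\gammatau\Curl\neu$ — neither of which survives in the final formula — combine and cancel, along with all curvature terms, leaving exactly $\Curl\,\mathbf{S}_{\kappa_{\star}}(\gammatau\neu)+\grad\,\mathsf{S}_{\kappa_{\star}}(\gamman\neu)$. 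The representation for $\nev$ then follows by the symmetry noted at the outset.

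I expect this last surface identity to be the main obstacle: it is the heart of the Stratton-Chu rearrangement, and the bookkeeping of the surface differential operators — in particular the disappearance of the terms carrying $\partial_n^-\neu$ and $\gammatau\Curl\neu$, absent from the final formula, and the cancellation of the mean-curvature terms — is delicate. This is precisely why routing through the established representation \cite[Theorem~9.1]{monk2003finite} is attractive. The hypothesis $\neu,\nev\in C^2(\Omega_i)$ guarantees that all traces appearing above are classically defined and all integrations by parts legitimate; for the merely Lipschitz boundary $\Gamma$ one additionally appeals to the density and trace results of \cite{buffa2002traces} recalled in Section \ref{sec:Preliminaries}.
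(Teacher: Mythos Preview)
The paper does not supply its own proof of this theorem: it is stated with a direct citation to \cite[Theorem~9.1]{monk2003finite} and then used as input for the subsequent derivations. Your proposal is therefore not competing with any argument in the paper; your parenthetical option of invoking \cite[Theorem~9.1]{monk2003finite} directly and then substituting $\Curl\Curl\neu-\kappa_{\star}^2\neu=\Curl\bm{j}_e+i\omega\mu_{\star}\bm{j}_m$ is precisely what the paper does implicitly.

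Your longer route via the componentwise scalar Helmholtz representation is also correct in outline and is in fact how \cite[Theorem~9.1]{monk2003finite} itself is established. The algebraic identity $-\Delta\neu-\kappa_{\star}^2\neu=\Curl\bm{j}_e+i\omega\mu_{\star}\bm{j}_m-\grad\dv\neu$ and the two integration-by-parts formulas you display are right (the latter is exactly \eqref{eq:ibpNewton} of the paper). You are also right that the remaining surface identity---collapsing $\mathbf{S}_{\kappa_{\star}}(\partial_n^-\neu)$, $\mathbf{S}_{\kappa_{\star}}(\gammatau\Curl\neu)$, $\mathbf{S}_{\kappa_{\star}}((\dv\neu)\bm{n})$ and $\mathbf{D}_{\kappa_{\star}}(\gamma^-\neu)$ into $\Curl\mathbf{S}_{\kappa_{\star}}(\gammatau\neu)+\grad\mathsf{S}_{\kappa_{\star}}(\gamman\neu)$---is the technical heart, and your sketch of it (double-cross-product decomposition of the double-layer kernel, surface integration by parts, cancellation of curvature terms) is the standard one. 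A minor caveat: the curvature manipulations assume $\Gamma$ is $C^2$, whereas the paper works with Lipschitz $\Gamma$; strictly speaking one argues first for smooth $\Gamma$ and then passes to the limit, or one quotes the Lipschitz-domain version from \cite{buffa2002traces,buffa2003galerkin} as you indicate at the end.
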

\end{tcolorbox}
	We introduce the transmission problem with piecewise-constant coefficients $\varepsilon_0,\mu_0 > 0$ in $\Omega_o$, $\ \varepsilon_1,\mu_1 >0 $ in $\Omega_i.$
	\begin{equation}\label{eq:transmission_problem_pwc}
		\left\{ \begin{array}{l}
			\begin{array}{rclrcll}
				\Curl \neu - i\omega \mu_0 \nev &=& 0, & \Curl \nev + i\omega \varepsilon_0 \neu &=& 0 &\text{ in }\Omega_o,\\
				\Curl \neu - i\omega \mu_1 \nev &=& \bm{f}_1, & \Curl\nev + i\omega \varepsilon_1 \neu &=& \bm{f}_2 &\text{ in }\Omega_i ,\\
				\gammat^+ \neu - \gammat^- \neu&=& -\gammat \neu^{\text{inc}}, &	\gammat^+ \nev - \gammat^- \nev&=& -\gammat \nev^{\text{inc}} &\text{ on }\Gamma, \\
			\end{array} \\
			\qquad\qquad\lim\limits_{r\rightarrow\infty}	\nev\times \dfrac{\nex}{r} - \neu = 0, \text{ uniformly on }r=|\nex|
		\end{array}\right.
	\end{equation}
	where $\bm{f}_1$ and $ \bm{f}_2 $ are in $\mathbf{H}(\dv, \Omega_i).$
	For Maxwell solutions, the integral representation takes a different form. 
	If $ \neu, \nev $ are solutions of \eqref{eq:transmission_problem_pwc}, it is possible to express $\neu$ and $\nev$ in terms of $ \gamman\neu, \gammat \neu, \gammatau \nev$. Note that from \eqref{eq:transmission_problem_pwc} we have
	\begin{align*}
		\begin{aligned}
			\Curl \neu - i\omega\mu_1\nev = \bm{f}_1 \text{ in }\Omega\quad&\Rightarrow \quad\nev = \tfrac{1}{i\omega\mu_1}\left(\Curl\neu - \bm{f}_1\right) &&\text{ in }\Omega_i,\\
			\Curl \nev + i\omega\varepsilon_1\neu = \bm{f}_2 \text{ in }\Omega\quad&\Rightarrow \quad\neu = \tfrac{1}{i\omega\varepsilon_1}\left(-\Curl\nev + \bm{f}_2\right) &&\text{ in }\Omega_i,
		\end{aligned}
	\end{align*}
	and therefore, using the property \cite[Section~4]{buffa2002traces}
	\begin{equation}
		\gamman^{\pm}(\Curl \bm{F}) = \dv_{\Gamma}(\gammatau^{\pm} \bm{F}), \quad \text{ for all }\bm{F}\in \mathbf{H}(\Curl, \reals^3\setminus \Gamma),
	\end{equation}
	we obtain the following identities in $H^{-1/2}(\Gamma)$
	\begin{subequations}
		\begin{align}\label{eq:gammanU}
			\gamman^- \neu &= \bm{n}\cdot \neu |_{\Gamma} = \dfrac{1}{i\omega \varepsilon_1}\left(-\bm{n}\cdot \Curl \nev|_{\Gamma} + \bm{n}\cdot \bm{f}_2|_{\Gamma}\right) = \dfrac{1}{i\omega\varepsilon_1}\left( - \dv_{\Gamma} (\gammatau^- \nev) + \gamman^- \bm{f}_2 \right), \\
			&\nonumber\\ \label{eq:gammanV}
			\gamman^- \nev &= \bm{n}\cdot \nev |_{\Gamma} = \dfrac{1}{i\omega \mu_1}\left(\bm{n}\cdot \Curl \neu|_{\Gamma}- \bm{n}\cdot \bm{f}_1|_{\Gamma}\right) = -\dfrac{1}{i\omega\mu_1} \left(\curl_{\Gamma} (\gammat^- \neu) - \gamman^-\bm{f}_1\right).
		\end{align}
	\end{subequations}
	From Theorem \ref{stratton-chu-general}, in $ \Omega_i $ we have ($\kappa_1\coloneqq\omega\sqrt{\varepsilon_1\mu_1}$) for the solution $(\neu,\nev)$ of \eqref{eq:transmission_problem_pwc}
	\begin{subequations}\label{eq:representation_formula_interior}
		\begin{align}
			&\begin{aligned}
				\neu =&\ \Curl (\mathbf{N}_{\Omega_i, \kappa_1}(\bm{f}_1)) + i\omega\mu_1 \mathbf{N}_{\Omega_i, \kappa_1}(\bm{f}_2) - \grad (\mathsf{N}_{\Omega_i, \kappa_1} (\dv\neu))&\\
				&-\Curl (\mathbf{S}_{\kappa_1}(\mathsf{R}\gammat^- \neu)) +  i\omega\mu_1 \left(\dfrac{1}{\kappa_1^2}\grad (\mathsf{S}_{\kappa_1}(\dv_{\Gamma}(\gammatau^- \nev))) +  \mathbf{S}_{\kappa_1}(\gammatau^- \nev)\right),&\\
				&+ \dfrac{1}{i\omega\varepsilon_1}\grad(\mathsf{S}_{\kappa_1}(\gamman^-\bm{f}_2)),
			\end{aligned}\\
			&\nonumber\\
			&	\begin{aligned}
				\nev =&\ \Curl (\mathbf{N}_{\Omega_i, \kappa_1}(\bm{f}_2)) - i\omega\varepsilon_1 \mathbf{N}_{\Omega_i, \kappa_1}(\bm{f}_1) - \grad (\mathsf{N}_{\Omega_i,\kappa_1} (\dv\nev))&\\
				&+\Curl (\mathbf{S}_{\kappa_1}(\gammatau^- \nev)) + i\omega\varepsilon_1 \left( \dfrac{1}{\kappa_1^2}\grad (\mathsf{S}_{\kappa_1}(\curl_{\Gamma}(\gammat^- \neu))) +  \mathbf{S}_{\kappa_1}(\mathsf{R}\gammat^- \neu)\right)&\\
				&+\dfrac{1}{i\omega\mu_1}\grad(\mathsf{S}_{\kappa_1}(\gamman^-\bm{f}_1)).
			\end{aligned}
		\end{align} 
	\end{subequations}
	Owing to the vanishing source terms, in $ \Omega_o $ we find the representation
	\begin{subequations}\label{eq:representation-formula-exterior}
		\begin{align}\label{eq:U0}
			\neu =&\ \Curl (\mathbf{S}_{\kappa_0}(\mathsf{R}\gammat^+ \neu)) -i\omega\mu_0\left( \dfrac{1}{\kappa_0^2}\grad (\mathsf{S}_{\kappa_0}(\dv_{\Gamma}(\gammatau^+ \nev)))  +\mathbf{S}_{\kappa_0}(\gammatau^+ \nev)\right),& \\
			&&\nonumber \\ \label{eq:V0}
			\nev =& - \Curl (\mathbf{S}_{\kappa_0}(\gammatau^+ \nev)) - i\omega\varepsilon_0\left(\dfrac{1}{\kappa_0^2}\grad (\mathsf{S}_{\kappa_0}(\curl_{\Gamma}(\gammat^+ \neu))) + \mathbf{S}_{\kappa_0}(\mathsf{R}\gammat^+ \neu)\right).& 
		\end{align}
	\end{subequations}
		We define the Maxwell single layer potential as
	\begin{align}\label{eq:MaxwellSL}
		\cdef{	\bm{\mathcal{T}}_{\kappa}\bm{\beta} }\coloneqq \tfrac{1}{\kappa^2}\grad\circ\ \mathsf{S}_{\kappa}\circ \dv_{\Gamma} + \mathbf{S}_{\kappa},\quad \text{ for all }\bm{\beta}\in \mathbf{H}^{-1/2}(\dv_{\Gamma}, \Gamma),
	\end{align}
	and the Maxwell double layer potential as
	\begin{equation}\label{eq:MaxwellDL}
		\cdef{	\bm{\mathcal{D}}_{\kappa} \bm{\alpha}} \coloneqq \Curl \mathbf{S}_{\kappa} (\mathsf{R} \bm{\alpha}), \quad \text{ for all }\bm{\alpha}\in \mathbf{H}^{-1/2}(\curl_{\Gamma}, \Gamma).
	\end{equation}
	
	\begin{proposition}[Maxwell Layer Potentials {\cite[Theorem~5]{buffa2003galerkin}}]\label{SLandDL}
		The Maxwell single layer potential and the Maxwell double layer potential are continuous linear operators
		\begin{subequations}
		\begin{align}
			\bm{\mathcal{T}}_{\kappa} \ &:\ \mathbf{H}^{-1/2}(\dv_{\Gamma}, \Gamma) \rightarrow \mathbf{H}(\Curl^2, \Omega_i \cup \Omega_o)\cap \mathbf{H}(\dv0, \Omega_i\cup\Omega_o),\\
			\bm{\mathcal{D}}_{\kappa} \ &:\ \mathbf{H}^{-1/2}(\curl_{\Gamma}, \Gamma) \rightarrow \mathbf{H}(\Curl^2, \Omega_i \cup \Omega_o)\cap \mathbf{H}(\dv0, \Omega_i\cup \Omega_o),
		\end{align}
	\end{subequations}
		where 
		\begin{align*}
			\cdef{\mathbf{H}(\Curl^2, \Omega_i\cup\Omega_o)}&\coloneqq \{\neu\in \mathbf{H}(\Curl, \Omega_i\cup\Omega_o) \ : \ \Curl^2\neu \in \mathbf{L}^2(\Omega_i\cup\Omega_o)\}, \\
			\cdef{\mathbf{H}(\dv0, \Omega_i\cup\Omega_o)}&\coloneqq \{\neu\in\mathbf{L}^2(\Omega_i\cup\Omega_o)\ : \ \dv\neu = 0\}.
		\end{align*}
	\end{proposition}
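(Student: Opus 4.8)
The plan is to reduce everything to the mapping properties of the scalar single layer potential $\mathsf{S}_{\kappa}$ collected in Proposition~\ref{eq:vectorNewton-SL}, together with two elementary identities valid in $\Omega_i\cup\Omega_o$, i.e.\ away from $\Gamma$. The first is the \emph{Helmholtz identity}: each Cartesian component of $\mathbf{S}_{\kappa}\bm{\beta}$ and the scalar potential $\mathsf{S}_{\kappa}\phi$ is a null solution of $-\Delta-\kappa^2$ off $\Gamma$; this follows from $\mathsf{S}_{\kappa}=\mathsf{N}_{\kappa}\circ\gamma'$, since $\mathsf{S}_{\kappa}\phi=\mathsf{N}_{\kappa}(\gamma'\phi)$ solves $-\Delta u-\kappa^2 u=\gamma'\phi$ whose right-hand side is supported on $\Gamma$ (cf.\ Theorem~\ref{eq:newton}). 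The second is the \emph{surface divergence identity}
\[
	\dv\bigl(\mathbf{S}_{\kappa}\bm{\beta}\bigr)=\mathsf{S}_{\kappa}\bigl(\dv_{\Gamma}\bm{\beta}\bigr)\qquad\text{for }\bm{\beta}\in\mathbf{H}^{-1/2}(\dv_{\Gamma},\Gamma),
\]
obtained for smooth tangential densities from $\grad_{\nex}G_{\kappa}(\nex,\ney)=-\grad_{\ney}G_{\kappa}(\nex,\ney)$ together with the surface Stokes theorem on the closed surface $\Gamma$, and then extended by density.

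With these in hand the divergence-free property is immediate: $\dv\bm{\mathcal{D}}_{\kappa}\bm{\alpha}=\dv\Curl\bigl(\mathbf{S}_{\kappa}(\mathsf{R}\bm{\alpha})\bigr)=0$, while, using $\dv\grad=\Delta$ and the Helmholtz identity,
\[
	\dv\bm{\mathcal{T}}_{\kappa}\bm{\beta}=\tfrac{1}{\kappa^2}\Delta\mathsf{S}_{\kappa}(\dv_{\Gamma}\bm{\beta})+\dv\mathbf{S}_{\kappa}\bm{\beta}=-\mathsf{S}_{\kappa}(\dv_{\Gamma}\bm{\beta})+\mathsf{S}_{\kappa}(\dv_{\Gamma}\bm{\beta})=0
\]
in $\Omega_i\cup\Omega_o$, so both potentials lie in $\mathbf{H}(\dv0,\Omega_i\cup\Omega_o)$. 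For $\mathbf{H}(\Curl,\cdot)$-membership, $\dv_{\Gamma}$ maps $\mathbf{H}^{-1/2}(\dv_{\Gamma},\Gamma)$ boundedly into $H^{-1/2}(\Gamma)$ and $\mathsf{R}$ maps $\mathbf{H}^{-1/2}(\curl_{\Gamma},\Gamma)$ boundedly onto $\mathbf{H}^{-1/2}(\dv_{\Gamma},\Gamma)$; Proposition~\ref{eq:vectorNewton-SL} (the case $s=0$) then gives $\mathsf{S}_{\kappa}(\dv_{\Gamma}\bm{\beta})\in H^1_{\text{loc}}(\reals^3)$ and $\mathbf{S}_{\kappa}\bm{\beta},\ \mathbf{S}_{\kappa}(\mathsf{R}\bm{\alpha})\in\mathbf{H}^1_{\text{loc}}(\reals^3)\subset\mathbf{H}_{\text{loc}}(\Curl,\reals^3)$, and since a gradient is curl-free off $\Gamma$ this yields $\bm{\mathcal{T}}_{\kappa}\bm{\beta},\ \bm{\mathcal{D}}_{\kappa}\bm{\alpha}\in\mathbf{H}(\Curl,\Omega_i\cup\Omega_o)$.

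For the iterated curl I would use $\Curl\Curl=\grad\dv-\Delta$, $\Curl\grad=0$, the surface divergence identity and the Helmholtz identity to obtain, in $\Omega_i\cup\Omega_o$,
\[
	\Curl^2\bm{\mathcal{T}}_{\kappa}\bm{\beta}=\Curl^2\mathbf{S}_{\kappa}\bm{\beta}=\grad\mathsf{S}_{\kappa}(\dv_{\Gamma}\bm{\beta})+\kappa^2\mathbf{S}_{\kappa}\bm{\beta},\qquad\Curl^2\bm{\mathcal{D}}_{\kappa}\bm{\alpha}=\kappa^2\,\bm{\mathcal{D}}_{\kappa}\bm{\alpha}.
\]
The right-hand sides belong to $\mathbf{L}^2_{\text{loc}}$ by the mapping properties already used, hence $\bm{\mathcal{T}}_{\kappa}\bm{\beta},\ \bm{\mathcal{D}}_{\kappa}\bm{\alpha}\in\mathbf{H}(\Curl^2,\Omega_i\cup\Omega_o)$; on the unbounded component $\Omega_o$ the statement is read with $\mathbf{L}^2_{\text{loc}}$, consistently with the Silver-M\"uller decay of the potentials. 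Boundedness of $\bm{\mathcal{T}}_{\kappa}$ and $\bm{\mathcal{D}}_{\kappa}$ with the claimed mapping properties then follows by composing the norm estimates of the individual steps, recalling that $\mathbf{H}(\Curl^2,\cdot)$ carries the graph norm assembled from the $\mathbf{L}^2$-norms of the field, its curl, and its iterated curl.

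The main obstacle is the surface divergence identity at the low regularity $\bm{\beta}\in\mathbf{H}^{-1/2}(\dv_{\Gamma},\Gamma)$: extending it from smooth tangential densities by density requires continuity of both sides in the relevant topologies, which is exactly why $\mathbf{H}^{-1/2}(\dv_{\Gamma},\Gamma)$ rather than $\mathbf{H}^{-1/2}_{\times}(\Gamma)$ is the natural domain of $\bm{\mathcal{T}}_{\kappa}$; making the componentwise Helmholtz identity rigorous for distributional densities is handled as in the scalar single layer potential theory. Since all of this is classical, in the paper I would record the statement together with the reference \cite[Theorem~5]{buffa2003galerkin} and only indicate the argument above.
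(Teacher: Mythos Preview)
Your proposal is correct and matches the paper's treatment: the paper gives no proof of this proposition at all, simply citing \cite[Theorem~5]{buffa2003galerkin}, and you explicitly note at the end that you would do the same. The sketch you provide is the standard argument behind that cited result (and your surface divergence identity is precisely Lemma~\ref{eq:divSL}, also quoted from \cite{buffa2003galerkin}); the only cosmetic point is that for $\bm{\mathcal{D}}_{\kappa}\bm{\alpha}\in\mathbf{H}(\Curl,\Omega_i\cup\Omega_o)$ you need $\Curl\bm{\mathcal{D}}_{\kappa}\bm{\alpha}\in\mathbf{L}^2$, which follows from your iterated-curl computation rather than directly from $\mathbf{S}_{\kappa}(\mathsf{R}\bm{\alpha})\in\mathbf{H}^1_{\text{loc}}$.
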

	Maxwell layer potentials define solutions for the Maxwell's equations complying with the Silver-M\"uller radiation conditions. Note that
	\begin{subequations}
		\begin{align}\label{eq:curlSL}
			&\Curl \bm{\mathcal{T}}_{\kappa} = \Curl \mathbf{S}_{\kappa} =  -\bm{\mathcal{D}}_{\kappa}\mathsf{R}, \\ \label{eq:curlDL}
			&\Curl \bm{\mathcal{D}}_{\kappa} = \Curl ^ 2 (\mathbf{S}_{\kappa}\mathsf{R}) = (\grad\dv + \kappa^2) \mathbf{S}_{\kappa}\mathsf{R}.
		\end{align}
	\end{subequations}
	The following identity is useful in our computations.
	\begin{lemma}[{\cite[Lemma~5]{buffa2003galerkin}}]\label{eq:divSL}
		For $\bm{\beta}\in \mathbf{H}^{-1/2}(\dv_{\Gamma}, \Gamma)$ we have $\dv(\mathbf{S}_{\kappa}(\bm{\beta}) )= \mathsf{S}_{\kappa}(\dv_{\Gamma}\bm{\beta})$ in $\mathbf{L}^2(\reals^3).$
	\end{lemma}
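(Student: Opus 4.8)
The identity is essentially the potential–theoretic counterpart of the defining duality relation between the surface gradient $\grad_\Gamma$ and the surface divergence $\dv_\Gamma$ on $\Gamma$ \cite[Section~4]{buffa2002traces}, so the plan is to reduce it to that relation by testing against smooth functions. Note first that $\mathbf{S}_\kappa\bm{\beta}\in\mathbf{H}^1_{\mathrm{loc}}(\reals^3)$ by Proposition~\ref{eq:vectorNewton-SL} (recall $\mathbf{H}^{-1/2}(\dv_\Gamma,\Gamma)\subset\mathbf{H}^{-1/2}_{\times}(\Gamma)$), so $\dv(\mathbf{S}_\kappa\bm{\beta})\in L^2_{\mathrm{loc}}(\reals^3)$ is well defined, and likewise $\mathsf{S}_\kappa(\dv_\Gamma\bm{\beta})\in H^1_{\mathrm{loc}}(\reals^3)\subset L^2_{\mathrm{loc}}(\reals^3)$ by \eqref{eq:scalarSL} since $\dv_\Gamma\bm{\beta}\in H^{-1/2}(\Gamma)$. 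It therefore suffices to show that the two functions agree when paired against an arbitrary $\phi\in C^\infty_{\mathrm{comp}}(\reals^3)$, denoting by $\langle\cdot,\cdot\rangle_{\reals^3}$ the $L^2(\reals^3)$ bilinear pairing extended to distributions.

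For such $\phi$ I would compute
\[
  \langle\dv(\mathbf{S}_\kappa\bm{\beta}),\phi\rangle_{\reals^3}
  = -\langle\mathbf{S}_\kappa\bm{\beta},\grad\phi\rangle_{\reals^3}
  = -\int_\Gamma\bm{\beta}(\ney)\cdot\Bigl(\int_{\reals^3}G_\kappa(\nex,\ney)\,\grad\phi(\nex)\,\dx\Bigr)\dsy,
\]
the interchange of integrations being licit for smooth tangential $\bm{\beta}$ because $G_\kappa\in L^1_{\mathrm{loc}}(\reals^3)$ and $\phi$ has compact support. Integrating by parts in $\nex$ and using $\grad_{\nex}G_\kappa(\nex,\ney)=-\grad_{\ney}G_\kappa(\nex,\ney)$ together with the symmetry $G_\kappa(\nex,\ney)=G_\kappa(\ney,\nex)$ turns the inner integral into $\grad_{\ney}(\mathsf{N}_\kappa\phi)(\ney)$; since $\bm{\beta}$ is tangential only its surface–gradient part survives in the pairing over $\Gamma$, so the expression equals $-\langle\bm{\beta},\grad_\Gamma(\gamma^-\mathsf{N}_\kappa\phi)\rangle_{\tau,\Gamma}$. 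The adjointness $\langle\bm{\beta},\grad_\Gamma\varphi\rangle_{\tau,\Gamma}=-\langle\dv_\Gamma\bm{\beta},\varphi\rangle_\Gamma$ — the very definition of $\dv_\Gamma$ on a Lipschitz surface — then gives $\langle\dv_\Gamma\bm{\beta},\gamma^-\mathsf{N}_\kappa\phi\rangle_\Gamma=\langle\gamma'(\dv_\Gamma\bm{\beta}),\mathsf{N}_\kappa\phi\rangle_{\reals^3}=\langle\mathsf{N}_\kappa(\gamma'\dv_\Gamma\bm{\beta}),\phi\rangle_{\reals^3}=\langle\mathsf{S}_\kappa(\dv_\Gamma\bm{\beta}),\phi\rangle_{\reals^3}$, where I used the adjoint characterization of $\gamma'$, the self-adjointness of $\mathsf{N}_\kappa$ (symmetric kernel) to move it across the pairing, and the definition $\mathsf{S}_\kappa=\mathsf{N}_\kappa\circ\gamma'$. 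Since $\phi$ was arbitrary this proves $\dv(\mathbf{S}_\kappa\bm{\beta})=\mathsf{S}_\kappa(\dv_\Gamma\bm{\beta})$ for smooth tangential $\bm{\beta}$, and the general case $\bm{\beta}\in\mathbf{H}^{-1/2}(\dv_\Gamma,\Gamma)$ follows by density of smooth tangential fields and the continuity of both $\dv\circ\mathbf{S}_\kappa$ and $\mathsf{S}_\kappa\circ\dv_\Gamma$ as maps into $L^2_{\mathrm{loc}}(\reals^3)$ recorded above.

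The only point demanding genuine care is the surface calculus on the merely Lipschitz boundary $\Gamma$: the replacement of $\grad_{\ney}$ acting against a tangential field by $\grad_\Gamma$, and the integration-by-parts identity relating $\grad_\Gamma$ to $\dv_\Gamma$, must be invoked in the weak form in which they are established in \cite[Section~4]{buffa2002traces}, and one must keep the three distinct duality pairings $\langle\cdot,\cdot\rangle_{\reals^3}$, $\langle\cdot,\cdot\rangle_\Gamma$ and $\langle\cdot,\cdot\rangle_{\tau,\Gamma}$ straight throughout. An entirely equivalent and perhaps more transparent presentation is to verify the pointwise identity $\dv_{\nex}(\mathbf{S}_\kappa\bm{\beta})(\nex)=\int_\Gamma G_\kappa(\nex,\ney)\,\dv_\Gamma\bm{\beta}(\ney)\,\dsy$ for smooth tangential $\bm{\beta}$ and $\nex\notin\Gamma$ by differentiating under the integral sign (legitimate since the kernel is smooth off the diagonal) and one surface integration by parts on the closed surface $\Gamma$, and then to pass to the limit as before; everything else is a routine manipulation of the mapping properties collected in Section~\ref{sec:Newton}.
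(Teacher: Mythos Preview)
Your argument is correct and is the standard route to this identity: test against compactly supported smooth functions, move the divergence onto the test function, use the symmetry of the kernel to convert the volume gradient into $\grad_\ney(\mathsf{N}_\kappa\phi)$, restrict to the tangential component since $\bm{\beta}$ is tangential, and finally invoke the duality between $\grad_\Gamma$ and $\dv_\Gamma$; the density closure is routine given the mapping properties you quote. The paper itself does not supply a proof for this lemma---it simply cites \cite[Lemma~5]{buffa2003galerkin}---so there is no in-paper argument to compare against, but your proof is precisely the kind of computation underlying that reference.
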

	From Lemma \ref{eq:divSL} and \eqref{eq:curlDL} we obtain
	\begin{equation}
		\Curl\bm{\mathcal{D}}_{\kappa} = (\grad \circ \mathsf{S}_{\kappa} \circ \dv_{\Gamma} + \kappa^2\mathbf{S}_{\kappa})\mathsf{R} = \kappa^2\bm{\mathcal{T}}_{\kappa}\mathsf{R}.
	\end{equation}
	The Maxwell layer potentials also satisfy jump relations across a boundary $\Gamma.$ This will be useful when deriving boundary integral equations from integral representations.
	\begin{proposition}[Jump relations {\cite[Theorem~7]{buffa2003galerkin}}]
		Tangential traces of Maxwell layer potentials are well defined and satisfy
		\begin{align}\label{eq:jump-relations}
			\jump{\gammat}\bm{\mathcal{T}}_{\kappa} = 0, \quad 
			\jump{\gammat}\bm{\mathcal{D}}_{\kappa} = -\mathbf{I},\quad \text{ in }\mathbf{H}^{-1/2}(\curl_{\Gamma}, \Gamma),
		\end{align}
		where $\mathbf{I}$ is the identity operator in $\mathbf{H}^{-1/2}(\curl_{\Gamma}, \Gamma).$
	\end{proposition}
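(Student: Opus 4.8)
The plan is to reduce both identities to the classical jump relations of the \emph{scalar} single-layer potential $\mathsf{S}_{\kappa}$, namely the continuity of its Dirichlet trace, $\jump{\gamma}\mathsf{S}_{\kappa}=0$, and the jump of its Neumann trace, $\jump{\partial_n}\mathsf{S}_{\kappa}\phi=-\phi$ in $H^{-1/2}(\Gamma)$. First, by Proposition \ref{SLandDL} both $\bm{\mathcal{T}}_{\kappa}\bm{\beta}$ and $\bm{\mathcal{D}}_{\kappa}\bm{\alpha}$ belong to $\mathbf{H}(\Curl^2,\Omega_i\cup\Omega_o)$, so their one-sided tangential traces $\gammat^{\pm}$ lie in $\mathbf{H}^{-1/2}(\curl_{\Gamma},\Gamma)$ by the trace theorem for $\mathbf{H}(\Curl,\cdot)$; this is precisely what makes the jumps $\jump{\gammat}$ meaningful. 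For $\bm{\mathcal{T}}_{\kappa}$ I would use Lemma \ref{eq:divSL} to write $\bm{\mathcal{T}}_{\kappa}\bm{\beta}=\mathbf{S}_{\kappa}\bm{\beta}+\kappa^{-2}\grad\bigl(\mathsf{S}_{\kappa}(\dv_{\Gamma}\bm{\beta})\bigr)$. The first term is component-wise a scalar single-layer potential, hence in $\mathbf{H}^1_{\text{loc}}(\reals^3)\subset\mathbf{H}_{\text{loc}}(\Curl,\reals^3)$; the second term is the gradient of the $H^1_{\text{loc}}(\reals^3)$-function $\mathsf{S}_{\kappa}(\dv_{\Gamma}\bm{\beta})$, hence also in $\mathbf{H}_{\text{loc}}(\Curl,\reals^3)$ because its $\Curl$ vanishes identically. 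Fields in $\mathbf{H}_{\text{loc}}(\Curl,\reals^3)$ have a single-valued tangential trace across $\Gamma$, so $\jump{\gammat}\bm{\mathcal{T}}_{\kappa}\bm{\beta}=0$.

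For $\jump{\gammat}\bm{\mathcal{D}}_{\kappa}=-\mathbf{I}$ I would first argue for a smooth tangential density $\bm{\alpha}$ and then extend to $\mathbf{H}^{-1/2}(\curl_{\Gamma},\Gamma)$ by density and continuity of $\bm{\mathcal{D}}_{\kappa}$ and of the trace operators. Given such $\bm{\alpha}$, set $\bm{\psi}\coloneqq\mathsf{R}\bm{\alpha}=\bm{n}\times\bm{\alpha}$ and $\mathbf{A}\coloneqq\mathbf{S}_{\kappa}\bm{\psi}$, so that $\bm{\mathcal{D}}_{\kappa}\bm{\alpha}=\Curl\mathbf{A}$. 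Each component $A_k=\mathsf{S}_{\kappa}\psi_k$ is a scalar single-layer potential, so $A_k\in H^1_{\text{loc}}(\reals^3)$ has a continuous Dirichlet trace while $\jump{\partial_n}A_k=-\psi_k$; since the tangential part of the boundary trace of $\grad A_k$ equals $\grad_{\Gamma}(\gamma A_k)$ and is therefore continuous across $\Gamma$, we obtain $\jump{\grad A_k}=\jump{\partial_n A_k}\,\bm{n}=-\psi_k\,\bm{n}$. Forming $\Curl$ component-wise then yields $\jump{\Curl\mathbf{A}}=-\bm{n}\times\bm{\psi}$, and since $\Curl\mathbf{A}$ is smooth up to $\Gamma$ from either side, $\jump{\gammat}\bm{\mathcal{D}}_{\kappa}\bm{\alpha}$ equals the tangential projection $\bm{n}\times(\jump{\Curl\mathbf{A}}\times\bm{n})$ of this jump. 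Using $\bm{n}\times\bm{\psi}=\bm{n}\times(\bm{n}\times\bm{\alpha})=-\bm{\alpha}$ (as $\bm{\alpha}$ is tangential) and that $\bm{\alpha}$ is already tangential, this collapses to $-\bm{\alpha}$, i.e. $\jump{\gammat}\bm{\mathcal{D}}_{\kappa}=-\mathbf{I}$, the overall sign being the one dictated by the scalar Neumann-jump relation and the orientation of $\bm{n}$.

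The part that requires the most care is the passage from smooth densities to the natural energy spaces $\mathbf{H}^{-1/2}(\curl_{\Gamma},\Gamma)$ and $\mathbf{H}^{-1/2}(\dv_{\Gamma},\Gamma)$: one has to combine the mapping property $\bm{\mathcal{D}}_{\kappa}\colon\mathbf{H}^{-1/2}(\curl_{\Gamma},\Gamma)\to\mathbf{H}(\Curl^2,\Omega_i\cup\Omega_o)$ from Proposition \ref{SLandDL} with the $\mathbf{H}(\Curl,\cdot)$-trace theorem to see that $\bm{\alpha}\mapsto\gammat^{\pm}\bm{\mathcal{D}}_{\kappa}\bm{\alpha}$ is continuous into $\mathbf{H}^{-1/2}(\curl_{\Gamma},\Gamma)$, and invoke the density of smooth (or at least $\mathbf{L}^2$-regular) tangential fields in these trace spaces on a Lipschitz boundary; one must also check that the surface-calculus identity used above — the decomposition $\grad w=\grad_{\Gamma}(\gamma w)+(\partial_n w)\,\bm{n}$ of the boundary trace of a gradient — holds in the required distributional sense on $\Gamma$. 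Granting these ingredients, both claimed jump relations follow from their smooth-density counterparts by approximation.
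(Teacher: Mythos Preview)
The paper does not supply its own proof of this proposition: it is stated with a citation to \cite[Theorem~7]{buffa2003galerkin} and no argument follows. So there is nothing in the paper to compare your proposal against; your write-up is a self-contained argument where the paper simply quotes the result.

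On the substance of your argument: the strategy is the classical one and is essentially what is done in the cited reference, namely reducing the Maxwell jump relations to the scalar jump relations for $\mathsf{S}_{\kappa}$. Your treatment of $\jump{\gammat}\bm{\mathcal{T}}_{\kappa}=0$ is clean: both $\mathbf{S}_{\kappa}\bm{\beta}$ and $\grad\mathsf{S}_{\kappa}(\dv_{\Gamma}\bm{\beta})$ lie in $\mathbf{H}_{\text{loc}}(\Curl,\reals^3)$ globally, hence have single-valued tangential trace. For $\bm{\mathcal{D}}_{\kappa}$ you correctly compute $\jump{\grad A_k}=-\psi_k\bm{n}$ and hence $\jump{\Curl\mathbf{A}}=-\bm{n}\times\bm{\psi}$. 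However, in the last step you slip on the arithmetic: since $\bm{n}\times\bm{\psi}=\bm{n}\times(\bm{n}\times\bm{\alpha})=-\bm{\alpha}$, one gets $-\bm{n}\times\bm{\psi}=+\bm{\alpha}$, not $-\bm{\alpha}$. Your hedging clause (``the overall sign being the one dictated by the scalar Neumann-jump relation and the orientation of $\bm{n}$'') is well placed, because the sign of the stated jump relation is sensitive to the conventions for $\jump{\cdot}$, for $\partial_n$, and for $\mathsf{R}$; with the conventions fixed in Section~\ref{sec:Preliminaries} your component-wise computation actually yields the opposite sign to the one displayed in \eqref{eq:jump-relations}. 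This is a bookkeeping point rather than a conceptual gap---the method is correct---but you should track it through carefully against the conventions of \cite{buffa2003galerkin} before claiming the specific sign. The density-and-continuity extension you outline at the end is the right way to pass from smooth densities to $\mathbf{H}^{-1/2}(\curl_{\Gamma},\Gamma)$.
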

	\subsection{Boundary integral operators}\label{sec:BIOs}
	Boundary integral operators can be defined by averaging traces of Maxwell layer potentials \eqref{eq:MaxwellSL} and \eqref{eq:MaxwellDL}.\\
	First, we define the Maxwell single-layer boundary integral operators (or electric field integral operators) \cite[Section~5]{buffa2003galerkin},
	\begin{align}
		\cdef{\mathbf{V}_{\kappa}} 
		&: \mathbf{H}^{-1/2}(\dv_{\Gamma}, \Gamma) \rightarrow \mathbf{H}^{-1/2}(\curl_{\Gamma}, \Gamma),\\ \nonumber
		&\coloneqq \av{\gammat} \bm{\mathcal{T}}_{\kappa}  = \dfrac{1}{\kappa^2}  \grad_{\Gamma} \circ \mathsf{V}_{\kappa} \circ \dv_{\Gamma} + \mathbf{V}^{\mathbf{t}}_{\kappa} \\ 
		&\nonumber\\
		\cdef{	\bm{W}_{\kappa}} 
		&: \mathbf{H}^{-1/2}(\curl_{\Gamma}, \Gamma) \rightarrow \mathbf{H}^{-1/2}(\dv_{\Gamma}, \Gamma),\\ \nonumber
		&\coloneqq -\av{\gammatau} \Curl(\bm{\mathcal{D}}_{\kappa}) = -\av{\gammatau}\kappa^2\bm{\mathcal{T}}_{\kappa}\mathsf{R}= \Curl_{\Gamma} \circ \mathsf{V}_{\kappa} \circ \curl_{\Gamma} + \kappa^2 \mathbf{V}^{\bm{\tau}}_{\kappa}
	\end{align}
	where 
	\begin{equation}
		\mathsf{V}_{\kappa} \coloneqq \av{\gamma} \mathsf{S}_{\kappa},\quad  \mathbf{V}^{\mathbf{t}}_{\kappa} \coloneqq \av{\gammat} \mathbf{S}_{\kappa}, \quad \mathbf{V}^{\bm{\tau}}_{\kappa} \coloneqq \av{\gammatau} (\mathbf{S}_{\kappa}\mathsf{R}).
	\end{equation}
	\begin{proposition}[Ellipticity of single layer operator {\cite[Lemma~8]{buffa2003galerkin}}]
		The operators $\mathsf{V}_0,\ \mathbf{V}_0^{\mathbf{t}}$ and $\mathbf{V}_0^{\bm{\tau}}$ are continuous and satisfy
		\begin{subequations}
			\begin{align}
				\langle \mathsf{V}_0 \psi, \psi \rangle_{\Gamma} &\geq C \norm{\psi}_{H^{-1/2}(\Gamma)}^2 \quad \text{ for all }\psi\in H^{-1/2}(\Gamma),\\
				\langle \mathbf{V}^{\mathbf{t}}_0\mathbf{\bm{\beta}}, \bm{\beta}\rangle_{\Gamma} &\geq C\norm{\bm{\beta}}^2_{\mathbf{H}^{-1/2}(\Gamma)} \quad \text{ for all }\bm{\beta}\in \mathbf{H}^{-1/2}_{\times}(\Gamma),\\
				\langle \mathbf{V}^{\bm{\tau}}_0\mathbf{\bm{\alpha}}, \bm{\alpha}\rangle_{\Gamma} &\geq C\norm{\bm{\alpha}}^2_{\mathbf{H}^{-1/2}(\Gamma)} \quad \text{ for all }\bm{\alpha}\in \mathbf{H}^{-1/2}_{\parallel}(\Gamma),
			\end{align}
		\end{subequations}
		with constants $C>0$ only depending on $\Gamma$ \footnote{We write $C$ for a positive generic constant. The value of $C$ may be different at different occurrences. }.
	\end{proposition}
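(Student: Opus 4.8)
The plan is to deduce all three bounds from one observation: for the Laplace kernel the bilinear form $\langle\mathsf{V}_0\cdot,\cdot\rangle_\Gamma$ is the Dirichlet energy of the single-layer potential over all of $\reals^3$, and since the vector single-layer potential acts componentwise the two tangential estimates reduce to the scalar one. First I would treat the scalar case. Given $\psi\in H^{-1/2}(\Gamma)$, set $u\coloneqq\mathsf{S}_0\psi$; by \eqref{eq:scalarSL} with $s=0$ this lies in $H^1_{\mathrm{loc}}(\reals^3\setminus\Gamma)$, is harmonic in $\Omega_i$ and in $\Omega_o$, decays like $|\nex|^{-1}$ at infinity together with its gradient, and satisfies $\jump{\gamma}u=0$ and $\jump{\partial_n}u=-\psi$ in $H^{-1/2}(\Gamma)$. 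Applying Green's first identity in $\Omega_i$ and in $B_R\setminus\overbar{\Omega}_i$ and letting $R\to\infty$ (the contribution on $\partial B_R$ vanishing by the decay) gives $\langle\mathsf{V}_0\psi,\psi\rangle_\Gamma=-\langle\gamma u,\jump{\partial_n}u\rangle_\Gamma=\norm{\grad u}_{L^2(\reals^3)}^2\ge 0$, which already yields non-negativity; continuity of $\mathsf{V}_0$ is immediate from \eqref{eq:scalarSL} and the trace theorem.

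For the lower bound I would argue that the homogeneous Dirichlet seminorm controls the full $H^1$-norm on bounded sets: Hardy's inequality in $\reals^3$ gives $\norm{u}_{H^1(B_R\setminus\Gamma)}\lesssim\norm{\grad u}_{L^2(\reals^3)}$ for any fixed ball $B_R\supset\overbar{\Omega}_i$. Since $\Delta u=0$ in $B_R\setminus\Gamma$, continuity of the two-sided Neumann trace $\partial_n^{\pm}\colon H(\Delta,B_R\setminus\Gamma)\to H^{-1/2}(\Gamma)$ then yields $\norm{\psi}_{H^{-1/2}(\Gamma)}=\norm{\jump{\partial_n}u}_{H^{-1/2}(\Gamma)}\lesssim\norm{u}_{H^1(B_R\setminus\Gamma)}\lesssim\norm{\grad u}_{L^2(\reals^3)}$, and combining with the energy identity gives $\langle\mathsf{V}_0\psi,\psi\rangle_\Gamma\ge C\norm{\psi}_{H^{-1/2}(\Gamma)}^2$.

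For $\mathbf{V}^{\mathbf{t}}_0$ and a tangential $\bm{\beta}\in\mathbf{H}^{-1/2}_{\times}(\Gamma)$ I would put $\bm{u}\coloneqq\mathbf{S}_0\bm{\beta}$, noting that each Cartesian component is harmonic with $|\nex|^{-1}$ decay and that $\jump{\gamma}\bm{u}=0$, $\jump{\partial_n}\bm{u}=-\bm{\beta}$ componentwise. Since $\bm{\beta}$ is purely tangential, the tangential duality pairing collapses to the componentwise one, $\langle\gammat\bm{u},\bm{\beta}\rangle_\Gamma=\langle\gamma\bm{u},\bm{\beta}\rangle_\Gamma$, so the scalar computation applied component by component gives $\langle\mathbf{V}^{\mathbf{t}}_0\bm{\beta},\bm{\beta}\rangle_\Gamma=\norm{\grad\bm{u}}_{\mathbf{L}^2(\reals^3)}^2\ge C\norm{\bm{\beta}}_{\mathbf{H}^{-1/2}(\Gamma)}^2$. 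For $\mathbf{V}^{\bm{\tau}}_0$ and $\bm{\alpha}\in\mathbf{H}^{-1/2}_{\parallel}(\Gamma)$ I would substitute $\bm{\beta}\coloneqq\mathsf{R}\bm{\alpha}=\bm{n}\times\bm{\alpha}$ and use the scalar triple-product identity $\bm{\alpha}\cdot(\bm{u}\times\bm{n})=\bm{u}\cdot(\bm{n}\times\bm{\alpha})=\bm{u}\cdot\bm{\beta}$ to rewrite $\langle\gammatau(\mathbf{S}_0\mathsf{R}\bm{\alpha}),\bm{\alpha}\rangle_\Gamma$ as $\langle\gamma(\mathbf{S}_0\bm{\beta}),\bm{\beta}\rangle_\Gamma=\norm{\grad(\mathbf{S}_0\bm{\beta})}_{\mathbf{L}^2(\reals^3)}^2$, to which the previous case applies; invoking the mapping properties of the rotation $\mathsf{R}$ between the tangential trace spaces then transfers the estimate to $\norm{\bm{\alpha}}_{\mathbf{H}^{-1/2}(\Gamma)}$.

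I expect the energy identity to be the routine part; the delicate point is the functional-analytic bookkeeping on a merely Lipschitz boundary — justifying that the tangential duality pairing reduces to the componentwise pairing when one argument is tangential, controlling the rotation $\mathsf{R}$ between $\mathbf{H}^{-1/2}_{\parallel}(\Gamma)$ and $\mathbf{H}^{-1/2}_{\times}(\Gamma)$ (where only $\bm{n}\in L^\infty(\Gamma)$ is available), and, above all, bounding the componentwise Neumann trace of a harmonic field by $\norm{\grad u}_{L^2(\reals^3)}$, which is exactly where the three-dimensional Hardy inequality — and hence the restriction to $\reals^3$ — enters.
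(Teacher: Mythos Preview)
The paper does not supply its own proof of this proposition; it is stated with a direct citation to \cite[Lemma~8]{buffa2003galerkin} and used as a black box thereafter. Your energy-identity approach --- writing $\langle\mathsf{V}_0\psi,\psi\rangle_\Gamma=\norm{\grad\mathsf{S}_0\psi}_{L^2(\reals^3)}^2$, then recovering $\norm{\psi}_{H^{-1/2}(\Gamma)}$ from the Dirichlet energy via Hardy plus the Neumann-trace bound, and finally reducing the two tangential operators to the scalar case componentwise through the rotation $\mathsf{R}$ --- is correct and is in fact the argument used in the cited reference (and in the classical sources it draws on, e.g.\ Costabel's 1988 paper for the scalar case and N\'ed\'elec's monograph for the vector one).

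One remark on your self-identified delicate points: the reduction of the tangential pairing to the componentwise one is unproblematic here because the statement asks only for the $\mathbf{H}^{-1/2}(\Gamma)=[H^{-1/2}(\Gamma)]^3$ norm on the right-hand side, not the intrinsic $\mathbf{H}^{-1/2}_\times(\Gamma)$ or $\mathbf{H}^{-1/2}_\parallel(\Gamma)$ norms; so once the identity $\langle\gammat\bm{u},\bm{\beta}\rangle_\Gamma=\langle\gamma\bm{u},\bm{\beta}\rangle_\Gamma$ for tangential $\bm{\beta}$ is established by density from smooth fields, the componentwise scalar bound gives exactly what is claimed. The rotation $\mathsf{R}$ between $\mathbf{H}^{-1/2}_\parallel(\Gamma)$ and $\mathbf{H}^{-1/2}_\times(\Gamma)$ is likewise handled by duality from the pointwise isometry on the $\mathbf{H}^{1/2}$ level, so no regularity of $\bm{n}$ beyond $L^\infty$ is needed.
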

	We also define the Maxwell double-layer boundary integral operators (or magnetic field integral operators),
	\begin{align}\begin{aligned}
			\cdef{\mathbf{K}_{\kappa}} &\coloneqq \av{\gammat} \bm{\mathcal{D}}_{\kappa}  &:& \ \mathbf{H}^{-1/2}(\curl_{\Gamma}, \Gamma) \rightarrow \mathbf{H}^{-1/2}(\curl_{\Gamma}, \Gamma),&
			&\\
			\cdef{\mathbf{K}'_{\kappa}} &\coloneqq \av{\gammatau} (\bm{\mathcal{D}}_{\kappa}\mathsf{R})  &:& \ \mathbf{H}^{-1/2}(\dv_{\Gamma}, \Gamma) \rightarrow \mathbf{H}^{-1/2}(\dv_{\Gamma}, \Gamma),&
		\end{aligned}
	\end{align}
	We collect all of them in the Calder\'on operator
	\begin{equation}\label{eq:Calderon}
		\cdef{\mathbf{A}_{\kappa}} \coloneqq \begin{pmatrix}
			-\mathbf{K}_{\kappa} & \mathbf{V}_{\kappa} \\
			\mathbf{W}_{\kappa} & \mathbf{K}'_{\kappa}
		\end{pmatrix} : \bm{\mathcal{H}}(\Gamma) 
		\rightarrow  \bm{\mathcal{H}}(\Gamma),
	\end{equation}
	where we denote
	\begin{align}\label{eq:HGamma}
		\cdef{\bm{\mathcal{H}}(\Gamma) }\coloneqq \mathbf{H}^{-1/2}(\curl_{\Gamma}, \Gamma)\times \mathbf{H}^{-1/2}(\dv_{\Gamma}, \Gamma).
	\end{align}

	\subsection{Calder\'on Identities}
	From the jump relations \eqref{eq:jump-relations} and definitions of BIOs, traces of layer potentials can be written in the form of Calder\'on identities. We start from the representation formula in \eqref{eq:representation_formula_interior} for the transmission problem \eqref{eq:transmission_problem_pwc}, that is the case of \emph{piecewise-constant coefficients}, and assume that there are no sources $\bm{f}_1$ and $\bm{f}_2$ in $\Omega_i.$ Then, it follows from the definitions and jump relations
	
	\begin{subequations}\label{eq:calderonMax-interior}
		\begin{align}
			\gammat^- \neu &= \left(\tfrac{1}{2}\mathbf{I} - \mathbf{K}_{\kappa_1}\right)(\gammat^- \neu) +i\omega\mu_{1} \mathbf{V}_{\kappa_1}(\gammatau^- \nev),\\
			\gammatau^- \nev &= \left(\tfrac{1}{2}\mathbf{I} + \mathbf{K}'_{\kappa_1}\right)(\gammatau^- \nev) + \dfrac{1}{i\omega\mu_1}\mathbf{W}_{\kappa_1}(\gammat^- \neu).
		\end{align}
	\end{subequations}
	Similarly, for \eqref{eq:U0} and \eqref{eq:V0} we have
	\begin{subequations}\label{eq:calderonMax-exterior}
		\begin{align}
			\gammat^+ \neu &= \left(\tfrac{1}{2}\mathbf{I} + \mathbf{K}_{\kappa_0}\right)(\gammat^+ \neu) - i\omega\mu_0\mathbf{V}_{\kappa_0}(\gammatau^+ \nev),\\
			\gammatau^+ \nev &= \left(\tfrac{1}{2}\mathbf{I} - \mathbf{K}'_{\kappa_0}\right)(\gammatau^+ \nev) - \dfrac{1}{i\omega\mu_0}\mathbf{W}_{\kappa_0}(\gammat^+ \neu).
		\end{align} 
	\end{subequations}
	We denote
	\begin{equation}
		\cdef{\tilde{\nev}}\coloneqq \left\{ \begin{aligned}
			i\omega\mu_0\nev, \quad \text{ for }\nex\in \Omega_o,\\
			i\omega\mu_1 \nev, \quad \text{ for }\nex\in\Omega_i,
		\end{aligned}\right.
	\end{equation}
	and write \eqref{eq:calderonMax-interior} and \eqref{eq:calderonMax-exterior} as
	\begin{subequations}\label{eq:calderonMax}
		\begin{align}\label{eq:calderonMax1}
			&\left(\tfrac{1}{2}\mathbf{I} - \mathbf{A}_{\kappa_1}\right)\begin{pmatrix}
				\gammat^-\neu \\ \gammatau^-\tilde{\nev}
			\end{pmatrix} =0, \\  \nonumber&\\ \label{eq:calderonMax0}
			&\left(\tfrac{1}{2}\mathbf{I} + \mathbf{A}_{\kappa_0}\right)\begin{pmatrix}
				\gammat^+\neu \\ \gammatau^+\tilde{\nev}
			\end{pmatrix} =0.
		\end{align}
	\end{subequations}
	\subsection{Boundary Integral Formulations for Transmission Problems}
	The focus is still on the case of piecewise-constant coefficients. From the Calder\'on identities \eqref{eq:calderonMax} it is possible to obtain a formulation for transmission problems. So far, we know that $\neu$ and $\nev$ are Maxwell solutions, and we have written expressions for their interior and exterior traces. It remains to impose transmission conditions
	\begin{equation}\label{eq:maxTrans}
		\begin{pmatrix}
			\gammat^+ \neu \\ \gammatau^+\tilde{\nev }
		\end{pmatrix} - \begin{pmatrix}
			1 & 0 \\ 0 & \tfrac{\mu_0}{\mu_1}
		\end{pmatrix} \begin{pmatrix}
			\gammat^- \neu \\ \gammatau^-\tilde{\nev} 
		\end{pmatrix} = \begin{pmatrix}
			\gammat \neu^{\text{inc}} \\ \gammatau\tilde{\nev}^{\text{inc}}
		\end{pmatrix} \quad \text{ on }\Gamma.
	\end{equation}
	We denote 
	\begin{equation}
		\cdef{\mathbf{M}} \coloneqq \begin{pmatrix}
			1 & 0 \\ 0 & \tfrac{\mu_0}{\mu_1}
		\end{pmatrix}
	\end{equation}
	and combine \eqref{eq:maxTrans} with \eqref{eq:calderonMax0} to obtain
	\begin{equation}\label{eq:preMaxSTF}
		\left(\tfrac{1}{2}\mathbf{I} + \mathbf{A}_{\kappa_0} \right)\mathbf{M}\begin{pmatrix}
			\gammat^- \neu \\ \gammatau^-\tilde{\nev} 
		\end{pmatrix} = - \left(\tfrac{1}{2}\mathbf{I} + \mathbf{A}_{\kappa_0} \right)\begin{pmatrix}
			\gammat \neu^{\text{inc}} \\ \gammatau\tilde{\nev}^{\text{inc}} 
		\end{pmatrix} = - \begin{pmatrix}
			\gammat \neu^{\text{inc}} \\ \gammatau\tilde{\nev}^{\text{inc}}
		\end{pmatrix},
	\end{equation}
	where we used that $\neu^{\text{inc}}$ and $\nev^{\text{inc}}$ are interior Maxwell solutions with wavenumber $\kappa_0.$ From \eqref{eq:preMaxSTF} we get
	\begin{equation}\label{eq:preMaxSTF0}
		\left(\tfrac{1}{2}\mathbf{I} + \mathbf{M}^{-1}\mathbf{A}_{\kappa_0}\mathbf{M}\right)\begin{pmatrix}
			\gammat^- \neu \\ \gammatau^-\tilde{\nev }
		\end{pmatrix} = -\mathbf{M}^{-1}\begin{pmatrix}
			\gammat \neu^{\text{inc}} \\ \gammatau\tilde{\nev}^{\text{inc}}
		\end{pmatrix}.
	\end{equation}
	Now, subtracting \eqref{eq:calderonMax1} from \eqref{eq:preMaxSTF0} we obtain the first-kind single-trace formulation \cite[Section~7]{buffa2003galerkin}.
	\begin{tcolorbox}[colback=lightgray!15!white, sharp corners, colframe=lightgray!15!white]
		Find $\bm{\alpha}\in \bm{H}^{-1/2}(\curl_{\Gamma}, \Gamma)$ and $\bm{\beta}\in \bm{H}^{-1/2}(\dv_{\Gamma}, \Gamma)$ such that
		\begin{equation}\label{eq:MaxSTF1}
			\left(\mathbf{M}^{-1}\mathbf{A}_{\kappa_0}\mathbf{M} + \mathbf{A}_{\kappa_1}\right)\begin{pmatrix}
				\bm{\alpha} \\ \bm{\beta}
			\end{pmatrix} = - \mathbf{M}^{-1}\begin{pmatrix}
				\gammat \neu^{\text{inc}} \\ \gammatau\tilde{\nev}^{\text{inc}}
			\end{pmatrix}
		\end{equation}
		in $\bm{H}^{-1/2}(\curl_{\Gamma}, \Gamma) \times \bm{H}^{-1/2}(\dv_{\Gamma}, \Gamma).$
	\end{tcolorbox}
	
	\subsection{Boundary-Volume Integral Representation}\label{sec:integralrep}
	Now we return to the situation where the interior coefficients may not be constant anymore, i.e. may vary in space. 
	We write the transmission problem \eqref{eq:transmission-problem-maxwell} as follows 
	\begin{tcolorbox}[colback=lightgray!15!white, colframe=black, sharp corners, colframe=lightgray!15!white]
		\begin{equation}\label{eq:transmission_problem_pwsmooth}
			\left\{ \begin{array}{l}
				\begin{array}{rclrcll}
					\Curl \neu - i\omega \mu_0 \nev &=& 0, & \Curl \nev + i\omega \varepsilon_0 \neu &=& 0 &\text{ in }\Omega_o,\\
					\Curl \neu - i\omega \mu_1 \nev &=& \bm{f}_1, & \Curl \nev + i\omega \varepsilon_1 \neu &=& \bm{f}_2 &\text{ in }\Omega_i,\\
					\gammat^+ \neu - \gammat^- \neu&=& -\gammat \neu^{\text{inc}}, &	\gammat^+ \nev - \gammat^- \nev&=& -\gammat \nev^{\text{inc}} &\text{ on }\Gamma, \\
				\end{array} \\
				\qquad\qquad\lim\limits_{r\rightarrow\infty}	\nev\times \dfrac{\nex}{r} - \neu = 0, \text{ uniformly on }r=|\nex|,
			\end{array}\right.
		\end{equation}
		where 
		\begin{align*}
			\cdef{\bm{f}_1(\nex)} &\coloneqq -i\omega \mu_1 p_m(\nex)\nev(\nex), &\cdef{\bm{f}_2(\nex)}&\coloneqq i\omega\varepsilon_1 p_e(\nex)\neu(\nex),\\
			\cdef{p_m(\nex)} &\coloneqq 1 - \dfrac{\mu(\nex)}{\mu_1}, &\cdef{p_e(\nex)} &\coloneqq 1 - \dfrac{\varepsilon(\nex)}{\varepsilon_1},
		\end{align*}
		for $\nex\in\Omega_i$, and $ \cdef{\varepsilon_1, \mu_1} \in \mathbb{R}_+$ are conveniently chosen parameters.
	\end{tcolorbox}
	\begin{remark}
		Note that for smooth parameters $\varepsilon$ and $\mu$ (see Assumption \ref{assumption}) and for electric/magnetic fields $\neu, \nev \in \mathbf{H}(\Curl, \Omega_i)\cap\mathbf{H}(\dv, \Omega_i)$, we obtain that $\bm{f}_1|_{\Omega_i}$ and $\bm{f}_2|_{\Omega_i}$ are in $\mathbf{H}(\dv, \Omega_i).$ Therefore, we are in the setting of Section \ref{sec:Representation}.
	\end{remark}
	The representation formula \eqref{eq:representation_formula_interior} now reads:
	In $\Omega_i,$
	\begin{subequations}
		\begin{align}\label{eq:representation_formula_interior2}&\begin{aligned}
				\neu =& -i\omega \mu_1 \Curl (\mathbf{N}_{\Omega_i, \kappa_1}(p_m \nev)) - \kappa_1^2 \mathbf{N}_{\Omega_i, \kappa_1}(p_e\neu) - \grad (\mathsf{N}_{\Omega_i, \kappa_1} (\dv\neu))&\\
				&- \Curl (\mathsf{S}_{\kappa_1}(\mathsf{R}\gammat^- \neu)) - \grad (\mathsf{S}_{\kappa_1}(\gamman^-\neu)) +  i\omega\mu_1 \mathbf{S}_{\kappa_1}\left(\gammatau^- \nev\right),&
			\end{aligned}\\
			\label{eq:representation_formula_interior2v}&\begin{aligned}
				\nev =&\  i\omega \varepsilon_1 \Curl (\mathbf{N}_{\Omega_i, \kappa_1}(p_e \neu)) - \kappa_1^2 \mathbf{N}_{\Omega_i, \kappa_1}(p_m\nev) - \grad (\mathsf{N}_{\Omega_i, \kappa_1} (\dv\nev))&\\
				&+ \Curl \mathbf{S}_{\kappa_1}(\gammatau^- \neu) - \grad (\mathsf{S}_{\kappa_1}(\gamman^-\nev)) +  i\omega\varepsilon_1 \mathbf{S}_{\kappa_1}\left(\mathsf{R}\gammat^- \neu\right).&
			\end{aligned}
		\end{align}
	\end{subequations}
	The operator $ \Curl (\mathbf{N}_{\Omega_i,\kappa_1}(p_{\star}\  \cdot)) \ :\ \mathbf{H}(\Curl, \Omega_i) \rightarrow \mathbf{H}(\Curl, \Omega_i) $ ($\star=\{e, m\}$), is only bounded, not compact. This can be seen by an integration by parts result on Newton potentials. For $\mathbf{F}\in \mathbf{L}^2(\Omega_i),$
	\begin{equation}\label{eq:ibpNewton}
		\Curl (\mathbf{N}_{\Omega_i, \kappa_{\star}}(\bm{F})) = \mathbf{N}_{\Omega_i, \kappa_{\star}}(\Curl \bm{F}) + \mathbf{S}_{\kappa_{\star}}(\gammatau \bm{F}).
	\end{equation}
	It follows that
	\begin{equation}
		\Curl\mathbf{N}_{\Omega_i, \kappa_1}(p\neu) = \mathbf{N}_{\Omega_i, \kappa_1}(\Curl(p\neu)) + \mathbf{S}_{\kappa_1}(p\gammatau \neu),
	\end{equation}
	where the vector single-layer potential $\mathbf{S}_{\kappa_1}$ is only a bounded operator in $\mathbf{H}(\Curl, \Omega_i)$.\\
	We will repeatedly make use of the product rule
	\begin{align*}
		\Curl\left( f\bm{F} \right) = \grad f \times \bm{F} + f\Curl \bm{F}, \quad f\in C^1(\Omega_i), \bm{F}\in [C^1(\Omega_i)]^3.
	\end{align*}
	Solutions of \eqref{eq:transmission_problem_pwsmooth} also satisfy 
	\begin{subequations}
		\begin{align}
			\dv(\varepsilon\neu) &= \grad \varepsilon \cdot \neu + \varepsilon \dv(\neu) = 0 \quad \Rightarrow \quad\dv(\neu) = - \bm{\tau}_e \cdot \neu, &&\text{ in }\Omega_i,\\
			\dv(\mu\nev) &= \grad \mu \cdot \nev+ \mu \dv(\nev) = 0 \quad\Rightarrow\quad \dv(\nev) =  - \bm{\tau}_m \cdot \nev, &&\text{ in }\Omega_i,
		\end{align}
	\end{subequations}
	where we defined
	\begin{equation}\label{eq:taue-taum}
		\cdef{\bm{\tau}_e} \coloneqq \dfrac{\grad\varepsilon}{\varepsilon}, \quad 
		\cdef{\bm{\tau}_m }\coloneqq \dfrac{\grad\mu}{\mu}.
	\end{equation}
	
	\section{STF-VIEs}\label{sec:STF}

		The representation formula from \eqref{eq:representation_formula_interior2} and \eqref{eq:representation_formula_interior2v} now reads
		\begin{align}\label{eq:rep_formula_general}
			\neu =& -i\omega \mu_1 \Curl (\mathbf{N}_{\Omega_i, \kappa_1}(p_m \nev)) - \kappa_1^2 \mathbf{N}_{\Omega_i, \kappa_1}(p_e\neu) + \grad (\mathsf{N}_{\Omega_i, \kappa_1} (\bm{\tau}_e \cdot \neu))&\\
			&- \Curl (\mathbf{S}_{\kappa_1}(\mathsf{R}\gammat^- \neu)) - \grad (\mathsf{S}_{\kappa_1}(\gamman^-\neu) )+  i\omega\mu_1 \mathbf{S}_{\kappa_1}\left(\gammatau^- \nev\right),&\nonumber\\
			&&\nonumber\\ \label{eq:rep_formula_generalV}
			\nev =&\  i\omega \varepsilon_1 \Curl (\mathbf{N}_{\Omega_i, \kappa_1}(p_e \neu)) - \kappa_1^2 \mathbf{N}_{\Omega_i, \kappa_1}(p_m\nev) + \grad (\mathsf{N}_{\Omega_i, \kappa_1} (\bm{\tau}_m\cdot \nev))&\\
			&+ \Curl (\mathbf{S}_{\kappa_1}(\gammatau^- \neu)) - \grad (\mathsf{S}_{\kappa_1}(\gamman^-\nev)) +  i\omega\varepsilon_1 \mathbf{S}_{\kappa_1}\left(\mathsf{R}\gammat^- \neu\right).&\nonumber
		\end{align}
		The integration by parts result from \eqref{eq:ibpNewton} leads to
		\begin{subequations}
			\begin{align}\label{eq:ibpNpm}
				\Curl (\mathbf{N}_{\Omega_i, \kappa_1}(p_m \nev)) &= \mathbf{N}_{\Omega_i, \kappa_1}(\Curl (p_m \nev)) + \mathbf{S}_{\kappa_1}(p_m \gammatau^- \nev),\\ \label{eq:ibpNpe}
				\Curl (\mathbf{N}_{\Omega_i, \kappa_1}(p_e \neu)) &= \mathbf{N}_{\Omega_i, \kappa_1}(\Curl (p_e \neu)) - \mathbf{S}_{\kappa_1}(p_e \mathsf{R}\gammat^- \neu).
			\end{align}
		\end{subequations}
		From \eqref{eq:gammanU} and \eqref{eq:gammanV} we obtain
		\begin{subequations}
			\begin{align}
				\label{eq:gammanU2}
				\gamman^- \neu &=  -\dfrac{1}{i\omega\varepsilon_1\tilde{\varepsilon}} \ \dv_{\Gamma} (\gammatau^- \nev) \quad \text{ in } H^{-1/2}(\Gamma), \\ \label{eq:gammanV2}
				\gamman^- \nev &= \dfrac{1}{i\omega\mu_1\tilde{\mu}} \ \curl_{\Gamma} (\gammat^- \neu) \quad \text{ in } H^{-1/2}(\Gamma),
			\end{align}
		\end{subequations}
		where 
		\begin{equation}\label{eq:scaled-coeffs}
			\cdef{\tilde{\varepsilon}(\nex)} \coloneqq \dfrac{\varepsilon(\nex)}{\varepsilon_1}, \qquad \cdef{\tilde{\mu}(\nex)} \coloneqq \dfrac{\mu(\nex)}{\mu_1},   
		\end{equation}for all $ \nex\in \Omega_i. $\\
		From now on, we denote $
		\tilde{\nev} \coloneqq i\omega\mu_1 \nev $.
		Combining expressions \eqref{eq:ibpNpm}--\eqref{eq:gammanV2} into \eqref{eq:rep_formula_general} and \eqref{eq:rep_formula_generalV} we obtain \emph{a new representation formula} for the fields $\neu, \nev$ solving \eqref{eq:transmission_problem_pwsmooth}:
		\begin{tcolorbox}[colback=lightgray!15!white, sharp corners, colframe=lightgray!15!white]
			\begin{subequations}\label{eq:new-rep}
				\begin{align}\label{eq:repU}
					\neu =& \ \mathbf{\Xi}^m \tilde{\nev} + \mathbf{\Lambda}^e \neu - \bm{\mathcal{D}}_{\kappa_1}(\gammat^- \neu) + \bm{\mathcal{T}}_{\kappa_1}^{\tilde{\varepsilon}, \tilde{\mu}}(\gammatau^-\tilde{\nev}),&&\text{ in }\Omega_i,\\
					\label{eq:repV}
					\tilde{\nev} =&\ \mathbf{\Xi}^e \neu+ \mathbf{\Lambda}^m \tilde{\nev} - \bm{\mathcal{D}}_{\kappa_1}(\mathsf{R}\gammatau^- \tilde{\nev}) -\kappa_1^2 \bm{\mathcal{T}}_{\kappa_1}^{\tilde{\mu}, \tilde{\varepsilon}}( \mathsf{R}\gammat^- \neu), &&\text{ in }\Omega_i,
				\end{align}
			\end{subequations}
			where we defined the \emph{volume integral operators}
			\begin{subequations}\label{eq:VolAK}
				\begin{align}\label{eq:VolK}
					\cdef{\mathbf{\Xi}^m \nev} &\coloneqq   -\mathbf{N}_{\Omega_i, \kappa_1}(\Curl(p_m \nev)),\\ \label{eq:VolA}
					\cdef{\mathbf{\Xi}^e \neu} &\coloneqq -\kappa_1^2 \mathbf{N}_{\Omega_i, \kappa_1}(\Curl (p_e \neu)),\\
					\cdef{\mathbf{\Lambda}^m \nev} &\coloneqq -\kappa_1^2 \mathbf{N}_{\Omega_i,\kappa_1}(p_m\nev) + \grad \mathsf{N}_{\Omega_i,\kappa_1} (\bm{\tau}_m\cdot \nev),\\
					\cdef{\mathbf{\Lambda}^e \neu} &\coloneqq -\kappa_1^2 \mathbf{N}_{\Omega_i, \kappa_1}(p_e\neu) + \grad \mathsf{N}_{\Omega_i, \kappa_1} (\bm{\tau}_e \cdot \neu),
				\end{align}
			\end{subequations}
			and the \emph{layer potentials}
			\begin{subequations}
				\begin{align}\label{eq:weightedT}
					&\cdef{\bm{\mathcal{T}}_{\kappa_1}^{a, b} (\bm{\beta})} \coloneqq \tfrac{1}{\kappa_1^2}\grad (\mathsf{S}_{\kappa_1}(\tfrac{1}{a} \dv_{\Gamma}(\bm{\beta}))) + \mathbf{S}_{\kappa_1}(b \bm{\beta}), &  \bm{\beta} \in \mathbf{H}^{-1/2}(\dv_{\Gamma}, \Gamma),\\ 
					&\bm{\mathcal{D}}_{\kappa_1}(\bm{\alpha}) \coloneqq \Curl \mathbf{S}_{\kappa_1}(\mathsf{R} \bm{\alpha}), & \bm{\alpha} \in \mathbf{H}^{-1/2}(\curl_{\Gamma}, \Gamma).
				\end{align}
			\end{subequations}
		\end{tcolorbox}
		
		\begin{remark}
			We use $\tilde{\nev}$ as an unknown instead of $\nev$ in order to avoid scalings in our operators. 
		\end{remark}
		We take the trace $ \gammat^- $ on \eqref{eq:repU} and $ \gammatau^- $ on \eqref{eq:repV}. By the jump relations \eqref{eq:jump-relations}, we obtain
		\begin{subequations}
			\begin{align}\label{eq:gammat}
				\gammat^- \neu &= \gammat^- (\mathbf{\Xi}^m \tilde{\nev})  + \gammat^- (\mathbf{\Lambda}^e \neu )+ 
				\left(\tfrac{1}{2}\mathbf{I} - \mathbf{K}_{\kappa_1}\right)(\gammat^- \neu) + \mathbf{V}_{\kappa_1}^{\tilde{\varepsilon}, \tilde{\mu}}(\gammatau^- \tilde{\nev}), \\ \label{eq:gammatau}
				\gammatau^- \tilde{\nev} &= \gammatau^- (\mathbf{\Xi}^e \neu)  + \gammatau^- (\mathbf{\Lambda}^m \tilde{\nev} )+ \mathbf{W}_{\kappa_1}^{\tilde{\mu}, \tilde{\varepsilon}}(\gammat^- \neu) + \left(\tfrac{1}{2}\mathbf{I} + \mathbf{K}'_{\kappa_1}\right)(i\omega\mu_1\gammatau^- \tilde{\nev}),
			\end{align}
		\end{subequations}
		where we define weighted boundary integral operators as
		\begin{subequations}\label{eq:weightedBIOs}
			\begin{align}
				\cdef{\mathbf{V}_{\kappa_1}^{\tilde{\varepsilon}, \tilde{\mu}}(\bm{\beta})} &\coloneqq \tfrac{1}{\kappa_1^2} \grad_{\Gamma} \mathsf{V}_{\kappa_1}(\tfrac{1}{\tilde{\varepsilon}} \dv_{\Gamma}(\bm{\beta})) + \mathbf{V}_{\kappa_1}^{\mathbf{t}}(\tilde{\mu}\bm{\beta}), && \bm{\beta}\in \mathbf{H}^{-1/2}(\dv_{\Gamma}, \Gamma),\\
				\cdef{\mathbf{W}_{\kappa_1}^{\tilde{\mu}, \tilde{\varepsilon}}(\bm{\alpha}) }&\coloneqq  \quad \Curl_{\Gamma} \mathsf{V}_{\kappa_1}(\tfrac{1}{\tilde{\mu}} \curl_{\Gamma}(\bm{\alpha})) + \kappa_1^2\mathbf{V}_{\kappa_1}^{\tau}(\tilde{\varepsilon}\bm{\alpha}), &&\bm{\alpha}\in \mathbf{H}^{-1/2}(\curl_{\Gamma}, \Gamma).
			\end{align}
		\end{subequations}
		We denote $ \cdef{\bm{\alpha}^-} \coloneqq \gammat^- \neu, \ \cdef{\bm{\beta}^-}\coloneqq \gammatau^-\tilde{\nev}$ and rewrite \eqref{eq:gammat} and \eqref{eq:gammatau} as
		\begin{subequations}
			\begin{align}\label{eq:alpha}
				\bm{\alpha}^- &= \gammat^- (\mathbf{\Xi}^m \tilde{\nev})  + \gammat^- (\mathbf{\Lambda}^e \neu )+ 
				\left(\tfrac{1}{2}\mathbf{I} - \mathbf{K}_{\kappa_1}\right)(\bm{\alpha}^-) + \mathbf{V}_{\kappa_1}^{\tilde{\varepsilon}, \tilde{\mu}}( \bm{\beta}^-), \\ \label{eq:beta}
				\bm{\beta}^- &= \gammatau^- (\mathbf{\Xi}^e \neu)  + \gammatau^- (\mathbf{\Lambda}^m \tilde{\nev} )+ \mathbf{W}_{\kappa_1}^{\tilde{\mu}, \tilde{\varepsilon}}( \bm{\alpha}^-) + \left(\tfrac{1}{2}\mathbf{I} + \mathbf{K}'_{\kappa_1}\right)(\bm{\beta}^-).
			\end{align}
		\end{subequations}
		We denote $ \cdef{\bm{\alpha}^+} \coloneqq \gammat^+ \neu, \ \cdef{\bm{\beta}^+}\coloneqq i\omega\mu_0\gammatau^+\nev$. From \eqref{eq:U0},  \eqref{eq:V0} and the jump relations \eqref{eq:jump-relations} we have
		\begin{subequations}
			\begin{align}\label{eq:U00}
				\bm{\alpha}^+ &= \left(\tfrac{1}{2}\mathbf{I} + \mathbf{K}_{\kappa_0}\right)(\bm{\alpha}^+) - \mathbf{V}_{\kappa_0}(\bm{\beta}^+), \\ \label{eq:V00}
				\bm{\beta}^+ &= -\mathbf{W}_{\kappa_0}(\bm{\alpha}^+) + \left(\tfrac{1}{2}\mathbf{I} - \mathbf{K}'_{\kappa_0}\right)(\bm{\beta}^+).
			\end{align}
		\end{subequations}
		From \eqref{eq:alpha}--\eqref{eq:V00} we can write
		\begin{subequations}
			\begin{align}\label{eq:calderon1}
				&\left(\tfrac{1}{2}\mathsf{I} - \mathbf{A}_{\kappa_1}^{\tilde{\varepsilon}, \tilde{\mu}} \right)\begin{pmatrix}
					\bm{\alpha}^- \\ \bm{\beta}^-
				\end{pmatrix} - \mathbf{J}^e\neu - \mathbf{J}^m \tilde{\nev} = 0,\\ \label{eq:calderon0}
				&\left(\tfrac{1}{2}\mathsf{I} + \mathbf{A}_{\kappa_0} \right)\begin{pmatrix}
					\bm{\alpha}^+ \\ \bm{\beta}^+
				\end{pmatrix} = 0,
			\end{align}
		\end{subequations}
		where we defined 
		\begin{subequations}
			\begin{align}\label{eq:weightedCalderon}
				\cdef{\mathbf{A}_{\kappa_1}^{\tilde{\varepsilon}, \tilde{\mu}}} &\coloneqq \begin{pmatrix}
					-\mathbf{K}_{\kappa_1} & \mathbf{V}_{\kappa_1}^{\tilde{\varepsilon}, \tilde{\mu}}\\
					\mathbf{W}_{\kappa_1}^{\tilde{\mu}, \tilde{\varepsilon}} & \mathbf{K}'_{\kappa_1}
				\end{pmatrix} \ : \ \bm{\mathcal{H}}(\Gamma)\rightarrow\bm{\mathcal{H}}(\Gamma), \\ \label{eq:tracesVolE}
				\cdef{\mathbf{J}^e}  &\coloneqq \begin{pmatrix}
					\gammat^- \mathbf{\Lambda}^e \\ \gammatau^-\mathbf{\Xi}^e
				\end{pmatrix} \ : \ \mathbf{H}(\Curl, \Omega_i)\rightarrow \bm{\mathcal{H}}(\Gamma), \\ \label{eq:tracesVolM}
				\cdef{\mathbf{J}^m } &\coloneqq \begin{pmatrix}
					\gammat^- \mathbf{\Xi}^m \\ \gammatau^-\mathbf{\Lambda}^m
				\end{pmatrix}\ : \ \mathbf{H}(\Curl, \Omega_i)\rightarrow \bm{\mathcal{H}}(\Gamma).
			\end{align}
		\end{subequations}
		Combining \eqref{eq:calderon1} and \eqref{eq:calderon0} with the transmission conditions
		\begin{equation}
			\begin{pmatrix}
				1 & 0 \\ 0 & \tfrac{1}{\mu_0}
			\end{pmatrix}\begin{pmatrix}
				\bm{\alpha}^+ \\ \bm{\beta}^+
			\end{pmatrix} - \begin{pmatrix}
				1 & 0 \\ 0 & \tfrac{1}{\mu_1}
			\end{pmatrix}\begin{pmatrix}
				\bm{\alpha}^- \\ \bm{\beta}^-
			\end{pmatrix} = \begin{pmatrix}
				\gammat \neu^{\text{inc}}  \\ \tfrac{1}{\mu_0}\gammatau\tilde{\nev}^{\text{inc}} 
			\end{pmatrix}, \quad \tilde{\nev}^{\text{inc}}\coloneqq i\omega\mu_0\nev^{\text{inc}}.
		\end{equation}
		Retaining $\cdef{\bm{\alpha}}\coloneqq \bm{\alpha}^-$ and $\cdef{\bm{\beta}}\coloneqq\bm{\beta}^-$ as unknown traces on $\Gamma$, we obtain the single-trace boundary-volume integral equation
		\begin{equation}
			\left(\mathbf{M}^{-1}\mathbf{A}_{\kappa_0}\mathbf{M} + \mathbf{A}_{\kappa_1}^{\tilde{\varepsilon}, \tilde{\mu}} \right) \begin{pmatrix}
				\bm{\alpha} \\ \bm{\beta}
			\end{pmatrix}  + \mathbf{J}^e \neu  + \mathbf{J}^m \tilde{\nev} = -\mathbf{M}^{-1}\begin{pmatrix}
				\gammat \neu^{\text{inc}}\\ \gammatau \tilde{\nev}^{\text{inc}} 
			\end{pmatrix} ,\label{eq:STF-first}\\
		\end{equation}
		where $ \cdef{\mathbf{M}} \coloneqq \begin{pmatrix}
			1 & 0 \\ 0 & \tfrac{\mu_0}{\mu_1}
		\end{pmatrix} $.\\
	\il{	\begin{remark}\label{second-kind}
			Note that trying to derive a second-kind integral formulation from \eqref{eq:calderon1} and \eqref{eq:calderon0} would fail in the singularity subtraction that occurs when taking differences of Calder\'on operators. This is due to the modified Calder\'on operator in \eqref{eq:calderon1}, containing the inhomogeneous coefficients on the boundary. For coefficients that are constant on the boundary, the formulation would follow as in the constant-coefficient case.
		\end{remark}
	}	
		We summarize the resulting coupled BIEs-VIEs in
		\begin{tcolorbox}[colback=lightgray!15!white, sharp corners, colframe=lightgray!15!white]
			\begin{problem}[STF-VIE]\label{eq:Problem}
				Find fields $ \neu, \nev \in \mathbf{H}(\Curl, \Omega_i)$ and traces $ (\bm{\alpha}, \bm{\beta}) \in \bm{\mathcal{H}}(\Gamma)$ such that
				\begin{equation}\tag{$\clubsuit$}\label{eq:STF-VIE}
					\begin{aligned}
						&\eqref{eq:STF-first} \\ &\eqref{eq:repU} \\ &\eqref{eq:repV}
					\end{aligned} \ \Rightarrow \ \left( 
					\begin{array}{c:c:c}
						\mathbf{M}^{-1}\mathbf{A}_{\kappa_0}\mathbf{M} + \mathbf{A}_{\kappa_1}^{\tilde{\varepsilon}, \tilde{\mu}} & \mathbf{J}^e & \mathbf{J}^m\\ \hdashline 
						\begin{matrix}
							\ \ \bm{\mathcal{D}}_{\kappa_1} & \ -\bm{\mathcal{T}}_{\kappa_1}^{\tilde{\varepsilon}, \tilde{\mu}} \end{matrix} & \mathsf{I}  -\mathbf{\Lambda}^e  & -\mathbf{\Xi}^m\\ \hdashline
						\begin{matrix}
							\kappa_1^2\bm{\mathcal{T}}_{\kappa_1}^{\tilde{\mu}, \tilde{\varepsilon}}\mathsf{R}&  \bm{\mathcal{D}}_{\kappa_1}\mathsf{R}
						\end{matrix} & - \mathbf{\Xi}^e  & \mathsf{I}  -\mathbf{\Lambda}^m
					\end{array}
					\right) \begin{pmatrix}
						\bm{\alpha} \\ \bm{\beta} \\\hdashline  \neu \\ \hdashline \nev 
					\end{pmatrix} = -\begin{pmatrix}
						\bm{g}_1\\ \bm{g}_2 \\\hdashline 0 \\\hdashline 0
					\end{pmatrix}
				\end{equation}
				holds in $\bm{\mathcal{H}}(\Gamma) \times \mathbf{H}(\Curl, \Omega_i)\times \mathbf{H}(\Curl, \Omega_i),$ where we defined the right-hand-side 
				\begin{align*}
					\begin{aligned}
						\bm{g}_1 \coloneqq \gammat\neu^{\text{inc}}, \quad
						\bm{g}_2 \coloneqq \tfrac{\mu_1}{\mu_0}\gammatau\tilde{\nev}^{\text{inc}}.
					\end{aligned}
				\end{align*}
			\end{problem}
		\end{tcolorbox}

		\section{Analysis of STF-VIEs}\label{sec:analysis}
		\subsection{Variational Formulation}\label{sec:variational}
		We now present a variational formulation for the coupled system \eqref{eq:Problem}. We denote by $\langle \cdot, \cdot\rangle_{\Omega_i}$ the duality pairing between $\mathbf{H}(\Curl, \Omega_i)'$ and $\mathbf{H}(\Curl, \Omega_i)$. Recall that $\langle \cdot, \cdot \rangle_{\bm{\tau},\Gamma}$ denotes the duality pairing between $\mathbf{H}^{-1/2}(\curl_{\Gamma}, \Gamma)$ and $\mathbf{H}^{-1/2}(\dv_{\Gamma}, \Gamma)$. In the trace space $\bm{\mathcal{H}}(\Gamma)$ we define
		\begin{align}\label{eq:maxwell-prod-duality}
			\cdef{\llangle \cdot, \cdot \rrangle }\ : \ \bm{\mathcal{H}}(\Gamma)\times\bm{\mathcal{H}}(\Gamma)\rightarrow \mathbb{C}, \quad \llangle \underline{\bm{\varphi}}, \underline{\bm{\zeta}} \rrangle \coloneqq \langle \bm{\varphi}_1, \bm{\zeta}_0\rangle_{\bm{\tau},\Gamma} + \langle \bm{\zeta}_1, \bm{\varphi}_0\rangle_{\bm{\tau},\Gamma},
		\end{align}
		for all $\underline{\bm{\varphi}} = (\bm{\varphi_0}, \bm{\varphi}_1)\in \bm{\mathcal{H}}(\Gamma), \ \underline{\bm{\zeta}} = (\bm{\zeta}_0, \bm{\zeta}_1)\in\bm{\mathcal{H}}(\Gamma)$.
		
		\begin{tcolorbox}[colback=lightgray!15!white, sharp corners, colframe=lightgray!15!white]
			\begin{problem}[Variational Formulation for STF-VIE]\label{prob:varSTF} Given $\underline{\bm{g}}\in \bm{\mathcal{H}}(\Gamma),$ we seek $(\bm{\alpha}, \bm{\beta})\in \bm{\mathcal{H}}(\Gamma)$ and $(\neu, \nev) \in \cdef{\bm{\mathcal{H}}(\Omega_i)}\coloneqq [\mathbf{H}(\Curl, \Omega_i)]^2$ such that the variational formulation 
				\begin{equation*}
					\begin{array}{lclcc}
						\mathsf{a}((\bm{\alpha}, \bm{\beta}), (\bm{\bm{\zeta}}, \bm{\xi})) &+& \mathsf{b}((\neu, \nev), (\bm{\zeta}, \bm{\xi})) &=& \llangle \underline{\bm{g}}, (\bm{\zeta}, \bm{\xi})\rrangle,\\
						\mathsf{c}((\bm{\alpha}, \bm{\beta}), (\bm{w}, \bm{q})) &+& \mathsf{d}((\neu, \nev), (\bm{w}, \bm{q})) &=& 0,
					\end{array}
				\end{equation*}
				holds for all $(\bm{\bm{\zeta}}, \bm{\xi}) \in \bm{\mathcal{H}}(\Gamma)$ and $ (\bm{w}, \bm{q}) \in \bm{\mathcal{H}}(\Omega_i)',$ where we denote the bilinear forms
				\begin{align}\label{eq:sesqui-linear-a}
					\cdef{\mathsf{a}((\bm{\alpha}, \bm{\beta}), (\bm{\bm{\zeta}}, \bm{\xi}))} &\coloneqq \llangle (\mathbf{M}^{-1}\mathbf{A}_{\kappa_0}\mathbf{M} + \mathbf{A}_{\kappa_1}^{\tilde{\varepsilon}, \tilde{\mu}})(\bm{\alpha}, \bm{\beta}), (\bm{\bm{\zeta}}, \bm{\xi})\rrangle, \\ \label{eq:sesqui-linear-b}
					\cdef{\mathsf{b}((\neu, \nev),(\bm{\zeta}, \bm{\xi}))} &\coloneqq \llangle \mathbf{J}^e\neu, (\bm{\zeta}, \bm{\xi})\rrangle + \llangle \mathbf{J}^m\nev, (\bm{\zeta}, \bm{\xi})\rrangle,	\\\label{eq:sesqui-linear-c}
					\cdef{\mathsf{c}((\bm{\alpha}, \bm{\beta}), (\bm{w}, \bm{q}))} &\coloneqq \langle (\bm{\mathcal{D}}_{\kappa_1} \bm{\alpha} - \bm{\mathcal{T}}^{\tilde{\varepsilon}, \tilde{\mu}}_{\kappa_1}\bm{\beta}), \bm{w}\rangle_{\Omega_i} + \langle (\bm{\mathcal{D}}_{\kappa_1} (\mathsf{R}\bm{\beta}) + \kappa_1^2\bm{\mathcal{T}}^{\tilde{\mu}, \tilde{\varepsilon}}_{\kappa_1}(\mathsf{R}\bm{\alpha})), \bm{q}\rangle_{\Omega_i},\\\label{eq:sesqui-linear-d}
					\cdef{\mathsf{d}((\neu, \nev), (\bm{w}, \bm{q})) }&\coloneqq \langle \neu - \mathbf{\Lambda}^e\neu - \mathbf{\Xi}^m\nev, \bm{w}\rangle_{\Omega_i} + \langle \nev - \mathbf{\Lambda}^m\nev - \mathbf{\Xi}^e\neu, \bm{q}\rangle_{\Omega_i}
				\end{align}
			\end{problem}
		\end{tcolorbox}
		\begin{proposition}
			The bilinear forms defined in \eqref{eq:sesqui-linear-a}--\eqref{eq:sesqui-linear-d}
			\begin{align}
				&\mathsf{a} \ : \ \bm{\mathcal{H}}(\Gamma) \times \bm{\mathcal{H}}(\Gamma) \rightarrow \mathbb{C},\\
				&\mathsf{b} \ : \ \bm{\mathcal{H}}(\Omega_i)\times \bm{\mathcal{H}}(\Gamma) \rightarrow \mathbb{C},\\
				&\mathsf{c} \ : \ \bm{\mathcal{H}}(\Gamma) \times \bm{\mathcal{H}}(\Omega_i)' \rightarrow \mathbb{C},\\
				&\mathsf{d} \ : \ \bm{\mathcal{H}}(\Omega_i)\times \bm{\mathcal{H}}(\Omega_i)' \rightarrow \mathbb{C},
			\end{align}
			are all continuous.
		\end{proposition}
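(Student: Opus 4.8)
The plan is to reduce the continuity of each of the four forms to the boundedness of its constituent operators, composed with the boundedness of the duality pairings $\langle\cdot,\cdot\rangle_{\bm\tau,\Gamma}$, $\langle\cdot,\cdot\rangle_{\Omega_i}$ and hence of $\llangle\cdot,\cdot\rrangle$ by \eqref{eq:maxwell-prod-duality}. Since each form in \eqref{eq:sesqui-linear-a}--\eqref{eq:sesqui-linear-d} is a finite sum of such terms, the triangle inequality then delivers estimates of the type $|\mathsf{a}(\underline{\bm\varphi},\underline{\bm\zeta})|\le C\,\|\underline{\bm\varphi}\|_{\bm{\mathcal{H}}(\Gamma)}\,\|\underline{\bm\zeta}\|_{\bm{\mathcal{H}}(\Gamma)}$, and analogously for $\mathsf{b},\mathsf{c},\mathsf{d}$ on the respective product spaces.

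For $\mathsf{a}$ I would first note that the unweighted Calder\'on operator $\mathbf{A}_{\kappa_0}:\bm{\mathcal{H}}(\Gamma)\to\bm{\mathcal{H}}(\Gamma)$ is bounded by the mapping properties in Section \ref{sec:BIOs} and \eqref{eq:Calderon}, while $\mathbf{M},\mathbf{M}^{-1}$ are constant diagonal multipliers, hence bounded on $\bm{\mathcal{H}}(\Gamma)$. For the weighted operators in \eqref{eq:weightedBIOs} the only new ingredient is multiplication by $\tilde\varepsilon,\tilde\mu,1/\tilde\varepsilon,1/\tilde\mu$ and by their restrictions to $\Gamma$: by Assumption \ref{assumption} these lie in $C^1(\overbar\Omega_i)$ and are bounded away from zero, so their traces are Lipschitz multipliers on $H^{\pm1/2}(\Gamma)$, and, using the surface product rule $\dv_\Gamma(f\bm\beta)=\grad_\Gamma f\cdot\bm\beta+f\,\dv_\Gamma\bm\beta$ (and its analogue for $\curl_\Gamma$), one checks that $\bm\beta\mapsto f\bm\beta$ is bounded on $\mathbf{H}^{-1/2}(\dv_\Gamma,\Gamma)$ and on $\mathbf{H}^{-1/2}(\curl_\Gamma,\Gamma)$, and that $\bm\beta\mapsto\tfrac1a\dv_\Gamma\bm\beta$ is bounded $\mathbf{H}^{-1/2}(\dv_\Gamma,\Gamma)\to H^{-1/2}(\Gamma)$. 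Composing these with the bounded operators $\mathsf{V}_{\kappa_1},\mathbf{V}_{\kappa_1}^{\mathbf{t}},\mathbf{V}_{\kappa_1}^{\bm{\tau}},\grad_\Gamma,\Curl_\Gamma$ of Section \ref{sec:BIOs} shows $\mathbf{A}_{\kappa_1}^{\tilde\varepsilon,\tilde\mu}$ is bounded on $\bm{\mathcal{H}}(\Gamma)$, and pairing with the bounded $\llangle\cdot,\cdot\rrangle$ yields continuity of $\mathsf{a}$.

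For the volume contributions I would argue as follows. For $\neu,\nev\in\mathbf{H}(\Curl,\Omega_i)$ one has $p_e\neu,p_m\nev\in\mathbf{L}^2(\Omega_i)$, $\Curl(p_e\neu)=\grad p_e\times\neu+p_e\Curl\neu\in\mathbf{L}^2(\Omega_i)$ (similarly with $p_m$), and $\bm\tau_e\cdot\neu,\bm\tau_m\cdot\nev\in L^2(\Omega_i)$, since $p_e,p_m\in C^1(\overbar\Omega_i)$ and $\bm\tau_e,\bm\tau_m\in\mathbf{L}^\infty(\Omega_i)$ by \eqref{eq:taue-taum} and Assumption \ref{assumption}. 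Hence, by \eqref{eq:mappingVecNewton} and \eqref{eq:mapping_newton_omega}, each of $\mathbf{\Xi}^e,\mathbf{\Xi}^m,\mathbf{\Lambda}^e,\mathbf{\Lambda}^m$ maps $\mathbf{H}(\Curl,\Omega_i)$ boundedly into $\mathbf{H}^1(\Omega_i)\hookrightarrow\mathbf{H}(\Curl,\Omega_i)$; composing with the bounded traces $\gammat^-,\gammatau^-$ of Section \ref{sec:Preliminaries} shows $\mathbf{J}^e,\mathbf{J}^m$ of \eqref{eq:tracesVolE}--\eqref{eq:tracesVolM} are bounded $\mathbf{H}(\Curl,\Omega_i)\to\bm{\mathcal{H}}(\Gamma)$, so $\mathsf{b}$ is continuous. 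Continuity of $\mathsf{d}$ follows the same way, using also the trivial boundedness of $\mathbf{H}(\Curl,\Omega_i)\hookrightarrow\mathbf{H}(\Curl,\Omega_i)$ so that the terms $\neu-\mathbf{\Lambda}^e\neu-\mathbf{\Xi}^m\nev$ and $\nev-\mathbf{\Lambda}^m\nev-\mathbf{\Xi}^e\neu$ are controlled, together with the boundedness of $\langle\cdot,\cdot\rangle_{\Omega_i}$. For $\mathsf{c}$ I would use that $\bm{\mathcal{D}}_{\kappa_1}$ maps $\mathbf{H}^{-1/2}(\curl_\Gamma,\Gamma)$ boundedly into $\mathbf{H}(\Curl^2,\Omega_i\cup\Omega_o)$, whose restriction to $\Omega_i$ embeds continuously in $\mathbf{H}(\Curl,\Omega_i)$ (Proposition \ref{SLandDL}), that the weighted potentials $\bm{\mathcal{T}}_{\kappa_1}^{\tilde\varepsilon,\tilde\mu},\bm{\mathcal{T}}_{\kappa_1}^{\tilde\mu,\tilde\varepsilon}$ of \eqref{eq:weightedT} map $\mathbf{H}^{-1/2}(\dv_\Gamma,\Gamma)$ boundedly into $\mathbf{H}(\Curl,\Omega_i)$ (combining $\grad\mathsf{S}_{\kappa_1}$ and $\mathbf{S}_{\kappa_1}$ from Proposition \ref{eq:vectorNewton-SL} and its corollaries with the multiplier bounds above), and that $\mathsf{R}$ is an isomorphism; pairing with $\langle\cdot,\cdot\rangle_{\Omega_i}$ then closes the argument.

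The hard part will be the verification that multiplication by a smooth scalar weight is bounded on the Maxwell trace spaces $\mathbf{H}^{-1/2}(\dv_\Gamma,\Gamma)$ and $\mathbf{H}^{-1/2}(\curl_\Gamma,\Gamma)$: these are graph spaces of the surface operators $\dv_\Gamma,\curl_\Gamma$, so boundedness of the multiplier on $\mathbf{H}^{-1/2}_\times(\Gamma)$ alone does not suffice -- one must also control the commutator with $\dv_\Gamma$ (resp. $\curl_\Gamma$), which is exactly where the $C^1$ regularity of $\varepsilon,\mu$ from Assumption \ref{assumption} enters through the surface product rule. Once this point is settled, the rest is a routine combination of the cited mapping properties with Sobolev embeddings and the triangle inequality.
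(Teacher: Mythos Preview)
Your argument is correct and follows the same route as the paper's proof, which simply records that continuity follows from the mapping properties of the vector Newton potential (Proposition~\ref{eq:vectorNewton-SL}), the Maxwell layer potentials (Proposition~\ref{SLandDL}), and the trace operators. Your write-up is considerably more detailed than the paper's three-line sketch; the one remark worth making is that the ``hard part'' you flag is milder than it looks, since in \eqref{eq:weightedBIOs} and \eqref{eq:weightedT} the smooth weights $\tilde\varepsilon^{\pm1},\tilde\mu^{\pm1}$ act only on $H^{-1/2}(\Gamma)$ (after $\dv_\Gamma$ or $\curl_\Gamma$ has already been applied) or on $\mathbf{H}^{-1/2}(\Gamma)$ before $\mathbf{S}_{\kappa_1}$, so no multiplier bound on the full graph space $\mathbf{H}^{-1/2}(\dv_\Gamma,\Gamma)$ is actually required.
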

		\begin{proof}
			The result follows from
			\begin{itemize}
				\item Continuity of the vector-valued Newton potential (Proposition \ref{eq:vectorNewton-SL}).
				\item Continuity of the Maxwell layer potentials (Proposition \ref{SLandDL}).
				\item Continuity of trace operators \cite[Theorem~4.1]{buffa2002traces}.
			\end{itemize}
		\end{proof}
		
		\subsection{Coercivity of weak STF-VIEs}\label{sec:coercive}
		
		Based on the results shown in Section \ref{sec:Newton}, we establish mapping properties for the operators defined in Section \ref{sec:STF}. 
		\begin{proposition}
			The Newton potentials $ \mathsf{N}_{\Omega_i, \star} $ and $\mathbf{N}_{\Omega_i, \star}$.
			\begin{subequations}
				\begin{align}\label{eq:mappingA}
					\mathbf{N}_{\Omega_i, \star} &: \mathbf{H}(\Curl, \Omega_i)\rightarrow \mathbf{H}^1(\Curl, \Omega_i),\\ \label{eq:mappingB}
					\mathbf{N}_{\Omega_i, \star}\Curl&: \mathbf{H}(\Curl, \Omega_i)\rightarrow \mathbf{H}^1(\Curl, \Omega_i),\\ \label{eq:mappingC}
					\grad \mathsf{N}_{\Omega_i, \star}&: L^2(\Omega_i)\rightarrow \mathbf{H}^1(\Curl, \Omega_i).
				\end{align}
			\end{subequations}
			are continuous.
		\end{proposition}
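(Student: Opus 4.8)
The plan is to derive all three continuity statements from the elliptic shift properties of the Newton potential recorded in Section~\ref{sec:Newton}, namely $\mathsf{N}_{\Omega_i,\star}:L^2(\Omega_i)\to H^2(\Omega_i)$ and its componentwise vectorial version $\mathbf{N}_{\Omega_i,\star}:\mathbf{L}^2(\Omega_i)\to\mathbf{H}^2(\Omega_i)$ from Proposition~\ref{eq:vectorNewton-SL}, combined with the elementary fact that
\[
	\mathbf{H}^2(\Omega_i)\hookrightarrow\mathbf{H}^1(\Curl,\Omega_i)
\]
continuously. This embedding holds because, recalling the definition $\mathbf{H}^1(\Curl,\Omega_i)=\{\bm{w}\in\mathbf{H}^1(\Omega_i):\Curl\bm{w}\in\mathbf{H}^1(\Omega_i)\}$ from Section~\ref{sec:Preliminaries}, each Cartesian component of $\Curl\bm{w}$ is a linear combination of first partial derivatives of the components of $\bm{w}$; hence $\norm{\bm{w}}_{\mathbf{H}^1(\Omega_i)}+\norm{\Curl\bm{w}}_{\mathbf{H}^1(\Omega_i)}\lesssim\norm{\bm{w}}_{\mathbf{H}^2(\Omega_i)}$ for every $\bm{w}\in\mathbf{H}^2(\Omega_i)$.

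With this in hand, for \eqref{eq:mappingA} I would use that $\mathbf{H}(\Curl,\Omega_i)\hookrightarrow\mathbf{L}^2(\Omega_i)$, so by the $\mathbf{L}^2\to\mathbf{H}^2$ mapping of $\mathbf{N}_{\Omega_i,\star}$ (equivalently, by the Corollary stating $\mathbf{N}_{\Omega_i,\star}:\mathbf{H}(\Curl,\Omega_i)\to\mathbf{H}^2(\Omega_i)$) the image lies in $\mathbf{H}^2(\Omega_i)$, and the displayed embedding finishes the argument. For \eqref{eq:mappingB}, given $\bm{u}\in\mathbf{H}(\Curl,\Omega_i)$ one has $\Curl\bm{u}\in\mathbf{L}^2(\Omega_i)$ with $\norm{\Curl\bm{u}}_{\mathbf{L}^2(\Omega_i)}\le\norm{\bm{u}}_{\mathbf{H}(\Curl,\Omega_i)}$, whence $\mathbf{N}_{\Omega_i,\star}(\Curl\bm{u})\in\mathbf{H}^2(\Omega_i)\hookrightarrow\mathbf{H}^1(\Curl,\Omega_i)$, again by the $\mathbf{L}^2\to\mathbf{H}^2$ property. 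For \eqref{eq:mappingC}, for $f\in L^2(\Omega_i)$ we obtain $\mathsf{N}_{\Omega_i,\star}f\in H^2(\Omega_i)$, hence $\grad\mathsf{N}_{\Omega_i,\star}f\in\mathbf{H}^1(\Omega_i)$ with norm bounded by $\norm{f}_{L^2(\Omega_i)}$; moreover $\Curl\grad=0$ extends from $C^\infty(\overbar{\Omega}_i)$ to $H^2(\Omega_i)$ by density, so $\Curl(\grad\mathsf{N}_{\Omega_i,\star}f)=0\in\mathbf{H}^1(\Omega_i)$, and therefore $\grad\mathsf{N}_{\Omega_i,\star}f\in\mathbf{H}^1(\Curl,\Omega_i)$ and the map is continuous.

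I do not expect any genuine obstacle here: the entire content is the $L^2\to H^2$ elliptic shift of the Newton potential quoted from \cite{sauter2010boundary}, together with routine bookkeeping of norms. The only point requiring a word of care is that $\mathbf{H}^1(\Curl,\Omega_i)$ is a graph space, so for \eqref{eq:mappingA} and \eqref{eq:mappingB} one must bound both $\norm{\cdot}_{\mathbf{H}^1(\Omega_i)}$ and $\norm{\Curl(\cdot)}_{\mathbf{H}^1(\Omega_i)}$ of the output, which the embedding $\mathbf{H}^2(\Omega_i)\hookrightarrow\mathbf{H}^1(\Curl,\Omega_i)$ dispatches once and for all, whereas for \eqref{eq:mappingC} the $\Curl$-part of the output is trivially in $\mathbf{H}^1(\Omega_i)$ because it vanishes identically.
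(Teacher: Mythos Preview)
Your proof is correct and follows essentially the same approach as the paper's own proof: both rely on the $\mathbf{L}^2\to\mathbf{H}^2$ shift of the Newton potential, the continuous embeddings $\mathbf{H}(\Curl,\Omega_i)\hookrightarrow\mathbf{L}^2(\Omega_i)$ and $\mathbf{H}^2(\Omega_i)\hookrightarrow\mathbf{H}^1(\Curl,\Omega_i)$, boundedness of $\Curl:\mathbf{H}(\Curl,\Omega_i)\to\mathbf{L}^2(\Omega_i)$, and the identity $\Curl\grad=0$. Your write-up is in fact slightly more explicit about the embedding $\mathbf{H}^2(\Omega_i)\hookrightarrow\mathbf{H}^1(\Curl,\Omega_i)$ and the density argument for $\Curl\grad=0$, but the content is identical.
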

		\begin{proof}
			We know that 
			\begin{equation}\label{eq:NewtonH2}
				\mathbf{N}_{\Omega_i, \star} : \mathbf{L}^2(\Omega_i)\rightarrow \mathbf{H}^2(\Omega_i).
			\end{equation}
			We also know the continuous embeddings $\mathbf{H}(\Curl,\Omega_i)\subset \mathbf{L}^2(\Omega_i)$ and $\mathbf{H}^2(\Omega_i)\subset \mathbf{H}^1(\Curl,\Omega_i).$ Therefore, we conclude \eqref{eq:mappingA}. \\
			We can show \eqref{eq:mappingB} from \eqref{eq:NewtonH2}, since $\Curl : \mathbf{H}(\Curl, \Omega_i)\rightarrow \mathbf{L}^2(\Omega_i)$ is bounded.\\
			To show \eqref{eq:mappingC}, we consider $f\in L^2(\Omega_i)$. We know from \eqref{eq:mapping_newton} that $\mathsf{N}_{\Omega_i,\star}f \in H^2(\Omega_i),$ and therefore $\grad \mathsf{N}_{\Omega_i,\star}f \in \mathbf{H}^1(\Omega_i).$ In addition, we know that $
			\Curl (\grad\mathsf{N}_{\Omega_i,\star}f) \equiv 0.$ Since both $\mathsf{N}_{\Omega_i,\star}f$ and its $\Curl$ belong to $\mathbf{H}^1(\Omega_i),$ we conclude that $\mathsf{N}_{\Omega_i,\star} : L^2(\Omega_i)\rightarrow \mathbf{H}^1(\Curl, \Omega_i).$
		\end{proof}
		
		\begin{corollary}\label{compactness}
			Under Assumption \ref{assumption}, the operators 
			\begin{align*}
				&\mathbf{\Lambda}^e : \mathbf{H}(\Curl, \Omega_i)\rightarrow \mathbf{H}(\Curl, \Omega_i), &\mathbf{\Lambda}^m : \mathbf{H}(\Curl, \Omega_i)\rightarrow \mathbf{H}(\Curl, \Omega_i),\\
				&\mathbf{\Xi}^e : \mathbf{H}(\Curl, \Omega_i)\rightarrow \mathbf{H}(\Curl, \Omega_i), &\mathbf{\Xi}^m : \mathbf{H}(\Curl, \Omega_i)\rightarrow \mathbf{H}(\Curl, \Omega_i),
			\end{align*}
			defined as in \eqref{eq:VolK}--\eqref{eq:VolA} are compact. In particular, the operators
			\begin{align*}
				&\mathbf{J}^e : \mathbf{H}(\Curl, \Omega_i)\rightarrow \bm{\mathcal{H}}(\Gamma), &\mathbf{J}^m : \mathbf{H}(\Curl, \Omega_i)\rightarrow \bm{\mathcal{H}}(\Gamma), 
			\end{align*}
			defined in \eqref{eq:tracesVolE} and \eqref{eq:tracesVolM} are compact.
		\end{corollary}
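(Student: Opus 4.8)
The plan is to factor each volume integral operator through the smoothing of the Newton potentials collected in the preceding Proposition (mapping properties \eqref{eq:mappingA}--\eqref{eq:mappingC}) and then post-compose with a compact embedding. The single compactness input needed is the following Rellich-type fact: since $\Omega_i$ is a bounded Lipschitz domain, $\mathbf{H}^1(\Omega_i)\hookrightarrow\mathbf{L}^2(\Omega_i)$ is compact, and applying this both to a field and to its $\Curl$ shows that $\mathbf{H}^1(\Curl,\Omega_i)\hookrightarrow\mathbf{H}(\Curl,\Omega_i)$ is compact. Everything else is bookkeeping with compositions of bounded and compact maps.

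The next step is to record the multiplier bounds, which is where Assumption \ref{assumption} (item 2) is used. Since $\varepsilon,\mu\in C^1(\overbar{\Omega}_i)$ and are bounded below, the contrasts $p_e,p_m$ and their gradients belong to $C^0(\overbar{\Omega}_i)\subset L^\infty(\Omega_i)$, and $\bm{\tau}_e=\grad\varepsilon/\varepsilon$, $\bm{\tau}_m=\grad\mu/\mu$ are bounded. Hence, by the product rule $\Curl(p_\star\neu)=\grad p_\star\times\neu+p_\star\Curl\neu$, multiplication $\neu\mapsto p_\star\neu$ (with $\star\in\{e,m\}$) is bounded on $\mathbf{H}(\Curl,\Omega_i)$, and $\neu\mapsto\bm{\tau}_\star\cdot\neu$ is bounded $\mathbf{H}(\Curl,\Omega_i)\to L^2(\Omega_i)$. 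Feeding these into \eqref{eq:mappingA}--\eqref{eq:mappingC}: $\mathbf{\Xi}^m\nev=-\mathbf{N}_{\Omega_i,\kappa_1}(\Curl(p_m\nev))$ and $\mathbf{\Xi}^e\neu=-\kappa_1^2\mathbf{N}_{\Omega_i,\kappa_1}(\Curl(p_e\neu))$ are, by \eqref{eq:VolK}, \eqref{eq:VolA}, the composition of multiplication by $p_\star$ with $\mathbf{N}_{\Omega_i,\kappa_1}\Curl$, hence bounded $\mathbf{H}(\Curl,\Omega_i)\to\mathbf{H}^1(\Curl,\Omega_i)$ by \eqref{eq:mappingB}; and $\mathbf{\Lambda}^e$ (and likewise $\mathbf{\Lambda}^m$) is the sum of $-\kappa_1^2\mathbf{N}_{\Omega_i,\kappa_1}(p_e\neu)$, bounded into $\mathbf{H}^1(\Curl,\Omega_i)$ by \eqref{eq:mappingA}, and $\grad\mathsf{N}_{\Omega_i,\kappa_1}(\bm{\tau}_e\cdot\neu)$, bounded into $\mathbf{H}^1(\Curl,\Omega_i)$ by \eqref{eq:mappingC}. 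Composing with the compact embedding $\mathbf{H}^1(\Curl,\Omega_i)\hookrightarrow\mathbf{H}(\Curl,\Omega_i)$ shows that $\mathbf{\Lambda}^e,\mathbf{\Lambda}^m,\mathbf{\Xi}^e,\mathbf{\Xi}^m$ are compact on $\mathbf{H}(\Curl,\Omega_i)$.

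Finally, the tangential traces $\gammat^-$ and $\gammatau^-$ are bounded from $\mathbf{H}(\Curl,\Omega_i)$ into the respective components of $\bm{\mathcal{H}}(\Gamma)$, so $\mathbf{J}^e$ and $\mathbf{J}^m$ from \eqref{eq:tracesVolE}--\eqref{eq:tracesVolM} are bounded operators composed with compact ones, and therefore compact. I expect the only delicate point — and the natural main obstacle — to be the multiplier step: one must make sure that the pointwise products with $p_\star$, $\grad p_\star$ and $\bm{\tau}_\star$ do not spoil the $\mathbf{L}^2$ (resp.\ $L^2$) regularity that triggers the two-order gain of the Newton potentials, which is exactly what the $C^1(\overbar{\Omega}_i)$ regularity of $\varepsilon,\mu$ together with their lower bounds in Assumption \ref{assumption} guarantee; the remaining steps are routine bookkeeping with compositions of bounded and compact maps.
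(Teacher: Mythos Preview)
Your proof is correct and follows essentially the same approach as the paper: factor the volume operators through the Newton-potential smoothing \eqref{eq:mappingA}--\eqref{eq:mappingC} using the $C^1$ multiplier bounds from Assumption~\ref{assumption}, then invoke the Rellich embedding $\mathbf{H}^1(\Omega_i)\hookrightarrow\mathbf{L}^2(\Omega_i)$ (hence $\mathbf{H}^1(\Curl,\Omega_i)\hookrightarrow\mathbf{H}(\Curl,\Omega_i)$ compactly). Your write-up is in fact more explicit than the paper's on the multiplier step and on deducing compactness of $\mathbf{J}^e,\mathbf{J}^m$ by composition with bounded traces.
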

		\begin{proof}
			From \eqref{eq:VolK} and \eqref{eq:VolA}, we observe that $\mathbf{\Lambda}^e, \mathbf{\Lambda}^m, \mathbf{\Xi}^e$ and $\mathbf{\Xi}^m$ are linear combinations of operators of the form \eqref{eq:mappingA}-\eqref{eq:mappingC}. Multipliers are all bounded and smooth, therefore they map elements of $\mathbf{H}(\Curl, \Omega_i)$ to $\mathbf{H}(\Curl, \Omega_i)$, and to $L^2(\Omega_i).$ \\
			The result follows by Rellich's embedding theorem \cite[Theorem~2.5.5]{sauter2010boundary}, which states compact inclusion from $H^1(\Omega_i)$ into $L^2(\Omega_i)$ (and therefore from $\mathbf{H}^1(\Omega_i)$ into $\mathbf{L}^2(\Omega_i)$).
		\end{proof}
		
		The left-hand side of the STF-VIE \eqref{eq:STF-VIE} can be decomposed into several operators as suggested by the operator matrix notation in \eqref{eq:STF-VIE}. An abstract analysis on such block operators is given in Appendix \ref{BlockOps}. In particular, we need to establish the coercivity/inf-sup stability of the diagonal operators $$\mathbf{M}^{-1}\mathbf{A}_{\kappa_0}\mathbf{M} + \mathbf{A}_{\kappa_1}^{\tilde{\varepsilon},\tilde{\mu}} \ : \ \bm{\mathcal{H}}(\Gamma)\rightarrow\bm{\mathcal{H}}(\Gamma),$$ and
		\begin{equation*}
			\begin{pmatrix}
				\mathbf{I} - \mathbf{\Lambda}^e & -\mathbf{\Xi}^m\\
				-\mathbf{\Xi}^e & \mathbf{I}-\mathbf{\Lambda}^m
			\end{pmatrix} \ : \ \bm{\mathcal{H}}(\Omega_i) \rightarrow \bm{\mathcal{H}}(\Omega_i).
		\end{equation*}
		After establishing stability and uniqueness of solutions, from Proposition \ref{th:well_posed} we will be able to infer well-posedness of the continuous problem.\\
		
		The first step is to show that a generalized G\r{a}rding inequality (T-coercivity) holds for the (weighted) Maxwell Calder\'on operator $ \mathbf{A}^{\tilde{\varepsilon}, \tilde{\mu}}_{\kappa_1} $ from \eqref{eq:weightedCalderon}. We start with the following result for (weighted) scalar and vector-valued single layer operators.
		\begin{lemma}\label{lemma:weighted} Let $ \chi \in C^{1}(\overbar{\Omega}_i) $ be such that $$ 0 < \chi_{\text{min}} < \chi(\nex) < \chi_{\text{max}} $$ for all $ \nex\in \Omega_i. $ Let $\kappa_1 > 0 $ and $ \mathsf{V}_{\kappa_1} : H^{-1/2}(\Gamma) \rightarrow H^{1/2}(\Gamma) $ be the scalar single layer boundary integral operator with wavenumber $ \kappa_1 $. Then, there exist a compact operator $ \Theta_{\chi} : H^{-1/2}(\Gamma) \rightarrow H^{1/2}(\Gamma)  $ and $ c_{\chi} > 0 $ such that
			\begin{equation}
				\mathrm{Re} \left\{ 	\langle \mathsf{V}_{\kappa_1} (\chi \varphi), \overline{\varphi}\rangle_{\Gamma}  + \langle \Theta_{\chi}\varphi, \overline{\varphi}\rangle_{\Gamma} \right\} \geq c_{\chi} \norm{\varphi}_{H^{-1/2}(\Gamma)}^2
			\end{equation}
			holds for all $ \varphi \in H^{-1/2}(\Gamma). $\\
			The result can also be extended to the vectorial case. There exist a compact operator $\mathbf{\Theta}_{\chi}:\mathbf{H}^{-1/2}(\dv_{\Gamma},\Gamma)\rightarrow \mathbf{H}^{-1/2}(\curl_{\Gamma},\Gamma)$ and a constant $C_{\chi}>0$ such that 
			\begin{equation}
				\mathrm{Re}\left\{ \langle \mathbf{V}_{\kappa_1}^{\mathbf{t}}(\chi\bm{\beta}), \overline{\bm{\beta}}\rangle_{\bm{\tau}, \Gamma} +  \langle \mathbf{\Theta}_{\chi}\bm{\beta}, \overline{\bm{\beta}}\rangle_{\bm{\tau}, \Gamma}\right\} \geq C_{\chi}\norm{\bm{\beta}}^2_{\mathbf{H}^{-1/2}(\Gamma)}
			\end{equation}
			holds for all $\bm{\beta}\in \mathbf{H}^{-1/2}(\dv_{\Gamma}0,\Gamma).$
		\end{lemma}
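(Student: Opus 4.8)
The plan is to reduce the statement to the known positivity of the unweighted single layer operators (stated just before in the paper: $\langle \mathsf{V}_0\psi,\psi\rangle_\Gamma \gtrsim \|\psi\|_{H^{-1/2}(\Gamma)}^2$ and the vectorial analogue for $\mathbf{V}_0^{\mathbf t}$), and to absorb every difference into a compact perturbation. First I would write
\begin{equation*}
  \langle \mathsf{V}_{\kappa_1}(\chi\varphi),\overline{\varphi}\rangle_\Gamma
  = \langle \mathsf{V}_0(\chi\varphi),\overline{\varphi}\rangle_\Gamma
  + \langle (\mathsf{V}_{\kappa_1}-\mathsf{V}_0)(\chi\varphi),\overline{\varphi}\rangle_\Gamma ,
\end{equation*}
and recall that $\mathsf{V}_{\kappa_1}-\mathsf{V}_0 : H^{-1/2}(\Gamma)\to H^{1/2}(\Gamma)$ is smoothing of higher order (its kernel $\tfrac{e^{i\kappa_1 r}-1}{4\pi r}$ is continuous, hence the operator maps into $H^{3/2}(\Gamma)$ or better), so composition with the bounded multiplier $\chi$ and with the compact embedding $H^{3/2}(\Gamma)\hookrightarrow H^{1/2}(\Gamma)$ makes $\varphi\mapsto(\mathsf{V}_{\kappa_1}-\mathsf{V}_0)(\chi\varphi)$ compact from $H^{-1/2}(\Gamma)$ to $H^{1/2}(\Gamma)$. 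That term is the first contribution to $\Theta_\chi$.

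Next I would handle the multiplier $\chi$ in the principal term $\langle \mathsf{V}_0(\chi\varphi),\overline{\varphi}\rangle_\Gamma$. The idea is to symmetrize: split $\chi\varphi$ so that the weight ends up acting on both arguments. Since $\mathsf{V}_0$ is self-adjoint with respect to $\langle\cdot,\cdot\rangle_\Gamma$, write
\begin{equation*}
  \langle \mathsf{V}_0(\chi\varphi),\overline{\varphi}\rangle_\Gamma
  = \langle \chi\,\mathsf{V}_0\varphi,\overline{\varphi}\rangle_\Gamma
    + \langle [\mathsf{V}_0,\chi]\varphi,\overline{\varphi}\rangle_\Gamma ,
\end{equation*}
and observe that the commutator $[\mathsf{V}_0,\chi]$ gains a derivative — because on the diagonal the singular kernel is multiplied by $\chi(\nex)-\chi(\ney)=O(|\nex-\ney|)$, which improves the mapping properties by one order — so $[\mathsf{V}_0,\chi]:H^{-1/2}(\Gamma)\to H^{3/2}(\Gamma)$ compactly into $H^{1/2}(\Gamma)$; this is the second contribution to $\Theta_\chi$. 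For the leading term I would then further reduce $\langle\chi\,\mathsf{V}_0\varphi,\overline\varphi\rangle_\Gamma$. Writing $\chi$ in the dual pairing as acting on the test side is delicate because $\chi\varphi$ must stay in $H^{-1/2}$; the cleaner route is to use the positive square root $\sqrt{\chi}\in C^1(\overbar\Omega_i)$ and again commute: $\langle\chi\,\mathsf{V}_0\varphi,\overline\varphi\rangle_\Gamma = \langle \mathsf{V}_0(\sqrt\chi\varphi),\overline{\sqrt\chi\,\varphi}\rangle_\Gamma + (\text{compact commutator terms})$, and then apply the unweighted coercivity estimate to $\psi:=\sqrt\chi\,\varphi$, giving $\langle\mathsf{V}_0\psi,\overline\psi\rangle_\Gamma\geq C\|\psi\|_{H^{-1/2}}^2\geq C\chi_{\min}\|\varphi\|_{H^{-1/2}}^2$ — here one uses that multiplication by the strictly positive $\sqrt\chi\in C^1$ is a bounded isomorphism on $H^{-1/2}(\Gamma)$ with inverse norm controlled by $\chi_{\min}$. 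Taking real parts and collecting all the compact remainders into $\Theta_\chi$ yields the scalar estimate with $c_\chi$ proportional to $C\chi_{\min}$.

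For the vectorial statement I would run the identical argument with $\mathbf{V}_0^{\mathbf t}$ in place of $\mathsf{V}_0$, using its coercivity on $\mathbf{H}^{-1/2}_\times(\Gamma)$ from the proposition quoted above, the fact that $\mathbf{V}_{\kappa_1}^{\mathbf t}-\mathbf{V}_0^{\mathbf t}$ is order-$(-3)$ smoothing hence compact $\mathbf{H}^{-1/2}(\dv_\Gamma,\Gamma)\to\mathbf{H}^{-1/2}(\curl_\Gamma,\Gamma)$, and the scalar-valued $C^1$ multiplier $\chi$ (and $\sqrt\chi$) acting componentwise, whose commutators with $\mathbf{V}_0^{\mathbf t}$ are again smoothing by one order and therefore compact into $\mathbf{H}^{-1/2}(\curl_\Gamma,\Gamma)$; the embedding $\mathbf{H}^{-1/2}(\dv_\Gamma,\Gamma)\subset\mathbf{H}^{-1/2}_\times(\Gamma)$ lets us apply the componentwise estimate, and multiplication by $\sqrt\chi$ remains an isomorphism on $\mathbf{H}^{-1/2}_\times(\Gamma)$ with constants governed by $\chi_{\min},\chi_{\max}$. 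I expect the main obstacle to be the rigorous justification that the commutators $[\mathsf{V}_0,\chi]$ and $[\mathbf{V}_0^{\mathbf t},\chi]$ are genuinely smoothing by one order on a merely Lipschitz boundary $\Gamma$ and hence compact on the relevant trace spaces; on Lipschitz domains the pseudodifferential calculus is not directly available, so this needs either a direct kernel estimate (using $|\chi(\nex)-\chi(\ney)|\le \mathrm{Lip}(\chi)\,|\nex-\ney|$ together with the known mapping properties of the weakly singular operator with kernel $|\nex-\ney|^{-1}\cdot|\nex-\ney| = 1$, i.e. a bounded integral operator whose composition with the embedding is compact) or an appeal to Costabel's results on boundary integral operators on Lipschitz domains. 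The rest — the algebraic splittings, absorbing terms into $\Theta_\chi$, and tracking that $c_\chi\sim\chi_{\min}$ — is routine.
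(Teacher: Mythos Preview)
Your proposal is correct and follows essentially the same strategy as the paper: symmetrize via $\sqrt{\chi}$, absorb the difference into a compact remainder by a kernel cancellation argument (your commutator $[\mathsf{V}_0,\chi]$ has kernel $G_0(\nex,\ney)(\chi(\ney)-\chi(\nex))$, exactly the cancellation the paper exploits), and then invoke the unweighted G\r{a}rding/coercivity estimate together with the norm equivalence $\|\sqrt{\chi}\,\varphi\|_{H^{-1/2}}\sim\|\varphi\|_{H^{-1/2}}$. The paper is slightly more economical: it works directly with $\mathsf{V}_{\kappa_1}$ (using its G\r{a}rding inequality rather than strict ellipticity of $\mathsf{V}_0$), and in a single step writes $\mathsf{V}_{\kappa_1}\chi-\chi^{1/2}\mathsf{V}_{\kappa_1}\chi^{1/2}$ as an operator with kernel $G_{\kappa_1}(\nex,\ney)\chi^{1/2}(\ney)(\chi^{1/2}(\ney)-\chi^{1/2}(\nex))$, avoiding your preliminary split $\mathsf{V}_{\kappa_1}=\mathsf{V}_0+(\text{compact})$ and the two-stage commutator manipulation; but these are cosmetic differences, and your concern about justifying the commutator compactness on Lipschitz $\Gamma$ via a direct kernel estimate is precisely how the paper proceeds as well.
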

		\begin{proof}
			Note that $ \chi^{1/2} $ is well defined and $ \chi^{1/2}\in C^{1}(\overbar{\Omega}_i). $ Then
			\begin{align*}
				\langle \mathsf{V}_{\kappa_1}(\chi \varphi), \overline{\varphi} \rangle_{\Gamma} &= \langle \mathsf{V}_{\kappa_1}(\chi^{1/2} \chi^{1/2} \varphi), \overline{\varphi} \rangle_{\Gamma} \\
				&= \langle \chi^{1/2}\mathsf{V}_{\kappa_1}(\chi^{1/2} \varphi), \overline{\varphi} \rangle_{\Gamma} + 
				\langle (\mathsf{V}_{\kappa_1} \chi - \chi^{1/2}\mathsf{V}_{\kappa_1}\chi^{1/2}) \varphi, \overline{\varphi} \rangle_{\Gamma} \\ \label{eq:proof_coercivity}
				&= \langle \mathsf{V}_{\kappa_1}(\chi^{1/2} \varphi),\overline{ (\chi^{1/2}\varphi)} \rangle_{\Gamma} + 
				\langle (\mathsf{V}_{\kappa_1} \chi - \chi^{1/2}\mathsf{V}_{\kappa_1}\chi^{1/2}) \varphi, \overline{\varphi} \rangle_{\Gamma}.
			\end{align*}
			We define $ \Theta_{\chi} \coloneqq \mathsf{V}_{\kappa_1} \chi - \chi^{1/2}\mathsf{V}_{\kappa_1}\chi^{1/2}. $ This is a compact operator due to the cancellation of singularity at $ \nex = \ney $:
			\begin{equation*}
				G(\nex, \ney) \chi(\ney) - G(\nex, \ney)\chi^{1/2}(\nex)\chi^{1/2}(\ney) = G(\nex, \ney) \chi^{1/2}(\ney)(\chi^{1/2}(\ney) - \chi^{1/2}(\nex)).
			\end{equation*}
			We also know that there exists a compact operator $ \Theta_{\mathsf{V}_{\kappa_1}} :  H^{-1/2}(\Gamma) \rightarrow H^{1/2}(\Gamma)  $ such that
			\begin{equation*}
				\mathrm{Re} \left\{ 	\langle \mathsf{V}_{\kappa_1} \psi, \overline{\psi} \rangle_{\Gamma}  + \langle \Theta_{\mathsf{V}_{\kappa_1}}\psi, \overline{\psi}\rangle_{\Gamma} \right\} \geq c_{\mathsf{V}_{\kappa_1}} \norm{\psi}_{H^{-1/2}(\Gamma)}^2
			\end{equation*}
			for every $ \psi\in H^{-1/2}(\Gamma) $. Note that $ \chi^{1/2}\varphi \in H^{-1/2}(\Gamma) $ for $ \chi^{1/2}$ smooth and $ \varphi \in H^{-1/2}(\Gamma)$. We define $ \Theta \coloneqq \chi^{1/2}\Theta_{\mathsf{V}_{\kappa_1}}\chi^{1/2} - \Theta_{\chi} $ and conclude
			\begin{equation*}
				\mathrm{Re}\left\{	\langle \mathsf{V}_{\kappa_1}(\chi \varphi), \overline{\varphi} \rangle_{\Gamma} + 	\langle \Theta \varphi, \overline{\varphi} \rangle_{\Gamma}\right\} \geq c_{\mathsf{V}_{\kappa_1}}\norm{\chi^{1/2}\varphi}^2_{H^{-1/2}(\Gamma)} \geq c_{\chi} \norm{\varphi}^2_{H^{-1/2}(\Gamma)},
			\end{equation*}
			where $ c_{\chi} $ depends on $ c_{\mathsf{V}_{\kappa_1}} $ and $ \chi,$ and we used Lemma \ref{lemma:eq_norms} in the last inequality. \\
			The result for $\mathbf{V}_{\kappa_1}^{\mathbf{t}}$ can be shown by following the same approach and using Lemma \ref{lemma:eq_norms}.
		\end{proof}
		In the spirit of results valid for the Maxwell Calder\'on operator \cite[Theorem~9]{buffa2003galerkin}, we can state the following proposition.
		\begin{proposition}[Generalized G\r{a}rding inequality for $\mathbf{A}_{\kappa_1}^{\tilde{\varepsilon},\tilde{\mu}}$]\label{GardingWeightedCald}
			Let $ \varepsilon, \mu \in C^1(\overbar{\Omega}_i)$ and define arbitrary positive coefficients $ \varepsilon_1, \mu_1, \kappa_1 $. Let $\mathbf{A}_{\kappa_1}^{\tilde{\varepsilon},\tilde{\mu}}$ be defined as in \eqref{eq:weightedCalderon}, where $\tilde{\varepsilon}=\tfrac{\varepsilon}{\varepsilon_1}$ and $\tilde{\mu}=\tfrac{\mu}{\mu_1}$. Then, there is a compact operator $ \mathbf{\Theta}_{\mathbf{A}} : \bm{\mathcal{H}}(\Gamma)\rightarrow \bm{\mathcal{H}}(\Gamma) $, an isomorphism $ \mathbf{X}_{\Gamma} : \bm{\mathcal{H}}(\Gamma)\rightarrow \bm{\mathcal{H}}(\Gamma)$ and a constant $ C_{\mathbf{A}} > 0 $ depending on $ \varepsilon,\mu, \varepsilon_1, \mu_1, \kappa_1, \Omega_i, $ such that
			\begin{equation}\label{eq:weightedMaxCald}
				\mathrm{Re}\left\{ \LLangle \mathbf{A}_{\kappa_1}^{\tilde{\varepsilon},\tilde{\mu}}\begin{pmatrix}
					\bm{\alpha} \\ \bm{\beta}
				\end{pmatrix}, \mathbf{X}_{\Gamma}  \begin{pmatrix}
					\overline{\bm{\alpha}} \\ \overline{\bm{\beta}}
				\end{pmatrix} \RRangle_{\Gamma} + \LLangle \mathbf{\Theta}_{\mathbf{A}}\begin{pmatrix}
					\bm{\alpha} \\ \bm{\beta}
				\end{pmatrix}, \begin{pmatrix}
					\overline{\bm{\alpha}} \\ \overline{\bm{\beta}}
				\end{pmatrix} \RRangle_{\Gamma}   \right\} \geq C_{\mathbf{A}}\norm{(\bm{\alpha}, \bm{\beta})}^2_{\bm{\mathcal{H}}(\Gamma)}.
			\end{equation}
		\end{proposition}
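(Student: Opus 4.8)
The plan is to mimic, term by term, the proof of the generalized G\r{a}rding inequality for the unweighted Maxwell Calder\'on operator in \cite[Theorem~9]{buffa2003galerkin}, the only genuinely new input being the coercivity of the \emph{weighted} scalar and vectorial single layer operators supplied by Lemma~\ref{lemma:weighted}. First I would fix the stable regular (Hodge-type) splittings of the two Maxwell trace spaces, each into a surface-gradient component and a complementary component that is $\dv_{\Gamma}$-free (resp.\ $\curl_{\Gamma}$-free), the summands on one space corresponding to those on the other under the rotation $\mathsf{R}$ up to a finite-dimensional space of harmonic fields \cite{buffa2002traces}. On these splittings I would take $\mathbf{X}_{\Gamma}$ to be the block-diagonal isomorphism $\mathrm{diag}(\mathbf{X}_{1},\mathbf{X}_{2})$ acting as $\pm\mathsf{I}$ on the graded pieces, with the signs used there (composed with $\mathsf{R}^{\pm1}$ to land in the correct space). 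The finite-dimensional harmonic parts contribute only to the compact remainder $\mathbf{\Theta}_{\mathbf{A}}$.

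Next I would expand $\LLangle \mathbf{A}_{\kappa_1}^{\tilde\varepsilon,\tilde\mu}(\bm\alpha,\bm\beta),\mathbf{X}_{\Gamma}(\overline{\bm\alpha},\overline{\bm\beta})\RRangle_{\Gamma}$ using the definition \eqref{eq:maxwell-prod-duality} and the block form \eqref{eq:weightedCalderon}. This produces three groups of terms: the two diagonal quadratic forms $\langle\mathbf{W}_{\kappa_1}^{\tilde\mu,\tilde\varepsilon}\bm\alpha,\mathbf{X}_{1}\overline{\bm\alpha}\rangle_{\bm\tau,\Gamma}$ and $\langle\mathbf{X}_{2}\overline{\bm\beta},\mathbf{V}_{\kappa_1}^{\tilde\varepsilon,\tilde\mu}\bm\beta\rangle_{\bm\tau,\Gamma}$, and the off-diagonal contribution $\langle\mathbf{K}'_{\kappa_1}\bm\beta,\mathbf{X}_{1}\overline{\bm\alpha}\rangle_{\bm\tau,\Gamma}-\langle\mathbf{X}_{2}\overline{\bm\beta},\mathbf{K}_{\kappa_1}\bm\alpha\rangle_{\bm\tau,\Gamma}$ involving only the \emph{unweighted} operators $\mathbf{K}_{\kappa_1},\mathbf{K}'_{\kappa_1}$. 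For the last group I would invoke the duality $\langle\mathbf{K}_{\kappa_1}\cdot,\cdot\rangle_{\bm\tau,\Gamma}=\langle\cdot,\mathbf{K}'_{\kappa_1}\cdot\rangle_{\bm\tau,\Gamma}$ together with the fact that $\mathbf{K}_{\kappa_1}$ respects the regular decomposition modulo smoothing, so that it commutes with $\mathbf{X}_{\Gamma}$ up to a compact operator; as in \cite[Theorem~9]{buffa2003galerkin} this turns the off-diagonal group into a compact sesquilinear form, absorbed into $\mathbf{\Theta}_{\mathbf{A}}$.

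It remains to bound the two diagonal forms from below, and here Lemma~\ref{lemma:weighted} enters. Writing $\bm\beta=\grad_{\Gamma}\phi+\bm\beta_{\perp}$ according to the splitting, on $\bm\beta_{\perp}$ (where $\dv_{\Gamma}\bm\beta_{\perp}=0$) the term $\tfrac{1}{\kappa_1^2}\grad_{\Gamma}\mathsf{V}_{\kappa_1}(\tfrac{1}{\tilde\varepsilon}\dv_{\Gamma}\cdot)$ vanishes and $\mathbf{V}_{\kappa_1}^{\mathbf{t}}(\tilde\mu\,\cdot)$ is coercive up to a compact perturbation by the vectorial part of Lemma~\ref{lemma:weighted} with $\chi=\tilde\mu$ (admissible by Assumption~\ref{assumption}); on $\grad_{\Gamma}\phi$, surface Green formulas turn the leading term into $\tfrac{1}{\kappa_1^2}\langle\mathsf{V}_{\kappa_1}(\tfrac{1}{\tilde\varepsilon}\,\Delta_{\Gamma}\phi),\overline{\Delta_{\Gamma}\phi}\rangle_{\Gamma}$, which is coercive in $\norm{\Delta_{\Gamma}\phi}_{H^{-1/2}(\Gamma)}$ up to a compact term by the scalar part of Lemma~\ref{lemma:weighted} with $\chi=1/\tilde\varepsilon$, while the $\mathbf{V}_{\kappa_1}^{\mathbf{t}}$-contribution there, and the cross terms between the two summands, are either compact forms or only $\lesssim\norm{\grad_{\Gamma}\phi}^2_{\mathbf{H}^{-1/2}(\Gamma)}$ and hence absorbable, since on the surface-gradient subspace $\norm{\grad_{\Gamma}\phi}_{\mathbf{H}^{-1/2}(\Gamma)}$ is a compact seminorm relative to $\norm{\dv_{\Gamma}\grad_{\Gamma}\phi}_{H^{-1/2}(\Gamma)}$ (Ehrling's inequality for the compact embedding $H^{3/2}(\Gamma)\hookrightarrow H^{1/2}(\Gamma)$). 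With the signs of $\mathbf{X}_{2}$ the two pieces add up to $\mathrm{Re}\,\langle\mathbf{X}_{2}\overline{\bm\beta},\mathbf{V}_{\kappa_1}^{\tilde\varepsilon,\tilde\mu}\bm\beta\rangle_{\bm\tau,\Gamma}\ge c\,\norm{\bm\beta}^2_{\mathbf{H}^{-1/2}(\dv_{\Gamma},\Gamma)}$ modulo a compact form, and the $\mathbf{W}_{\kappa_1}^{\tilde\mu,\tilde\varepsilon}$-form is treated identically with $\dv_{\Gamma}/\curl_{\Gamma}$, the weights $\tilde\varepsilon/\tilde\mu$ and an extra factor $\kappa_1^2$ interchanged. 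Summing and collecting all compact remainders into one $\mathbf{\Theta}_{\mathbf{A}}$ yields \eqref{eq:weightedMaxCald}, with $C_{\mathbf{A}}$ depending only on $\kappa_1$, the bounds $\varepsilon_{\min},\varepsilon_{\max},\mu_{\min},\mu_{\max}$, the constants $\varepsilon_1,\mu_1$ and $\Omega_i$.

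I expect the principal difficulty to be weight bookkeeping rather than anything structurally new: one must verify that every remainder produced when moving the $C^1$ weights $\tilde\mu$ and $1/\tilde\varepsilon$ past $\grad_{\Gamma}$, $\curl_{\Gamma}$, $\mathsf{V}_{\kappa_1}$ and the surface integration by parts is genuinely a compact operator (this is where the kernel-singularity cancellation already exploited in the proof of Lemma~\ref{lemma:weighted} must be invoked once more, now in the presence of the surface differential operators), and that the block-diagonal $\mathbf{X}_{\Gamma}$ still renders the $\mathbf{K}_{\kappa_1},\mathbf{K}'_{\kappa_1}$ contribution a compact form — the delicate point, exactly as in the unweighted argument of \cite[Theorem~9]{buffa2003galerkin}, being that $\mathbf{K}_{\kappa_1}$ preserves the regular decomposition of $\bm{\mathcal{H}}(\Gamma)$ only up to smoothing operators.
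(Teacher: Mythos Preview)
Your proposal is correct and takes essentially the same route as the paper: a stable regular splitting of the trace spaces, $\mathbf{X}_{\Gamma}$ defined as the sign-flip on the two graded pieces (no composition with $\mathsf{R}^{\pm1}$ is needed, since $\llangle\cdot,\cdot\rrangle$ is already a self-pairing on $\bm{\mathcal{H}}(\Gamma)$), and Lemma~\ref{lemma:weighted} supplying coercivity of the weighted single-layer forms $\mathbf{V}_{\kappa_1}^{\mathbf{t}}(\tilde\mu\,\cdot)$ and $\mathsf{V}_{\kappa_1}(\tfrac{1}{\tilde\mu}\,\cdot)$ on the relevant components. One point where the paper's argument differs from your sketch: the unweighted $\mathbf{K}_{\kappa_1},\mathbf{K}'_{\kappa_1}$ cross-terms are not disposed of as a compact form via ``$\mathbf{K}_{\kappa_1}$ commutes with $\mathbf{X}_{\Gamma}$ up to smoothing'', but rather shown to have \emph{vanishing real part} directly from the symmetry $\langle\mathbf{K}_{\kappa_1}\bm\alpha,\bm\beta\rangle_{\Gamma}=\langle\bm\alpha,\mathbf{K}'_{\kappa_1}\bm\beta\rangle_{\Gamma}$ of \cite[Lemma~6]{buffa2003galerkin}, which yields $-\langle\mathbf{K}_{\kappa_1}\bm\alpha_{\perp},\overline{\bm\beta}_0\rangle_{\Gamma}+\langle\mathbf{K}'_{\kappa_1}\bm\beta_0,\overline{\bm\alpha}_{\perp}\rangle_{\Gamma}=-2i\,\mathrm{Im}\,\langle\mathbf{K}_{\kappa_1}\bm\alpha_{\perp},\overline{\bm\beta}_0\rangle_{\Gamma}$; the paper also uses the \emph{regular} decomposition of \cite[Lemma~2]{buffa2003galerkin} ($\mathbf{H}^{1/2}$-part plus surface-div/curl-free part) rather than a Hodge splitting, though your observation that $\Delta_{\Gamma}\phi\in H^{-1/2}$ forces $\grad_{\Gamma}\phi\in\mathbf{H}^{1/2}$ shows the two are interchangeable here.
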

		\begin{proof}
			The proof is largely based on the one in \cite[Theorem~9]{buffa2003galerkin}, with the difference that the operator $ \mathbf{A}_{\kappa_1}^{\tilde{\varepsilon}, \tilde{\mu}} $ is weighted by the strictly positive $C^1$--smooth multipliers $ \tilde{\varepsilon} $ and $ \tilde{\mu}$.\\
			We use the regular decomposition theorem \cite[Lemma~2]{buffa2003galerkin}: $\bm{\alpha}\in \mathbf{H}^{-1/2}(\curl_{\Gamma},\Gamma)$ can be written as 
			\begin{equation}\label{eq:decomp-alpha}
				\bm{\alpha} = \bm{\alpha}_{\perp} + \bm{\alpha}_0, \quad \bm{\alpha}_{\perp}\in \mathbf{H}^{1/2}_{\parallel}(\Gamma), \ \bm{\alpha}_0\in \mathbf{H}^{-1/2}(\curl_{\Gamma}0, \Gamma),
			\end{equation}
			where $$\norm{\bm{\alpha}_{\perp}}_{\mathbf{H}^{1/2}_{\parallel}(\Gamma)} \leq C\norm{\curl_{\Gamma}\bm{\alpha}}_{H^{-1/2}(\Gamma)}.$$
			Similarly, $\bm{\beta}\in \mathbf{H}^{-1/2}(\dv_{\Gamma}, \Gamma)$ can be written as
			\begin{equation}\label{eq:decomp-beta}
				\bm{\beta} = \bm{\beta}_{\perp} + \bm{\beta}_0, \quad \bm{\beta}_{\perp}\in \mathbf{H}^{1/2}_{\times}(\Gamma), \ \bm{\beta}_0\in \mathbf{H}^{-1/2}(\dv_{\Gamma}0, \Gamma),
			\end{equation}
			where $$\norm{\bm{\beta}_{\perp}}_{\mathbf{H}^{1/2}_{\times}(\Gamma)} \leq C\norm{\dv_{\Gamma}\bm{\beta}}_{H^{-1/2}(\Gamma)}.$$
			We define 
			\begin{align}
				\mathbf{X}_{\Gamma}\begin{pmatrix}
					\bm{\alpha} \\ \bm{\beta}
				\end{pmatrix} \coloneqq \begin{pmatrix}
					\ \ 	\bm{\alpha}_{\perp} - \bm{\alpha}_0 \\ \bm{\beta}_{\perp}-\bm{\beta}_0
				\end{pmatrix}, \quad \text{ for all }\bm{\alpha}\in\mathbf{H}^{-1/2}(\curl_{\Gamma}, \Gamma), \ \bm{\beta}\in \mathbf{H}^{-1/2}(\dv_{\Gamma}, \Gamma).
			\end{align}
			Now we write
			\begin{align}\label{eq:Aepsmu1}\begin{aligned}
					\LLangle\mathbf{A}_{\kappa_1}^{\tilde{\varepsilon}, \tilde{\mu}}\begin{pmatrix}
						\bm{\alpha} \\ \bm{\beta}
					\end{pmatrix}, \mathbf{X}_{\Gamma}\begin{pmatrix}
						\overline{\bm{\alpha}} \\ \overline{\bm{\beta}}
					\end{pmatrix}\RRangle_{\Gamma} &{}=		\LLangle\mathbf{A}_{\kappa_1}^{\tilde{\varepsilon}, \tilde{\mu}}\begin{pmatrix}
						\bm{\alpha}_{\perp} \\ \bm{\beta}_0
					\end{pmatrix}, \begin{pmatrix}
						\ \ \overline{\bm{\alpha}}_{\perp} \\ -\overline{\bm{\beta}}_0
					\end{pmatrix}\RRangle_{\Gamma} \\
					&{}+ \LLangle\mathbf{A}_{\kappa_1}^{\tilde{\varepsilon}, \tilde{\mu}}\begin{pmatrix}
						\bm{\alpha}_0 \\ \bm{\beta}_{\perp}
					\end{pmatrix}, \begin{pmatrix}
						\ \ \overline{\bm{\alpha}}_{\perp} \\ -\overline{\bm{\beta}}_0
					\end{pmatrix}\RRangle_{\Gamma} \\
					&{}+\LLangle\mathbf{A}_{\kappa_1}^{\tilde{\varepsilon}, \tilde{\mu}}\begin{pmatrix}
						\bm{\alpha}_{\perp} \\ \bm{\beta}_0
					\end{pmatrix}, \begin{pmatrix}
						-\overline{\bm{\alpha}}_{0} \\ \ \ \overline{\bm{\beta}}_{\perp}
					\end{pmatrix}\RRangle_{\Gamma} \\
					&{}+ \LLangle\mathbf{A}_{\kappa_1}^{\tilde{\varepsilon}, \tilde{\mu}}\begin{pmatrix}
						\bm{\alpha}_0 \\ \bm{\beta}_{\perp}
					\end{pmatrix}, \begin{pmatrix}
						-\overline{\bm{\alpha}}_{0} \\ \ \ \overline{\bm{\beta}}_{\perp}
					\end{pmatrix}\RRangle_{\Gamma} 
				\end{aligned}
			\end{align}
			We study the first term in the right-hand side of \eqref{eq:Aepsmu1},
			\begin{align}
				\LLangle\mathbf{A}_{\kappa_1}^{\tilde{\varepsilon}, \tilde{\mu}}\begin{pmatrix}
					\bm{\alpha}_{\perp} \\ \bm{\beta}_0
				\end{pmatrix}, \begin{pmatrix}
					\ \ \overline{\bm{\alpha}}_{\perp} \\ -\overline{\bm{\beta}}_0
				\end{pmatrix}\RRangle_{\Gamma} =& - \langle \mathbf{K}_{\kappa_1}\bm{\alpha}_{\perp}, \overline{\bm{\beta}}_0\rangle_{\Gamma} - \langle\mathbf{V}_{\kappa_1}^{\tilde{\varepsilon},\tilde{\mu}}\bm{\beta}_0, \overline{\bm{\beta}}_0\rangle_{\Gamma} \\ \nonumber
				&+\langle \mathbf{W}_{\kappa_1}^{\tilde{\mu}, \tilde{\varepsilon}}\bm{\alpha}_{\perp}, \overline{\bm{\alpha}}_{\perp}\rangle_{\Gamma} + \langle \mathbf{K}'_{\kappa_1}\bm{\beta}_0, \overline{\bm{\alpha}}_{\perp}\rangle_{\Gamma},
			\end{align}
			where
			\begin{subequations}
				\begin{align}\label{eq:V1emu}
					\langle\mathbf{V}_{\kappa_1}^{\tilde{\varepsilon},\tilde{\mu}}\bm{\beta}_0, \overline{\bm{\beta}}_0\rangle_{\Gamma} &= \langle \mathbf{V}^{\mathbf{t}}_{\kappa_1}(\tilde{\mu}\bm{\beta}_0), \overline{\bm{\beta}}_0\rangle_{\Gamma},\\ \label{eq:W1mue}
					\langle\mathbf{W}_{\kappa_1}^{\tilde{\mu},\tilde{\varepsilon}}\bm{\alpha}_{\perp}, \overline{\bm{\alpha}}_{\perp}\rangle_{\Gamma} &= \langle \mathsf{V}_{\kappa_1}(\tfrac{1}{\tilde{\mu}}\curl_{\Gamma}\bm{\alpha}_{\perp}), \curl_{\Gamma}\overline{\bm{\alpha}}_{\perp}\rangle_{\Gamma} + \kappa_1^2 \langle \mathbf{V}_{\kappa_1}^{\bm{\tau}}(\tilde{\varepsilon}\bm{\alpha}_{\perp}), \overline{\bm{\alpha}}_{\perp}\rangle_{\Gamma}
				\end{align}
			\end{subequations}
			The second term in \eqref{eq:W1mue} is a compact perturbation since $\bm{\alpha}_{\perp}\in \mathbf{H}^{1/2}_{\times}(\Gamma)$ and $\mathbf{H}^{1/2}_{\times}(\Gamma)$ is compactly embedded in $\mathbf{H}^{-1/2}_{\times}(\Gamma)$ \cite[Corollary~1]{buffa2003galerkin}, while for the first term in \eqref{eq:W1mue} and \eqref{eq:V1emu} we have a coercivity result that follows from Lemma \ref{lemma:weighted} \:
			\begin{subequations}
				\begin{align}
					\mathrm{Re}\{ \langle \mathbf{V}^{\mathbf{t}}_{\kappa_1}(\tilde{\mu}\bm{\beta}_0), \overline{\bm{\beta}}_0\rangle_{\Gamma} + \mathsf{t}(\bm{\beta}_0, \overline{\bm{\beta}}_0) \} &\geq c_1 \norm{\bm{\beta}_0}_{\mathbf{H}^{-1/2}(\Gamma)}^2, \\
					\mathrm{Re}\{ \langle \mathsf{V}_{\kappa_1}(\tfrac{1}{\tilde{\varepsilon}}\curl_{\Gamma}\bm{\alpha}_{\perp}), \curl_{\Gamma}\overline{\bm{\alpha}}_{\perp}\rangle_{\Gamma} + \mathsf{t}(\bm{\alpha}_{\perp}, \overline{\bm{\alpha}}_{\perp}) \} &\geq c_2 \norm{\curl_{\Gamma}\bm{\alpha}_{\perp}}_{H^{-1/2}(\Gamma)}^2.
				\end{align}
			\end{subequations}
			On the other hand, from \cite[Lemma~6]{buffa2003galerkin}, a symmetry between $\mathbf{K}_{\kappa_1}$ and $\mathbf{K}_{\kappa_1}'$ with respect to the duality pairing $\langle \cdot, \cdot \rangle_{\Gamma}$ implies \cite[Theorem~9]{buffa2003galerkin}
			\begin{equation}
				-\langle\mathbf{K}_{\kappa_1}\bm{\alpha}_{\perp}, \overline{\bm{\beta}}_0\rangle_{\Gamma} + \langle\mathbf{K}_{\kappa_1}'\bm{\beta}_0, \overline{\bm{\alpha}}_{\perp}\rangle_{\Gamma} = -2i\text{Im}\left\{ \langle  \mathbf{K}_{\kappa_1}\bm{\alpha}_{\perp}, \overline{\bm{\beta}}_0\rangle_{\Gamma} \right\}.
			\end{equation}
			Then, we can establish that there exist a compact perturbation  $\mathbf{\Theta}_{\mathbf{A},1}$ and a constant $c_{\mathbf{A},1}>0$ such that
			\begin{align}\label{eq:Part1}
				\mathrm{Re}\left\{ \LLangle \mathbf{A}_{\kappa_1}^{\tilde{\varepsilon}, \tilde{\mu}}\begin{pmatrix}
					\bm{\alpha}_{\perp} \\ \bm{\beta}_0
				\end{pmatrix}, \begin{pmatrix}
					\ \ \overline{\bm{\alpha}}_{\perp} \\ -\overline{\bm{\beta}}_0
				\end{pmatrix}\RRangle_{\Gamma} + \LLangle \mathbf{\Theta}_{\mathbf{A},1}\begin{pmatrix}
					\bm{\alpha} \\ \bm{\beta}
				\end{pmatrix}, \begin{pmatrix}
					\overline{\bm{\alpha}} \\ \overline{\bm{\beta}}
				\end{pmatrix}\RRangle_{\Gamma} \right\}  \geq& c_{\mathbf{A},1}\left(\norm{\curl_{\Gamma}\bm{\alpha}_{\perp}}_{H^{-1/2}(\Gamma)}^2 \right.\\ \nonumber
				& \left.+ \norm{\bm{\beta}_0}_{\mathbf{H}^{-1/2}(\Gamma)}^2\right).
			\end{align} 
			In a similar way, we study the third and fourth terms in \eqref{eq:Aepsmu1} and show
			\begin{align}\label{eq:Part2}
				\mathrm{Re}\left\{ \LLangle \mathbf{A}_{\kappa_1}^{\tilde{\varepsilon}, \tilde{\mu}}\begin{pmatrix}
					\bm{\alpha}_{0} \\ \bm{\beta}_{\perp}
				\end{pmatrix}, \begin{pmatrix}
					\ \ \overline{\bm{\alpha}}_{0} \\ \overline{\bm{\beta}}_{\perp}
				\end{pmatrix}\RRangle_{\Gamma} + \LLangle \mathbf{\Theta}_{\mathbf{A},2}\begin{pmatrix}
					\bm{\alpha} \\ \bm{\beta}
				\end{pmatrix}, \begin{pmatrix}
					\overline{\bm{\alpha}} \\ \overline{\bm{\beta}}
				\end{pmatrix}\RRangle_{\Gamma} \right\}  \geq& c_{\mathbf{A},2}\left(\norm{\dv_{\Gamma}\bm{\beta}_{\perp}}_{H^{-1/2}(\Gamma)}^2 \right.\\ \nonumber
				& \left.+ \norm{\bm{\alpha}_0}_{\mathbf{H}^{-1/2}(\Gamma)}^2\right).
			\end{align} 
			Combining \eqref{eq:Part1} and \eqref{eq:Part2}, and by the stability of the decomposition in \eqref{eq:decomp-alpha} and \eqref{eq:decomp-beta}, we conclude that \eqref{eq:weightedMaxCald} holds.
		\end{proof}
		\begin{corollary}\label{th:Tcoercive-weighted-STF}
			Let us define $ \mathbb{A} \coloneqq  \mathbf{M}^{-1}\mathbf{A}_{\kappa_0}\mathbf{M} + \mathbf{A}_{\kappa_1}^{\tilde{\varepsilon}, \tilde{\mu}}$.  Then, under Assumption \ref{assumption},  there is a compact operator $ \mathbf{\Theta}_{\mathbb{A}} : \bm{\mathcal{H}}(\Gamma)\rightarrow \bm{\mathcal{H}}(\Gamma) $, an isomorphism $ \mathbb{X}_{\Gamma} : \bm{\mathcal{H}}(\Gamma)\rightarrow \bm{\mathcal{H}}(\Gamma)$ and a constant $ C_{\mathbb{A}} > 0 $ depending on $ \varepsilon,\mu, \varepsilon_1, \mu_1, \varepsilon_0, \mu_0, \Omega_i, $ such that
			\begin{equation}
				\mathrm{Re}\left\{ \left\langle \mathbb{A} \begin{pmatrix}
					\bm{\alpha} \\ \bm{\beta}
				\end{pmatrix}, \mathbb{X}_{\Gamma}  \begin{pmatrix}
					\overline{\bm{\alpha}} \\ \overline{\bm{\beta}}
				\end{pmatrix} \right\rangle_{\Gamma} + \left\langle \mathbf{\Theta}_{\mathbb{A}}\begin{pmatrix}
					\bm{\alpha} \\ \bm{\beta}
				\end{pmatrix}, \begin{pmatrix}
					\overline{\bm{\alpha}} \\ \overline{\bm{\beta}}
				\end{pmatrix} \right\rangle_{\Gamma}   \right\} \geq C_{\mathbb{A}}\norm{(\bm{\alpha}, \bm{\beta})}^2_{\bm{\mathcal{H}}(\Gamma)},
			\end{equation} 
			where 
			$$\norm{(\bm{\alpha},\bm{\beta})}^2_{\bm{\mathcal{H}(\Gamma)}} \coloneqq \norm{\bm{\alpha}}_{\mathbf{H}^{-1/2}(\curl_{\Gamma},\Gamma)}^2+ \norm{\bm{\beta}}^2_{\mathbf{H}^{-1/2}(\dv_{\Gamma}, \Gamma)}, $$
			for all $\bm{\alpha}\in\mathbf{H}^{-1/2}(\curl_{\Gamma}, \Gamma)$ and $\bm{\beta}\in\mathbf{H}^{-1/2}(\dv_{\Gamma}, \Gamma).$
		\end{corollary}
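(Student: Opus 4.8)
The plan is to obtain the asserted generalized G\r{a}rding inequality for $\mathbb{A}$ by \emph{adding} two such inequalities, one for each summand $\mathbf{M}^{-1}\mathbf{A}_{\kappa_0}\mathbf{M}$ and $\mathbf{A}_{\kappa_1}^{\tilde{\varepsilon},\tilde{\mu}}$, arranged so that \emph{both are tested against the same isomorphism}. The second summand is already handled by Proposition~\ref{GardingWeightedCald}. The key observation for the first summand is that $\mathbf{M}=\operatorname{diag}(1,\tfrac{\mu_0}{\mu_1})$ (see \eqref{eq:STF-first}) is a constant, strictly positive, diagonal multiplier, so conjugation leaves the diagonal blocks $-\mathbf{K}_{\kappa_0},\mathbf{K}'_{\kappa_0}$ of $\mathbf{A}_{\kappa_0}$ untouched and only rescales the off-diagonal blocks by the positive constants $\tfrac{\mu_0}{\mu_1}$ and $\tfrac{\mu_1}{\mu_0}$. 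A direct comparison with the definitions in \eqref{eq:weightedBIOs}--\eqref{eq:weightedCalderon} then shows
\begin{equation*}
	\mathbf{M}^{-1}\mathbf{A}_{\kappa_0}\mathbf{M}=\mathbf{A}_{\kappa_0}^{\tilde{\varepsilon}_{\star},\tilde{\mu}_{\star}},\qquad \tilde{\varepsilon}_{\star}\coloneqq\tfrac{\mu_1}{\mu_0},\quad \tilde{\mu}_{\star}\coloneqq\tfrac{\mu_0}{\mu_1},
\end{equation*}
i.e.\ $\mathbf{M}^{-1}\mathbf{A}_{\kappa_0}\mathbf{M}$ is itself a weighted Calder\'on operator of the form \eqref{eq:weightedCalderon}, but with \emph{constant} strictly positive weights.

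Next I would record that the proof of Proposition~\ref{GardingWeightedCald} never uses anything specific to the wavenumber $\kappa_1$: it relies only on the ellipticity of $\mathsf{V}_0,\mathbf{V}_0^{\mathbf{t}},\mathbf{V}_0^{\bm{\tau}}$, on Lemma~\ref{lemma:weighted} to absorb the strictly positive $C^1$ multipliers, on the compact embeddings $\mathbf{H}^{1/2}_{\parallel/\times}(\Gamma)\hookrightarrow\mathbf{H}^{-1/2}_{\parallel/\times}(\Gamma)$, and on the $\mathbf{K}$--$\mathbf{K}'$ duality symmetry. Hence the same argument, applied with wavenumber $\kappa_0$ and the constant weights $\tilde{\varepsilon}_{\star},\tilde{\mu}_{\star}$, produces a compact operator $\mathbf{\Theta}_0:\bm{\mathcal{H}}(\Gamma)\to\bm{\mathcal{H}}(\Gamma)$ and a constant $C_0>0$ (depending on $\mu_0,\mu_1,\kappa_0,\Omega_i$) with
\begin{equation*}
	\mathrm{Re}\left\{\LLangle \mathbf{M}^{-1}\mathbf{A}_{\kappa_0}\mathbf{M}\begin{pmatrix}\bm{\alpha}\\\bm{\beta}\end{pmatrix},\mathbf{X}_{\Gamma}\begin{pmatrix}\overline{\bm{\alpha}}\\\overline{\bm{\beta}}\end{pmatrix}\RRangle_{\Gamma}+\LLangle \mathbf{\Theta}_0\begin{pmatrix}\bm{\alpha}\\\bm{\beta}\end{pmatrix},\begin{pmatrix}\overline{\bm{\alpha}}\\\overline{\bm{\beta}}\end{pmatrix}\RRangle_{\Gamma}\right\}\geq C_0\norm{(\bm{\alpha},\bm{\beta})}^2_{\bm{\mathcal{H}}(\Gamma)}.
\end{equation*}
Crucially, the isomorphism here may be taken to be the \emph{same} $\mathbf{X}_{\Gamma}$ as in Proposition~\ref{GardingWeightedCald}: by construction $\mathbf{X}_{\Gamma}$ is assembled solely from the regular decompositions \eqref{eq:decomp-alpha}--\eqref{eq:decomp-beta} of the trace spaces, which depend only on $\Gamma$ and not on the operator, the wavenumber, or the weights, so one fixes a single such decomposition once and for all and uses it throughout.

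Finally I would add this inequality to the one from Proposition~\ref{GardingWeightedCald}. By linearity of $(\bm{\alpha},\bm{\beta})\mapsto \LLangle \mathbb{A}(\bm{\alpha},\bm{\beta}),\mathbf{X}_{\Gamma}(\overline{\bm{\alpha}},\overline{\bm{\beta}})\RRangle_{\Gamma}$ in its first slot and linearity of $\mathbf{X}_{\Gamma}$, choosing $\mathbb{X}_{\Gamma}\coloneqq\mathbf{X}_{\Gamma}$, $\mathbf{\Theta}_{\mathbb{A}}\coloneqq\mathbf{\Theta}_0+\mathbf{\Theta}_{\mathbf{A}}$ (compact, being a sum of compact operators), and $C_{\mathbb{A}}\coloneqq C_0+C_{\mathbf{A}}>0$ yields the claim, with $C_{\mathbb{A}}$ depending on $\varepsilon,\mu,\varepsilon_0,\varepsilon_1,\mu_0,\mu_1,\Omega_i$ through $C_0$, $C_{\mathbf{A}}$ and the relations $\kappa_j=\omega\sqrt{\varepsilon_j\mu_j}$. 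The only genuinely delicate point is the compatibility requirement that the two G\r{a}rding inequalities be set up with a \emph{common} test isomorphism $\mathbf{X}_{\Gamma}$ — everything else is bookkeeping — and this works precisely because the regular-decomposition construction underlying $\mathbf{X}_{\Gamma}$ is independent of the Calder\'on-type operator to which it is applied.
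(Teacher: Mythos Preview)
Your proposal is correct and follows essentially the same approach as the paper's proof: establish a generalized G\r{a}rding inequality for $\mathbf{M}^{-1}\mathbf{A}_{\kappa_0}\mathbf{M}$ via the method of Proposition~\ref{GardingWeightedCald} with the \emph{same} isomorphism $\mathbf{X}_{\Gamma}$, then add the two inequalities. Your explicit identification $\mathbf{M}^{-1}\mathbf{A}_{\kappa_0}\mathbf{M}=\mathbf{A}_{\kappa_0}^{\tilde{\varepsilon}_{\star},\tilde{\mu}_{\star}}$ with constant weights is a clean way to see why Proposition~\ref{GardingWeightedCald} applies directly to the first summand, which the paper leaves implicit.
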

		\begin{proof}
			The proof for a Generalized G\r{a}rding inequality for $\mathbf{M}^{-1}\mathbf{A}_{\kappa_0}\mathbf{M}$ follows the same approach as in Proposition \ref{GardingWeightedCald}, also usign the isomorphism $\mathbf{X}_{\Gamma}$. The result follows by noting that $\mathbb{A}$ is a linear combination of two operators that satisfy a Generalized G\r{a}rding inequality with same isomorphisms $\mathbf{X}_{\Gamma}$ (Proposition \ref{GardingWeightedCald}).
		\end{proof}

		\begin{proposition}[Generalized G\r{a}rding inequality for $ \mathbf{I}- \mathbf{\Lambda}^{\star} $] \label{IminusA}
			Let $ \mathbf{\Lambda}^{\star}, \ \star = \{e, m\} $ defined as in \eqref{eq:VolK} or \eqref{eq:VolA},
			\begin{align*}
				\mathbf{\Lambda}^e \neu &\coloneqq -\kappa_1^2 \mathbf{N}_{\Omega_i,\kappa_1}(p_e\neu) + \grad \mathsf{N}_{\Omega_i,\kappa_1} (\bm{\tau}_e \cdot \neu),\\ 
				\mathbf{\Lambda}^m \nev &\coloneqq -\kappa_1^2 \mathbf{N}_{\Omega_i,\kappa_1}(p_m\nev) + \grad \mathsf{N}_{\Omega_i,\kappa_1} (\bm{\tau}_m\cdot \nev).
			\end{align*}
			Then, there exist a compact operator $ \Theta_{\star} : \mathbf{H}(\Curl, \Omega_i)\rightarrow \mathbf{H}(\Curl, \Omega_i)$, an isomorphism $ X_{\star} : \mathbf{H}(\Curl, \Omega_i)\rightarrow (\mathbf{H}(\Curl, \Omega_i))'$ and a constant $ C_{\star} > 0 $ such that
			\begin{equation}\label{eq:TcoerciveU}
				\mathrm{Re}\left\{ \langle  (\mathbf{I} - \mathbf{\Lambda}^{\star})\bm{u}, X_{\star}\overline{\bm{u}}\rangle_{\Omega_i} + ( \Theta_{\star}\bm{u}, \overline{\bm{u}})_{\mathbf{H}(\Curl,\Omega_i)}  \right\} \geq C_{\star}\norm{\bm{u}}_{\mathbf{H}(\Curl, \Omega_i)}^2
			\end{equation}
			holds for all $ \bm{u}\in \mathbf{H}(\Curl, \Omega_i). $
		\end{proposition}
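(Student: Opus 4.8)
The plan is to exploit that $\mathbf{I} - \mathbf{\Lambda}^{\star}$ is a compact perturbation of the identity on $\mathbf{H}(\Curl, \Omega_i)$. By Corollary \ref{compactness}, and more precisely by the mapping properties \eqref{eq:mappingA}--\eqref{eq:mappingC} on which it rests, $\mathbf{\Lambda}^{\star}$ maps $\mathbf{H}(\Curl, \Omega_i)$ continuously into $\mathbf{H}^1(\Curl, \Omega_i)$, which embeds compactly into $\mathbf{H}(\Curl, \Omega_i)$ by Rellich's theorem; hence $\mathbf{\Lambda}^{\star}$ is compact. The identity, on the other hand, is already \emph{coercive} with respect to a natural isomorphism onto the dual, so the only task is to move $\mathbf{\Lambda}^{\star}$ into the admissible compact perturbation $\Theta_{\star}$ and keep track of the duality/conjugation bookkeeping.

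Concretely, I would take $X_{\star}\colon \mathbf{H}(\Curl,\Omega_i)\to(\mathbf{H}(\Curl,\Omega_i))'$ to be the isomorphism associated with the graph bilinear form of $\mathbf{H}(\Curl,\Omega_i)$, i.e.\ $\langle \bm{w}, X_{\star}\bm{v}\rangle_{\Omega_i} := \int_{\Omega_i}\bigl(\bm{v}\cdot\bm{w} + \Curl\bm{v}\cdot\Curl\bm{w}\bigr)$ for all $\bm{v},\bm{w}\in\mathbf{H}(\Curl,\Omega_i)$. This form is bounded, and testing with $\bm{w}=\overline{\bm{v}}$ returns $\norm{\bm{v}}^2_{\mathbf{H}(\Curl,\Omega_i)}$, so $X_{\star}$ is inf-sup stable, hence a topological isomorphism onto the dual. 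With this choice one has $\langle \bm{u}, X_{\star}\overline{\bm{u}}\rangle_{\Omega_i} = \norm{\bm{u}}^2_{\mathbf{H}(\Curl,\Omega_i)}$ and $\langle \mathbf{\Lambda}^{\star}\bm{u}, X_{\star}\overline{\bm{u}}\rangle_{\Omega_i} = (\mathbf{\Lambda}^{\star}\bm{u},\overline{\bm{u}})_{\mathbf{H}(\Curl,\Omega_i)}$ for every $\bm{u}\in\mathbf{H}(\Curl,\Omega_i)$. Choosing $\Theta_{\star} := \mathbf{\Lambda}^{\star}$, which is compact by Corollary \ref{compactness}, the two contributions on the left-hand side of \eqref{eq:TcoerciveU} cancel and what remains is exactly $\norm{\bm{u}}^2_{\mathbf{H}(\Curl,\Omega_i)}$; the inequality then holds with $C_{\star}=1$.

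The main point is that there is no genuine obstacle at the continuous level: the single substantive ingredient is the smoothing $\mathbf{\Lambda}^{\star}\colon\mathbf{H}(\Curl,\Omega_i)\to\mathbf{H}^1(\Curl,\Omega_i)$ together with the compact Sobolev embedding, both already available, and the remainder is the standard argument for compact perturbations of the identity. The reason for phrasing the estimate through an isomorphism $X_{\star}$ and a compact $\Theta_{\star}$, rather than simply invoking Fredholm theory, is that in this form it transfers verbatim to the conforming finite-element subspaces of Section \ref{sec:Galerkin} — with $X_{\star}$ replaced by the corresponding discrete isomorphism — which is exactly what is needed for the discrete inf-sup conditions. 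The only aspect requiring a little care is the bookkeeping between the bilinear $\mathbf{L}^{2}$-based pairing $\langle\cdot,\cdot\rangle_{\Omega_i}$, the complex conjugates, and the pairing $(\cdot,\cdot)_{\mathbf{H}(\Curl,\Omega_i)}$, but this is routine.
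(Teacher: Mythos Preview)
Your argument is correct and essentially identical to the paper's own proof: choose $X_{\star}$ to be the Riesz isomorphism induced by the $\mathbf{H}(\Curl,\Omega_i)$ inner product, invoke the compactness of $\mathbf{\Lambda}^{\star}$ from Corollary~\ref{compactness}, and set $\Theta_{\star}=\mathbf{\Lambda}^{\star}$ so that the $\mathbf{\Lambda}^{\star}$-terms cancel and only $\norm{\bm{u}}_{\mathbf{H}(\Curl,\Omega_i)}^2$ remains. The paper carries out exactly these steps, with $C_{\star}=1$.
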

		\begin{proof}
			First, we study the duality pairing $ \langle \bm{u}, \bm{w}\rangle_{\Omega_i}$, with $\bm{w}\in (\mathbf{H}(\Curl, \Omega_i))'.$ Note that a simple choice is $X_{\star}$ such that
			\begin{equation}
				\langle \neu, X_{\star} \overline{\nev}\rangle_{\Omega_i} \coloneqq (\neu, \overbar{\nev})_{\mathbf{H}(\Curl, \Omega_i)} = (\neu, \overbar{\nev})_{\Omega_i} + (\Curl \neu, \Curl\overbar{\nev})_{\Omega_i},
			\end{equation}
			for all $\neu,\nev\in\mathbf{H}(\Curl, \Omega_i),$
			i.e. the inverse Riesz isomorphism.\\
			In that case, $\norm{X_{\star}} = 1$ and
			\begin{equation}
				\langle \bm{u}, X_{\star}\overline{\neu}\rangle_{\Omega_i}  = \norm{\neu}_{\mathbf{H}(\Curl, \Omega_i)}^2.
			\end{equation}
			From Proposition \ref{compactness}, we know that $\mathbf{\Lambda}^{\star}$ is compact in $\mathbf{H}(\Curl, \Omega_i).$ Therefore, choosing $\Theta_{\star} = \mathbf{\Lambda}^{\star}$ leads to \eqref{eq:TcoerciveU}.
		\end{proof}

		\subsection{STF-VIEs: Uniqueness of solutions}\label{sec:unique}
		The results from this section require an assumption on the material properties $\varepsilon  $ and $ \mu. $ 
		\begin{tcolorbox}[colback=lightgray!15!white, colframe=black, sharp corners, colframe=lightgray!15!white]
			\begin{assumption}\label{ConstCoeffs}
				We assume that the material properties $ \varepsilon $ and $\mu  $ are constant on the interface $ \Gamma, $ i.e.
				\begin{equation}
					\varepsilon(\nex)\equiv \varepsilon_1, \quad \mu(\nex) \equiv \mu_1, \quad \text{ for all }\nex\in \Gamma.
				\end{equation}
			\end{assumption}
		\end{tcolorbox}
		\begin{proposition}\label{unique}
			Under Assumption \ref{ConstCoeffs}, there exists a unique solution to Problem \ref{eq:Problem}.
		\end{proposition}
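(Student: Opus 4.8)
The plan is to obtain well-posedness from Fredholm theory, so that it is enough to establish injectivity. Corollary~\ref{th:Tcoercive-weighted-STF} shows that the boundary block $\mathbb{A}=\mathbf{M}^{-1}\mathbf{A}_{\kappa_0}\mathbf{M}+\mathbf{A}_{\kappa_1}^{\tilde{\varepsilon},\tilde{\mu}}$ is T-coercive on $\bm{\mathcal{H}}(\Gamma)$; Proposition~\ref{IminusA}, combined with the compactness of $\mathbf{\Lambda}^e,\mathbf{\Lambda}^m,\mathbf{\Xi}^e,\mathbf{\Xi}^m$ from Corollary~\ref{compactness}, shows that the volume block $\begin{pmatrix}\mathbf{I}-\mathbf{\Lambda}^e & -\mathbf{\Xi}^m\\ -\mathbf{\Xi}^e & \mathbf{I}-\mathbf{\Lambda}^m\end{pmatrix}$ is a compact perturbation of the identity on $\bm{\mathcal{H}}(\Omega_i)$; and the coupling operators $\mathbf{J}^e,\mathbf{J}^m$ are compact (Corollary~\ref{compactness}), while the remaining off-diagonal blocks are bounded. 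By the abstract block-operator analysis of Appendix~\ref{BlockOps} (cf.\ Proposition~\ref{th:well_posed}) the operator of the system \eqref{eq:STF-VIE} of Problem~\ref{eq:Problem} is therefore Fredholm of index zero, and it suffices to prove that the homogeneous problem \eqref{eq:STF-VIE} with $\bm{g}_1=\bm{g}_2=0$ has only the trivial solution.

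So let $(\bm{\alpha},\bm{\beta},\neu,\nev)$ solve the homogeneous system. Under Assumption~\ref{ConstCoeffs} one has $\tilde{\varepsilon}|_{\Gamma}=\tilde{\mu}|_{\Gamma}=1$, hence $\mathbf{A}_{\kappa_1}^{\tilde{\varepsilon},\tilde{\mu}}=\mathbf{A}_{\kappa_1}$, and the interior/exterior tangential traces of $\bm{\mathcal{T}}_{\kappa_1}^{\tilde{\varepsilon},\tilde{\mu}}$ and $\bm{\mathcal{T}}_{\kappa_1}^{\tilde{\mu},\tilde{\varepsilon}}$ coincide with those of $\bm{\mathcal{T}}_{\kappa_1}$; this is precisely what makes the interface terms of \eqref{eq:STF-VIE} reduce to the piecewise-constant case. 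I would then build a pair of fields on $\reals^3\setminus\Gamma$: inside $\Omega_i$ take the right-hand sides of \eqref{eq:repU} and \eqref{eq:repV}, which by those two equations equal $\neu$ and $\tilde{\nev}=i\omega\mu_1\nev$; outside $\Omega_i$ take the exterior layer-potential representation \eqref{eq:U0}--\eqref{eq:V0} with densities $\bm{\alpha}$ and $\tfrac{\mu_0}{\mu_1}\bm{\beta}$, the choice dictated by the homogeneous transmission conditions encoded in \eqref{eq:STF-first}. Because the vectorial Newton potential $\mathbf{N}_{\Omega_i,\kappa_1}$ inverts $-\Delta-\kappa_1^2$ componentwise (Theorem~\ref{eq:newton}) and the scalar and vector single-layer potentials solve the homogeneous Helmholtz/Maxwell equations away from $\Gamma$, reversing the algebra that turned the Stratton--Chu formula (Theorem~\ref{stratton-chu-general}) into \eqref{eq:new-rep} shows that the constructed fields satisfy $\Curl\neu-i\omega\mu\,\nev=0$ and $\Curl\nev+i\omega\varepsilon\,\neu=0$ in $\Omega_i$ — here one uses $\mu_1(1-p_m)=\mu$, $\varepsilon_1(1-p_e)=\varepsilon$ and that $\dv(\mu\nev)=\dv(\varepsilon\neu)=0$ is built into the $\bm{\tau}_m\cdot$ and $\bm{\tau}_e\cdot$ terms — while outside $\Omega_i$ they solve the vacuum Maxwell system with wavenumber $\kappa_0$ and the Silver--M\"uller radiation condition.

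To conclude I would verify the transmission conditions: the jump relations \eqref{eq:jump-relations}, together with $\tilde{\varepsilon}|_{\Gamma}=\tilde{\mu}|_{\Gamma}=1$ and the homogeneous equation \eqref{eq:STF-first}, force the tangential traces of the two fields to match across $\Gamma$, so that the glued pair solves \eqref{eq:transmission_problem_pwsmooth} with $\neu^{\text{inc}}=\nev^{\text{inc}}=0$. Since $\varepsilon,\mu$ are real, bounded and bounded below by positive constants (Assumption~\ref{assumption}), this transmission problem is uniquely solvable — by the standard argument, the radiating exterior field vanishes by Rellich's lemma and unique continuation, whence the interior field vanishes as well \cite[Ch.~6]{colton2012inverse}. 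Therefore $\neu\equiv\nev\equiv 0$. Taking interior tangential traces then gives $\gammat^-\neu=\gammatau^-\tilde{\nev}=0$; feeding this into \eqref{eq:STF-first} (with $\mathbf{A}_{\kappa_1}^{\tilde{\varepsilon},\tilde{\mu}}=\mathbf{A}_{\kappa_1}$) together with the trace identities obtained from \eqref{eq:repU}--\eqref{eq:repV} forces $\mathbf{M}(\bm{\alpha},\bm{\beta})$ to be the Cauchy data of the (already vanishing) radiating exterior field, so $\mathbf{M}(\bm{\alpha},\bm{\beta})=0$ and hence $\bm{\alpha}=\bm{\beta}=0$. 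Combined with the Fredholm reduction of the first paragraph this proves the proposition.

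I expect the main obstacle to be the consistency step in the second paragraph — showing that an abstract solution of the coupled system genuinely yields a distributional solution of the Maxwell transmission PDE in $\Omega_i$, equivalently that running the Stratton--Chu derivation backwards closes — together with the careful check that Assumption~\ref{ConstCoeffs} is exactly what makes the weighted surface operators behave as in the piecewise-constant case, so that the jump relations and the exterior Calder\'on projector may be invoked verbatim. This parallels the acoustic analysis in \cite{labarca2023}; once it is in place, the remaining steps are bookkeeping with the mapping and jump properties collected in Section~\ref{sec:Newton}.
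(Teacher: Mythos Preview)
Your overall strategy matches the paper's: reduce to injectivity via Fredholm, then from a homogeneous solution build genuine Maxwell fields inside and outside, match traces, and invoke uniqueness of the transmission problem. The serious gap is the sentence ``\ldots and that $\dv(\mu\nev)=\dv(\varepsilon\neu)=0$ is built into the $\bm{\tau}_m\cdot$ and $\bm{\tau}_e\cdot$ terms''. It is not. In the forward derivation one replaced $-\dv\neu$ by $\bm{\tau}_e\cdot\neu$ \emph{because} $(\neu,\nev)$ was a Maxwell solution; for an abstract solution of \eqref{eq:STF-VIE} there is no a priori relation between $\dv\neu$ and $\bm{\tau}_e\cdot\neu$. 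If you take the curl of \eqref{eq:STFU-proof2} and subtract \eqref{eq:STFV-proof2} you do \emph{not} get $\Curl\neu-\tfrac{\mu}{\mu_1}\tilde{\nev}=0$ but rather
\[
\Curl\neu-\tfrac{\mu}{\mu_1}\tilde{\nev}=-\grad\mathsf{N}_{\Omega_i,\kappa_1}\!\bigl(p_m\,\tfrac{1}{\mu}\dv(\mu\tilde{\nev})\bigr),
\]
and the extra gradient term does not vanish automatically.

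What the paper does --- and what your sketch is missing --- is to take the divergence of this identity, which yields a homogeneous Lippmann--Schwinger equation for $\eta\coloneqq\tfrac{1}{\mu}\dv(\mu\tilde{\nev})$:
\[
\eta+\kappa_1^2\mathsf{N}_{\Omega_i,\kappa_1}(p_m\eta)=0.
\]
The unique continuation principle for the scalar Helmholtz equation with $C^1$ coefficient (Assumption~\ref{assumption}) then forces $\eta\equiv 0$, so $\dv(\mu\tilde{\nev})=0$ a posteriori, and only then does the pair $(\neu,\nev)$ satisfy Maxwell in $\Omega_i$. Assumption~\ref{ConstCoeffs} enters twice here: it kills the boundary contributions in the integration-by-parts identities for $\Curl\mathbf{N}_{\Omega_i,\kappa_1}$ and $\dv\mathbf{N}_{\Omega_i,\kappa_1}$ (so that the right-hand side of the Lippmann--Schwinger equation is indeed zero), and at the very end it makes the boundary block in \eqref{eq:STF1-proof2} equal the standard STF operator, whose injectivity gives $\bm{\alpha}=\bm{\beta}=0$. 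You correctly flagged the consistency step as the main obstacle; the missing ingredient is precisely this divergence/Lippmann--Schwinger argument.
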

		\begin{proof}
			Let us assume that we have a solution $ \neu \in \mathbf{H}(\Curl, \Omega_i), \ \tilde{\nev} \in \mathbf{H}(\Curl, \Omega_i), \  (\bm{\alpha}, \bm{\beta})\in\mathbf{H}^{-1/2}(\curl_{\Gamma}, \Gamma)\times \mathbf{H}^{-1/2}(\dv_{\Gamma}, \Gamma)$ such that
			\begin{subequations}
				\begin{align}
					\left(\mathbf{M}^{-1}\mathbf{A}_{\kappa_0}\mathbf{M} + \mathbf{A}_{\kappa_1}^{\tilde{\varepsilon}, \tilde{\mu}} \right) \begin{pmatrix}
						\bm{\alpha} \\ \bm{\beta}
					\end{pmatrix}  + \mathbf{J}^e \neu  + \mathbf{J}^m \tilde{\nev}&= 0,\label{eq:STF1-proof}\\
					\bm{\mathcal{D}}_{\kappa_1}(\bm{\alpha}) 	-  \bm{\mathcal{T}}^{\tilde{\varepsilon}, \tilde{\mu}}_{\kappa_1}(\bm{\beta}) +\neu  -\mathbf{\Lambda}^e\neu - \mathbf{\Xi}^m\tilde{\nev} &= 0, \label{eq:STFU-proof}\\
					\kappa_1^2\bm{\mathcal{T}}^{\tilde{\mu}, \tilde{\varepsilon}}_{\kappa_1}(\mathsf{R}\bm{\alpha}) + \bm{\mathcal{D}}_{\kappa_1}(\mathsf{R}\bm{\beta}) +\tilde{\nev}  -\mathbf{\Lambda}^m\tilde{\nev} - \mathbf{\Xi}^e\neu &= 0, \label{eq:STFV-proof}
				\end{align}
			\end{subequations}
			in $\mathbf{H}^{-1/2}(\curl_{\Gamma}, \Gamma)\times \mathbf{H}^{-1/2}(\dv_{\Gamma}, \Gamma) \times \mathbf{H}(\Curl, \Omega_i)\times \mathbf{H}(\Curl, \Omega_i).$ \\
			Because we assume $ \varepsilon(\nex)\equiv \varepsilon_1 > 0 $ and $ \mu(\nex) \equiv \mu_1 >0$ for all $ \nex \in \Gamma,$ we can rewrite \eqref{eq:STF1-proof}--\eqref{eq:STFV-proof} as
			\begin{subequations}
				\begin{align}\label{eq:STF1-proof2}
					\left(\mathbf{M}^{-1}\mathbf{A}_{\kappa_0}\mathbf{M} + \mathbf{A}_{\kappa_1} \right) \begin{pmatrix}
						\bm{\alpha} \\ \bm{\beta}
					\end{pmatrix}  + \mathbf{J}^e \neu  + \mathbf{J}^m \tilde{\nev}&= 0,\\ \label{eq:STFU-proof2}
					\bm{\mathcal{D}}_{\kappa_1}(\bm{\alpha}) 	-  \bm{\mathcal{T}}_{\kappa_1}(\bm{\beta}) +\neu  -\mathbf{\Lambda}^e\neu - \mathbf{\Xi}^m\tilde{\nev} &= 0,  \\
					\kappa_1^2\bm{\mathcal{T}}_{\kappa_1}(\mathsf{R}\bm{\alpha}) + \bm{\mathcal{D}}_{\kappa_1}(\mathsf{R}\bm{\beta}) +\tilde{\nev}  -\mathbf{\Lambda}^m\tilde{\nev} - \mathbf{\Xi}^e\neu &= 0, \label{eq:STFV-proof2}
				\end{align}
			\end{subequations}
			The proof is divided into five parts.
			\begin{enumerate}
				\item From \eqref{eq:STFU-proof2} and \eqref{eq:STFV-proof2}, we show that $\neu$ and $\nev$ ``almost" satisfy Maxwell equations in $\Omega$. We have an extra term, which is the gradient of a volume potential.
				\item We show that the extra term from Part 1 is zero, and therefore $\neu$ and $\nev$ satisfy Maxwell equations in $\Omega_i.$
				\item Using Maxwell layer potentials (see \eqref{eq:MaxwellSL} and \eqref{eq:MaxwellDL}) and $\bm{\alpha}$ and $\bm{\beta},$ we define $\neu_0$ and $\nev_0$ such that they satisfy Maxwell equations in $\Omega_o.$ Then, we compute the jumps $\gammat^+\neu_0 - \gammat^-\neu$ and $\gammatau^+\nev_0 - \gammatau^-\nev$. Using \eqref{eq:STF1-proof2}, we show that the jumps are zero.
				\item We conclude that $\neu,\nev,\neu_0$ and $\nev_0$ define solutions for the Maxwell transmission problem with no sources. It follows that all of them are zero. From \eqref{eq:STF1-proof2}, we conclude that $\bm{\alpha}$ and $\bm{\beta}$ are also zero.
			\end{enumerate}
			\begin{proofpart}
				We take the curl of \eqref{eq:STFU-proof2}. Using \eqref{eq:curlSL}, \eqref{eq:curlDL} we get 
				\begin{equation}\label{eq:proof_curlB}
					\kappa_1^2\bm{\mathcal{T}}_{\kappa_1}(\mathsf{R}\bm{\alpha}) + \bm{\mathcal{D}}_{\kappa_1}(\mathsf{R}\bm{\beta}) + \Curl \neu + \kappa_1^2 \Curl \mathbf{N}_{\Omega_i, \kappa_1}(p_e \neu) + \highlightr{cyan}{\Curl\mathbf{N}_{\Omega_i, \kappa_1}(\Curl(p_m \tilde{\nev}))}= 0,
				\end{equation}
				which by \thighlightr{cyan}{integration by parts \eqref{eq:ibpNewton}} and \thighlightr{cyan}{Assumption \ref{ConstCoeffs}}, can be rewritten as
				\begin{equation}\label{eq:proof_curlC}
					\highlightr{green}{\kappa_1^2\bm{\mathcal{T}}_{\kappa_1}(\mathsf{R}\bm{\alpha}) + \bm{\mathcal{D}}_{\kappa_1}(\mathsf{R}\bm{\beta})} + \Curl \neu + \highlightr{green}{\kappa_1^2 \Curl \mathbf{N}_{\Omega_i, \kappa_1}(p_e \neu)} + \highlightr{cyan}{\Curl^2\mathbf{N}_{\Omega_i, \kappa_1}(p_m \tilde{\nev})}= 0.
				\end{equation}
				Similarly, \eqref{eq:STFV-proof2} can be rewritten as
				\begin{equation}\label{eq:proof_V}
					\highlightr{green}{\kappa_1^2\bm{\mathcal{T}}_{\kappa_1}(\mathsf{R}\bm{\alpha}) + \bm{\mathcal{D}}_{\kappa_1}(\mathsf{R}\bm{\beta})} + \tilde{\nev} +  \kappa_1^2 \mathbf{N}_{\Omega_i, \kappa_1}(p_m \tilde{\nev}) -  \grad \mathsf{N}_{\Omega_i,\kappa_1}(\bm{\tau}_m \cdot \tilde{\nev}) + \highlightr{green}{\kappa_1^2\Curl\mathbf{N}_{\Omega_i, \kappa_1}(p_e\neu)} = 0.
				\end{equation}
				\thighlightr{green}{Substracting  \eqref{eq:proof_V} from \eqref{eq:proof_curlC}} we obtain
				\begin{align}\label{eq:proof_Eq1}
					\Curl \neu -  \tilde{\nev} +  \highlightr{pink}{\Curl^2(\mathbf{N}_{\Omega_i, \kappa_1}(p_m \tilde{\nev})) -  \kappa_1^2 \mathbf{N}_{\Omega_i, \kappa_1}(p_m \tilde{\nev}) }+  \grad (\mathsf{N}_{\Omega_i, \kappa_1}(\bm{\tau}_m \cdot \tilde{\nev})) = 0.
				\end{align}
				Note that, as $\bm{w} = \mathbf{N}_{\Omega_i, \kappa_1}\bm{f}$ defines a solution for the (vector) Helmholtz equation 
				$$-\bm{\Delta}\bm{w} - \kappa_1^2\bm{w} = \bm{f} \quad \text{ in } \Omega_i,$$ and $$-\bm{\Delta} = \Curl^2 - \grad\dv,$$ we get
				\begin{subequations}\label{eq:proof_Eq2}
					\begin{align}
						\highlightr{pink}{\Curl^2\mathbf{N}_{\Omega_i, \kappa_1}(p_m \tilde{\nev}) -  \kappa_1^2 \mathbf{N}_{\Omega_i, \kappa_1}(p_m \tilde{\nev})} &= \grad( \dv (\mathbf{N}_{\Omega_i, \kappa_1}(p_m \tilde{\nev}) ))+ p_m\tilde{\nev} \\ \label{eq:ibpDivNewton}
						&= \grad (\mathbf{N}_{\Omega_i, \kappa_1}(\dv(p_m \tilde{\nev})) )+ p_m\tilde{\nev},
					\end{align}
				\end{subequations}
				where \eqref{eq:ibpDivNewton} is obtained by the integration by parts
				\begin{equation}
					\dv(\mathbf{N}_{\Omega_i, \kappa_1}(\bm{f})) = \mathbf{N}_{\Omega_i, \kappa_1}(\dv\bm{f}) - \mathsf{S}_{\kappa_1}(\gamman^-\bm{f}), 
				\end{equation}
				and Assumption \ref{ConstCoeffs}.\\
				From \eqref{eq:proof_Eq1} and \eqref{eq:proof_Eq2} we write 
				\begin{equation}
					\Curl \neu - \tilde{\nev} + p_m\tilde{\nev} + \grad (\mathsf{N}_{\Omega_i, \kappa_1} (\dv(p_m\tilde{\nev}) + \bm{\tau}_m\cdot \tilde{\nev} )) = 0.
				\end{equation}
				Recall that $ p_m(\nex) = 1 - \tfrac{\mu(\nex)}{\mu_1}, $ so we can write
				\begin{equation}\label{eq:proofBeforeTedious}
					\Curl \neu - \tfrac{\mu}{\mu_1}\tilde{\nev}+ \grad (\mathsf{N}_{\Omega_i, \kappa_1} (\dv(p_m\tilde{\nev}) + \bm{\tau}_m\cdot \tilde{\nev} )) = 0.
				\end{equation}
			\end{proofpart}
			\begin{proofpart}
				In this part, we show that the third term in \eqref{eq:proofBeforeTedious} is zero. First, we do the following computation
				\begin{align}\nonumber
					\dv(p_m\tilde{\nev}) + \bm{\tau}_m\cdot \tilde{\nev} &= \dv(p_m\tilde{\nev}) + \tfrac{\grad\mu}{\mu}\cdot \tilde{\nev} \\\nonumber
					&= \dv\tilde{\nev} -\dv(\tfrac{\mu}{\mu_1} \tilde{\nev})  + \tfrac{\grad\mu}{\mu}\cdot \tilde{\nev}\\\nonumber
					&= \dv\tilde{\nev} -\tfrac{1}{\mu_1}(\grad \mu \cdot \tilde{\nev} + \mu \dv(\tilde{\nev}))  + \tfrac{\grad\mu}{\mu}\cdot \tilde{\nev}\\\nonumber
					&= \dv\tilde{\nev} + (-\tfrac{1}{\mu_1} + \tfrac{1}{\mu})\grad \mu \cdot \tilde{\nev} - \tfrac{\mu}{\mu_1} \dv(\tilde{\nev})\\\label{eq:tedious}
					&= \dv\tilde{\nev} + p_m\tfrac{1}{\mu}\grad \mu \cdot \tilde{\nev} - \tfrac{\mu}{\mu_1} \dv(\tilde{\nev})\\\nonumber
					&= p_m\tfrac{1}{\mu}\grad \mu \cdot \tilde{\nev} + p_m\dv(\tilde{\nev})\\\nonumber
					&= p_m\tfrac{1}{\mu}(\grad \mu \cdot \tilde{\nev} + \mu\dv(\tilde{\nev})) = p_m\tfrac{1}{\mu}\dv(\mu\tilde{\nev})
				\end{align}
				From \eqref{eq:tedious}, we can write \eqref{eq:proofBeforeTedious} as
				\begin{equation}\label{eq:proofEqPreDiv}
					\Curl \neu - \frac{\mu}{\mu_1} \tilde{\nev} = -\grad(\mathsf{N}_{\Omega_i,\kappa_1}(p_m \tfrac{1}{\mu}\dv(\mu\tilde{\nev}))).
				\end{equation}
				We take the divergence of \eqref{eq:proofEqPreDiv} and get
				\begin{align}\label{eq:proofDiv}
					-\frac{1}{\mu_1}\dv(\mu\tilde{\nev}) = -\Delta \mathsf{N}_{\Omega_i,\kappa_1}(p_m \tfrac{1}{\mu}\dv(\mu\tilde{\nev})) =\kappa_1^2\mathsf{N}_{\Omega_i,\kappa_1}(p_m \tfrac{1}{\mu}\dv(\mu\tilde{\nev})) + \highlightr{yellow}{p_m \tfrac{1}{\mu}}\dv(\mu\tilde{\nev}),
				\end{align}
				where we used that $w= \mathsf{N}_{\Omega_i, \kappa_1}f$ defines a solution for the (scalar) Helmholtz equation $-\Delta w - \kappa_1^2w = f$.
				Rearranging terms in \eqref{eq:proofDiv} we get
				\begin{equation}
					\left(-\frac{1}{\mu_1} \highlightr{yellow}{- \frac{1}{\mu} +  \frac{1}{\mu} \frac{\mu}{\mu_1}}\right) \dv(\mu \tilde{\nev}) = \kappa_1^2\mathsf{N}_{\Omega_i,\kappa_1}(p_m \tfrac{1}{\mu}\dv(\mu\tilde{\nev})). 
				\end{equation}
				Writing $\eta \coloneqq \tfrac{1}{\mu}\dv(\mu\tilde{\nev})$, \eqref{eq:proofDiv} becomes the Lippmann-Schwinger equation with zero right-hand side \cite[Section~8.2]{colton2012inverse}
				\begin{equation}\label{eq:proofLS}
					\eta + \kappa_1^2 \mathsf{N}_{\Omega_i,\kappa_1}(p_m \eta) = 0.
				\end{equation}
				This is an equivalent formulation to an homogeneous Helmholtz transmission problem (see \cite[Lemma~7]{costabel2010volume}, \cite[Theorem~8.3]{colton2012inverse}).\\
				This problem is known to have a unique solution as long as a unique continuation principle holds \cite[Section~8.3]{colton2012inverse}, which is the case for $p_m\in C^1(\overbar{\Omega}_i).$ \cite[Theorem~8.6]{colton2012inverse}.\\ The homogeneous problem has only the trivial solution, and we know $$\eta = \tfrac{1}{\mu}\dv(\mu\tilde{\nev}) \equiv 0.$$
				It follows that $\neu$ and $\tilde{\nev}$ satisfy
				\begin{equation}\label{eq:max1}
					\Curl \neu - i\omega\mu (\tfrac{1}{i\omega\mu_1}\tilde{\nev}) = 0 \qquad \text{ in }\Omega_i.
				\end{equation}
				We denote $\nev \coloneqq \tfrac{1}{i\omega\mu_1}\tilde{\nev}$. Similar computations show that 
				\begin{equation}\label{eq:max2}
					\Curl \nev - i\omega\varepsilon \neu = 0 \qquad \text{ in }\Omega_i.
				\end{equation}
			\end{proofpart}
			\begin{proofpart}
				Now, we define an exterior field
				\begin{subequations}
					\begin{align}\label{eq:potentials_exterior_proof}
						\neu_0 &= \bm{\mathcal{T}}_{\kappa_0}(\tfrac{\mu_0}{\mu_1} \bm{\beta}) + \bm{\mathcal{D}}_{\kappa_0}(\bm{\alpha}), &\text{ in }\mathbb{R}^3 \setminus\overbar{\Omega}_i,\\
						\tilde{\nev}_0 &= \bm{\mathcal{D}}_{\kappa_0}(\tfrac{\mu_0}{\mu_1} \mathsf{R}\bm{\beta}) + \kappa_0^2\bm{\mathcal{T}}_{\kappa_0}(\mathsf{R}\bm{\alpha}), &\text{ in }\mathbb{R}^3 \setminus\overbar{\Omega}_i,
					\end{align}
				\end{subequations}
				with $\tilde{\nev}_0= i\omega\mu_0\nev_0$
				that satisfies
				\begin{subequations}\label{eq:Maxwell-exterior-proof}
					\begin{align}
						\Curl \neu_0 - i\omega\mu_0 \nev_0 = 0 \quad \text{ in }\mathbb{R}^3 \setminus\overbar{\Omega}_i,\\
						\Curl \nev_0 + i\omega\varepsilon_0 \neu_0 = 0 \quad \text{ in }\mathbb{R}^3 \setminus\overbar{\Omega}_i.
					\end{align}
				\end{subequations}
			\end{proofpart}
			Taking traces on \eqref{eq:potentials_exterior_proof} we get
			\begin{equation}\label{eq:exterior}
				\begin{pmatrix}
					\gammat^+ \neu_0 \\ \gammatau^+ \tilde{\nev}_0
				\end{pmatrix} = \left(\frac{1}{2}\mathbf{I} - \mathbf{A}_{\kappa_0}\right)\mathbf{M}\begin{pmatrix}
					\bm{\alpha} \\ \bm{\beta}
				\end{pmatrix}.
			\end{equation}
			Taking traces on \eqref{eq:STFU-proof2} and \eqref{eq:STFV-proof2} we obtain
			\begin{equation}\label{eq:interior}
				\begin{pmatrix}
					\gammat^- \neu \\ \gammatau^-\tilde{\nev} 
				\end{pmatrix} =  \left(\frac{1}{2}\mathbf{I} + \mathbf{A}_{\kappa_1}\right)\begin{pmatrix}
					\bm{\alpha} \\ \bm{\beta}
				\end{pmatrix} + \mathbf{J}^e\neu + \mathbf{J}^m\tilde{\nev}  
			\end{equation}
			Combining  \eqref{eq:exterior} and \eqref{eq:interior}, from \eqref{eq:STF1-proof2} we conclude that 
			\begin{equation}\label{eq:proof-trans}
				\begin{pmatrix}
					\gammat^+ \neu_0 \\ \gammatau^+ \nev_0
				\end{pmatrix} - \begin{pmatrix}
					\gammat^- \neu \\ \gammatau^-\nev 
				\end{pmatrix} = 0.
			\end{equation}
			\begin{proofpart}
				We know that $\neu, \nev$ satisfy \eqref{eq:max1} and \eqref{eq:max2}. We also know that $\neu_0$ and $\nev_0$ satisfy \eqref{eq:Maxwell-exterior-proof}. Moreover, the transmission conditions \eqref{eq:proof-trans} hold.
				Therefore, 
				\begin{equation*}
					\bm{U}(\nex) \coloneqq \left\{ \begin{array}{ll}
						\neu_0(\nex), & \nex\in \mathbb{R}^3 \setminus \overbar{\Omega}_i,\\
						\neu(\nex), & \nex \in \Omega_i,
					\end{array}\right. \qquad 
					\bm{V}(\nex) \coloneqq \left\{ \begin{array}{ll}
						\nev_0(\nex), & \nex\in \mathbb{R}^3 \setminus \overbar{\Omega}_i,\\
						\nev(\nex), & \nex \in \Omega_i,
					\end{array}\right.
				\end{equation*}
				are solutions of the homogeneous Maxwell transmission problem. It follows that $\bm{U} \equiv 0, \ \bm{V}\equiv 0$ and $\neu \equiv 0, \ \nev\equiv 0.$
				We conclude from \eqref{eq:STF1-proof2} that 
				\begin{equation}\label{eq:injectiveSTF}
					\left(\mathbf{M}^{-1}\mathbf{A}_{\kappa_0}\mathbf{M} + \mathbf{A}_{\kappa_1} \right) \begin{pmatrix}
						\bm{\alpha} \\ \bm{\beta}
					\end{pmatrix} = 0,
				\end{equation}
				which is known to be an invertible operator \cite[Theorem~12]{buffa2003galerkin}. Therefore, $\bm{\alpha} \equiv 0$ and $\bm{\beta} \equiv 0$, which concludes the proof.
			\end{proofpart}
		\end{proof}
		
		\begin{remark}
			The assumption of constant coefficients over the boundary $\Gamma$ is essential in two parts of the proof: (1) for obtaining homogeneous right-hand side in \eqref{eq:proofLS} and therefore a divergence free field; (2) to ensure injectivity of the single-trace equation in \eqref{eq:injectiveSTF}. This is similar to what was observed in the Helmholtz transmission problem \cite[Section~3.3]{labarca2023}.
		\end{remark}
		
		\begin{theorem}[Well-Posedness of Problem \ref{prob:varSTF}]\label{th:well-posed} Under Assumptions \ref{assumption} and \ref{ConstCoeffs}, there exists a unique solution 
			\begin{equation*}
				(\bm{\alpha}^{\star}, \bm{\beta}^{\star}, \neu^{\star}, \nev^{\star}) \in \bm{\mathcal{H}}(\Gamma)\times \bm{\mathcal{H}}(\Omega_i)
			\end{equation*}
			to Problem \ref{prob:varSTF}, which satisfies
			\begin{equation*}
				\norm{\bm{\alpha}^{\star}}_{\mathbf{H}^{-1/2}(\curl_{\Gamma}, \Gamma)} +
				\norm{\bm{\beta}^{\star}}_{\mathbf{H}^{-1/2}(\dv_{\Gamma}, \Gamma)} +
				\norm{\neu^{\star}}_{\mathbf{H}(\Curl, \Omega_i)} +
				\norm{\nev^{\star}}_{\mathbf{H}(\Curl, \Omega_i)}  \leq C\norm{\underline{\bm{g}}}_{\bm{\mathcal{H}}(\Gamma)}.
			\end{equation*}
		\end{theorem}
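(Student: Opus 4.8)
The plan is to recast Problem~\ref{prob:varSTF} as a single linear operator equation $\mathcal{L}\mathbf{x}=\mathbf{b}$ on the product Hilbert space $\mathcal{X}:=\bm{\mathcal{H}}(\Gamma)\times\bm{\mathcal{H}}(\Omega_i)$, with unknown $\mathbf{x}=(\bm{\alpha},\bm{\beta},\neu,\nev)$, data $\mathbf{b}=(\underline{\bm{g}},0)$, and $\mathcal{L}$ the block operator displayed in \eqref{eq:STF-VIE} (after the usual Riesz identifications; continuity of $\mathcal{L}$ is the content of the continuity proposition in Section~\ref{sec:variational}). The goal is to show that $\mathcal{L}$ is a Fredholm operator of index zero that is injective: it is then bijective, the open mapping theorem provides a bounded inverse, and since $\mathbf{b}$ carries only the $\underline{\bm{g}}$-component we obtain precisely the asserted estimate with $C=\|\mathcal{L}^{-1}\|$. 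This is exactly the situation treated by the abstract block-operator result in Appendix~\ref{BlockOps} (Proposition~\ref{th:well_posed}), so the work reduces to verifying its hypotheses.

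For the index-zero property I would view $\mathcal{L}$ as a $2\times2$ block operator $\bigl(\begin{smallmatrix}\mathbb{A}&\mathcal{B}\\\mathcal{C}&\mathcal{D}\end{smallmatrix}\bigr)$ relative to the splitting $\mathcal{X}=\bm{\mathcal{H}}(\Gamma)\times\bm{\mathcal{H}}(\Omega_i)$, where $\mathbb{A}=\mathbf{M}^{-1}\mathbf{A}_{\kappa_0}\mathbf{M}+\mathbf{A}_{\kappa_1}^{\tilde{\varepsilon},\tilde{\mu}}$, $\mathcal{B}(\neu,\nev)=\mathbf{J}^e\neu+\mathbf{J}^m\nev$, $\mathcal{C}$ gathers the layer potentials $\bm{\mathcal{D}}_{\kappa_1}$, $\bm{\mathcal{T}}_{\kappa_1}^{\tilde{\varepsilon},\tilde{\mu}}$, $\bm{\mathcal{T}}_{\kappa_1}^{\tilde{\mu},\tilde{\varepsilon}}$ acting on $(\bm{\alpha},\bm{\beta})$, and $\mathcal{D}=\bigl(\begin{smallmatrix}\mathbf{I}-\mathbf{\Lambda}^e&-\mathbf{\Xi}^m\\-\mathbf{\Xi}^e&\mathbf{I}-\mathbf{\Lambda}^m\end{smallmatrix}\bigr)$. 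A single generalized G\r{a}rding inequality for the whole of $\mathcal{L}$ with a block-diagonal sign-flip isomorphism is \emph{not} available, since $\mathcal{C}$ is only bounded, not compact, from $\bm{\mathcal{H}}(\Gamma)$ into $\bm{\mathcal{H}}(\Omega_i)$; I would therefore exploit the asymmetry of the coupling. By Corollary~\ref{compactness} the block $\mathcal{B}$ is compact, so $\mathcal{L}=\bigl(\begin{smallmatrix}\mathbb{A}&0\\\mathcal{C}&\mathcal{D}\end{smallmatrix}\bigr)+\mathcal{K}$ with $\mathcal{K}$ compact. In the remaining block lower triangular operator the top block $\mathbb{A}$ reduces, under Assumption~\ref{ConstCoeffs}, to the classical first-kind STF Maxwell operator $\mathbf{M}^{-1}\mathbf{A}_{\kappa_0}\mathbf{M}+\mathbf{A}_{\kappa_1}$ --- because $\tilde{\varepsilon}\equiv\tilde{\mu}\equiv1$ on $\Gamma$ forces $\mathbf{V}_{\kappa_1}^{\tilde{\varepsilon},\tilde{\mu}}=\mathbf{V}_{\kappa_1}$ and $\mathbf{W}_{\kappa_1}^{\tilde{\mu},\tilde{\varepsilon}}=\mathbf{W}_{\kappa_1}$ --- and is hence invertible by \cite[Theorem~12]{buffa2003galerkin}; the bottom block $\mathcal{D}$ is, again by Corollary~\ref{compactness}, a compact perturbation of the identity, hence Fredholm of index zero. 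A block lower triangular operator with an invertible top block and an index-zero bottom block is Fredholm of index zero, and adding the compact $\mathcal{K}$ does not change this. (Both assumptions enter: Assumption~\ref{assumption} provides the $C^1$-regularity of $p_e,p_m,\bm{\tau}_e,\bm{\tau}_m$ behind Corollary~\ref{compactness}, and Corollary~\ref{th:Tcoercive-weighted-STF} together with Proposition~\ref{IminusA} records the underlying generalized G\r{a}rding inequalities; Assumption~\ref{ConstCoeffs} is what upgrades the G\r{a}rding inequality for $\mathbb{A}$ to genuine invertibility.)

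Injectivity of $\mathcal{L}$ is exactly Proposition~\ref{unique}: under Assumption~\ref{ConstCoeffs} a solution of $\mathcal{L}\mathbf{x}=0$ is shown there, in several parts, to satisfy $\neu=\nev=0$ in $\Omega_i$ --- the volume equations force the fields to solve Maxwell's equations in $\Omega_i$, a radiating exterior field with matching traces is built from $(\bm{\alpha},\bm{\beta})$, and uniqueness for the Maxwell transmission problem applies --- after which invertibility of $\mathbb{A}$ gives $\bm{\alpha}=\bm{\beta}=0$. A Fredholm operator of index zero which is injective is bijective, so by the open mapping theorem $\mathcal{L}^{-1}$ is bounded, which yields existence, uniqueness and the stability bound.

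The step I expect to be the crux is the structural observation in the second paragraph: recognising that a naive T-coercivity argument for the full coupled system fails because of the non-compact boundary-to-volume layer potentials, and that one must instead combine the one-directional compactness of the trace operators $\mathbf{J}^e,\mathbf{J}^m$ with the honest invertibility --- not merely the Fredholmness --- of the reduced boundary Calder\'on block, which is precisely where Assumption~\ref{ConstCoeffs} is indispensable; verifying that the weighted boundary integral operators actually collapse to the unweighted ones on $\Gamma$ under that assumption is the small but essential check that makes the argument go through.
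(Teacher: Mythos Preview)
Your argument is correct and follows the same overall architecture as the paper: cast the system as a block operator, use compactness of the boundary-coupling block $\mathcal{B}=(\mathbf{J}^e,\mathbf{J}^m)$ (Corollary~\ref{compactness}) together with Fredholmness of the diagonal blocks to get Fredholm index zero, then invoke injectivity (Proposition~\ref{unique}) --- this is exactly the abstract scheme of Appendix~\ref{BlockOps} and Proposition~\ref{th:well_posed} that the paper appeals to.

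The one genuine difference is how you obtain the Fredholm property of the top-left block $\mathbb{A}=\mathbf{M}^{-1}\mathbf{A}_{\kappa_0}\mathbf{M}+\mathbf{A}_{\kappa_1}^{\tilde{\varepsilon},\tilde{\mu}}$. The paper establishes a generalized G\r{a}rding inequality for the \emph{weighted} Calder\'on operator (Proposition~\ref{GardingWeightedCald} and Corollary~\ref{th:Tcoercive-weighted-STF}), which holds under Assumption~\ref{assumption} alone, without Assumption~\ref{ConstCoeffs}; this gives Fredholmness of $\mathbb{A}$ in the general case and is of independent interest. You instead observe that under Assumption~\ref{ConstCoeffs} the weights $\tilde{\varepsilon},\tilde{\mu}$ equal $1$ on $\Gamma$, so $\mathbb{A}$ collapses to the classical STF operator, for which outright invertibility is already known from \cite[Theorem~12]{buffa2003galerkin}. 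Your route is shorter for the theorem as stated --- it sidesteps Lemma~\ref{lemma:weighted} and the whole weighted-G\r{a}rding machinery --- but it sacrifices the information that Fredholmness persists when the coefficients are not constant on $\Gamma$. Both are valid; the paper's path separates the Fredholm ingredient from the uniqueness ingredient, while yours bundles Assumption~\ref{ConstCoeffs} into both steps.
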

		\begin{proof}
			The proof follows Proposition \ref{th:well_posed} and the framework of Appendix \ref{BlockOps}. In particular, we have
			\begin{itemize}
				\item Compactness results for $\mathbf{\Xi}^e, \mathbf{J}^e$ and $\mathbf{\Xi}^m,\mathbf{J}^m$, from Corollary \ref{compactness}.
				\item $T$-coercivity (or generalized G\r{a}rding inequality) for $$\mathbb{A} \coloneqq  \mathbf{M}^{-1}\mathbf{A}_{\kappa_0}\mathbf{M} + \mathbf{A}_{\kappa_1}^{\tilde{\varepsilon}, \tilde{\mu}},$$ from Corollary \ref{th:Tcoercive-weighted-STF}
				\item $T$-coercivity for $\mathbf{I}-\mathbf{\Lambda}^{\star}$, from Proposition \ref{IminusA}.
				\item Uniqueness of solutions, from Proposition \ref{unique}.
			\end{itemize}
			As the assumptions of Proposition \ref{th:well_posed} hold, we obtain well-posedness of Problem \ref{prob:varSTF}.
		\end{proof}
		\section{Galerkin Discretization}\label{sec:Galerkin}
		\subsection{Finite Element and Boundary Element Spaces}
		Let $\cdef{\{\mathcal{T}_h\}_{h>0}}$ be a globally quasi-uniform and shape-regular family of simplicial meshes of $\Omega_i$ (see \cite[Section~9]{steinbach2007numerical}). Let $\cdef{\{\Sigma_h\}_{h>0}}$ be the induced family of meshes on $\Gamma$: $\Sigma_h = \mathcal{T}_h |_{\Gamma}$. We choose finite element spaces:
		\begin{itemize}
			\item $\cdef{N_h} \coloneqq N_h(\mathcal{T}_h)\subset \mathbf{H}(\Curl, \Omega_i)$ of lowest order N\'ed\'elec edge elements (in the volume) \cite{nedelec1986new},\cite[Section~3]{hiptmair2002finite},\cite{bossavit1988rationale}.
			\item $\cdef{E_h} \coloneqq E_h(\Sigma_h) \subset \mathbf{H}^{-1/2}(\curl_{\Gamma}, \Gamma)$ of lowest order surface edge elements \cite[Section~2.2]{borm2004fast}.
			\item $\cdef{W_h} \coloneqq W_h(\Sigma_h) \subset \mathbf{H}^{-1/2}(\dv_{\Gamma}, \Gamma)$ of lowest order rotated surface edge elements, also known as RWG (Rao-Wilton-Glisson) boundary elements in computational engineering \cite{rao1982electromagnetic}.
		\end{itemize}
		We will denote $\cdef{N_h^{\star}}$ a conforming subspace of the dual space of $\mathbf{H}(\Curl, \Omega_i).$ 
		
		\begin{remark}\label{remark}
			It is important to note that, contrary to what is a standard choice in the literature on volume integral equations \cite{botha2006solving,markkanen2020new}, using $N_h^{\star} = N_h$ does not lead to a stable discretization of the duality pairing. We briefly describe why this is not the case in Appendix \ref{sec:L2-proj}. As it happens with the duality product in the trace space \cite{buffa2007dual}, a good approach might be the use of a dual barycentric finite element complex, i.e. the use of face elements on a dual mesh as a subspace of $\mathbf{H}(\Curl, \Omega_i)'.$ These claims, although intuitive, remain as an open problem. The generalization of a dual barycentric complex has been studied in different contexts \cite{christiansen2008construction}. \\
			
		\end{remark}
		
		\subsection{Asymptotic Quasi-Optimality}\label{sec:quasi-optimal}
		In order to obtain a final result on the discretization of Problem \ref{prob:varSTF}, we need a discrete version of Proposition \ref{IminusA}. As mentioned in Remark \ref{remark}, this is related to a stable discrete duality pairing in $\mathbf{H}(\Curl, \Omega_i).$ \\
		Our goal is to have a conforming discretization of $\mathbf{H}(\Curl, \Omega_i)'$ such that the following holds.
		\begin{assumption}[Discrete inf-sup Condition for $\mathsf{d}$]\label{discreteInfSup} There exists $c_{\mathsf{d}} >0$ such that
			\begin{equation*}
				c_{\mathsf{d}} \leq \inf\limits_{0\neq \neu_h \in N_h}\sup\limits_{0\neq \bm{w}_h\in N^{\star}_h} \dfrac{\langle \bm{w}_h, \neu_h\rangle_{\Omega_i}}{\norm{\neu_h}_{\mathbf{H}(\Curl, \Omega_i)} \norm{\bm{w}_h}_{\mathbf{H}(\Curl, \Omega_i)'}} \quad \text{ for all }h > 0.
			\end{equation*}
		\end{assumption}
		We have to include this assumption in order to arrive at the following main result on the quasi-optimality of Galerkin solutions for \ref{prob:varSTF}:
		\begin{theorem} Provided that Assumptions \ref{ConstCoeffs} and \ref{discreteInfSup} hold, there are $h_0 >0$ and a constant $c_{\mathsf{qo}} > 0$ independent of $h$ such that there exists a unique Galerkin solution $(\bm{\alpha}_h^{\star}, \bm{\beta}_h^{\star}, \neu_h^{\star}, \nev_h^{\star}) \in E_h\times W_h \times N_h \times N_h$ of Problem \ref{prob:varSTF} for all $h< h_0$. The solution satisfies
			\begin{align*}
				\norm{(\bm{\alpha}^{\star}, \bm{\beta}^{\star}, \neu^{\star}, \nev^{\star}) - (\bm{\alpha}_h^{\star}, \bm{\beta}_h^{\star}, \neu_h^{\star}, \nev_h^{\star})} \leq c_{\mathsf{qo}} 
				\inf\limits_{\substack{(\bm{\lambda}_h, \bm{\eta}_h)\in E_h\times W_h, \\ (\bm{w}_h, \bm{q}_h)\in N_h\times N_h }}
				\norm{(\bm{\alpha}^{\star}, \bm{\beta}^{\star}, \neu^{\star}, \nev^{\star}) - (\bm{\lambda}_h, \bm{\eta}_h, \bm{w}_h, \nev_h)}
			\end{align*}
		\end{theorem}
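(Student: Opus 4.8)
The plan is to read the coupled system as a single operator equation $\mathcal{B}\mathbf{U}=\mathbf{G}$ for $\mathbf{U}=(\bm\alpha,\bm\beta,\neu,\nev)$ on $\bm{\mathcal{H}}(\Gamma)\times\bm{\mathcal{H}}(\Omega_i)$, and to invoke the abstract block-operator machinery of Appendix \ref{BlockOps} (Proposition \ref{th:well_posed}): once the continuous problem is well posed (Theorem \ref{th:well-posed}) and $\mathcal{B}$ satisfies an $h$-uniform discrete inf-sup condition on the conforming subspace $E_h\times W_h\times N_h\times N_h$, the generalized C\'ea lemma for inf-sup stable Galerkin schemes delivers existence, uniqueness and the quasi-optimality estimate for all $h<h_0$. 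Thus the whole proof reduces to establishing that uniform discrete inf-sup condition, equivalently discrete $T_h$-coercivity, for the full block operator.

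I would obtain this by treating the two diagonal blocks first. For $\mathbb{A}=\mathbf{M}^{-1}\mathbf{A}_{\kappa_0}\mathbf{M}+\mathbf{A}_{\kappa_1}^{\tilde\varepsilon,\tilde\mu}$ on $\bm{\mathcal{H}}(\Gamma)$, Corollary \ref{th:Tcoercive-weighted-STF} gives a generalized G\r{a}rding inequality with an isomorphism $\mathbb{X}_\Gamma$ built from the regular (Helmholtz-type) splittings of $\mathbf{H}^{-1/2}(\curl_\Gamma,\Gamma)$ and $\mathbf{H}^{-1/2}(\dv_\Gamma,\Gamma)$; to descend to $E_h\times W_h$ I would follow the Buffa--Christiansen/Hiptmair route and use $h$-uniformly $\mathbf{H}^{-1/2}(\curl_\Gamma,\Gamma)$- and $\mathbf{H}^{-1/2}(\dv_\Gamma,\Gamma)$-stable \emph{discrete} regular decompositions of the lowest-order edge and RWG spaces (available via commuting quasi-interpolation on shape-regular meshes) to construct a bounded discrete counterpart $\mathbb{X}_{\Gamma,h}$ with $\mathbb{X}_{\Gamma,h}(E_h\times W_h)\subset E_h\times W_h$, yielding discrete $T_h$-coercivity of $\mathbb{A}$ up to a compact remainder. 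For the volume block $\mathbf{I}-\mathbf{\Lambda}^\star$, Proposition \ref{IminusA} uses the Riesz isomorphism $X_\star:\mathbf{H}(\Curl,\Omega_i)\to\mathbf{H}(\Curl,\Omega_i)'$, and here the discrete analogue is exactly where Assumption \ref{discreteInfSup} is consumed: the discrete inf-sup between $N_h$ and $N_h^\star$ for $\langle\cdot,\cdot\rangle_{\Omega_i}$ is what lets one replace $X_\star$ by a bounded discrete map $N_h\to N_h^\star$, while compactness of $\mathbf{\Lambda}^\star$ (Corollary \ref{compactness}) keeps the identity as the dominant term.

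Having both diagonal blocks $T_h$-coercive, I would assemble: the off-diagonal operators $\mathbf{J}^e,\mathbf{J}^m$ and $\mathbf{\Xi}^e,\mathbf{\Xi}^m$ are compact (Corollary \ref{compactness}), so $\mathcal{B}=\mathcal{B}_0+\mathcal{K}$ with $\mathcal{B}_0$ discretely $T_h$-coercive and $\mathcal{K}$ compact. A standard discrete-compactness argument then upgrades this to a genuine $h$-uniform discrete inf-sup: since the continuous problem is injective (Proposition \ref{unique}) and $\mathcal{B}$ is Fredholm of index zero, and since Galerkin projections of a fixed compact operator converge to it in operator norm along an asymptotically dense family of conforming subspaces, one concludes that the Galerkin operator is injective — hence bijective with $h$-uniformly bounded inverse — for all $h$ below some $h_0$. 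The generalized C\'ea lemma then produces the quasi-optimality bound with $c_{\mathsf{qo}}$ depending only on the stability and continuity constants of $\mathcal{B}$.

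The hard part will be step two and three carried out \emph{jointly}: producing discrete isomorphisms $\mathbb{X}_{\Gamma,h}$ and $X_{\star,h}$ that are simultaneously (i) $h$-uniformly bounded, (ii) stable with respect to the relevant — for $\mathbb{X}_{\Gamma,h}$, fractional-order — norms, and (iii) invariant on the chosen finite-element spaces, so that the two generalized G\r{a}rding inequalities can be combined with a single, compatible test-space transformation across the coupled system. On the boundary this rests on the classical existence of uniformly stable discrete regular decompositions of $E_h$ and $W_h$; in the volume it is precisely Assumption \ref{discreteInfSup} that we are forced to impose, because, as observed in Remark \ref{remark}, the naive conforming choice $N_h^\star=N_h$ does not yield a stable discrete duality pairing.
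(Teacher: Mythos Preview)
Your proposal is correct and follows essentially the same route as the paper: establish $T_h$-coercivity of the two diagonal blocks (boundary block via discrete regular decompositions and local interpolation in the spirit of \cite[Section~9]{buffa2003galerkin}, volume block via Assumption~\ref{discreteInfSup}), treat the off-diagonal volume operators as compact perturbations (Corollary~\ref{compactness}), and feed everything into the abstract block-operator machinery of Appendix~\ref{BlockOps} (Propositions~\ref{th:inf-sup-t0-weakened} and~\ref{th:abstract_qo}) together with injectivity (Proposition~\ref{unique}) to obtain the $h$-uniform discrete inf-sup condition and hence quasi-optimality. The only cosmetic remark is that the discrete $\mathbb{X}_{\Gamma,h}$ the paper invokes comes from the local linear interpolation of \cite[Lemma~16]{buffa2003galerkin} rather than the Buffa--Christiansen dual-mesh construction, but your description of ``discrete regular decompositions via commuting quasi-interpolation'' captures exactly that mechanism.
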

		\begin{proof}
			The proof is based on the result from Propositions \ref{th:inf-sup-t0-weakened} and \ref{th:abstract_qo}. In particular, we need $T_h$-coercivity result (see \cite[Theorem~2]{ciarlet2012}) for the bilinear form
			\begin{align*}
				\mathsf{m}((\bm{\alpha}, \bm{\beta}, \neu, \nev), (\bm{\zeta}, \bm{\xi}, \bm{w}, \bm{q})) \coloneqq{}& \mathsf{a}((\bm{\alpha}, \bm{\beta}), (\bm{\zeta}, \bm{\xi})) + \mathsf{b}((\neu, \nev), (\bm{\zeta}, \bm{\xi}))\\
				+{}&\mathsf{c}((\bm{\alpha}, \bm{\beta}), (\bm{w}, \bm{q})) + \mathsf{d}((\neu, \nev), (\bm{w}, \bm{q}))
			\end{align*} 
			from Problem \ref{prob:varSTF}. According to Proposition \ref{th:inf-sup-t0-weakened} we need to verify the
			\begin{itemize}
				\item $T_h$-coercivity for $\mathsf{a}$. This result follows by noticing that the regular components in the stable regular decomposition from \eqref{eq:decomp-alpha} and \eqref{eq:decomp-beta} are in the domain of local linear interpolation operators \cite[Lemma~16]{buffa2003galerkin}. Therefore, $T$-coercivity translates to $T_h$-coercivity simply by local interpolation \cite[Section~9]{buffa2003galerkin}. 
				\item $T_h$-coercivity for $\mathsf{d}$. This property is supplied by Assumption \ref{discreteInfSup}.
			\end{itemize}
			As $T_h$-coercivity is equivalent to $h$-uniform inf-sup stability (see \cite[Theorem~2]{ciarlet2012}), quasi-optimality follows from $\mathsf{m}$ being $h$-uniform inf-sup stable up to compact perturbations.
		\end{proof}
		
		\subsection{Numerical Experiments}\label{sec:numerics}
		We show numerical experiments to validate our formulation. We compare our results with highly-resolved solution $(\neu^{\star}_h, \nev^{\star}_h)$ obtained from a FEM-BEM coupling, also known as the Johnson-N\'ed\'elec coupling (see \cite{johnson1980coupling}). We study convergence of solutions with respect to the $\mathbf{L}^2-$norm
		\begin{equation}\label{eq:error-norms}
			\text{error}_{\mathbf{L}^2} \coloneqq \dfrac{\norm{\neu^{\star}_h - \neu_h}_{\mathbf{L}^2(\Omega_i)}}{\norm{\neu^{\star}_h}_{\mathbf{L}^2(\Omega_i)}}, \quad \text{error}_{\mathbf{L}^2\times\mathbf{L}^2} \coloneqq \dfrac{\sqrt{\norm{\neu^{\star}_h - \neu_h}_{\mathbf{L}^2(\Omega_i)}^2 + \norm{\nev^{\star}_h - \nev_h}_{\mathbf{L}^2(\Omega_i)}^2  } }{\sqrt{\norm{\neu^{\star}_h}_{\mathbf{L}^2(\Omega_i)}^2 + \norm{\nev^{\star}_h}_{\mathbf{L}^2(\Omega_i)}^2 }}
		\end{equation}
		where $(\neu_h, \nev_h)$ is a Galerkin solution of Problem \ref{prob:varSTF}. In the case of FEM-BEM coupling, we compute $\nev_h^{\star}=\tfrac{1}{i\omega\mu}\Curl(\neu_h^{\star})$.\\
		
		In all of our experiments, we use $N_h(\mathcal{T}_h)$ as a finite element space for the dual of $\mathbf{H}(\Curl, \Omega_i).$ Note that, as mentioned in Remark \ref{remark}, this may not lead to a stable discretization of the duality pairing in $\mathbf{H}(\Curl, \Omega_i).$
		
		The implementation was carried out in C\texttt{++}, by extending the BemTool\footnote{\url{https://github.com/xclaeys/BemTool}} library for BEM computations to the case of VIEs. Numerical integration of singular integrals is computed in terms of a Duffy transformation \cite{feist2023efficient,feist2023fractional,munger2022efficient} and tensorized Gauss quadrature rules. Matrix compression with $\mathcal{H}-$matrices is done with the Castor library \cite{Aussal2022}, a C\texttt{++} header-only library for linear algebra computations. We have made our code available in a Github repository \footnote{\url{https://github.com/ijlabarca/CoupledBVIE}}.
		
		\subsection{Scattering at a dielectric cube}\label{sec:test_cube}
		We study the electromagnetic scattering problem at a unit cube 
		\begin{equation*}
			\Omega_i\coloneqq \{  \nex = (x,y,z)\in \mathbb{R}^3 : 0\leq x,y,z\leq 1 \}.
		\end{equation*}
		Material properties are given by
		\begin{align*}
			\varepsilon(\nex) = \left\{\begin{array}{ll}
				2 + 4xyz(1-x)(1-y)(1-z), &\text{for }\nex \in \Omega_i,\\
				1, &\text{for }\nex \in \mathbb{R}^3 \setminus \overbar{\Omega}_i.
			\end{array}  \right.\\
		\end{align*}
		and $\mu(\nex)\equiv 1$ in $\mathbb{R}^3.$ Note that material properties are constant at the boundary $\Gamma$. \\
		The incident wave is given by
		\begin{equation*}
			\neu^{\text{inc}}(\nex) = \bm{e}_0 \exp(i\kappa_0 \nex \cdot \nex_0),
		\end{equation*}
		where $\kappa_0 = 1, \ \nex_0 = (0, 1, 0)$ and $\bm{e}_0 = (1, 0, 0).$\\
		The meshes used for our computations are described in Table \ref{tab:meshCube}. The reference solution is obtained by FEM-BEM coupling computed on the finest mesh. Convergence results are shown in Figure \ref{fig:test_cube}. We observe $\mathcal{O}(h)$ convergence, which is the best we can expect for this setting, because the approximation spaces merely contain the full space of piecewise-constant functions. Apparently a potential violation of Assumption \ref{discreteInfSup} does not affect convergence in the $\mathbf{L}^2$--norm in this case.
		\begin{figure}[ht!]
			\begin{minipage}[b]{.45\linewidth}
				\centering
				\begin{tabular}{|c|c|c|c|}
					\hline
					\multicolumn{4}{|c|}{\textbf{Meshes}} \\
					\hline
					\textbf{Elements} & \textbf{Nodes}& \textbf{Edges}& \textbf{Mesh size} \\
					\hline
					24 & 14 & 49 & 1/2\\
					\hline
					192 & 63 &302 & 1/4\\
					\hline
					1536 & 365 &2092 & 1/8\\
					\hline
					12288 & 2457 &15512 & 1/16\\
					\hline
					98304 & 17969 & 119344& 1/32\\
					\hline
				\end{tabular}
				\captionof{table}{Meshes used in Section \ref{sec:test_cube}, generated by uniform regular refinement 
					{\label{tab:meshCube}}}
			\end{minipage}\hfill
			\begin{minipage}[b]{.4\linewidth}
				\centering
				\includegraphics[width=0.7\linewidth]{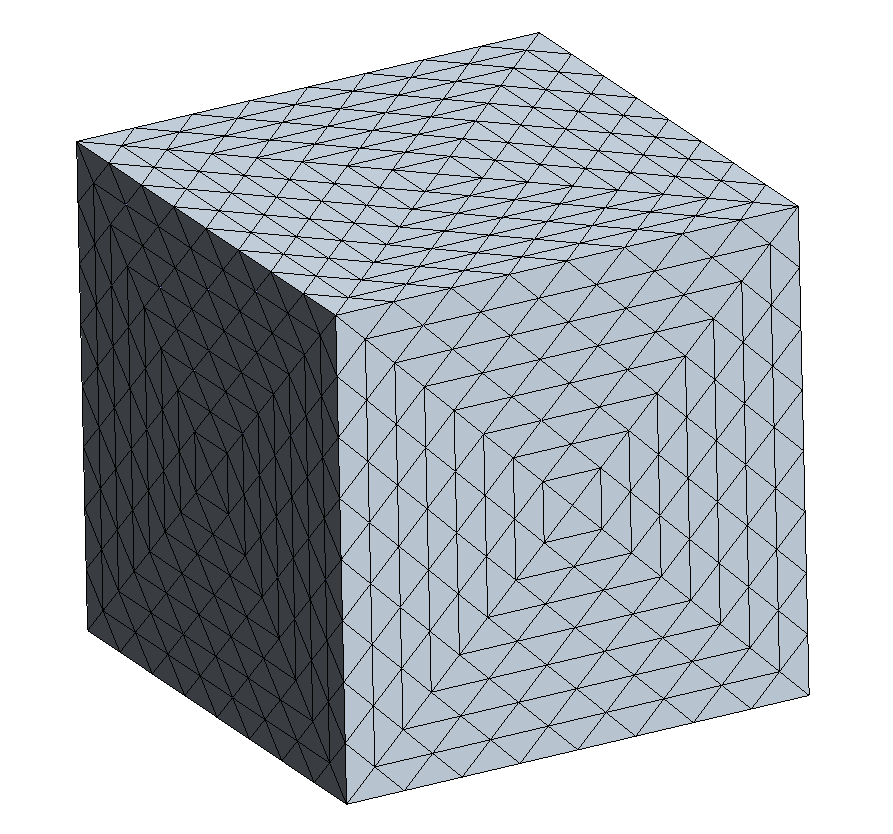}	
				\label{fig:meshCube}
				\captionof{figure}{Mesh with 12288 elements.}
			\end{minipage}\hfill
		\end{figure}
		\FloatBarrier

		\begin{figure}[ht!]
			\centering 
			\includegraphics[width=0.5\linewidth]{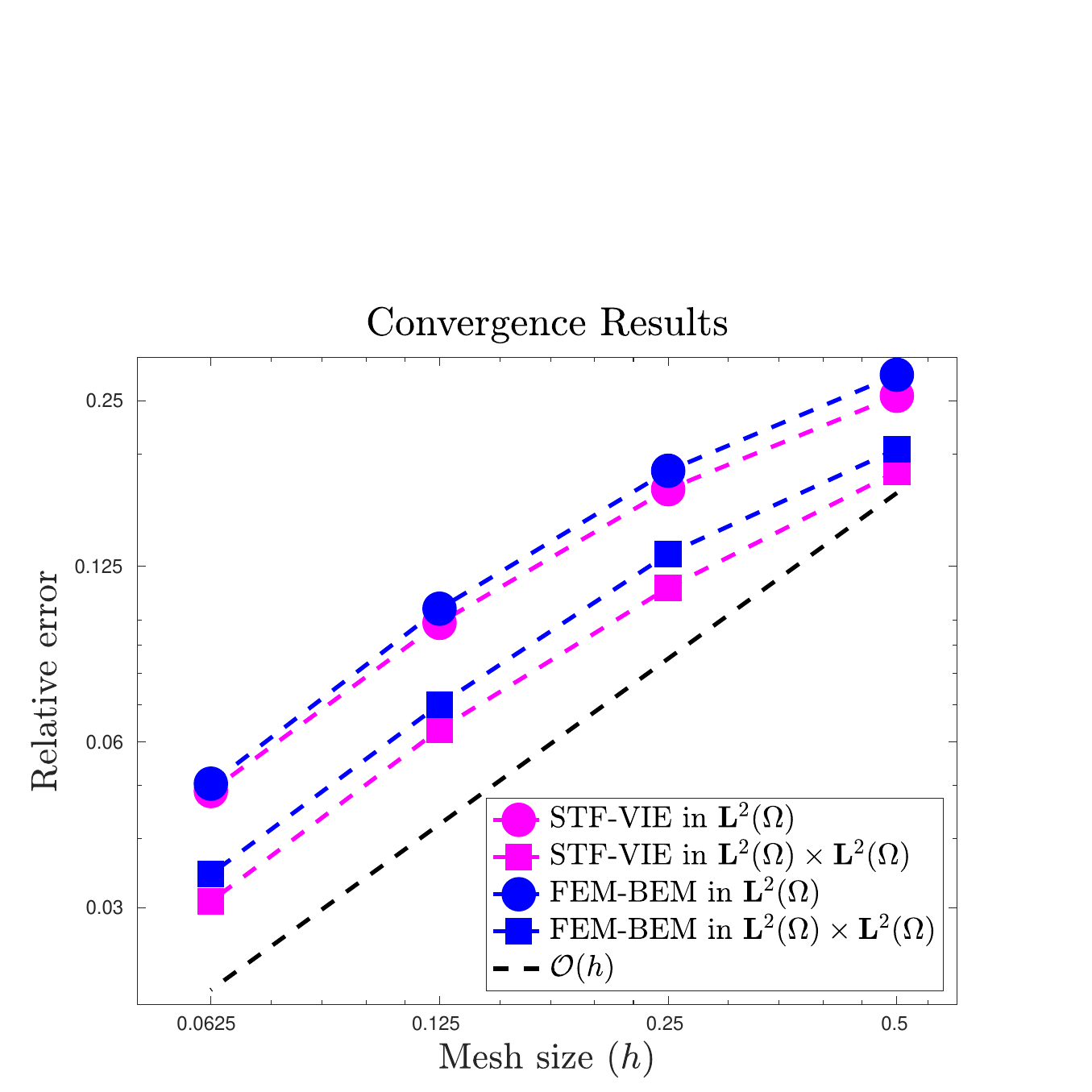}	
			\caption{Scattering at a cube: problem of Section \ref{sec:test_cube}. Error norms \eqref{eq:error-norms} as functions of $ h $.}
			\label{fig:test_cube}
		\end{figure}
		\FloatBarrier

		\subsection{Scattering at a tetrahedron}\label{sec:test_tetra}
		Now we study the problem with $\Omega$ being the tetrahedron with vertices $(0,0,0), (1, 0, 0), (0, 1, 0), (0, 0, 1).$
		
		\begin{figure}
			\begin{minipage}[b]{.45\linewidth}
				\centering
				\begin{tabular}{|c|c|c|l|}
					\hline
					\multicolumn{4}{|c|}{\textbf{Meshes}} \\
					\hline
					\textbf{Elements} & \textbf{Nodes}& \textbf{Edges}& \textbf{Mesh size} \\
					\hline
					4 & 7 & 15 & 0.346681\\
					\hline
					32 & 22 &73 & 0.173340\\
					\hline
					256 & 95 &430 & 0.0866702\\
					\hline
					2048 & 525 &2892 & 0.0433351\\
					\hline
					16384 & 3417 & 21080& 0.0216676\\
					\hline
				\end{tabular}   
				\captionof{table}{Meshes used in Section \ref{sec:test_tetra}, generated by uniform regular refinement.}
			\end{minipage}\hfill
			\begin{minipage}[b]{.4\linewidth}
				\centering
				\includegraphics[width=0.7\linewidth]{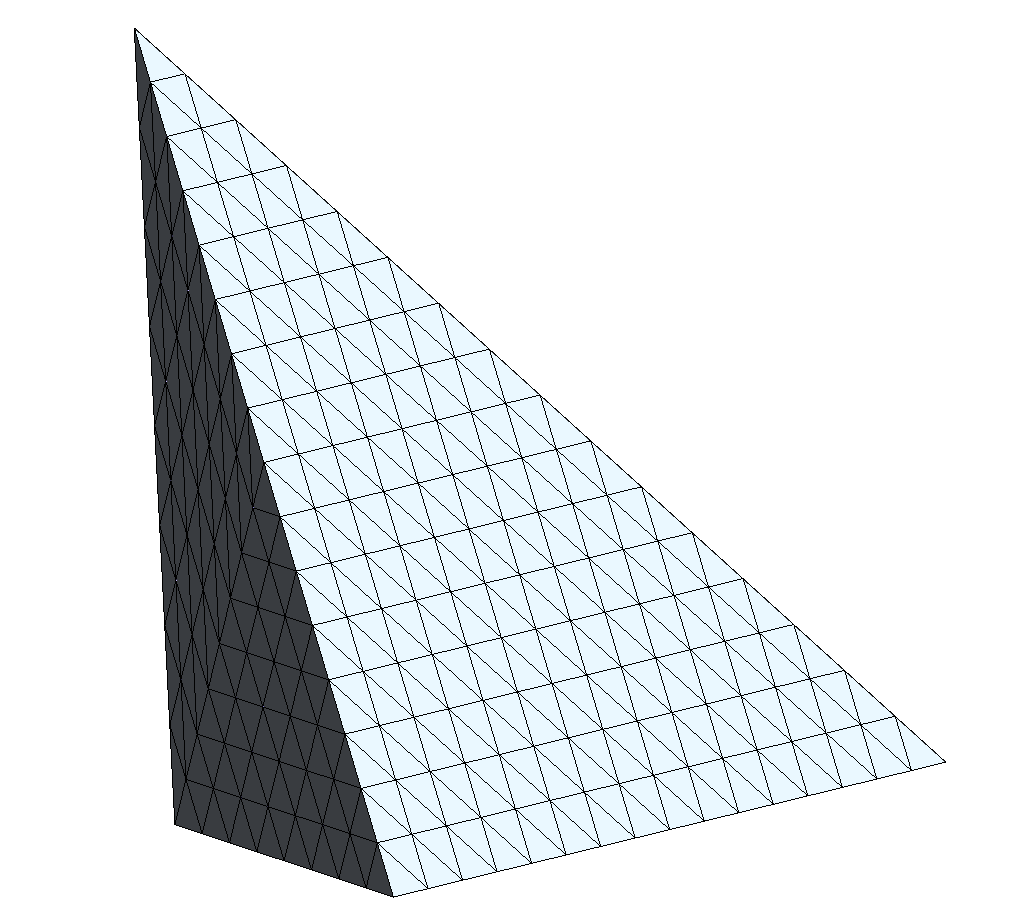}	
				\label{fig:meshTetra}
				\captionof{figure}{Mesh with 2048 elements. }
			\end{minipage}\hfill
		\end{figure}
		Material properties are given by
		\begin{align*}
			\varepsilon(\nex) = \left\{\begin{array}{ll}
				2 + 4xyz(1-x)(1-y)(1-z), &\text{for }\nex \in \Omega_i,\\
				1, &\text{for }\nex \in \mathbb{R}^3 \setminus \overbar{\Omega}_i.
			\end{array}  \right.\\
			\mu(\nex) = \left\{\begin{array}{ll}
				2 + 4xyz(1-x)(1-y)(1-z), &\text{for }\nex \in \Omega_i,\\
				1, &\text{for }\nex \in \mathbb{R}^3 \setminus \overbar{\Omega}_i.
			\end{array}  \right.\\
		\end{align*}
		Note that in this case, material properties are not homogeneous over the whole bondary $\Gamma.$ Convergence results are shown in Figure \ref{fig:test_tetra}. Again, we observe $\mathcal{O}(h)$ convergence, although this case does not satisfy the assumptions of Proposition \ref{unique}, nor Assumption \ref{discreteInfSup}.
		
		\begin{figure}[ht!] 
			\centering
			\includegraphics[width=0.5\linewidth]{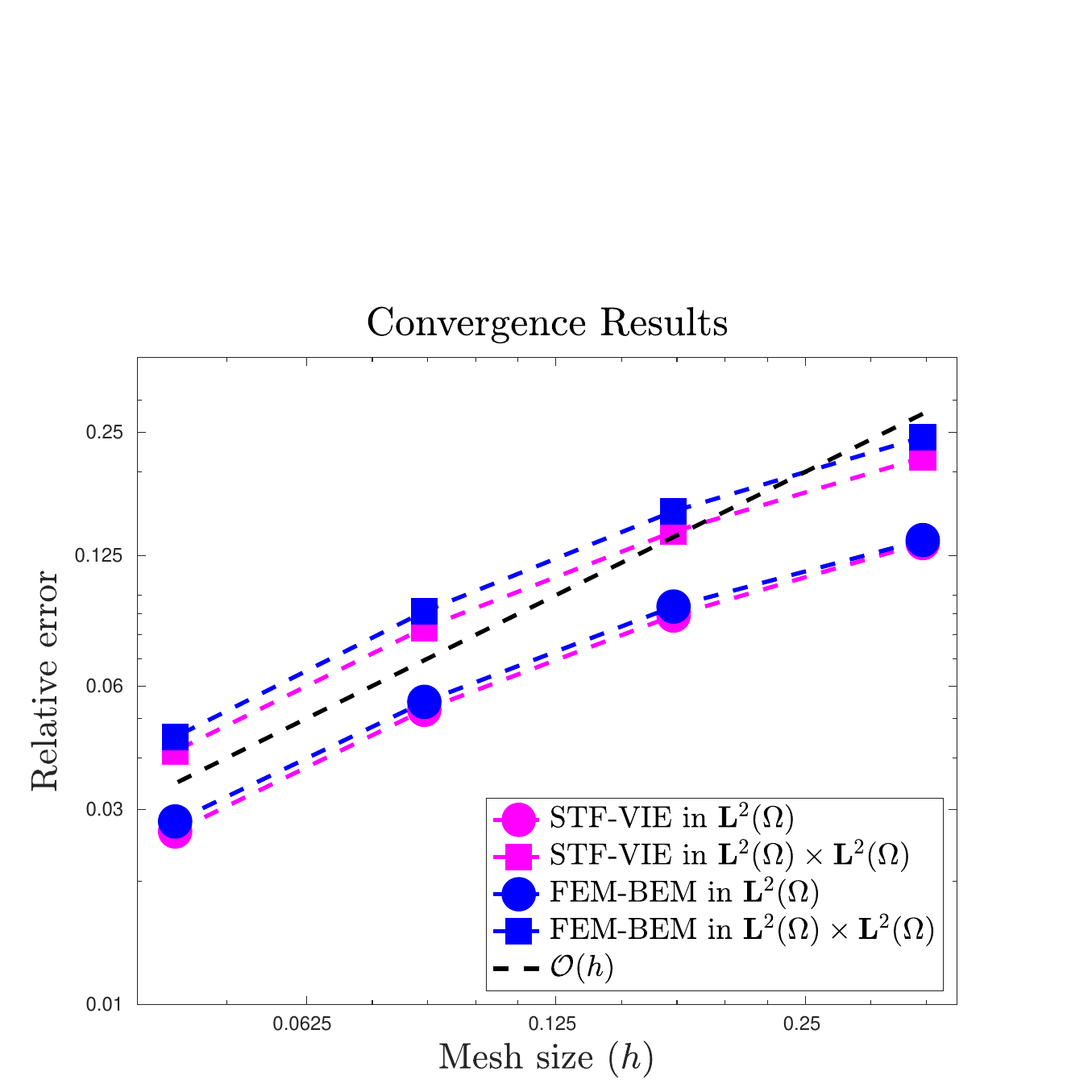}	
			\caption{Scattering at a Tetrahedron: problem of Section \ref{sec:test_tetra}. Error norms \eqref{eq:error-norms} as functions of $ h $.}
			\label{fig:test_tetra}
		\end{figure}
		
		\FloatBarrier

		\il{
		\subsection{Scattering at a dielectric Fichera cube}\label{sec:test_fichera}
		We study the electromagnetic scattering problem at a unit cube 
		\begin{equation*}
			\Omega_i\coloneqq [-1, 1]^3 \setminus [0, 1]^3.
		\end{equation*}
		Material properties are given by
		\begin{align*}
			\varepsilon(\nex) = \left\{\begin{array}{ll}
				2 + 4xyz(1-x)(1-y)(1-z), &\text{for }\nex \in \Omega_i,\\
				1, &\text{for }\nex \in \mathbb{R}^3 \setminus \overbar{\Omega}_i.
			\end{array}  \right.\\
		\end{align*}
		and $\mu(\nex)\equiv 1$ in $\mathbb{R}^3.$ Also in this case the material properties are not constant at the boundary $\Gamma$. \\
		Convergence results are shown in Figure \ref{fig:test_fichera}. We observe a reduced order of convergence due to the singular behavior of Maxwell solutions in this particular geometry \cite{costabel1999singularities}.
		\begin{figure}[ht!]
			\begin{minipage}[b]{.45\linewidth}
				\centering
				\begin{tabular}{|c|c|c|c|}
					\hline
					\multicolumn{4}{|c|}{\textbf{Meshes}} \\
					\hline
					\textbf{Elements} & \textbf{Nodes}& \textbf{Edges}& \textbf{Mesh size} \\
					\hline
					130 & 51 & 228 & 1/2\\
					\hline
					569 & 189 &925 & 1/3\\
					\hline
					1040 & 279 &1510 & 1/4\\
					\hline
					8320 & 1789 &10876 & 1/8\\
					\hline
					66560 & 12665 & 82296& 1/16\\
					\hline
				\end{tabular}
				\captionof{table}{Meshes used in Section \ref{sec:test_fichera}, generated by uniform regular refinement 
					{\label{tab:meshFichera}}}
			\end{minipage}\hfill
			\begin{minipage}[b]{.4\linewidth}
				\centering
				\includegraphics[width=0.7\linewidth]{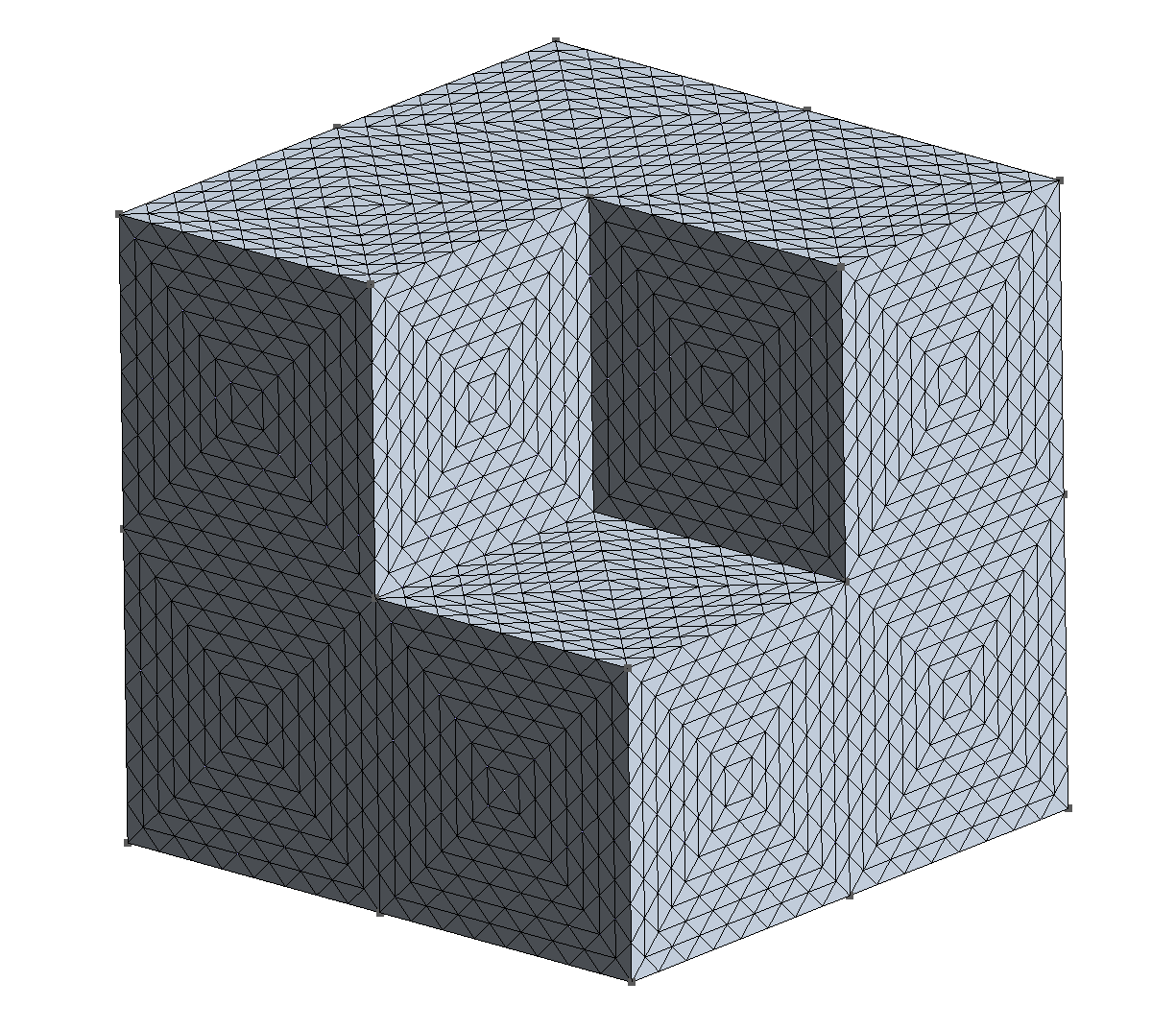}	
				\label{fig:meshFichera}
				\captionof{figure}{Mesh with 66560 elements.}
			\end{minipage}\hfill
		\end{figure}
		\FloatBarrier

		\begin{figure}[ht!]
			\centering 
			\includegraphics[width=0.5\linewidth]{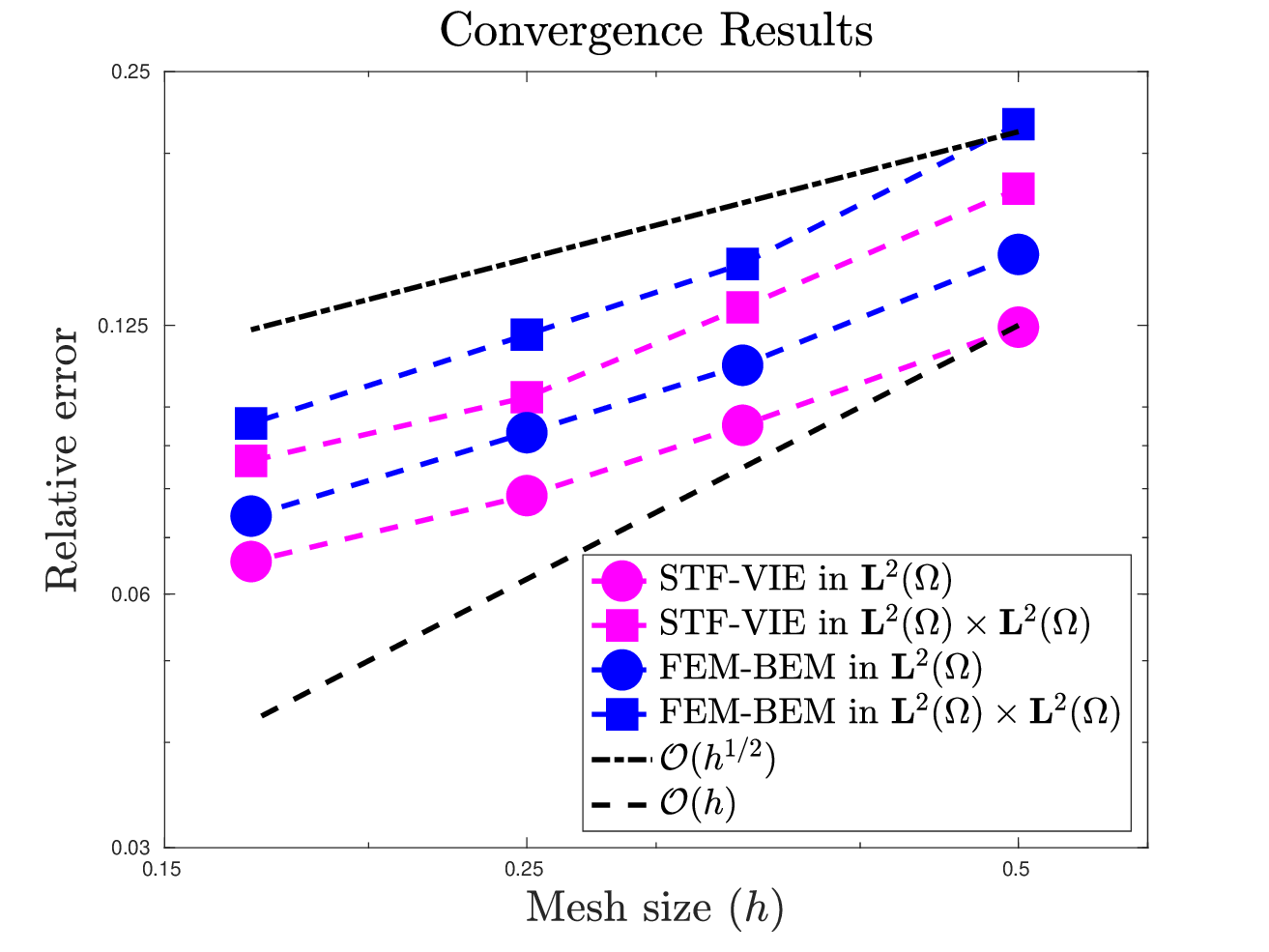}	
			\caption{Scattering at a Fichera cube: problem of Section \ref{sec:test_fichera}. Error norms \eqref{eq:error-norms} as functions of $ h $.}
			\label{fig:test_fichera}
		\end{figure}
		\FloatBarrier

		\subsection{Scattering at a dielectric cube: Empirical study of impact of frequency}\label{sec:test_freq}
		We study the electromagnetic scattering problem at a unit cube 
		\begin{equation*}
			\Omega_i\coloneqq \{  \nex = (x,y,z)\in \mathbb{R}^3 : 0\leq x,y,z\leq 1 \}.
		\end{equation*}
		Material properties are given by
		\begin{align*}
			\varepsilon(\nex) = \left\{\begin{array}{ll}
				2 + 2xyz(1-x)(1-y)(1-z), &\text{for }\nex \in \Omega_i,\\
				1, &\text{for }\nex \in \mathbb{R}^3 \setminus \overbar{\Omega}_i.
			\end{array}  \right.\\
		\end{align*}
		and
		\begin{align*}
	\mu(\nex) = \left\{\begin{array}{ll}
		2, &\text{for }\nex \in \Omega_i,\\
		1, &\text{for }\nex \in \mathbb{R}^3 \setminus \overbar{\Omega}_i.
	\end{array}  \right.\\
\end{align*}	
We study several frequency-domain problems for different values of $\omega > 0$, such that 
	
	\begin{align*}
		\kappa(\nex) = \left\{\begin{array}{ll}
			\omega\sqrt{\varepsilon(\nex)\mu(\nex)}, &\text{for }\nex \in \Omega_i,\\
			\omega\sqrt{\varepsilon_0\mu_0}, &\text{for }\nex \in \mathbb{R}^3 \setminus \overbar{\Omega}_i.
		\end{array}  \right.\\
	\end{align*}
	We define a weighted-norm in $\mathbf{L}^2(\Omega_i)\times\mathbf{L}^2(\Omega_i)$ given by 
	\begin{equation}
		\norm{(\neu, \nev)}_{\mathbf{H}_{\kappa_0}(\Omega_i)} ^2 \coloneqq \norm{\neu}^2_{\mathbf{L}^2(\Omega_i)} + \dfrac{1}{\kappa_0^2}\norm{\nev}^2_{\mathbf{L}^2(\Omega_i)} 
	\end{equation}
	for all $(\neu,\nev)\in \mathbf{L}^2(\Omega_i)\times \mathbf{L}^2(\Omega_i)$. Then, we measure relative errors as
	\begin{equation}\label{eq:error-norms-kappa}
		 \text{error}_{\mathbf{H}_{\kappa_0}} \coloneqq \dfrac{\sqrt{\norm{\neu^{\star}_h - \neu_h}_{\mathbf{L}^2(\Omega_i)}^2 + \frac{1}{\kappa_0^2}\norm{\nev^{\star}_h - \nev_h}_{\mathbf{L}^2(\Omega_i)}^2  } }{\sqrt{\norm{\neu^{\star}_h}_{\mathbf{L}^2(\Omega_i)}^2 +  \frac{1}{\kappa_0^2}\norm{\nev^{\star}_h}_{\mathbf{L}^2(\Omega_i)}^2 }}
	\end{equation}

	to compare results for different frequencies.\\
	For each problem, we use a uniform mesh such that the product $\kappa_0 h \approx 0.9$. As expected for the FEM-BEM coupling, the number of degrees of freedom needs to increase at a higher rate in order to keep the same level of accuracy, due to the pollution effect at higher frequencies. We observe that STF-VIEs have a better behavior in this context, suggesting better frequency-dependent constants for the inverse operator.
	
	The analysis of such behavior remains open for volume and boundary-volume integral equations, although existing results for the combined field integral equation \cite{galkowski2023does} point to the absense of pollution effect in certain integral equation formulations.
	
		\begin{figure}[h!]
				\centering
				\includegraphics[width=0.55\linewidth]{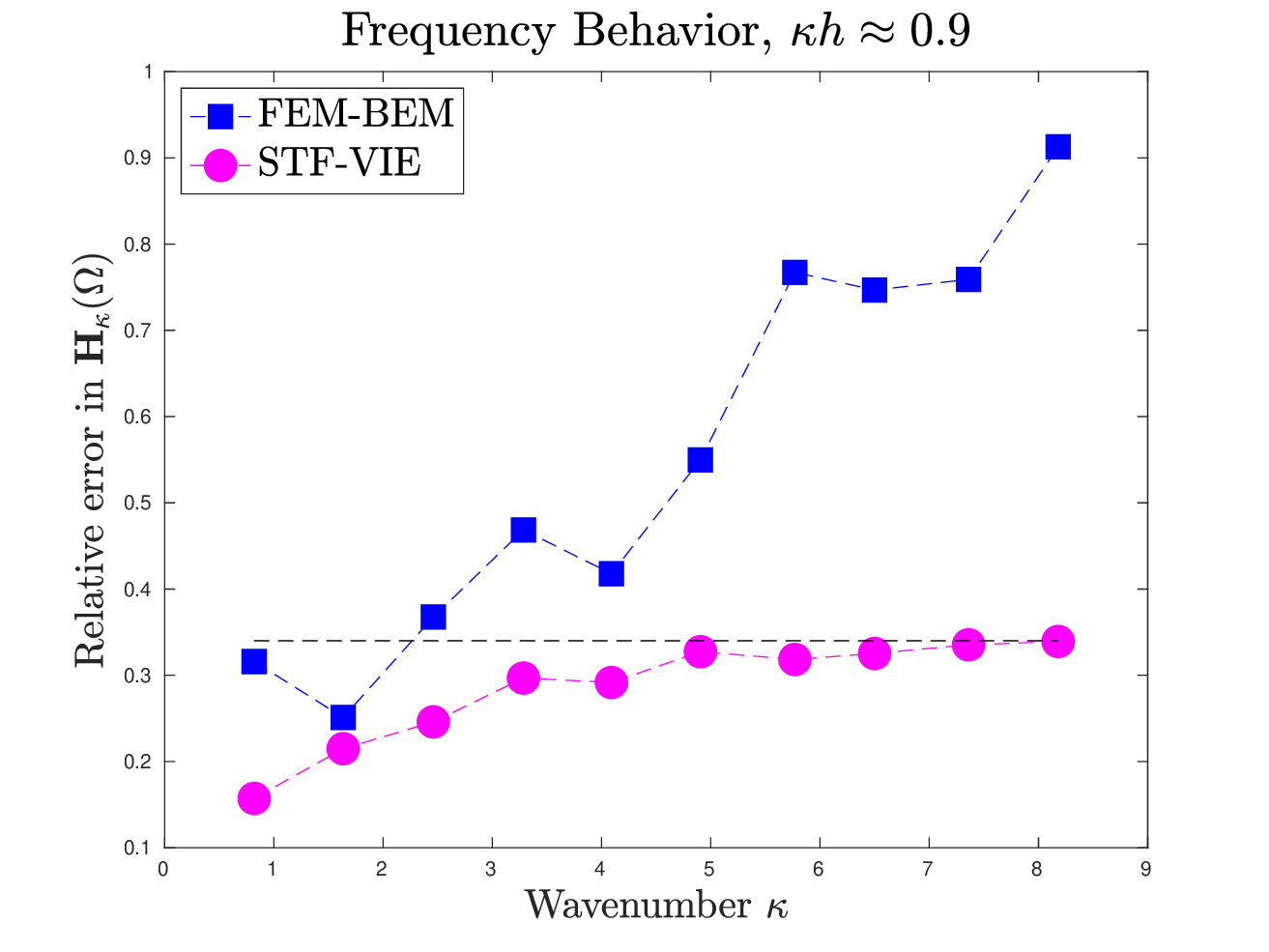}	
				\label{fig:maxwell-freq}
				\captionof{figure}{Wavenumber comparison between FEM-BEM and STF-VIEs in Section \ref{sec:test_fichera}. Errors are computed as in \eqref{eq:error-norms-kappa}.}
		\end{figure}
		\FloatBarrier
		
		}
		
		\section{Conclusion}
		We presented a new formulation coupling boundary and volume integral equations. Under assumptions on the material properties, we are able to show well-posedness of continuous and discrete settings. Uniqueness of solutions in a general setting remains an open problem.
		Our numerical experiments show optimal convergence of Galerkin discretizations \il{in the absense of geometric singularities.} The use of a conforming subspace of the dual of $\mathbf{H}(\Curl, \Omega_i)$ that ensures a stable discretization remains as an open problem. \\
		
		\il{Whether STF-VIE offers a competitive alternative to the $h$-version of FEM-BEM coupling depends on
			\begin{enumerate}[label=(\roman*)]
				\item a favorable setting with the inhomogeneities \emph{supported in a small part} of $\Omega_i$,
				\item the frequency,
				\item an efficient implementation harnessing matrix compression.
			\end{enumerate}
			Concerning (ii) we have some evidence that STF-VIEs are \emph{immune to the notorious pollution
				effect} haunting $h$-FEM for wave propagation problems for higher frequencies. Yet, studying
			the dependence of the discretization error of the new STF-VIEs on frequency is a
			challenging research topic beyond the scope of this paper.
			Concerning (iii) in this work, we employed ACA based $\mathcal{H}$–matrix compression, although the
			use of $\mathcal{H}^2$–matrix compression is even more necessary for VIEs to become relevant. In
			high-frequency situations, \emph{directional $\mathcal{H}^2$–matrix compression} \cite{bebendorf2015wideband,borm2017directional} offers a promising path
			towards efficient implementation.
			.\\ }

		\section*{Funding}
		This work was supported by the Swiss National Science Foundation under grant SNF200021 184848/1 “Novel BEM for Electromagnetics”.



\bibliographystyle{elsarticle-num} 
\bibliography{References}


%
%
%

\appendix

\section{Block Operators}\label{BlockOps}
In this section we present results from \cite[Appendix~A]{labarca2023} that cover a particular case of block operators. We show what is required to obtain inf-sup conditions in the continuous and discrete setting. The theoretical results from this appendix are used to establish well-posedness of the variational STF-VIE problem in Sections \ref{sec:analysis} and \ref{sec:Galerkin}.
\subsection{Fredholm Equation}
Let $ X, \Pi $ be Hilbert spaces and $ X', \Pi' $ their duals. Consider the operators
\begin{align*}
	&A : X \rightarrow X', &B : \Pi \rightarrow X', \\
	&C : X \rightarrow \Pi', &D : \Pi \rightarrow \Pi ',
\end{align*}
all of them bounded linear operators. We study the block operator equation
\begin{align}\label{eq:abstract_system}
	\begin{pmatrix}
		A & B\\ C & D
	\end{pmatrix}\begin{pmatrix}
		u \\ p  
	\end{pmatrix} = \begin{pmatrix}
		f \\ 0
	\end{pmatrix}, \ f\in X'.
\end{align}
\begin{assumption}\label{th:assumption}
	The operator 
	$$ \mathbf{T} = \begin{pmatrix}
		A & B\\ C & D
	\end{pmatrix} : X\times \Pi \rightarrow X' \times \Pi'$$
	is injective. Moreover, $ A $ and $ D $ are coercive operators. $ B $ is a compact operator.
\end{assumption}
\begin{proposition}[{\cite[Proposition~A.2]{labarca2023}}]\label{th:well_posed}
	Under assumption \ref{th:assumption}, there exists a unique solution $ (u^{\star}, p^{\star}) \in X\times \Pi $ to the system in \eqref{eq:abstract_system}. Moreover, the solution satisfies
	\begin{equation*}
		\norm{u^{\star}}_X + \norm{p^{\star}}_{\Pi} \leq C \norm{f}_{X'}.
	\end{equation*}
\end{proposition}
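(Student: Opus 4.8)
The plan is to turn the block system \eqref{eq:abstract_system} into a single Fredholm equation of the second kind on $X\times\Pi$ and then invoke Fredholm alternative together with the injectivity assumed in Assumption \ref{th:assumption}. First I would observe that since $A$ and $D$ are coercive (in the sense of a Gårding/coercivity estimate on Hilbert spaces), by Lax--Milgram they are boundedly invertible; write $\mathbf{T}_0 \coloneqq \operatorname{diag}(A,D)$, which is an isomorphism $X\times\Pi \to X'\times\Pi'$ with bounded inverse. Then $\mathbf{T} = \mathbf{T}_0 + \mathbf{T}_1$, where $\mathbf{T}_1 = \begin{pmatrix} 0 & B \\ C & 0\end{pmatrix}$. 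Composing with $\mathbf{T}_0^{-1}$ on the left gives $\mathbf{T}_0^{-1}\mathbf{T} = \mathsf{Id} + \mathbf{T}_0^{-1}\mathbf{T}_1$ on $X\times\Pi$.

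The second step is to identify $\mathbf{T}_0^{-1}\mathbf{T}_1$ as a compact operator on $X\times\Pi$. The off-diagonal block $B:\Pi\to X'$ is compact by hypothesis, so $A^{-1}B:\Pi\to X$ is compact; the other entry $D^{-1}C:X\to\Pi$ is only bounded, not a priori compact. This is where I expect the main subtlety to lie: $\mathbf{T}_1$ itself is not compact, so one cannot directly conclude $\mathsf{Id} + \mathbf{T}_0^{-1}\mathbf{T}_1$ is Fredholm of index zero. The fix is to square (or otherwise iterate) the perturbation: note that $\mathbf{T}_0^{-1}\mathbf{T}_1$ swaps the two components, so $(\mathbf{T}_0^{-1}\mathbf{T}_1)^2 = \operatorname{diag}(A^{-1}B\,D^{-1}C,\; D^{-1}C\,A^{-1}B)$, and \emph{both} diagonal entries contain the compact factor $B$ (respectively $A^{-1}B$), hence $(\mathbf{T}_0^{-1}\mathbf{T}_1)^2$ is compact on $X\times\Pi$. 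Therefore $\mathsf{Id} - (\mathbf{T}_0^{-1}\mathbf{T}_1)^2 = (\mathsf{Id}-\mathbf{T}_0^{-1}\mathbf{T}_1)(\mathsf{Id}+\mathbf{T}_0^{-1}\mathbf{T}_1)$ is a Fredholm operator of index zero, and a standard argument (a product of two operators whose composition in one order is Fredholm of index zero, both factors sharing structure) shows each factor, in particular $\mathsf{Id}+\mathbf{T}_0^{-1}\mathbf{T}_1$, is itself Fredholm of index zero.

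The third step is to close the argument via the Fredholm alternative: $\mathbf{T}_0^{-1}\mathbf{T} = \mathsf{Id}+\mathbf{T}_0^{-1}\mathbf{T}_1$ is Fredholm of index zero on $X\times\Pi$, so it is invertible if and only if it is injective. But $\mathbf{T}_0^{-1}\mathbf{T}$ is injective exactly when $\mathbf{T}$ is injective (as $\mathbf{T}_0^{-1}$ is an isomorphism), and injectivity of $\mathbf{T}$ is precisely Assumption \ref{th:assumption}. Hence $\mathbf{T}_0^{-1}\mathbf{T}$, and therefore $\mathbf{T}$, is boundedly invertible $X\times\Pi \to X'\times\Pi'$. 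Applying $\mathbf{T}^{-1}$ to the right-hand side $(f,0)$ yields the unique solution $(u^\star,p^\star)$ together with the a priori bound $\norm{u^\star}_X + \norm{p^\star}_\Pi \le \norm{\mathbf{T}^{-1}}\,\norm{f}_{X'} = C\norm{f}_{X'}$. The only genuinely delicate point is the index-zero bookkeeping after squaring the perturbation; everything else is routine Hilbert-space functional analysis.
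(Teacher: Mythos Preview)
The paper does not supply its own proof of this proposition; it merely cites \cite[Proposition~A.2]{labarca2023}. Your argument is correct: writing $\mathbf{T}_0=\operatorname{diag}(A,D)$ (invertible by coercivity and Lax--Milgram) and $K=\mathbf{T}_0^{-1}\mathbf{T}_1$, the square $K^2=\operatorname{diag}(A^{-1}BD^{-1}C,\,D^{-1}CA^{-1}B)$ is compact because each diagonal block contains the compact factor $A^{-1}B$, and then the homotopy $t\mapsto \mathsf{Id}+tK$ (Fredholm for every $t$ since $(tK)^2$ is compact) pins the index of $\mathsf{Id}+K$ at zero. Fredholm alternative plus the assumed injectivity of $\mathbf{T}$ finishes the job.

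A more direct route---and very likely the one in the cited reference---is the Schur complement: use invertibility of $D$ to eliminate $p=-D^{-1}Cu$ from the second row, reducing the system to $(A-BD^{-1}C)u=f$ in $X'$. Here $BD^{-1}C:X\to X'$ is compact (it contains $B$), so $A-BD^{-1}C$ is a compact perturbation of the coercive operator $A$ and hence Fredholm of index zero; injectivity of this Schur complement follows immediately from injectivity of $\mathbf{T}$. This avoids the squaring/homotopy bookkeeping entirely and makes the structure of the estimate $\norm{u^\star}_X\le C\norm{f}_{X'}$, $\norm{p^\star}_\Pi\le \norm{D^{-1}C}\norm{u^\star}_X$ transparent. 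Your approach has the mild advantage of treating the two rows symmetrically, but the Schur-complement argument is shorter and exploits the block-triangular asymmetry (only $B$ compact) more naturally.
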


\subsection{Galerkin Discretization}
Next, we consider the Galerkin discretization of \eqref{eq:abstract_system}. Choose finite dimensional subspaces $ X_h\subset X$ and $\Pi_h \subset \Pi. $ We study the following variational problem: find $ (u_h, p_h)\in X_h \times \Pi_h $ such that 
\begin{align*}
	\langle Au_h, v_h \rangle_X + \langle Bp_h, v_h\rangle_{X} &= \langle f, v_h \rangle, &\text{for all }v_h\in X_h,\\
	\langle Cu_h, q_h \rangle_{\Pi} + \langle Dp_h, q_h\rangle_{\Pi} &= 0, &\text{for all }q_h\in \Pi_h,
\end{align*}
which can be rewritten as
\begin{equation}\label{eq:discrete_galerkin}
	\mathsf{t}\left((u_h, p_h), (v_h, q_h)\right) = \langle f, v_h \rangle, \quad \text{for all }v_h\in X_h, \ q_h \in \Pi_h,
\end{equation}
where 
\begin{equation}
	\mathsf{t}\left((u_h, p_h), (v_h, q_h)\right) = \langle Au_h, v_h \rangle_X + \langle Bp_h, v_h\rangle_{X} + \langle Cu_h, q_h \rangle_{\Pi} + \langle Dp_h, q_h\rangle_{\Pi}.
\end{equation}
\begin{proposition}[Inf-sup condition, {\cite[Proposition~A.3]{labarca2023}}]\label{th:inf-sup-t0}
	Let $A_0 : X\rightarrow X'$ and $D_0:\Pi\rightarrow\Pi'$ be elliptic operators, and $C : X\rightarrow \Pi'$ a bounded operator. The bilinear form $ \mathsf{t}_0: (X\times \Pi) \times (X\times \Pi)\rightarrow \mathbb{C} $ given by
	\begin{equation*}
		\mathsf{t}_0((u, p), (v, q)) = \langle A_0u, v \rangle_X + \langle Cu, q \rangle_{\Pi} + \langle D_0p, q\rangle_{\Pi},
	\end{equation*}
	satisfies the $ h-$uniform discrete inf-sup condition
	\begin{equation}
		c_1^{\mathsf{t}_0} \leq 	\inf\limits_{\substack{0\neq(u_h, p_h) \in X_h\times \Pi_h}}\sup\limits_{\substack{0\neq(v_h, q_h) \in X_h\times \Pi_h}} \dfrac{\mathrm{Re} \{\mathsf{t}_0((u_h, p_h), (v_h, q_h))\}}{\norm{(u_h, p_h)}_{X\times \Pi} \norm{(v_h, q_h)}_{X\times \Pi}}, \quad \text{ for all }h>0.
	\end{equation}
\end{proposition}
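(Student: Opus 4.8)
The plan is to build an explicit test pair $(v_h,q_h)$ for each $0\neq(u_h,p_h)\in X_h\times\Pi_h$ that certifies the inf-sup lower bound uniformly in $h$. Because $A_0$ and $D_0$ are elliptic (hence satisfy G\r{a}rding-type coercivity without compact remainder here), I first record the two ellipticity estimates $\mathrm{Re}\langle A_0 u_h,u_h\rangle_X\ge c_{A}\norm{u_h}_X^2$ and $\mathrm{Re}\langle D_0 p_h,p_h\rangle_\Pi\ge c_{D}\norm{p_h}_\Pi^2$, valid for all $h$ since they hold on the whole spaces and $X_h\subset X$, $\Pi_h\subset\Pi$. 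The coupling term $C$ is merely bounded, $|\langle C u_h,q_h\rangle_\Pi|\le \norm{C}\,\norm{u_h}_X\norm{q_h}_\Pi$, and note the bilinear form $\mathsf{t}_0$ has \emph{no} $B$-type (upper off-diagonal) coupling: $\mathsf{t}_0((u,p),(v,q))=\langle A_0u,v\rangle_X+\langle Cu,q\rangle_\Pi+\langle D_0p,q\rangle_\Pi$. This triangular structure is exactly what makes a direct construction possible.

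The key steps, in order: (1) Given $(u_h,p_h)$, choose the test pair $v_h:=u_h$ and $q_h:=p_h - \delta\, \sigma_h(u_h)$, where $\sigma_h(u_h)\in\Pi_h$ is chosen so that $\langle D_0 p_h,\sigma_h(u_h)\rangle_\Pi$ reproduces, up to controllable error, the term $\langle C u_h, p_h\rangle_\Pi$ needed to cancel the cross term; concretely, since $D_0$ is elliptic it is invertible, so one may set $\sigma_h(u_h)$ to be the Galerkin (Ritz) solution in $\Pi_h$ of $\langle D_0 \sigma_h(u_h),q_h\rangle_\Pi = \langle C u_h, q_h\rangle_\Pi$ for all $q_h\in\Pi_h$, which is well defined and $h$-uniformly stable, $\norm{\sigma_h(u_h)}_\Pi\le C\norm{u_h}_X$, precisely by the ellipticity of $D_0$ (Lax--Milgram on $\Pi_h$). (2) Evaluate $\mathrm{Re}\,\mathsf{t}_0((u_h,p_h),(v_h,q_h))$: the $\langle A_0 u_h,u_h\rangle_X$ term gives $\ge c_A\norm{u_h}_X^2$; the $\langle C u_h, p_h\rangle_\Pi$ term is cancelled against $-\delta\langle D_0 p_h,\sigma_h(u_h)\rangle_\Pi = -\delta\langle C u_h, p_h\rangle_\Pi$... wait, this needs the adjoint relation, so instead define $\sigma_h$ via $\langle D_0 q_h,\sigma_h(u_h)\rangle_\Pi=\langle C u_h,q_h\rangle_\Pi$ using the elliptic form $\langle D_0\cdot,\cdot\rangle$; with that choice $\langle D_0 p_h,\sigma_h(u_h)\rangle_\Pi$ is not directly $\langle Cu_h,p_h\rangle_\Pi$ in general — so the cleaner route is to take instead $q_h := p_h$ and $v_h := u_h - \delta\, \tau_h(p_h)$ with $\tau_h(p_h)\in X_h$ the Ritz solution of $\langle A_0 \tau_h(p_h),v_h\rangle_X=0$... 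This reshuffling shows the real mechanics: one solves a small auxiliary elliptic problem to absorb $C$, pays a term $\mathcal O(\delta\norm{C}^2/c_D)\norm{u_h}_X\norm{p_h}_\Pi$, and then picks $\delta$ small enough (depending only on $c_A,c_D,\norm{C}$, not on $h$) so that the combination is bounded below by $c(\norm{u_h}_X^2+\norm{p_h}_\Pi^2)$, i.e. $\gtrsim \norm{(u_h,p_h)}_{X\times\Pi}^2$. (3) Check $\norm{(v_h,q_h)}_{X\times\Pi}\le C\norm{(u_h,p_h)}_{X\times\Pi}$ using the $h$-uniform bound on $\tau_h$/$\sigma_h$. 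Dividing yields the claimed $h$-uniform constant $c_1^{\mathsf{t}_0}$.

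The main obstacle I anticipate is purely bookkeeping: getting the auxiliary Ritz correction on the right side of the triangular block so that the cross term from $C$ is exactly absorbed while the sign of the remaining quadratic form stays positive, and verifying that the correction operator ($\tau_h$ or $\sigma_h$) is $h$-uniformly bounded — which is immediate from ellipticity but must be stated carefully. There is no compactness argument and no need for a limiting/uniqueness argument here, since both diagonal operators are genuinely elliptic (not merely Fredholm); this is the reason the discrete inf-sup is $h$-uniform without any $h_0$ threshold. The statement is, in effect, the standard ``triangular saddle-point with elliptic diagonal'' lemma, and I would present it as such, citing \cite[Proposition~A.3]{labarca2023} for the full details if a reference suffices, or else carrying out the three steps above.
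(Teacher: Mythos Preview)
The paper does not prove this proposition in-text; it simply cites \cite[Proposition~A.3]{labarca2023}, so there is no proof here to compare against. Your overall strategy --- exploit the lower-triangular block structure together with the ellipticity of both diagonal operators to build an explicit test pair --- is the natural one and is correct.

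The execution, however, wanders. Your second definition of $\sigma_h$ (via $\langle D_0 r_h,\sigma_h\rangle_\Pi=\langle Cu_h,r_h\rangle_\Pi$ for all $r_h\in\Pi_h$, well-posed with the $h$-uniform bound $\norm{\sigma_h}_\Pi\le(\norm{C}/c_D)\norm{u_h}_X$ by ellipticity) \emph{does} give $\langle D_0 p_h,\sigma_h\rangle_\Pi=\langle Cu_h,p_h\rangle_\Pi$ exactly --- just set $r_h=p_h$ --- so you dismissed a working construction. With $(v_h,q_h)=(\mu u_h,\,p_h-\sigma_h)$ the cross term cancels cleanly and choosing $\mu$ large (rather than $\delta$ small) absorbs the residual $-\mathrm{Re}\langle D_0\sigma_h,\sigma_h\rangle$, which is bounded by $(\norm{D_0}\norm{C}^2/c_D^2)\norm{u_h}_X^2$. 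Simpler still, no Ritz correction is needed at all: take $(v_h,q_h)=(u_h,\lambda p_h)$, apply weighted Young's inequality to the single cross term $\lambda\,\mathrm{Re}\langle Cu_h,p_h\rangle_\Pi$, and choose $\lambda>0$ small depending only on $c_A,c_D,\norm{C}$ to obtain $\mathrm{Re}\,\mathsf{t}_0\ge c\bigl(\norm{u_h}_X^2+\norm{p_h}_\Pi^2\bigr)$ directly. Your closing remarks (no compactness argument, no $h_0$ threshold, constant independent of $h$) are correct.
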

For the sake of simplicity we have stated Proposition \ref{th:inf-sup-t0} assuming elliptic operators $ A_0 $ and $ D_0. $ However, in Section \ref{sec:coercive} we face the situation that $ D_0 $ merely satisfies an inf-sup condition. This case is addressed by the following extended version of Proposition \ref{th:inf-sup-t0}.

\begin{proposition}[inf-sup condition, weakened assumptions, {\cite[Proposition~A.4]{labarca2023}}]\label{th:inf-sup-t0-weakened}
	In the setting of Section \ref{BlockOps}, let $ \widetilde{\Pi} $ be another Hilbert space and $ \widetilde{\Pi}_h \subset \widetilde{\Pi}$ a finite dimensional subspace. Let $ C: X \rightarrow \widetilde{\Pi}' $ be bounded and let $ D_0: \Pi \rightarrow \widetilde{\Pi}' $ be a bounded operator that satisfies an $ h-$uniform discrete inf-sup condition
	\begin{equation}
		c_1^{\mathsf{d}_0} \leq 	\inf\limits_{\substack{0\neq p_h \in \Pi_h}}\sup\limits_{\substack{0\neq q_h \in \widetilde{\Pi}_h}} \dfrac{\mathrm{Re} \{\langle D_0p_h, q_h\rangle_{\widetilde{\Pi}}\}}{\norm{p_h}_{\Pi} \norm{q_h}_{\widetilde{\Pi}}} \quad \text{ for all }h>0.
	\end{equation}
	Then, the bilinear form $ \mathsf{t}_0: (X\times \Pi) \times (X\times \widetilde{\Pi})\rightarrow \mathbb{C} $ given by
	\begin{equation*}
		\mathsf{t}_0((u, p), (v, q)) = \langle A_0u, v \rangle_X + \langle Cu, q \rangle_{\widetilde{\Pi}} + \langle D_0p, q\rangle_{\widetilde{\Pi}}
	\end{equation*}
	satisfies the $ h-$uniform discrete inf-sup condition
	\begin{equation}
		c_1^{\mathsf{t}_0} \leq 	\inf\limits_{\substack{0\neq(u_h, p_h) \in X_h\times \Pi_h}}\sup\limits_{\substack{0\neq(v_h, q_h) \in X_h\times \widetilde{\Pi}_h}} \dfrac{\mathrm{Re} \{\mathsf{t}_0((u_h, p_h), (v_h, q_h))\}}{\norm{(u_h, p_h)}_{X\times \Pi} \norm{(v_h, q_h)}_{X\times \widetilde{\Pi}}}, \quad \text{ for all }h>0.
	\end{equation}
\end{proposition}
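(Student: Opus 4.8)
The plan is the classical test-function construction for block systems with a coercive leading block; the argument parallels the proof of Proposition \ref{th:inf-sup-t0} and \cite[Proposition~A.4]{labarca2023}, with the coercivity-driven choice of the second test component replaced by an inf-sup-driven one. First I would fix an arbitrary $0\neq(u_h,p_h)\in X_h\times\Pi_h$. Since $A_0$ is elliptic (as carried over from Proposition \ref{th:inf-sup-t0}) with constant $\alpha>0$, the component $v_h=u_h$ already produces the $X$-contribution $\mathrm{Re}\langle A_0 u_h,u_h\rangle_X\geq\alpha\norm{u_h}_X^2$. The discrete inf-sup condition for $D_0$ furnishes, for the given $p_h$, an element $\hat q_h\in\widetilde\Pi_h$, which I normalize by $\norm{\hat q_h}_{\widetilde\Pi}=\norm{p_h}_\Pi$, such that $\mathrm{Re}\langle D_0 p_h,\hat q_h\rangle_{\widetilde\Pi}\geq c_1^{\mathsf{d}_0}\norm{p_h}_\Pi^2$ (the supremum is attained since $\widetilde\Pi_h$ is finite-dimensional). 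The candidate test pair is $(v_h,q_h)=(u_h,\delta\hat q_h)$ with $\delta>0$ to be fixed.

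Evaluating $\mathsf{t}_0$ on this pair and using boundedness of $C$ together with the normalization of $\hat q_h$,
\[
\mathrm{Re}\,\mathsf{t}_0\bigl((u_h,p_h),(u_h,\delta\hat q_h)\bigr)\;\geq\;\alpha\norm{u_h}_X^2+\delta c_1^{\mathsf{d}_0}\norm{p_h}_\Pi^2-\delta\norm{C}\,\norm{u_h}_X\norm{p_h}_\Pi .
\]
Next I would absorb the indefinite cross term by Young's inequality, $\delta\norm{C}\norm{u_h}_X\norm{p_h}_\Pi\leq\tfrac{\alpha}{2}\norm{u_h}_X^2+\tfrac{\delta^2\norm{C}^2}{2\alpha}\norm{p_h}_\Pi^2$, and pick $\delta$ small enough (say $\delta\leq\min\{1,\alpha c_1^{\mathsf{d}_0}\norm{C}^{-2}\}$) so that the coefficient of $\norm{p_h}_\Pi^2$ stays $\geq\tfrac12\delta c_1^{\mathsf{d}_0}$. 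This gives $\mathrm{Re}\,\mathsf{t}_0((u_h,p_h),(v_h,q_h))\geq c\,\norm{(u_h,p_h)}_{X\times\Pi}^2$ with $c>0$ depending only on $\alpha$, $\norm{C}$, and $c_1^{\mathsf{d}_0}$.

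To close, I would bound the test pair: $\norm{(v_h,q_h)}_{X\times\widetilde\Pi}^2=\norm{u_h}_X^2+\delta^2\norm{\hat q_h}_{\widetilde\Pi}^2=\norm{u_h}_X^2+\delta^2\norm{p_h}_\Pi^2\leq\norm{(u_h,p_h)}_{X\times\Pi}^2$, and dividing the lower bound by this norm yields the $h$-uniform discrete inf-sup estimate with constant $c_1^{\mathsf{t}_0}=c$. Since $\alpha$, $\norm{C}$, and $c_1^{\mathsf{d}_0}$ are $h$-independent by hypothesis, so is $c_1^{\mathsf{t}_0}$. I do not expect a genuine obstacle here: the only delicate point is that $\delta$ must be selected from $h$-independent quantities, which is automatic; everything else is the routine bookkeeping already present in the non-weakened statement.
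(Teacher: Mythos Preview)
Your argument is correct and is the standard construction for this type of block inf-sup result: choose $v_h=u_h$ to exploit the ellipticity of $A_0$, use the discrete inf-sup for $D_0$ to pick a near-supremizer $\hat q_h$ normalized against $\|p_h\|_\Pi$, scale it by a small $\delta$ to absorb the cross term $\langle Cu_h,q_h\rangle_{\widetilde\Pi}$ via Young's inequality, and finish with the trivial bound on the norm of the test pair. All constants ($\alpha$, $\norm{C}$, $c_1^{\mathsf{d}_0}$) are $h$-independent, so the resulting inf-sup constant is too. The only cosmetic point is the edge case $p_h=0$ (then take $\hat q_h=0$) and $\norm{C}=0$ (then the cross term is absent); both are harmless.

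Note that the paper does not actually supply its own proof of this proposition: it is stated with a citation to \cite[Proposition~A.4]{labarca2023} and no proof environment follows. Your argument is precisely the natural extension of the proof scheme behind Proposition~\ref{th:inf-sup-t0}, replacing the coercivity-based choice of $q_h$ by the inf-sup-based one, which is exactly what the cited reference does.
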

%

\begin{proposition}[{\cite[Proposition~A.2.6]{labarca2024}}]\label{th:t-inf-sup-coercive}
	Let $V$ and $W$ be Hilbert spaces, $\{V_h\}_{h>0}$ and $\{W_h\}_{h>0}$ asymptotically dense families of finite dimensional subspaces of $V$ and $H$ respectively. Consider a bounded sesquilinear form $\mathsf{t} : V \times W\rightarrow \mathbb{C}$ such that $\mathsf{t}= \mathsf{t}_0 + \mathsf{t}_{\mathbf{K}}.$ We assume the following 
	\begin{enumerate}
		\item The operator $\mathbf{A}: V\rightarrow W'$ induced by the sesquilinear form $\mathsf{t}$ is injective.
		\item The operator $\mathbf{K}: V\rightarrow W'$ induced by the sesquilinear form $\mathsf{t}_{\mathbf{K}}$ is compact.
		\item The sesquilinear form $\mathsf{t}_0$ satisfies an inf-sup condition on $V\times W$.
		\item The sesquilinear form $\mathsf{t}_0$ satisfies an $h$--uniform discrete inf-sup condition on $V_h\times W_h$.
	\end{enumerate}
	Then, there exist $h_0 >0$ and $c_{\mathsf{t}} > 0$ such that
	\begin{equation}
		0< c_{\mathsf{t}} \leq \inf\limits_{0\neq v_h\in V_h}\sup\limits_{0\neq w_h\in W_h}\dfrac{\mathrm{Re}\{\mathsf{t}(v_h, w_h)\}}{\norm{v_h}_{V}\norm{w_h}_W}, \quad \text{ for all }h< h_0.
	\end{equation}
\end{proposition}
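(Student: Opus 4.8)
The plan is to argue by contradiction, combining weak compactness of bounded sequences in the Hilbert space $V$ with the smoothing effect of the compact operator $\mathbf{K}$; this is the discrete analogue of the classical fact that a compact perturbation of an isomorphism which stays injective is again an isomorphism. First I would fix the continuous picture. Let $\mathbf{A}_0 : V \to W'$ denote the operator induced by $\mathsf{t}_0$. Assumption (3) (understood in the two-sided sense appropriate to a square problem) makes $\mathbf{A}_0$ an isomorphism, so assumption (2) turns $\mathbf{A}_0^{-1}\mathbf{A} = \mathsf{Id} + \mathbf{A}_0^{-1}\mathbf{K}$ into a Fredholm operator of index zero, and assumption (1) upgrades it to an isomorphism; hence $\mathbf{A}$ is boundedly invertible, and in particular injective—the only consequence of this reduction actually used below. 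I would also record the elementary remark that, since the discrete spaces are complex vector spaces, $\sup_{0\neq w_h\in W_h}\mathrm{Re}\{\mathsf{t}(v,w_h)\}/\norm{w_h}_W = \sup_{0\neq w_h\in W_h}|\mathsf{t}(v,w_h)|/\norm{w_h}_W$ for every $v$, so the $\mathrm{Re}$–inf–sup conditions may be handled as ordinary inf–sup conditions.

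Suppose now the asserted $h$–uniform discrete inf–sup fails. Choosing constants and mesh thresholds both equal to $1/n$ produces a sequence $h_n \downarrow 0$ and $v_{h_n}\in V_{h_n}$ with $\norm{v_{h_n}}_V = 1$ and $\varepsilon_n \coloneqq \sup_{0\neq w_h\in W_{h_n}} |\mathsf{t}(v_{h_n},w_h)|/\norm{w_h}_W \to 0$. Passing to a subsequence (not relabelled), $v_{h_n}\rightharpoonup v$ in $V$. The decisive step is to identify $v = 0$: given $w\in W$, asymptotic density of $\{W_h\}_{h>0}$ provides $w_{h_n}\in W_{h_n}$ with $w_{h_n}\to w$ in $W$; then $|\mathsf{t}(v_{h_n},w_{h_n})|\le \varepsilon_n\norm{w_{h_n}}_W \to 0$, while simultaneously $\mathsf{t}(v_{h_n},w_{h_n}) = \mathsf{t}(v_{h_n},w_{h_n}-w) + \mathsf{t}(v_{h_n},w) \to \mathsf{t}(v,w)$, the first summand vanishing by boundedness of $\mathsf{t}$ and $\norm{v_{h_n}}_V=1$, the second by weak convergence applied to the bounded functional $\mathsf{t}(\cdot,w)$. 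Hence $\mathsf{t}(v,w) = 0$ for all $w\in W$, i.e. $\mathbf{A}v = 0$, so $v = 0$ by injectivity of $\mathbf{A}$.

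To finish, I would invoke the discrete inf–sup (4) for $\mathsf{t}_0$ with uniform constant $c_0>0$: writing $\mathsf{t}_0 = \mathsf{t}-\mathsf{t}_{\mathbf{K}}$ and bounding $|\mathsf{t}_{\mathbf{K}}(v_{h_n},w_h)| = |\langle\mathbf{K}v_{h_n},w_h\rangle| \le \norm{\mathbf{K}v_{h_n}}_{W'}\norm{w_h}_W$, one obtains
\begin{equation*}
	c_0 = c_0\norm{v_{h_n}}_V \le \sup_{0\neq w_h\in W_{h_n}}\frac{|\mathsf{t}_0(v_{h_n},w_h)|}{\norm{w_h}_W} \le \varepsilon_n + \norm{\mathbf{K}v_{h_n}}_{W'}.
\end{equation*}
Since $v_{h_n}\rightharpoonup v = 0$ and $\mathbf{K}$ is compact, $\mathbf{K}v_{h_n}\to 0$ strongly in $W'$; together with $\varepsilon_n\to 0$ this forces $c_0\le 0$, the required contradiction. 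The only delicate point—more a matter of care than a real obstacle—is the identification of the weak limit as zero: it is precisely there that asymptotic density of the discrete test spaces and the injectivity hypothesis must be combined, and the statement genuinely needs the density assumption; everything else is routine, modulo the $\mathrm{Re}$/modulus bookkeeping noted at the outset.
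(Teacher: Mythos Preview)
Your proof is correct, but it takes a genuinely different route from the paper's. The paper argues constructively via $T_h$-coercivity: starting from the uniformly bounded family $T_h:V_h\to W_h$ furnished by assumption (4), it builds a compact correction operator $\mathbf{X} = -(\mathbf{A}')^{-1}\mathbf{K}'T_h$ (requiring invertibility of $\mathbf{A}$, hence all of (1)--(3)), sets $w_h^\star = T_h v_h + P_h\mathbf{X}v_h$ with $P_h$ the $W$-orthogonal projection onto $W_h$, and shows that $\mathsf{t}(v_h, w_h^\star) = \mathsf{t}_0(v_h, T_hv_h) + \mathsf{t}(v_h,(P_h-\mathsf{Id})\mathbf{X}v_h)$; the remainder vanishes uniformly because $\norm{(P_h-\mathsf{Id})\mathbf{X}}\to 0$ for compact $\mathbf{X}$ and asymptotically dense $W_h$. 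Your contradiction argument via weak compactness is more elementary, and in fact slightly sharper: as you note, you only use injectivity of $\mathbf{A}$, so assumption (3) is never invoked. The price is that your argument is non-constructive and gives no handle on the constant $c_{\mathsf{t}}$ or the threshold $h_0$, whereas the paper's approach in principle ties both to $\norm{(P_h-\mathsf{Id})\mathbf{X}}$ and the $T_h$-coercivity constant of $\mathsf{t}_0$.
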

\begin{proof}
	We recall that $h$--uniform inf-sup conditions are equivalent to $T_h$--coercivity (see \cite[Theorem~2]{ciarlet2012}): let $\{T_h\}_{h>0}$ be the family of bounded linear operators $T_h : V_h \rightarrow W_h$ such that
	\begin{equation}
		\norm{T_h} \leq C \quad \text{ for all }h>0,
	\end{equation}
	and 
	\begin{equation}
		\text{Re}	\{\mathsf{t}_0(v_h, T_h v_h)\} \geq c_{\mathsf{t}_0}' \norm{v_h}^2_{V} \quad \text{ for all }v_h\in V_h,
	\end{equation}
	where $ c_{\mathsf{t}_0}' > 0$ is independent of $h$.\\
	
	We define an operator $\mathbf{X}: V_h\rightarrow W$ such that given $v_h\in V_h$,  
	\begin{equation}\label{eq:wh0}
		\mathsf{t}(q, \mathbf{X}v_h) = -\mathsf{t}_{\mathbf{K}}(q, T_hv_h) \quad \text{for all }q\in V,
	\end{equation}
	which means that $\mathbf{X} = -(\mathbf{A}')^{-1} \mathbf{K}'T_h$. This operator is well defined since $\mathbf{A}$ is invertible due to Fredholm alternative and injectivity. Moreover, $\mathbf{X}$ is a compact operator, since $\mathbf{K}$ is compact.
	We choose conveniently
	\begin{equation}\label{eq:inf-sup-candidates0-full}
		w^{\star}_h= 
		T_h v_h + P_h\mathbf{X}v_h, 
	\end{equation}
	where $P_h: W\rightarrow W_h$ is the $W$--orthogonal projection.
	Now, we compute
	\begin{align}\label{eq:t-inf-sup-compact}
		\begin{aligned}
			\mathsf{t}(v_h, w_h^{\star}) &= \mathsf{t}(v_h, T_hv_h) + \mathsf{t}(v_h, P_h\mathbf{X}v_h) \\
			&=\mathsf{t}(v_h, T_hv_h) + \mathsf{t}(v_h, \mathbf{X}v_h) + \mathsf{t}(v_h, (P_h-\mathsf{Id})\mathbf{X}v_h)\\
			&=\mathsf{t}(v_h, T_hv_h) - \mathsf{t}_{\mathbf{K}}(v_h, T_hv_h) + \mathsf{t}(v_h, (P_h-\mathsf{Id})\mathbf{X}v_h)\\
			&=\mathsf{t}_0(v_h, T_hv_h) + \mathsf{t}(v_h, (P_h-\mathsf{Id})\mathbf{X}v_h)\\
		\end{aligned}
	\end{align}
	From \eqref{eq:t-inf-sup-compact} we obtain
	\begin{equation}\begin{aligned}
			|\mathsf{t}(v_h, w_h^{\star})|&\geq |\mathsf{t}_0(v_h, T_hv_h)| - |\mathsf{t}(v_h,(P_h-\mathsf{Id})\mathbf{X}v_h)|\\
			&\geq c_{\mathsf{t}_0}\norm{v_h}_V^2 - \norm{\mathbf{A}}\norm{v_h}_V^2 \norm{(P_h-\mathsf{Id})\mathbf{X}},
		\end{aligned}
	\end{equation}
	where $ \norm{(P_h-\mathsf{Id})\mathbf{X}}\rightarrow 0$ uniformly as $h\rightarrow 0,$ due to $\mathbf{X}$ being a compact operator. Therefore, there exists $h_0>0$ such that
	\begin{equation}
		|\mathsf{t}(v_h, w_h^{\star})| = |\{\mathsf{t}(v_h, (T_h+P_h\mathbf{X})v_h )|\geq \tfrac{1}{2}c_{\mathsf{t}_0}\norm{v_h}_V^2
	\end{equation}
	This corresponds to $T_h$--coercivity with a family of operators $\{\tilde{T}_h\}_{h<h_0}$, where
	$$\tilde{T}_h \coloneqq T_h + P_h\mathbf{X}, \quad \norm{\tilde{T}_h}\leq \norm{T_h} + \norm{P_h}\norm{\mathbf{X}} \leq C',$$
	with $C'>0$ independent of $h.$
	This result is equivalent to an $h$--uniform inf-sup condition for $\mathsf{t}$, for all $h< h_0$ (see \cite[Theorem~2]{ciarlet2012}).
\end{proof}

\begin{proposition}[Asymptotic quasi-optimality,{\cite[Proposition~A.5]{labarca2023}}]\label{th:abstract_qo}
	Provided that Assumption \ref{th:assumption} holds, there is $ h_0 >0$ and a constant $ c_{\mathsf{qo}}>0 $ independent of $ h $ such that there exists a unique Galerkin solution $ (u_h, p_h) \in X_h \times \Pi_h$ of \eqref{eq:discrete_galerkin} for all $ h < h_0 $. The solution satisfies
	\begin{equation}
		\norm{(u, p) - (u_h, p_h)}_{X\times\Pi} \leq c_{\mathsf{qo}}  \inf\limits_{(\eta_h, \tau_h)\in X_h\times\Pi_h} \norm{(u, p) - (\eta_h, \tau_h)}_{X\times\Pi}.
	\end{equation}
\end{proposition}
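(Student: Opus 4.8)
The plan is to derive quasi-optimality from an $h$-uniform discrete inf-sup condition for the full Galerkin bilinear form $\mathsf{t}$, which in turn follows by regarding $\mathsf{t}$ as a compact perturbation of a stable, triangular form. I set $V = W = X\times\Pi$ and $V_h = W_h = X_h\times\Pi_h$, and split $\mathsf{t} = \mathsf{t}_0 + \mathsf{t}_{\mathbf{K}}$, where
\[
\mathsf{t}_0((u,p),(v,q)) = \langle Au,v\rangle_X + \langle Cu,q\rangle_\Pi + \langle Dp,q\rangle_\Pi
\]
collects the $A$, $C$, $D$ blocks and $\mathsf{t}_{\mathbf{K}}((u,p),(v,q)) = \langle Bp, v\rangle_X$ is the single off-diagonal coupling.

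First I would verify the four hypotheses of Proposition \ref{th:t-inf-sup-coercive}. Injectivity of the operator induced by $\mathsf{t}$ is exactly the injectivity of $\mathbf{T}$ from Assumption \ref{th:assumption}. Compactness of the operator induced by $\mathsf{t}_{\mathbf{K}}$ is immediate since $B$ is compact by Assumption \ref{th:assumption}. The continuous and $h$-uniform discrete inf-sup stability of $\mathsf{t}_0$ is precisely the content of Proposition \ref{th:inf-sup-t0} with $A_0 = A$ and $D_0 = D$, whose ellipticity is guaranteed by Assumption \ref{th:assumption}; the continuous inf-sup condition is recovered by the same triangular argument applied with $X_h = X$ and $\Pi_h = \Pi$. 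With all four hypotheses in hand, Proposition \ref{th:t-inf-sup-coercive} yields $h_0 > 0$ and $c_{\mathsf{t}} > 0$ such that $\mathsf{t}$ satisfies the $h$-uniform discrete inf-sup condition on $V_h \times W_h$ for every $h < h_0$.

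Given this, existence and uniqueness of the Galerkin solution for $h < h_0$ follows from finite-dimensional linear algebra: the discrete problem \eqref{eq:discrete_galerkin} is a square system, and the discrete inf-sup condition forces the associated matrix to be injective, hence invertible. For quasi-optimality I would run the standard Babu\v{s}ka argument. Fixing an arbitrary $(\eta_h, \tau_h) \in X_h \times \Pi_h$, I apply the discrete inf-sup condition to $(u_h, p_h) - (\eta_h, \tau_h)$, use Galerkin orthogonality $\mathsf{t}((u,p) - (u_h, p_h), (w_h, r_h)) = 0$ for all discrete test functions (obtained by subtracting \eqref{eq:discrete_galerkin} from the continuous equation) to replace $(u_h, p_h)$ by $(u, p)$ inside $\mathsf{t}$, and finally the boundedness $|\mathsf{t}(\cdot,\cdot)| \le C_{\mathsf{t}}\norm{\cdot}\norm{\cdot}$, to arrive at
\[
c_{\mathsf{t}} \norm{(u_h, p_h) - (\eta_h, \tau_h)}_{X\times\Pi} \le C_{\mathsf{t}} \norm{(u, p) - (\eta_h, \tau_h)}_{X\times\Pi}.
\]
A triangle inequality and the infimum over $(\eta_h, \tau_h)$ then give the stated estimate with $c_{\mathsf{qo}} = 1 + C_{\mathsf{t}}/c_{\mathsf{t}}$.

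The main obstacle is the first step: establishing the $h$-uniform discrete inf-sup bound for $\mathsf{t}$ despite the coupling $B$ being merely compact, not small. This is exactly where the threshold $h_0$ enters and the reason quasi-optimality is only asymptotic. Everything hinges on Proposition \ref{th:t-inf-sup-coercive}, whose proof absorbs the compact term via the auxiliary operator $\mathbf{X} = -(\mathbf{A}')^{-1}\mathbf{K}' T_h$ together with the uniform convergence $\norm{(P_h - \Id)\mathbf{X}} \to 0$ as $h \to 0$. Once that proposition is granted, the passage to quasi-optimality is the routine C\'ea/Strang computation sketched above, and the existence argument is standard finite-dimensional linear algebra.
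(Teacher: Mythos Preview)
Your proposal is correct and follows exactly the approach the paper's appendix is set up to support: the paper does not give an explicit proof of this proposition (it is cited from \cite[Proposition~A.5]{labarca2023}), but the surrounding Propositions \ref{th:inf-sup-t0} and \ref{th:t-inf-sup-coercive} are precisely the ingredients you invoke, and the final Babu\v{s}ka/C\'ea step is standard.
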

%
\section{Equivalent norms in trace spaces}\label{sec:norm-eq}

The coercivity results in Section \ref{sec:coercive} depend on a norm equivalence in trace spaces. This will be important for the subsequent analysis.
\begin{lemma}\label{lemma:eq_norms}
	Let $ \chi \in C^1(\overbar{\Omega}_i) $ be such that $$ 0 < \chi_{\text{min}} < \chi(\nex) < \chi_{\text{max}} $$ for all $ \nex\in \overbar{\Omega}_i$. Then 
	\begin{equation}
		c_{1, \chi}\norm{\varphi}_{H^{1/2}(\Gamma)} \leq \norm{\chi^{1/2}\varphi}_{H^{1/2}(\Gamma)} \leq c_{2, \chi}\norm{\varphi}_{H^{1/2}(\Gamma)}, \quad \text{ for all }\varphi\in H^{1/2}(\Gamma),
	\end{equation}
	with constants $c_{1,\chi}, c_{2,\chi}$ depending on $\chi$ and $\Gamma.$\\
	By duality, the result also holds for $ H^{-1/2}(\Gamma) $
	\begin{equation}
		c_{1, \chi}\norm{\psi}_{H^{-1/2}(\Gamma)} \leq \norm{\chi^{1/2}\psi}_{H^{-1/2}(\Gamma)} \leq c_{2, \chi}\norm{\psi}_{H^{-1/2}(\Gamma)}, \quad \text{ for all }\psi\in H^{-1/2}(\Gamma).
	\end{equation}
	The result also extends component-wise to the vectorial case, to $\mathbf{H}^{1/2}(\Gamma)$ and its dual $\mathbf{H}^{-1/2}(\Gamma).$
\end{lemma}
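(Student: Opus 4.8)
The plan is to reduce the whole statement to the single fact that, for any $\psi\in C^1(\overbar{\Omega}_i)$ with $0<\psi_{\min}\le\psi\le\psi_{\max}$, pointwise multiplication by $\psi$ is a bounded operator on $H^{1/2}(\Gamma)$, with operator norm controlled by $\psi_{\max}$, the Lipschitz constant of $\psi$, and $\Gamma$. Granting this, I would apply it with $\psi=\chi^{1/2}$ to obtain the upper bound $\norm{\chi^{1/2}\varphi}_{H^{1/2}(\Gamma)}\le c_{2,\chi}\norm{\varphi}_{H^{1/2}(\Gamma)}$, and with $\psi=\chi^{-1/2}$ (also admissible, since $\chi$ is $C^1$ and bounded away from $0$ and $\infty$) applied to $\chi^{1/2}\varphi$ to obtain $\norm{\varphi}_{H^{1/2}(\Gamma)}=\norm{\chi^{-1/2}(\chi^{1/2}\varphi)}_{H^{1/2}(\Gamma)}\le c'\,\norm{\chi^{1/2}\varphi}_{H^{1/2}(\Gamma)}$, i.e.\ the lower bound with $c_{1,\chi}=1/c'$. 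Preliminary to this one checks that $\chi^{1/2}$ and $\chi^{-1/2}$ really are Lipschitz on $\overbar{\Omega}_i$, hence on $\Gamma$ with the chordal metric: $\chi$ is Lipschitz because it is $C^1$ on a bounded Lipschitz (hence quasiconvex) domain, and $t\mapsto t^{\pm 1/2}$ is Lipschitz on the compact interval $[\chi_{\min},\chi_{\max}]\subset(0,\infty)$.

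For the multiplier estimate itself I would use the intrinsic Gagliardo--Slobodeckij norm on $\Gamma$ and split
\[
[\psi\varphi]_{H^{1/2}(\Gamma)}^2 \le C\int_\Gamma\!\int_\Gamma \frac{\psi(\nex)^2\,|\varphi(\nex)-\varphi(\ney)|^2 + |\varphi(\ney)|^2\,|\psi(\nex)-\psi(\ney)|^2}{|\nex-\ney|^{3}}\,\dsx\,\dsy .
\]
The first contribution is $\le \psi_{\max}^2\,[\varphi]_{H^{1/2}(\Gamma)}^2$. In the second, Lipschitz continuity gives $|\psi(\nex)-\psi(\ney)|^2\le L_\psi^2|\nex-\ney|^2$, and since $\Gamma$ is a two-dimensional surface one has $\sup_{\ney\in\Gamma}\int_\Gamma|\nex-\ney|^{-1}\,\dsx<\infty$, so this contribution is $\le C L_\psi^2\norm{\varphi}_{L^2(\Gamma)}^2$; together with $\norm{\psi\varphi}_{L^2(\Gamma)}\le\psi_{\max}\norm{\varphi}_{L^2(\Gamma)}$ this yields boundedness of $M_\psi$ on $H^{1/2}(\Gamma)$. (Equivalently, one may simply invoke the module property of $H^s(\Gamma)$, $0<s<1$, over Lipschitz functions, in the spirit of the multiplier results in \cite{mclean2000strongly}.)

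For the statement on $H^{-1/2}(\Gamma)=\bigl(H^{1/2}(\Gamma)\bigr)'$ I would argue by transposition. Since $\chi^{1/2}$ is real-valued, $\langle \chi^{1/2}\psi,\varphi\rangle_{\Gamma}=\langle \psi,\chi^{1/2}\varphi\rangle_{\Gamma}$, so
\[
\norm{\chi^{1/2}\psi}_{H^{-1/2}(\Gamma)} = \sup_{0\neq\varphi\in H^{1/2}(\Gamma)} \frac{\bigl|\langle \psi,\chi^{1/2}\varphi\rangle_{\Gamma}\bigr|}{\norm{\varphi}_{H^{1/2}(\Gamma)}} .
\]
By the first half of the lemma, $M_{\chi^{1/2}}:H^{1/2}(\Gamma)\to H^{1/2}(\Gamma)$ is a bijection with bounded inverse $M_{\chi^{-1/2}}$, so the substitution $\phi=\chi^{1/2}\varphi$ sweeps out all of $H^{1/2}(\Gamma)\setminus\{0\}$, while $\norm{\varphi}_{H^{1/2}(\Gamma)}=\norm{\chi^{-1/2}\phi}_{H^{1/2}(\Gamma)}$ is equivalent to $\norm{\phi}_{H^{1/2}(\Gamma)}$ with the constants already produced. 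Hence the displayed supremum is equivalent to $\sup_{\phi\neq0}|\langle\psi,\phi\rangle_{\Gamma}|/\norm{\phi}_{H^{1/2}(\Gamma)}=\norm{\psi}_{H^{-1/2}(\Gamma)}$, with constants depending only on $\chi$ and $\Gamma$. The vectorial versions follow at once: $\mathbf{H}^{\pm1/2}(\Gamma)=[H^{\pm1/2}(\Gamma)]^3$ carries the product norm, the scalar multiplier $\chi^{1/2}$ acts componentwise, and summing the three scalar inequalities finishes the argument.

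The only genuinely technical point is the multiplier estimate on the Lipschitz surface $\Gamma$; everything after it (the duality manipulation and the componentwise sum) is bookkeeping. The essential inputs there are that $\chi^{1/2}$ and $\chi^{-1/2}$ are Lipschitz, which uses $\chi\in C^1(\overbar{\Omega}_i)$ together with the two-sided positive bound, and the integrability $\sup_{\ney}\int_\Gamma|\nex-\ney|^{-1}\,\dsx<\infty$, which is precisely the subcriticality $\tfrac12<1$ of the Sobolev exponent on the two-dimensional set $\Gamma$.
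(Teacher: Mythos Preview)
Your proposal is correct and follows essentially the same route as the paper: both use the Gagliardo--Slobodeckij norm, the add--subtract decomposition of $\psi(\nex)\varphi(\nex)-\psi(\ney)\varphi(\ney)$, Lipschitz continuity of $\chi^{\pm 1/2}$, and the integrability of $|\nex-\ney|^{-1}$ over the compact Lipschitz surface $\Gamma$, then pass to $H^{-1/2}(\Gamma)$ by duality and to the vector case componentwise. The only cosmetic difference is that the paper's duality argument bounds $\norm{\chi^{1/2}\psi}_{H^{-1/2}}$ directly via $\norm{\chi^{1/2}\varphi}_{H^{1/2}}\le c_{2,\chi}\norm{\varphi}_{H^{1/2}}$ rather than through your substitution $\phi=\chi^{1/2}\varphi$, but both arrive at the same conclusion.
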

\begin{proof}
	We start by recalling that for any $\varphi\in H^{1/2}(\Gamma), $ we can write \cite[Section~2.5]{steinbach2007numerical}
	\begin{equation}\label{eq:sobolev-slobodeki}
		\norm{\varphi}_{H^{1/2}(\Gamma)}^2 = \norm{\varphi}^2_{L^2(\Gamma)} + \displaystyle \int\limits_{\Gamma}\displaystyle\int\limits_{\Gamma}\dfrac{|\varphi(\nex) - \varphi(\ney)|^2}{|\nex-\ney|^{3}}\dsy\dsx. 
	\end{equation}
	Then, we compute
	\begin{align*}
		\norm{\chi^{1/2}\varphi}_{H^{1/2}(\Gamma)}^2 &= \norm{\chi^{1/2}\varphi}_{L^2(\Gamma)}^2 +   \displaystyle \int\limits_{\Gamma}\displaystyle\int\limits_{\Gamma}\dfrac{|\chi^{1/2}(\nex)\varphi(\nex) - \chi^{1/2}(\ney)\varphi(\ney)|^2}{|\nex-\ney|^{3}}\dsy\dsx.
	\end{align*}
	We denote 
	\begin{equation}\label{eq:Ivarphi}
		I_{\varphi}^{(1)} \coloneqq \displaystyle \int\limits_{\Gamma}\displaystyle\int\limits_{\Gamma}\dfrac{|\chi^{1/2}(\nex)\varphi(\nex) - \chi^{1/2}(\ney)\varphi(\ney)|^2}{|\nex-\ney|^{3}}\dsy\dsx.
	\end{equation}
	By adding zero,
	\begin{align}\label{eq:AddZero}\begin{aligned}
			|\chi^{1/2}(\nex)\varphi(\nex) - \chi^{1/2}(\ney)\varphi(\ney)| &= |\chi^{1/2}(\nex)\varphi(\nex) - \chi^{1/2}(\ney)\varphi(\nex) \\
			&{}+\chi^{1/2}(\ney)\varphi(\nex) - \chi^{1/2}(\ney)\varphi(\ney)|\\
			&\leq |\chi^{1/2}(\ney)| |\varphi(\nex) - \varphi(\ney)| \\
			&{}+ |\varphi(\nex)| |\chi^{1/2}(\nex) - \chi^{1/2}(\ney)|
		\end{aligned}
	\end{align}
	Since $\chi\in C^1(\overbar{\Omega}_i)$ and positively bounded from below, we know that $\chi^{1/2}\in C^1(\overbar{\Omega}_i).$ Therefore, since $\Omega$ is bounded,
	\begin{equation}\label{eq:LipschitzBound}
		|\chi^{1/2}(\nex) - \chi^{1/2}(\ney)| \leq C_{\chi} |\nex - \ney|, \quad \text{ for all }\nex,\ney\in \Gamma.
	\end{equation}
	Combining \eqref{eq:AddZero} and \eqref{eq:LipschitzBound} into \eqref{eq:Ivarphi}, we obtain
	\begin{equation}\label{eq:I1}
		I_{\varphi}^{(1)} \leq C \left(\chi_{\text{max}} |\varphi|_{H^{1/2}(\Gamma)}^2 + C^2_{\chi} I^{(2)}_{\varphi}\right),
	\end{equation}
	where \begin{subequations}
		\begin{align}\label{eq:I2}
			I^{(2)}_{\varphi} &\coloneqq \displaystyle\int\limits_{\Gamma}\displaystyle\int\limits_{\Gamma}  \frac{|\varphi(\nex)|^2}{|\nex-\ney|}\dsy\dsx \\ \label{eq:rm1-proof}
			&\leq C \int\limits_{\Gamma}|\varphi(\nex)|^2 \left(\int\limits_{\Gamma} \dfrac{1}{|\nex-\ney|} \dsy\right)\dsx  \\
			&\leq C' \norm{\varphi}^2_{L^2(\Gamma)}.
		\end{align}
	\end{subequations}
	due to the integral in \eqref{eq:rm1-proof} being finite, since $\Gamma$ is compact and Lipschitz.\\
	From \eqref{eq:I1} and \eqref{eq:I2} we conclude that there exists a constant $c_{2,\chi}>0$ such that 
	\begin{equation}\label{eq:upper}
		\norm{\chi^{1/2}\varphi}_{H^{1/2}(\Gamma)} \leq c_{2,\chi}\norm{\varphi}_{H^{1/2}(\Gamma)}.
	\end{equation}
	Using \eqref{eq:upper} with $\chi' = \chi^{-1}$ and $\varphi' = \chi^{1/2}\varphi$, we obtain
	\begin{equation}\label{eq:H12}
		c_{1,\chi}\norm{\varphi}_{H^{1/2}(\Gamma)} \leq \norm{\chi^{1/2}\varphi}_{H^{1/2}(\Gamma)} \leq c_{2,\chi}\norm{\varphi}_{H^{1/2}(\Gamma)}.
	\end{equation}
	The proof for $\psi\in H^{-1/2}(\Gamma)$ follows a duality argument. Note that
	\begin{subequations}
		\begin{align}
			\norm{\chi^{1/2}\psi}_{H^{-1/2}(\Gamma)} &= \sup\limits_{\varphi\in H^{1/2}(\Gamma)\setminus\{0\}} \dfrac{\langle \chi^{1/2}\psi, \varphi\rangle_{\Gamma}}{\norm{\varphi}_{H^{1/2}(\Gamma)}} \\
			&=\sup\limits_{\varphi\in H^{1/2}(\Gamma)\setminus\{0\}} \dfrac{\langle \psi, \chi^{1/2}\varphi\rangle_{\Gamma}}{\norm{\varphi}_{H^{1/2}(\Gamma)}} \\
			&\leq \sup\limits_{\varphi\in H^{1/2}(\Gamma)\setminus\{0\}} \norm{\psi}_{H^{-1/2}(\Gamma)}\dfrac{\norm{\chi^{1/2}\varphi}_{H^{1/2}(\Gamma)}}{\norm{\varphi}_{H^{1/2}(\Gamma)}} \\
			&= c_{2, \chi}\norm{\psi}_{H^{-1/2}(\Gamma)}.
		\end{align}
	\end{subequations}
	Repeating the argument with $\chi' = \chi^{-1}$ and $\psi' = \chi^{1/2}\psi,$ we conclude
	\begin{equation*}
		c_{1,\chi}\norm{\psi}_{H^{-1/2}(\Gamma)} \leq \norm{\chi^{1/2}\psi}_{H^{-1/2}(\Gamma)} \leq c_{2,\chi}\norm{\psi}_{H^{-1/2}(\Gamma)}.
	\end{equation*}
\end{proof}
\section{$\mathbf{L}^2$--Projection in $\mathbf{H}(\Curl, \Omega_i)$}\label{sec:L2-proj}

In the scalar case, there is a $h$-uniform discrete inf-sup condition for the dual product between $H^1(\Omega_i)$ and $\widetilde{H}^{-1}(\Omega_i)$, discretized with the finite dimensional space of piecewise-linear continuous functions $P^1_h$ \cite{steinbach2002}. The result is based on the $H^1$-stability of the $L^2$-projection $Q_h : L^2(\Omega_i) \rightarrow P^1_h$ defined as
\begin{equation}
	\langle Q_hu, v_h\rangle_{\Omega_i} = \langle u, v_h\rangle,\quad \text{ for all }v_h\in P^1_h,\ u\in L^2(\Omega_i).
\end{equation}
We know $Q_h$ satisfies (see \cite[Theorem~4.1]{steinbach2002},\cite[Theorem~3]{karkulik20132d},\cite[Section~3]{bramble1991some})
\begin{equation}\label{eq:Qh}
	\norm{Q_hu}_{H^1(\Omega_i)} \leq c_{Q} \norm{u}_{H^1(\Omega_i)}\quad \text{ for all }u\in H^1(\Omega_i).
\end{equation}
The result in \eqref{eq:Qh} is proven by using a quasi-interpolation operator $\widetilde{I}_h$, known to be stable in $H^1(\Omega_i)$ and for which some approximation properties can be shown \cite[Section~3]{steinbach2002}:
\begin{align}
	\norm{\widetilde{I}_h u}_{H^1(\Omega_i)} &\leq c_{\tilde{I}}\norm{u}_{H^1(\Omega_i)},\\
	\norm{u-\widetilde{I}_hu}_{L^2(\Omega_i)} &\leq c_{I\!I} h |u|_{H^1(\Omega_i)},
\end{align}
for all $u\in H^1(\Omega_i)$.\\
Assuming a quasi-uniform and shape-regular family of meshes, it follows that
\begin{align*}
	\norm{Q_h u}_{H^1(\Omega_i)} &\leq \norm{Q_h u - \widetilde{I}_h u + \widetilde{I}_h u}_{H^1(\Omega_i)} \\
	&\leq \norm{(Q_h  - \widetilde{I}_h) u}_{H^1(\Omega_i)} + \norm{\widetilde{I}_h u}_{H^1(\Omega_i)} \\
	&\leq \tfrac{1}{h}\norm{(Q_h  - \widetilde{I}_h) u}_{L^2(\Omega_i)} + \norm{\widetilde{I}_h u}_{H^1(\Omega_i)} \\
	&\leq \tfrac{1}{h}\norm{u - Q_hu}_{L^2(\Omega_i)} +\tfrac{1}{h}\norm{u - \widetilde{I}_hu}_{L^2(\Omega_i)}+ c_{\tilde{I}}\norm{u}_{H^1(\Omega_i)}\\ \label{eq:CrucialH1}
	&\leq C|u|_{H^1(\Omega_i)} + c_{I\!I}|u|_{H^1(\Omega_i)}+ c_{\tilde{I}}\norm{u}_{H^1(\Omega_i)} \ \leq c_{Q}\norm{u}_{H^1(\Omega_i)}
\end{align*}
The fundamental step in this proof is: being able to bound the $L^2$-error with the $H^1$-seminorm. \\
Is it possible to have a similar result in $\mathbf{H}(\Curl, \Omega_i)$ with its seminorm? The answer is no. Consider $\bm{w}\in \mathbf{H}(\Curl0, \Omega_i)$ and $\mathbf{Q}_h : \mathbf{H}(\Curl,\Omega_i)\rightarrow N_h \subset \mathbf{H}(\Curl, \Omega_i)$ the standard $\mathbf{L}^2$-projection into N\'ed\'elec edge elements. Then, such a result requires
\begin{equation}
	\norm{\bm{w} - \mathbf{Q}_h \bm{w}}_{\mathbf{L}^2(\Omega_i)} \leq C|\bm{w}|_{\mathbf{H}(\Curl,\Omega_i)} = 0,
\end{equation}
which can only be true for constants or polynomials in $N_h$, but as we know, $\mathbf{H}(\Curl0, \Omega_i)$ is an infinite dimensional subspace of $\mathbf{H}(\Curl,\Omega_i).$ Therefore, such a proof is not valid for the standard $\mathbf{L}^2$-projection $\mathbf{Q}_h$.\\
Some numerical evidence of this issue and its implications will be shown in Appendix \ref{sec:L2-projection}.

\subsection{Numerical experiment}\label{sec:L2-projection}
In this section, we study the convergence in the $\mathbf{H}(\Curl, \Omega_i)$ norm of the $\mathbf{L}^2$-projection $\mathbf{Q}_h$, defined as
\begin{equation}
	\langle \mathbf{Q}_h \neu, \nev_h\rangle_{\Omega_i} = \langle \neu, \nev_h \rangle_{\Omega_i} \quad \text{ for all }\nev_h\in N_h, \ \neu\in \mathbf{L}^2(\Omega_i),
\end{equation}
where $N_h = N_h(\mathcal{T}_h)$ is the finite dimensional space of N\'ed\'elec edge functions in a tetrahedral mesh $\mathcal{T}_h$ of $\Omega_i.$\\
In particular, we consider 
\begin{equation*}
	\Omega_i\coloneqq \{  \nex = (x,y,z)\in \mathbb{R}^3 : 0\leq x,y,z\leq 1 \},
\end{equation*}
and 
\begin{equation*}
	\neu(\nex) = \neu^{\star}(\nex)\coloneqq \bm{e}_0 \exp(i\kappa_0 \nex \cdot \nex_0),
\end{equation*}
where $\kappa_0 = 2, \ \nex_0 = (0, 1, 0)$ and $\bm{e}_0 = (1, 0, 0).$ We can observe in Figure \ref{fig:L2-projection} the errors of the projections $\mathbf{Q}_h\neu^{\star}$ in the $\mathbf{L}^2(\Omega_i)$ and $\mathbf{H}(\Curl, \Omega_i)$ norms. \\

\begin{figure}[ht!]
	\centering 
	\includegraphics[width=0.5\linewidth]{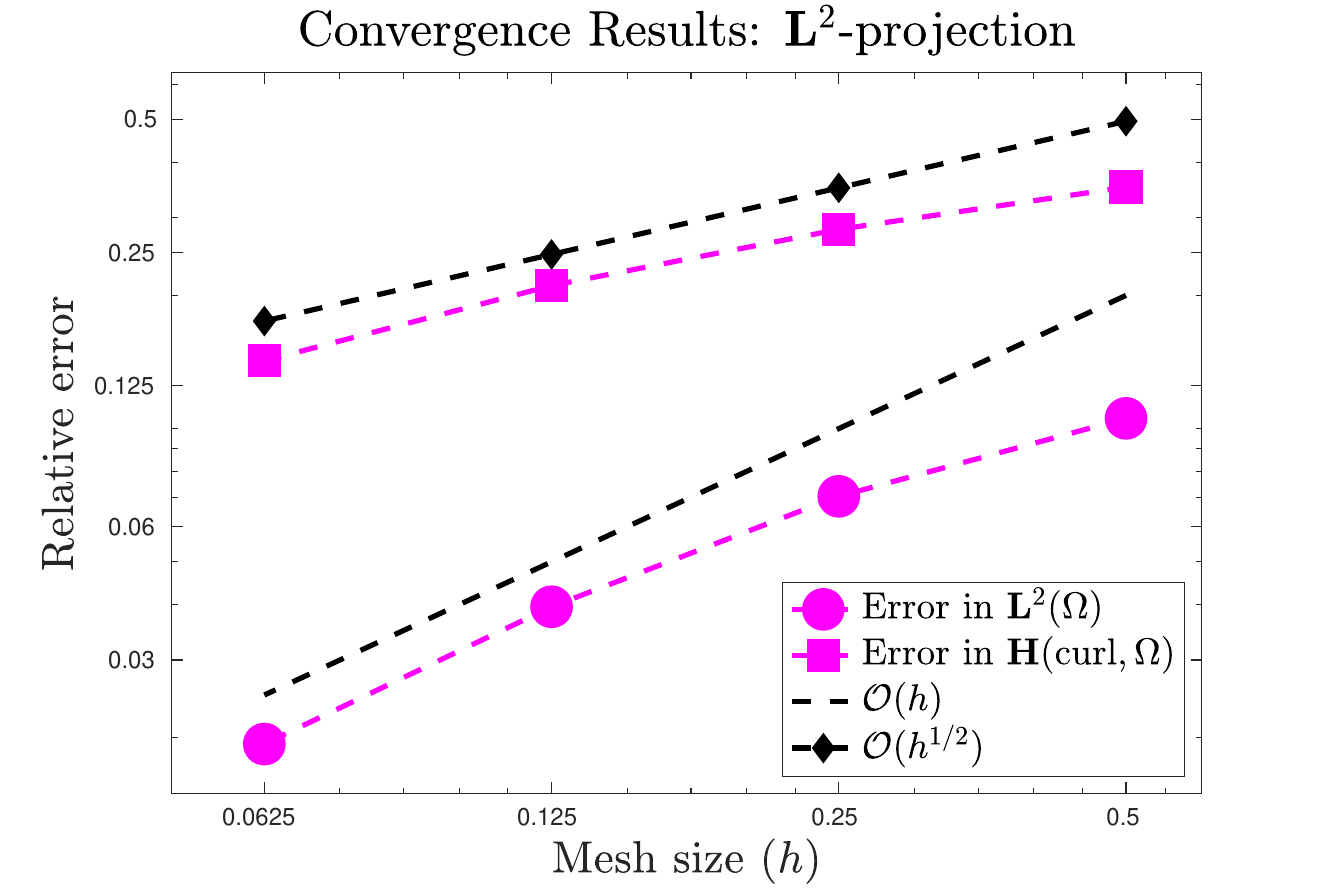}	
	\caption{$\mathbf{L}^2$-projection $\mathbf{Q}_h$, Section \ref{sec:L2-projection}. Error norms \eqref{eq:error-norms} as functions of $ h $.}
	\label{fig:L2-projection}
\end{figure}
It is a well known result that the approximation error for $\mathbf{L}^2(\Omega_i)$ and $\mathbf{H}(\Curl, \Omega_i)$ in formulations that are stable in $\mathbf{H}(\Curl,\Omega_i)$ has the same convergence rate, due to the interpolation estimates being the same \cite[Remark~10]{hiptmair2002finite}.\\

Assume $\norm{\mathbf{Q}_h \neu}_{\mathbf{H}(\Curl,\Omega_i)}\leq C_S \norm{\neu}_{\mathbf{H}(\Curl,\Omega_i)}$. Then it follows
\begin{align}\label{eq:uminusQhu}
	\begin{aligned}
		\norm{\neu - \mathbf{Q}_h\neu}_{\mathbf{H}(\Curl,\Omega_i)} &= \norm{\neu - \mathbf{Q}_h\neu - \nev_h + \mathbf{Q}_h\nev_h}_{\mathbf{H}(\Curl,\Omega_i)} \\
		&= \norm{(\mathbf{I} - \mathbf{Q}_h)(\neu-\nev_h)}_{\mathbf{H}(\Curl,\Omega_i)} \\
		&\leq (1+C_S)\norm{\neu-\nev_h}_{\mathbf{H}(\Curl,\Omega_i)}
	\end{aligned}
\end{align}
for all $\nev_h\in N_h.$ From \eqref{eq:uminusQhu} we obtain that for smooth vector fields $\neu\in\mathbf{H}^1(\Curl,\Omega_i)$ it holds
\begin{equation*}
	\norm{\neu - \mathbf{Q}_h\neu}_{\mathbf{H}(\Curl,\Omega_i)} \leq (1+C_S)\inf\limits_{\nev_h\in N_h}\norm{\neu - \nev_h}_{\mathbf{H}(\Curl,\Omega_i)} = \mathcal{O}(h)
\end{equation*}
on shape-regular and quasi-uniform families of meshes.

As we observe in Figure \ref{fig:L2-projection}, there is a reduced order of convergence of the $\mathbf{L}^2$-projection in the $\mathbf{H}(\Curl, \Omega_i)$ norm. We conclude that 
\begin{equation}
	\dfrac{\norm{\mathbf{Q}_h\neu}_{\mathbf{H}(\Curl,\Omega_i)}}{\norm{\neu}_{\mathbf{H}(\Curl,\Omega_i)}} \text{ is not bounded uniformly in }h.
\end{equation}

\end{document}